\let\@evenhead\@empty
\let\@oddhead\@empty
\newtheorem{thm}{Theorem}[section]
\newtheorem{cor}[thm]{Corollary}
\newtheorem{prop}[thm]{Proposition}
\newtheorem{lem}[thm]{Lemma}
\newtheorem*{thm*}{Theorem}
\newtheorem*{prop*}{Proposition}
\newtheorem*{lem*}{Lemma}
\newtheorem*{cor*}{Corollary}
\newtheorem*{conj*}{Conjecture}
\newtheorem*{mainthm*}{Main Theorem}
\newtheorem{lem'}{Lemma}
\newtheorem{thm'}{Theorem}
\newtheorem{prop'}{Proposition}
\theoremstyle{definition}
\newtheorem*{defn*}{Definition}
\newtheorem*{exmp*}{Example}
\newtheorem*{exer*}{Exercise}
\theoremstyle{remark}
\newtheorem{rem}[thm]{Remark}
\newtheorem*{claim*}{Claim}
\newtheorem*{rem*}{Remark}
\newtheorem{rem'}{Remark}
\newtheorem*{qtn'}{Question}
\newtheorem*{problem*}{Problem}
\newtheorem*{soln'}{Solution}
\DeclareMathOperator{\re}{Re}
\DeclareMathOperator{\im}{Im}
\DeclareMathOperator{\Hom}{Hom}
\DeclareMathOperator{\Sym}{Sym}
\DeclareMathOperator{\GL}{GL}
\DeclareMathOperator{\SL}{SL}
\DeclareMathOperator{\PGL}{PGL}
\DeclareMathOperator{\PSL}{PSL}
\DeclareMathOperator{\PSO}{PSO}
\DeclareMathOperator{\PSU}{PSU}
\DeclareMathOperator{\Ad}{Ad}
\DeclareMathOperator{\LHS}{LHS}
\DeclareMathOperator{\RHS}{RHS}
\DeclareMathOperator{\Arg}{Arg}
\DeclareMathOperator{\Ai}{Ai}
\newcommand{\true}{\textnormal{true}}
\newcommand{\Abs}{\textnormal{abs}}
\newcommand{\Mod}{\textnormal{mod}}
\newcommand{\cusp}{\textnormal{cusp}}
\newcommand{\R}{\mathbf{R}}
\newcommand{\C}{\mathbf{C}}
\newcommand{\Z}{\mathbf{Z}}
\newcommand{\Q}{\mathbf{Q}}
\renewcommand{\1}{\mathds{1}}
\renewcommand{\H}{\mathbf{H}}
\renewcommand{\S}{\mathcal{S}}
\newcommand{\D}{\mathcal{D}}
\newcommand{\A}{\mathbf{A}}
\newcommand{\F}{\mathcal{F}}
\newcommand{\M}{\mathcal{M}}
\renewcommand{\L}{\mathcal{L}}
\renewcommand{\P}{\mathcal{P}}
\newcommand{\pvec}[1]{\vec{#1}\mkern2mu\vphantom{#1}}
\DeclareRobustCommand\widecheck[1]{{\mathpalette\@widecheck{#1}}} \def\@widecheck#1#2{%
	\setbox\z@\hbox{\m@th$#1#2$}%
	\setbox\tw@\hbox{\m@th$#1%
		\widehat{%
			\vrule\@width\z@\@height\ht\z@ \vrule\@height\z@\@width\wd\z@}$}%
	\dp\tw@-\ht\z@ \@tempdima\ht\z@ \advance\@tempdima2\ht\tw@ \divide\@tempdima\thr@@ \setbox\tw@\hbox{%
		\raise\@tempdima\hbox{\scalebox{1}[-1]{\lower\@tempdima\box \tw@}}}%
	{\ooalign{\box\tw@ \cr \box\z@}}}
\numberwithin{equation}{section}
\title{Weyl bound for trilinear periods via conformal bootstrap}
\author{Anshul Adve}
\address{Department of Mathematics, Princeton University, Princeton, NJ 08540, USA}
\email{aadve@princeton.edu}
\author{James Bonifacio}
\address{Department of Physics and Astronomy, University of Mississippi, University, MS 38677, USA}
\email{bonifacio@phy.olemiss.edu}
\author{Petr Kravchuk}
\address{King’s College London, Strand, London, WC2R 2LS, UK}
\email{petr.kravchuk@kcl.ac.uk}
\author{Dalimil Maz\'a\v{c}}
\address{Institut de Physique Th\'{e}orique, Universit\'{e} Paris-Saclay, CEA, CNRS, 91191 Gif-sur-Yvette, France}
\email{dalimil.mazac@gmail.com}
\author{Sridip Pal}
\address{Walter Burke Institute for Theoretical Physics, Caltech, Pasadena,  CA 91125, USA\vspace{-0.2cm}}\address{Curr.\ Add: Institut des Hautes \'Etudes Scientifiques,  91440 Bures sur Yvette,  France}
\email{sridip@ihes.fr}
\author{Alex Radcliffe}
\address{King’s College London, Strand, London, WC2R 2LS, UK}
\email{alexander.radcliffe@kcl.ac.uk}
\author{Gordon Rogelberg}
\address{Department of Physics, Yale University, New Haven, CT 06511, USA}
\email{gordon.rogelberg@yale.edu}
\date{\today}
\begin{document}
	
	\begin{abstract}
		Let $f_1,f_2$ be holomorphic modular forms of the same weight for a cocompact lattice $\Gamma < \PSL_2(\R)$.
		We estimate the rate of decay of the coefficients in the expansion of $f_1\overline{f_2}$ in a Laplace eigenbasis.
		By specializing our main theorem to the case where $\Gamma$ is arithmetic, we obtain new instances of the Weyl bound for triple product $L$-functions in the spectral aspect.
		Our method builds on the conformal bootstrap in physics.
	\end{abstract}
	
	\maketitle
	
	\parskip 0em
	\tableofcontents
	\parskip 0.5em
	
	\section{Introduction} \label{sec:intro}
	
	Let $\H$ denote the upper half-plane model of the hyperbolic plane.
	The orientation-preserving isometry group of $\H$ is $G = \PSL_2(\R)$, acting by M\"obius transformations.
	Let $\Gamma$ be a cocompact lattice in $G$ (not necessarily arithmetic), so $\Gamma \backslash \H$ is a compact hyperbolic surface, possibly with elliptic points.
	Let $f_1,f_2 \colon \H \to \C$ be holomorphic modular forms for $\Gamma$ of the same weight $2k$, with $k \in \Z_{>0}$.
	Equip $\Gamma \backslash \H$ with the area measure normalized to have total mass $1$. Then normalize the $f_i$ to have Petersson norm
	\begin{align*}
		\|y^k|f_i|\|_{L^2(\Gamma \backslash \H)} = 1.
	\end{align*}
	The left hand side is well-defined because $y^k|f_i|$ is a $\Gamma$-invariant function on $\H$.
	Since $f_1,f_2$ have the same weight (namely $2k$), $y^{2k} f_1 \overline{f_2}$ is also $\Gamma$-invariant, so it can be expanded in a basis of Laplace eigenfunctions for $\Gamma \backslash \H$ (since $\Gamma \backslash \H$ is compact, there is an honest orthonormal eigenbasis).
	In this paper, we consider the rate of decay of the coefficients in this expansion.
	These coefficients are sometimes called ``trilinear periods" or ``triple product periods," because they are given as integrals of products of three automorphic forms.
	
	Let $\{\varphi_r\}_{r \in \Z_{\geq 0}}$ be an orthonormal basis of $L^2(\Gamma \backslash \H)$ consisting of real-valued Laplace eigenfunctions, with $\varphi_0 = 1$.
	Let $\lambda_r \geq 0$ be the eigenvalue of $\varphi_r$.
	Assume the $\varphi_r$ are ordered so that $\lambda_r$ increases with $r$.
	Write $\lambda_r = s_r(1-s_r)$, with $s_r = \sigma_r + it_r$ the unique solution to this quadratic equation with real part $\sigma_r \in [\frac{1}{2},1]$ and imaginary part $t_r \geq 0$. Then $s_r \in (\frac{1}{2},1] \cup (\frac{1}{2}+i\R_{\geq 0})$.
	Define
	\begin{align*}
		C_r
		= \langle y^{2k} f_1\overline{f_2}, \varphi_r \rangle_{L^2(\Gamma \backslash \H)}.
	\end{align*}
	We are interested in the size of $C_r$ as $r \to \infty$ with $\Gamma, f_1, f_2$ fixed.
	Therefore, \textit{we allow all implicit constants to depend on $\Gamma, f_1, f_2$}.
	
	Since $y^{2k} f_1 \overline{f_2}$ is real analytic and $\varphi_r$ oscillates at frequency $\sim t_r$, one expects $C_r$ to decay exponentially with $t_r$ (see Subsection~\ref{subsec:notation} for the definition of $\sim$ and other asymptotic notation).
	We normalize out this exponential decay as follows.
	Denote
	\begin{align} \label{eqn:C_tilde_r_def}
		\widetilde{C}_r
		= \begin{cases}
			C_r &\text{if } t_r \leq 1, \\
			t_r^{-2k+1} e^{\pi t_r/2} C_r &\text{if } t_r > 1
		\end{cases}
	\end{align}
	(the cutoff $t_r \leq 1$ is arbitrary; all that matters is that $|\widetilde{C}_r| \sim |C_r|$ for $t_r \lesssim 1$).
	With this normalization, the $\widetilde{C}_r$ have size $\sim 1$ on average, as explained in Remark~\ref{rem:sharpness}.
	Our main result, Theorem~\ref{thm:Weyl}, says that on average over a short interval, one has $|\widetilde{C}_r| \lesssim_{\varepsilon} (t_r+1)^{\varepsilon}$.
	As a stepping stone to this theorem, we will show in Proposition~\ref{prop:convexity} that on average over a long interval, one has $|\widetilde{C}_r| \lesssim 1$.
	
	\begin{thm}[Weyl bound for $\widetilde{C}_r$] \label{thm:Weyl}
		Let $T \geq 1$ and $\varepsilon > 0$. Then
		\begin{align} \label{eqn:Weyl}
			\sum_{|t_r-T| \leq T^{\frac{1}{3}}} |\widetilde{C}_r|^2
			\lesssim_{\varepsilon} T^{\frac{4}{3}+\varepsilon}.
		\end{align}
	\end{thm}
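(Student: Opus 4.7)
The approach is a spectral amplification argument whose amplifier is supplied by a conformal-bootstrap / crossing identity for a suitable four-point correlator of automorphic forms. Write $F = y^{2k} f_1 \overline{f_2}$, so that $C_r = \langle F, \varphi_r \rangle$. I would bound the target sum from above by $\sum_r h(t_r) |C_r|^2$ for a nonnegative Paley-Wiener test function $h = h_T$ with $h \geq 1$ on $|t-T| \leq T^{1/3}$ and geometric support of scale $\sim T^{-1/3}$, after suitably absorbing the $\widetilde{C}_r$-to-$C_r$ normalization factor $t^{-2k+1}e^{\pi t/2}$ from \eqref{eqn:C_tilde_r_def} into $h$. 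This spectral sum expands as
\begin{align*}
    \sum_r h(t_r)|C_r|^2 \;=\; \iint_{(\Gamma \backslash \H)^2} F(z)\overline{F(z')}\, K_h(z,z')\, d\mu(z)\,d\mu(z'),
\end{align*}
where $K_h(z,z') = \sum_{\gamma \in \Gamma} k_h(d(z,\gamma z'))$ is the automorphic point-pair kernel produced by Selberg--Harish-Chandra inversion of $h$.

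Next I would split the sum over $\gamma$ into the identity contribution and the off-diagonal tail. The identity contribution is $k_h(0)\|F\|_2^2$, which by the Plancherel/Weyl-law density comes out at size $\sim T^{4/3}$ and supplies the main term. The off-diagonal contribution is where the bootstrap enters: using the holomorphy of $f_1,f_2$, the off-diagonal matrix coefficient $\iint F(z)\overline{F(z')}\,k_h(d(z,\gamma z'))\,d\mu\,d\mu'$ can be reorganized (by opening the kernel in the principal series and invoking positivity of $|C_r|^2$) as a spectral sum weighted against the \emph{long} window $|t_r-T|\leq T$. Plugging in Proposition~\ref{prop:convexity} gives $\sum_{|t_r-T|\leq T}|\widetilde C_r|^2 \lesssim T^2$, and a dyadic summation over geodesic-length scales $d_\gamma$ then yields the claimed bound $\lesssim T^{4/3+\varepsilon}$.

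The main obstacle, and the reason the conformal-bootstrap framework is indispensable here rather than a plain pretrace estimate, is the control of the off-diagonal piece on a window so short that there is no longer enough phase space to average out naive trace-formula error terms. The bootstrap bypasses this by turning the positivity of $|C_r|^2$ into a two-channel Cauchy-Schwarz inequality in which the short-window sum is dominated by the long-window sum plus a geometrically controlled error; crucially, this forces $h$ to be a nonnegative square of a real function so that both sides of the crossing identity remain positive. Ensuring that this Cauchy-Schwarz is sharp -- balancing the decay of $k_h$ at hyperbolic distance $\sim T^{-1/3}$ against the $T^2$ input from Proposition~\ref{prop:convexity} so that the exponents reproduce $4/3$ rather than convexity's $2$ -- is the technical heart of the argument, and the $T^\varepsilon$ loss is the expected logarithmic penalty from the dyadic summation over the geodesic scales.
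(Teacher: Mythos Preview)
Your proposal has a genuine gap at the place you yourself identify as the ``technical heart.'' You set up a pretrace-formula expansion with a point-pair kernel $K_h = \sum_{\gamma} k_h$ and split into identity plus off-diagonal, then assert that the off-diagonal piece ``can be reorganized (by opening the kernel in the principal series and invoking positivity of $|C_r|^2$) as a spectral sum weighted against the long window.'' But this reorganization \emph{is} the entire content of the theorem, and you have not supplied a mechanism for it. A generic pretrace formula gives you a geometric sum over $\gamma \neq e$, and there is no off-the-shelf way to turn that back into a spectral sum over a long window with a gain of $T^{2/3}$. The phrase ``conformal bootstrap'' is being used here as a placeholder for an argument that is never actually given.

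The paper's approach is quite different from what you sketch. There is no sum over $\gamma \in \Gamma$ and no point-pair kernel. Instead, one expands a four-point correlator $\int_{\Gamma\backslash G} (g_1 f)(g_2 f)(g_3 \overline f)(g_4 \overline f)$ spectrally in two ways, corresponding to the two pairings $(13)(24)$ and $(23)(14)$ of the four factors. Each expansion has the form $\sum_r C_r^2 \cdot (\text{explicit hypergeometric in the cross-ratio})$; equating them gives an exact identity (the crossing equation, Corollary~\ref{cor:crossing}). Integrating this identity against a carefully chosen measure $\mu$ on the cross-ratio variable (Section~\ref{sec:mu_def}) produces $\sum_r W(s_r)\widetilde C_r^2 = \sum_r \widecheck W(s_r)\widetilde C_r^2$, where $W$ is engineered to approximate $\mathbf{1}_{|t_r-T|\lesssim T^{1/3}}$ and $\widecheck W$ turns out to be supported on $t_r \lesssim T^{2/3}$ with $|\widecheck W(\tfrac12+it)| \lesssim H t^{-1/2}$. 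The dyadic sum against Proposition~\ref{prop:convexity} then gives $T^{3/2}H^{-1/2}$, which equals $TH$ precisely at $H = T^{1/3}$. One further point: you assert that $h$ must be a nonnegative square so both channels stay positive. The paper's $W$ is explicitly \emph{not} nonnegative (see Remark~\ref{rem:non-factor} and Figure~\ref{fig:I_u}); allowing $W$ to dip slightly negative is what permits the sharp localization that Bernstein--Reznikov's positive choices could not achieve.
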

	
	\begin{prop}[Convexity bound for $\widetilde{C}_r$] \label{prop:convexity}
		Let $T \geq 1$. Then
		\begin{align} \label{eqn:convexity}
			\sum_{t_r \leq T} |\widetilde{C}_r|^2
			\lesssim T^2.
		\end{align}
	\end{prop}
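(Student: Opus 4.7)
The plan is to factor $C_r$ via invariant trilinear forms on representations of $G=\PSL_2(\R)$, extracting a universal archimedean Gamma factor that matches the normalization in~\eqref{eqn:C_tilde_r_def}, and then bound the remaining global coefficient on average via Parseval and Weyl's law.

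First I would lift $f_1,f_2$ to functions $F_1,F_2$ on $\Gamma\backslash G$, each a lowest-weight vector in an embedded copy of the holomorphic discrete series $\pi_{2k}^{+}\subset L^2(\Gamma\backslash G)$, so that $F_1\overline{F_2}=y^{2k}f_1\overline{f_2}$ on $\Gamma\backslash\H$ and $C_r=\langle F_1\overline{F_2},\varphi_r\rangle$. For each $r$, $\varphi_r$ sits as the spherical vector in some irreducible subrepresentation $\pi_{s_r}\subset L^2(\Gamma\backslash G)$. By the uniqueness up to scalar of $G$-invariant trilinear forms on $\pi_{2k}^{+}\otimes\overline{\pi_{2k}^{+}}\otimes\pi_{s_r}$ (Oksak, Loke), one obtains a factorization
\[
C_r\;=\;A_r\cdot I(s_r),
\]
where $A_r$ encodes the global embeddings of the three representations into $L^2(\Gamma\backslash G)$ and $I(s_r)$ is a universal archimedean triple integral. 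An explicit model computation (say, in the non-compact realization of the principal series) identifies $I(s_r)$ with a ratio of Gamma functions, and Stirling gives $|I(s_r)|\asymp t_r^{2k-1}e^{-\pi t_r/2}$ for $t_r\ge 1$. Comparing with~\eqref{eqn:C_tilde_r_def} yields $|\widetilde C_r|\asymp|A_r|$ for such $r$, and the finitely many $t_r<1$ contribute $O(1)$ and may be absorbed.

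It remains to prove $\sum_{t_r\le T}|A_r|^2\lesssim T^2$. The naive Parseval identity $\sum_r|C_r|^2=\|F_1\overline{F_2}\|_{L^2}^2<\infty$ is too weak once one divides by $|I(s_r)|^2$. A better approach is to apply Plancherel to the tensor $F_1\otimes\overline{F_2}\in\pi_{2k}^{+}\otimes\overline{\pi_{2k}^{+}}$ using the decomposition of this tensor product into unitary principal series (Repka) with density $d\mu_k(t)$, then combine with the spectral decomposition of $L^2(\Gamma\backslash G)$; after matching the Plancherel density against $|I(\tfrac12+it)|^{-2}$, one obtains an identity of the form
\[
\sum_r \omega(t_r)\,|A_r|^2\;\lesssim\;\|F_1\|_{L^2}^2\,\|F_2\|_{L^2}^2,\qquad \omega(t)\asymp t^{-2}.
\]
Since $\omega(t_r)^{-1}\lesssim T^2$ on $\{t_r\le T\}$, the bound follows.

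The main obstacle is the explicit archimedean bookkeeping: identifying $I(s_r)$ as the correct ratio of Gamma functions, establishing the $t_r^{2k-1}e^{-\pi t_r/2}$ asymptotic, and verifying that combining the Plancherel density with $|I|^{-2}$ produces the correct order $t^{-2}$ to balance against Weyl's law. Once those are in place, the proof is a routine Parseval estimate.
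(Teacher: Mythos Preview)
Your approach is genuinely different from the paper's. The paper proves Proposition~\ref{prop:convexity} by evaluating the crossing equation \eqref{eqn:crossing_2} at the single point $z=T^{-1}$: on the u-channel side one uses that $\widetilde{H}_{s_r}(1-T^{-2})\ge 0$ together with a lower bound $\widetilde{H}_{s_r}(1-T^{-2})\gtrsim T^{4k-2}$ in the window $t_r\sim T$; on the t-channel side one shows $|\widetilde{H}_{s_r}(1-T^{2})|\lessapprox T^{-2(1-\sigma_r)}e^{-ct_r}$, so the t-channel sum is $1+o(1)$ (after first establishing the a~priori subexponential bound $\widetilde{C}_r^2\lesssim_\delta e^{\delta t_r}$ from convergence of the t-channel at a fixed $z$). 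This gives $\sum_{t_r\sim T}\widetilde{C}_r^2\lesssim T^2$, and dyadic summation finishes. No trilinear-form factorization, no Plancherel for $\pi_{2k}^+\otimes\overline{\pi_{2k}^+}$, and no archimedean Gamma computation are used; the only special-function input is a few hypergeometric bounds.

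Your route is essentially the Bernstein--Reznikov scheme of \cite{Bernstein--Reznikov_99,Bernstein--Reznikov_04}: factor $C_r$ through the one-dimensional space of invariant trilinear forms, identify the archimedean factor via Stirling, and then control the remaining global coefficients by a Hermitian-form inequality. This does work and is what underlies the row \cite{Bernstein--Reznikov_99,Bernstein--Reznikov_04} in the table in Section~\ref{sec:intro}. The advantage of the paper's argument is that it reuses exactly the same machinery (the crossing equation and hypergeometric block estimates) that will later yield the Weyl bound, so nothing is set up twice; your approach, by contrast, is closer to the existing literature and independent of the crossing-equation framework.

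One caution: your final step is the one that actually carries the weight, and as written it is vague. Saying ``apply Plancherel to $F_1\otimes\overline{F_2}$ via Repka's decomposition and combine with the spectral decomposition of $L^2(\Gamma\backslash G)$'' does not by itself produce an inequality of the form $\sum_r\omega(t_r)|A_r|^2\lesssim 1$; the multiplication map $\pi_{2k}^+\otimes\overline{\pi_{2k}^+}\to L^2(\Gamma\backslash G)$ is $G$-equivariant but not isometric, so Plancherel on the abstract tensor product does not directly bound norms on $\Gamma\backslash G$. What Bernstein--Reznikov actually do is compare two $G$-invariant Hermitian forms on a model space (the automorphic one built from the $|A_r|^2$ and a model one built from the representation theory), and the inequality between them requires a choice of test vector and a positivity argument. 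You should either invoke their result explicitly or spell out that comparison; the ``$\omega(t)\asymp t^{-2}$'' claim is exactly the computation you flag as the main obstacle, and it does not fall out of Plancherel alone.
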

	
	The names ``Weyl bound" and ``convexity bound" are justified by the fact that Theorem~\ref{thm:Weyl} and Proposition~\ref{prop:convexity} imply the Weyl and convexity bounds, respectively, for certain triple product $L$-functions (see Section~\ref{sec:L}).
	For a similar reason, we call
	\begin{align} \label{eqn:Lindelof_C_r_tilde}
		|\widetilde{C}_r|^2 \lesssim_{\varepsilon} (t_r+1)^\varepsilon
	\end{align}
	the ``Lindel\"of bound" for $\widetilde{C}_r$, or simply ``Lindel\"of" for short.
	By Weyl's law, $\LHS\eqref{eqn:Weyl}$ is a sum of $\sim T^{\frac{4}{3}}$ many terms, and $\LHS\eqref{eqn:convexity}$ is a sum of $\sim T^2$ many terms, so \eqref{eqn:Weyl} and \eqref{eqn:convexity} are Lindel\"of-on-average bounds.
	
	\begin{rem}[Sharpness] \label{rem:sharpness}
		Using a real Tauberian theorem as in \cite{Qiao--Rychkov}, the proof of Proposition~\ref{prop:convexity} can be refined to give the asymptotic $\LHS\eqref{eqn:convexity} = (c+o(1)) T^2$ for some constant $c>0$ depending only on $k$. Thus the $\widetilde{C}_r$ indeed have size $\sim 1$ on average.
	\end{rem}
	
	\begin{rem}[Uniform dependence on $f_1,f_2$] \label{rem:uniform_dep_on_f}
		Once we know \eqref{eqn:Weyl} for $f_1,f_2$ ranging over a fixed orthonormal basis of weight $2k$ forms, we know it for all $f_1,f_2$ by linearity. Consequently the implicit constant in \eqref{eqn:Weyl} only depends on $f_1,f_2$ through $k$.
		The same goes for \eqref{eqn:convexity}.
	\end{rem}
	
	\begin{rem}[Non-compact case] \label{rem:non-cpt_case}
		If $\Gamma \backslash \H$ is non-compact (but still finite volume), then the left hand sides of \eqref{eqn:Weyl} and \eqref{eqn:convexity} can each be interpreted as a sum over cusp forms plus an integral over Eisenstein series.
		Our method extends easily to this situation, simply by including Eisenstein series in all spectral decompositions. The only reason we restrict to the compact case is to simplify notation.
	\end{rem}
	
	When $\Gamma$ is a congruence group coming from a quaternion algebra and $f_1,f_2,\varphi_r$ are Hecke eigenforms, each $\widetilde{C}_r$ is either zero or satisfies
	\begin{align} \label{eqn:C_tilde_approx_L}
		|\widetilde{C}_r|^2 = (t_r+1)^{o(1)} L(\tfrac{1}{2}, f_1 \otimes f_2 \otimes \varphi_r),
	\end{align}
	where $L(s, f_1 \otimes f_2 \otimes \varphi_r)$ is the triple product $L$-function of Garrett \cite{Garrett} (without $\Gamma$-factors) associated to the three cusp forms on $\PGL_2$ corresponding to $f_1,f_2,\varphi$ by Jacquet--Langlands.
	The formula \eqref{eqn:C_tilde_approx_L} is essentially due to Watson \cite[Theorem~3]{Watson} (see Theorem~\ref{thm:triple} and Corollary~\ref{cor:triple}).
	The proof of \eqref{eqn:C_tilde_approx_L} shows that the central value $L(\frac{1}{2}, f_1 \otimes f_2 \otimes \varphi_r)$ is nonnegative.
	The central values of such triple product $L$-functions have been well-studied from an analytic point of view because of their connection to quantum unique ergodicity on $\Gamma \backslash \H$ \cite{Watson,Nelson_25,Sarnak_01,Liu--Ye,Sound_weak_subconvexity,Holowinsky--Sound,Lindenstrauss,Sound_QUE}, and because when $\Gamma \backslash \H$ is non-compact, they generalize Rankin--Selberg $L$-functions \cite{Michel--Venkatesh} (see also Remark~\ref{rem:Weyl_Rankin-Selberg} and the table in Section~\ref{sec:L}).
	
	By specializing Theorem~\ref{thm:Weyl} to the above arithmetic setting, in Corollary~\ref{cor:Weyl_triple_product} we obtain a Weyl-type subconvex bound in the spectral aspect for $L(\frac{1}{2}, f_1 \otimes f_2 \otimes \varphi_r)$: we show that this $L$-value is bounded by its analytic conductor (defined in \cite{Iwaniec--Sarnak}) to the power $\frac{1}{6}+\varepsilon$, for any fixed $\varepsilon > 0$.
	Here the analytic conductor is $\sim (t_r+1)^8$.
	Note $\frac{8}{6} = \frac{4}{3}$ is the exponent in the right hand side of \eqref{eqn:Weyl}.
	No subconvex bound was previously known for $\Gamma \backslash \H$ compact (see Remarks~\ref{rem:table_summary} and \ref{rem:Gamma_may_be_cocompact}).
	By Remark~\ref{rem:non-cpt_case}, Theorem~\ref{thm:Weyl} also applies when $\Gamma \backslash \mathbf{H}$ is non-compact, but in this case the arithmetic applications are less novel (see Remark~\ref{rem:Weyl_Rankin-Selberg}).
	We refer to \cite{Michel} for a broad overview of subconvexity.
	
	The Weyl exponent $\frac{1}{6}$ is a significant threshold.
	Sub-Weyl bounds have only been proved for $\GL_1$ in the $t$-aspect (see, e.g., \cite{Bombieri--Iwaniec_1,Bombieri--Iwaniec_2}) and depth aspect \cite{Milicevic}, and very recently for $\GL_2$ in the same aspects \cite{Holowinsky_et_al}. Even for the Riemann zeta function, the exponent $\frac{1}{6}$ has only been marginally improved, the world record being $\frac{1}{6}-\frac{1}{84}$ \cite{Bourgain}.
	
	Let us now return to our original setting where $\Gamma$ is not necessarily arithmetic.
	Historically, it was a challenge even to prove the polynomial bound $|\widetilde{C}_r|^2 \lesssim t_r^{O(1)}$.
	This bound was desired for applications, and the problem of proving it was posed by Selberg \cite{Selberg} in the generality where $\Gamma \backslash \H$ may be non-compact and $f_1,f_2$ may be Maass rather than holomorphic.
	It was eventually proved in the non-compact holomorphic case by Good \cite{Good_81} using Poincar\'e series, and in full generality by Sarnak \cite{Sarnak_94}.
	
	All known estimates for an individual $\widetilde{C}_r$ are obtained as corollaries of estimates for $\widetilde{C}_r$ on average.
	These averaged estimates are of the following general shape (c.f. Theorem~\ref{thm:Weyl} and Proposition~\ref{prop:convexity}).
	Let $T \geq 1$ and $H = H(T) \in [1,T]$. Then for every $\varepsilon > 0$,
	\begin{align} \label{eqn:general_averaged_bound}
		\sum_{|t_r-T| \leq H} |\widetilde{C}_r|^2
		\lesssim_{\varepsilon} T^{1+\varepsilon} H M,
	\end{align}
	where $M$ is some explicit function of $T$.
	If $M = 1$, then this is Lindel\"of on average (because by Weyl's law, there are $\sim TH$ many terms on the left hand side).
	Dropping all but one term, \eqref{eqn:general_averaged_bound} implies that
	\begin{align} \label{eqn:general_C_r_bound}
		|\widetilde{C}_r|^2
		\lesssim_{\varepsilon} T^{1+\varepsilon} H M
		\qquad \text{for} \qquad
		|t_r-T| \leq H.
	\end{align}
	In the arithmetic setting, when \eqref{eqn:C_tilde_approx_L} holds, \eqref{eqn:general_C_r_bound} implies
	\begin{align} \label{eqn:general_L_bound}
		L(\tfrac{1}{2}, f_1 \otimes f_2 \otimes \varphi_r)
		\lesssim_{\varepsilon} T^{1+\varepsilon} H M
		\qquad \text{for} \qquad
		|t_r-T| \leq H.
	\end{align}
	The following table shows, in chronological order, which cases of \eqref{eqn:general_averaged_bound} have been proved, and in each case what quality $L$-function bound is given by \eqref{eqn:general_L_bound}.
	In the table, we allow $\Gamma \backslash \mathbf{H}$ to be non-compact, in which case the left hand side of \eqref{eqn:general_averaged_bound} should be interpreted as a sum over cusp forms plus an integral over Eisenstein series, as in Remark~\ref{rem:non-cpt_case}.
	In addition, we allow $f_1,f_2$ to both be Maass instead of both being holomorphic, in which case we set $k=0$; then \eqref{eqn:C_tilde_r_def} is still the correct normalization.
	The last line of the table corresponds to Theorem~\ref{thm:Weyl} together with Remark~\ref{rem:non-cpt_case}.
	
	\begin{table}[H]
		\centering
		\begin{tabular}{|c|c|c|c|c|c|}
			\hline
			\textbf{Citation} & \textbf{Assumptions on $\Gamma$} & \textbf{Assumptions on $f_1,f_2$} & $H$ & $M$ & \textbf{$L$-function bound}\\
			\hline
			\hline
			\cite{Good_81} & $\Gamma \backslash \H$ has cusps & Holomorphic & $T$ & $1$ & Convexity \\
			\hline
			\cite{Sarnak_94} & None & None & $1$ & $T$ & Convexity \\
			\hline
			\cite{Bernstein--Reznikov_99,Bernstein--Reznikov_04} & None & None & $T$ & $1$ & Convexity \\
			\hline
			\cite{Bernstein--Reznikov_10} & None & Maass & $T^{\frac{1}{3}}$ & $T^{\frac{1}{3}}$ & Subconvex (halfway \\
			& & & & & from convex to Weyl) \\
			\hline
			\cite{Suvitie} & Congruence group & Holomorphic & $T^{\frac{1}{3}}$ & $1$ & Weyl \\
			& in $\PSL_2(\Z)$ &&&& \\
			\hline
			\cite{Blomer--Jana--Nelson} & Congruence group & None & $T^{\frac{1}{3}}$ & $1$ & Weyl \\
			& in $\PSL_2(\Z)$ &&&& \\
			\hline
			\textbf{This paper} & \textbf{None} & \textbf{Holomorphic} & $T^{\frac{1}{3}}$ & $1$ & \textbf{Weyl} \\
			\hline
		\end{tabular}
	\end{table}
	
	Technically \cite{Sarnak_94,Bernstein--Reznikov_99,Bernstein--Reznikov_04} all assume $f_1,f_2$ are Maass, but they can be adapted to the holomorphic case.
	Similarly, both \cite{Suvitie} and \cite{Blomer--Jana--Nelson} assume $\Gamma = \PSL_2(\Z)$, but they can likely be adapted to any congruence subgroup.
	
	\begin{rem}[Summary of table] \label{rem:table_summary}
		The situation was as follows prior to the present paper.
		\begin{itemize} \itemsep = 0.5em
			\item When $\Gamma \backslash \H$ is compact, the only known instance of \eqref{eqn:general_averaged_bound} which implies a subconvex bound is due to Bernstein and Reznikov in their most recent paper \cite{Bernstein--Reznikov_10}. It applies only when $f_1,f_2$ are Maass, and it does not reach the Weyl bound.
			
			\item The only known instances of \eqref{eqn:general_averaged_bound} which imply the Weyl bound are for $\Gamma$ a congruence subgroup of $\PSL_2(\Z)$, due to Suvitie \cite{Suvitie} and Blomer, Jana, and Nelson \cite{Blomer--Jana--Nelson}.
		\end{itemize}
	\end{rem}
	
	We obtain the Weyl bound for all lattices $\Gamma$, including cocompact $\Gamma$, but only when $f_1,f_2$ are holomorphic (Remark~\ref{rem:use_of_discrete_series} points out where we use holomorphicity).
	We do not yet see how to treat the case where $f_1,f_2$ are Maass, although there is no serious obstruction to doing so.
	The main challenge appears to be that the relevant special functions are more complicated.
	More generally, it would be desirable to extend our results to the ``all archimedean types" setting as in \cite{Blomer--Jana--Nelson}.
	We believe we can make partial progress in this direction --- see Remark~\ref{rem:weight_aspect}.
	
	Our method is very similar to that of Bernstein and Reznikov in their papers \cite{Bernstein--Reznikov_04,Bernstein--Reznikov_10,Reznikov_unfolding}, especially \cite{Bernstein--Reznikov_10}.
	We highlight their work in Section~\ref{sec:Bernstein--Reznikov}.
	The main conceptual difference is in how we choose our test vectors --- specifically, a new feature of our work is that we employ \textit{non-factorizable} test vectors (see Remark~\ref{rem:non-factor}).
	This is explained in Subsection~\ref{subsec:abstract_heuristics} (and ``test vector" is defined at the end of Subsection~\ref{subsec:abstract_framework}).
	
	Our work is also similar in spirit to \cite{Michel--Venkatesh} and \cite[Section~4]{Venkatesh}, both of which prove subconvex bounds for trilinear periods by harmonic analytic or geometric techniques rather than techniques from analytic number theory.
	The details are however quite different.
	
	Our approach is inspired by the conformal bootstrap in physics, via an analogy between conformal field theories (CFTs) and hyperbolic manifolds.
	This analogy was introduced by the second author \cite{Bonifacio_22_1,Bonifacio_22_2}, following similar work on Einstein manifolds by the second author and Hinterbichler \cite{Bonifacio--Hinterbichler}, to put bounds on the first few eigenvalues of the Laplacian.
	The analogy was sharpened by the third, fourth, and fifth authors \cite{Kravchuk--Mazac--Pal}, who placed it in a representation-theoretic setting.
	The key tool we will use to prove Theorem~\ref{thm:Weyl} is the \textit{crossing equation} from \cite{Kravchuk--Mazac--Pal} (the term ``crossing" comes from physics; see Section~\ref{sec:crossing}).
	In language which has become standard in the analytic theory of automorphic forms, the crossing equation is a \textit{spectral reciprocity formula}.
	It is useful because among such reciprocity formulas, it is particularly explicit.

	By the polarization identity, it suffices to prove Theorem~\ref{thm:Weyl} and Proposition~\ref{prop:convexity} when $f_1 = f_2$; in this paragraph we assume this and write $f = f_1 = f_2$. In general, crossing equations express the associativity of multiplication $C^{\infty}(\Gamma\backslash G)\times C^{\infty}(\Gamma\backslash G)\rightarrow C^{\infty}(\Gamma\backslash G)$ after decomposing into irreducible representations. They are thus naturally indexed by quadruples of irreducibles in $L^2(\Gamma \backslash G)$, or equivalently quadruples of automorphic forms on $\Gamma \backslash G$. What we call ``the" crossing equation, namely Corollary~\ref{cor:crossing}, corresponds to the quadruple $(f,f,\overline{f},\overline{f})$, as we will see in Section~\ref{sec:crossing}. This case works out especially cleanly --- the special functions which appear are only hypergeometric ${}_2F_1$'s, whereas for other quadruples they may be more complicated.
	From the point of view of the conformal bootstrap, Theorem~\ref{thm:Weyl} is a single-correlator bound, because we only consider the single quadruple $(f,f,\overline{f},\overline{f})$.
	Experience with the conformal bootstrap in CFT suggests that using more correlators, i.e., using crossing equations associated to more quadruples, can give significantly better results.
	A striking example of this is the recent improvement in numerical precision of our knowledge of critical exponents for the 3d Ising model \cite{Chang_et_al_25}.
	It would be interesting to see if a mixed-correlator bootstrap could be set up to place bounds on the $C_r$.
	In \cite{Adve_25}, the first author shows rigorously that any statement about Laplace eigenvalues and trilinear periods which is provable for an arbitrary compact hyperbolic surface can in principle be proved directly from the crossing equations (making use of all quadruples of automorphic forms).
	This ``completeness result" genuinely requires using more quadruples than just those considered in \cite{Kravchuk--Mazac--Pal}.
	Indeed, the sixth author gave convincing numerical evidence in \cite{Radcliffe} that the nearly sharp bounds in \cite{Kravchuk--Mazac--Pal} do not converge to a sharp bound unless new correlators are introduced.
	
	As a consequence of the fact that Corollary~\ref{cor:crossing} involves ${}_2F_1$'s, we will need uniform asymptotics for hypergeometrics with multiple parameters varying.
	These are quite subtle, so we prefer not to quote them from the literature for fear of misinterpreting a reference (and in any case we were often unable to find references).
	We therefore work out all the needed asymptotics from scratch.
	The only facts about hypergeometric functions which we cite are exact identities.
	We do however cite Bessel function asymptotics with one parameter varying, because these are well-known and straightforward.
	
	\subsection{Organization}
	
	Section~\ref{sec:L} discusses the implications of Theorem~\ref{thm:Weyl} for $L$-functions.
	Section~\ref{sec:crossing} recalls the crossing equation from \cite{Kravchuk--Mazac--Pal}, and sets up the notation we will need to use it in the rest of the paper.
	Section~\ref{sec:strategy_heuristics} outlines the proof of Theorem~\ref{thm:Weyl} and explains heuristically why our method achieves the Weyl bound.
	The proof of Theorem~\ref{thm:Weyl} will use a certain averaged form of the crossing equation (see Corollaries~\ref{cor:crossing_averaged} and \ref{cor:crossing_averaged_specific}), and Section~\ref{sec:mu_def} motivates our choice of averaging.
	Choosing how to average is equivalent to choosing a test vector in a certain representation of $G$, as we explain in Section~\ref{sec:Bernstein--Reznikov}. Section~\ref{sec:Bernstein--Reznikov} also discusses the similarities and differences between this paper and \cite{Bernstein--Reznikov_10}.
	Section~\ref{sec:main_proofs} proves our main results, Theorem~\ref{thm:Weyl} and Proposition~\ref{prop:convexity}, assuming several estimates for hypergeometrics and oscillatory integrals involving hypergeometrics.
	These estimates are proved in Sections~\ref{sec:complementary}--\ref{sec:t_averaging}.
	Section~\ref{sec:complementary} proves them when the spectral parameter is bounded, in which case everything can be bounded trivially.
	Section~\ref{sec:asymptotics} proves all the estimates we will need for hypergeometrics.
	Finally, Sections~\ref{sec:u_averaging} and \ref{sec:t_averaging} use the results of Section~\ref{sec:asymptotics} to estimate the oscillatory integrals which arise in the averaged crossing equation in Corollary~\ref{cor:crossing_averaged_specific}.
	
	The proof of Theorem~\ref{thm:Weyl} is contained in Sections~\ref{sec:crossing}, \ref{sec:mu_def}, and \ref{sec:main_proofs}--\ref{sec:t_averaging}.
	The main conceptual ideas are in Sections~\ref{sec:crossing}--\ref{sec:main_proofs}.
	The analysis in Sections~\ref{sec:complementary}--\ref{sec:t_averaging} is just a matter of technique, and could be carried out in multiple ways.
	The algebraic expressions in these last four sections become quite lengthy, so when reading them it would be helpful to have a computer algebra system at hand.
	Indeed, for many of the results in these four sections, checking the statement numerically may be more convincing and more efficient than reading the proofs.
	
	\subsection{Notation} \label{subsec:notation}
	
	Throughout, $G = \PSL_2(\R)$, and $K$ is the maximal compact subgroup $\PSO_2(\R)$.
	
	We always use the principal branch of the logarithm. Consequently, $z^s$ is formally defined as $\exp(s\log z)$, with $\log z$ the principal branch.
	We also always use the branch of the hypergeometric ${}_2F_1$ which is defined on $\C \setminus [1,\infty)$, and the branch of the Bessel function $K_{\nu}$ defined on $\C \setminus (-\infty,0]$.
	
	We use $O$-notation and $o$-notation in the usual way.
	Given $X,Y \in \R$ with $Y$ nonnegative, the inequality $X \lesssim Y$, or equivalently $Y \gtrsim X$, means that $X \leq O(Y)$. If $X$ is also nonnegative, then $X \sim Y$ means $X \lesssim Y \lesssim X$.
	The stronger inequality $X \ll Y$, or equivalently $Y \gg X$, means that $X \leq cY$ for some $c \gtrsim 1$, where $c$ is sufficiently small for the relevant context.
	In other words, $X \ll Y$ is the negation of $X \gtrsim Y$.
	The weakest inequality $X \lessapprox Y$, or equivalently $Y \gtrapprox X$, means that $X \lesssim_{\delta} T^{\delta} Y$ for every $\delta > 0$, where $T$ is as in Proposition~\ref{prop:convexity}, Theorem~\ref{thm:Weyl}, or Theorem~\ref{thm:Weyl_L}, depending on the context.
	Similarly, $\widetilde{O}(Y)$ signifies a quantity which is $O_{\delta}(T^{\delta}Y)$ for every $\delta > 0$.
	Given $Z \gtrsim 1$, we write $Z^{-\infty}$ for a nonnegative number which is $\lesssim_n Z^{-n}$ for every $n \geq 0$.
	Here and throughout, we allow implicit constants to depend on all parameters which appear in their subscripts. As stated above, we further allow implicit constants to depend on $\Gamma,f_1,f_2$.
	
	In addition to the above inequality signs which have rigorous meaning, we also informally use $\approx,\lll,\ggg$.
	Roughly speaking, $X \approx Y$ means that $X$ is equal to $Y$ up to a negligible error, and $X \lll Y$, or equivalently $Y \ggg X$, should be thought of as the negation of $X \gtrapprox Y$.
	Since we do not define $\approx,\lll,\ggg$ precisely, we do not use them in proofs.
	
	The indicator function of a region defined only up to constants, such as $\1_{|x| \lesssim 1}$, is understood to be a smooth cutoff.
	Similarly, an integral over such a region is understood to be defined with respect to a smooth cutoff.
	Unless specified otherwise, these cutoffs are compactly supported in the given region rather than just having rapidly decaying tails.
	For example, we may write
	\begin{align*}
		\Big|\int_{|x| \lesssim 1} e^{-ix\cdot \xi} \, dx\Big|
		\lesssim_d (|\xi|+1)^{-\infty}
	\end{align*}
	for $\xi \in \R^d$; this would be false with a sharp cutoff.
	
	Further conventions are adopted at various places in the body of the paper.
	They are always written in italics, and in the most important cases they appear near the beginning of a section or subsection.
	
	\subsection*{Acknowledgements} We would like to thank Peter Sarnak for his encouragement and for feedback on earlier drafts. The first author is supported by the National Science Foundation Graduate Research Fellowship
	Program under Grant No. DGE-2039656. The third author is supported by UK Research and Innovation (UKRI) under the UK
	government's Horizon Europe funding Guarantee [grant number EP/X042618/1] and the
	Science and Technology Facilities Council [grant number ST/X000753/1]. During the work, the fifth author has been supported by the U.S. Department of Energy, Office of Science, Office of High Energy Physics, under Award Number DE-SC0011632, and by the Walter Burke Institute for Theoretical Physics. The sixth author has been supported by a studentship from the Faculty of Natural, Mathematical \& Engineering Sciences at King's College London. The seventh author has been supported by Simons Foundation grant 488651 (Simons Collaboration on the Nonperturbative
Bootstrap) and DOE grant DE-SC0017660.
	
	\section{Application to $L$-functions} \label{sec:L}
	
	In this section, \textit{we always work over $\Q$}.
	
	For $N \in \Z_{>0}$ and $T \geq H \geq 1$, let $\F_{N,T,H}$ be the family of cuspidal automorphic representations $\pi$ of $\PGL_2$ with level dividing $N$, such that $\pi_{\infty}$ is principal series with spectral parameter in $[T-H,T+H]$.
	By Weyl's law, $\#\F_{N,T,H} \sim_N TH$.
	Our main result about $L$-functions is the following Lindel\"of-on-average bound, which we will deduce from Theorem~\ref{thm:Weyl}.
	
	\begin{thm}[Weyl bound for triple product $L$-functions] \label{thm:Weyl_L}
		Let $\pi_1,\pi_2$ be cuspidal automorphic representations of $\PGL_2$, with $\pi_{1,\infty}, \pi_{2,\infty}$ discrete series of the same weight.
		Let $N \in \Z_{>0}$ and $T \geq 1$.
		Denote $\F = \F_{N,T,T^{1/3}}$.
		Then for every $\varepsilon>0$,
		\begin{align} \label{eqn:Weyl_L}
			\sum_{\pi_3 \in \F} L(\tfrac{1}{2}, \pi_1 \otimes \pi_2 \otimes \pi_3)
			\lesssim_{\pi_1,\pi_2,N,\varepsilon} T^{\frac{4}{3}+\varepsilon}.
		\end{align}
	\end{thm}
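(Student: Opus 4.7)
The plan is to deduce Theorem~\ref{thm:Weyl_L} from Theorem~\ref{thm:Weyl} via Watson's formula, i.e., the identity \eqref{eqn:C_tilde_approx_L}. The setup: I would realize all relevant representations as automorphic forms on $\Gamma_0(N) \backslash \H$. This quotient is non-compact, but Remark~\ref{rem:non-cpt_case} assures me that Theorem~\ref{thm:Weyl} still applies, provided I interpret the left-hand side of \eqref{eqn:Weyl} as a sum over cusp forms plus an integral over Eisenstein series. Since $\pi_{1,\infty}, \pi_{2,\infty}$ are discrete series of the same weight $2k$, I take $f_1, f_2$ to be the corresponding holomorphic newforms in $S_{2k}(\Gamma_0(N))$, normalized so that $\|y^k|f_i|\|_{L^2} = 1$; these are fixed data, and the implicit constants in Theorem~\ref{thm:Weyl} are allowed to depend on them.

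Next, I would choose an orthonormal Laplace eigenbasis $\{\varphi_r\}$ of $L^2(\Gamma_0(N) \backslash \H)$ compatible with the decomposition into Hecke newforms (and their oldform lifts). For each $\pi_3 \in \F = \F_{N,T,T^{1/3}}$, the newform generating $\pi_3$ (of some level $N' \mid N$) appears in the basis as some $\varphi_{r(\pi_3)}$, with spectral parameter $t_{r(\pi_3)} \in [T-T^{1/3}, T+T^{1/3}]$. Applying Watson's formula in the form of \eqref{eqn:C_tilde_approx_L} (which, in the level aspect, is the content of Theorem~\ref{thm:triple} and Corollary~\ref{cor:triple} referenced in the introduction):
\begin{align*}
    |\widetilde{C}_{r(\pi_3)}|^2 = (t_{r(\pi_3)}+1)^{o(1)} \, L(\tfrac{1}{2}, \pi_1 \otimes \pi_2 \otimes \pi_3).
\end{align*}
Since $t_{r(\pi_3)} \sim T$, the prefactor is $T^{o(1)}$, which can be absorbed into the $T^{\varepsilon}$ loss.

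Summing over $\pi_3 \in \F$ and using the nonnegativity of the central values $L(\tfrac{1}{2}, \pi_1 \otimes \pi_2 \otimes \pi_3)$ together with the nonnegativity of the Eisenstein contributions to the sum in \eqref{eqn:Weyl} (these can only enlarge the right-hand side when dropped), I get
\begin{align*}
    \sum_{\pi_3 \in \F} L(\tfrac{1}{2}, \pi_1 \otimes \pi_2 \otimes \pi_3)
    \leq T^{o(1)} \sum_{\pi_3 \in \F} |\widetilde{C}_{r(\pi_3)}|^2
    \leq T^{o(1)} \sum_{|t_r - T| \leq T^{1/3}} |\widetilde{C}_r|^2
    \lesssim_{\varepsilon} T^{4/3 + \varepsilon},
\end{align*}
where the final inequality is Theorem~\ref{thm:Weyl} applied to $\Gamma = \Gamma_0(N)$ with the above choice of $f_1, f_2$ (using Remark~\ref{rem:non-cpt_case} for the non-compact quotient).

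The main obstacle is the careful bookkeeping required at Step~4: Watson's original formula in \cite{Watson} is stated for squarefree level and specific local hypotheses, and to cover arbitrary $N$ one needs the level-aspect form of Ichino's triple product formula, with all local archimedean and non-archimedean matrix coefficient integrals computed correctly. In particular, one must verify that the normalizing factors (Petersson norms, adelic volumes, and local $L$-factors at bad primes) combine into the $(t_r+1)^{o(1)}$ factor claimed in \eqref{eqn:C_tilde_approx_L}, uniformly as $\pi_3$ varies over $\F$ with $\pi_1, \pi_2, N$ fixed. The excerpt indicates that this analysis is carried out in Theorem~\ref{thm:triple} and Corollary~\ref{cor:triple}; once granted, the deduction above is essentially immediate.
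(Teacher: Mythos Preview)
Your argument has a genuine gap: you assume the triple period can always be computed on $\Gamma_0(N)\backslash\H$, but this is false in general. By Theorem~\ref{thm:Prasad--Loke}, when $L(\tfrac{1}{2},\pi_1\otimes\pi_2\otimes\pi_3)\neq 0$ there is a \emph{unique} quaternion algebra $B$ on which the relevant $G$-invariant trilinear form is nonzero, and $B$ need not be the split algebra $M_2(\Q)$. When $B$ is division, the period $\langle y^{2k}f_1\overline{f_2},\varphi_{r(\pi_3)}\rangle$ computed on any congruence subgroup of $\PSL_2(\Z)$ vanishes identically (the Hom space is zero on the split side), yet the central $L$-value can be nonzero. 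In that case your chain of inequalities collapses: the right-hand side is zero while the left is not. The statement \eqref{eqn:C_tilde_approx_L} you invoke says only that $\widetilde{C}_r$ is \emph{either zero or} satisfies the formula---it does not say the $L$-value vanishes when the period does. This is precisely the obstruction flagged in Remark~\ref{rem:Gamma_may_be_cocompact}.

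The paper's proof repairs this by invoking Theorem~\ref{thm:triple}, which for each $\pi_3$ produces a lattice $\Gamma\subseteq B^\times(\Z)\cap\SL_2(\R)$ (possibly cocompact) and forms $f_{1,\Gamma},f_{2,\Gamma},\varphi$ on $\Gamma\backslash\H$ for which the period formula \eqref{eqn:triple} holds with the correct nonvanishing. The ramification constraint in Theorem~\ref{thm:Prasad--Loke} together with the index bound in Theorem~\ref{thm:triple} ensures that as $\pi_3$ ranges over $\F$, only a finite set $\S$ of lattices arises, depending only on $\pi_1,\pi_2,N$. One then applies Theorem~\ref{thm:Weyl} separately to each $\Gamma\in\S$ and sums. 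Your single-lattice argument captures only those $\pi_3$ for which $B$ happens to be split; the missing ingredient is exactly this reduction to a finite family of (possibly cocompact) lattices.
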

	
	As mentioned earlier, $L(\tfrac{1}{2}, \pi_1 \otimes \pi_2 \otimes \pi_3)$ is always nonnegative, so there is no need for absolute values on the left hand side.
	
	\begin{rem}[Weyl bound for Rankin--Selberg $L$-functions] \label{rem:Weyl_Rankin-Selberg}
		By Remark~\ref{rem:non-cpt_case}, our methods also give the analog of Theorem~\ref{thm:Weyl_L} where $\pi_3$ varies over Eisenstein series with spectral parameter in $[T-H,T+H]$.
		Then $L(\tfrac{1}{2}, \pi_1 \otimes \pi_2 \otimes \pi_3)$ is the square of a Rankin--Selberg $L$-function.
		To be precise, we can show that for any fixed $\pi_1,\pi_2$ as in Theorem~\ref{thm:Weyl_L}, any fixed Dirichlet character $\chi$, any $T \geq 2$, and any $\varepsilon > 0$,
		\begin{align} \label{eqn:Weyl_Rankin-Selberg}
			\int_{T-T^{\frac{1}{3}}}^{T+T^{\frac{1}{3}}} |L(\tfrac{1}{2}+it, \pi_1 \otimes \pi_2 \otimes \chi)|^2 \, dt
			\lesssim_{\pi_1,\pi_2,\chi,\varepsilon} T^{\frac{4}{3}+\varepsilon}.
		\end{align}
		This could be proved using the methods of \cite{Blomer--Jana--Nelson} as well.
		As written, \eqref{eqn:Weyl_Rankin-Selberg} is weaker than Lindel\"of on average. This is a consequence of the fact that on congruence hyperbolic surfaces, the Eisenstein part of the spectrum contributes less to the spectral measure than the cuspidal part.
	\end{rem}
	
	By nonnegativity of the central value, Theorem~\ref{thm:Weyl_L} implies that each individual $L$-value on the left hand side of \eqref{eqn:Weyl_L} is bounded by $T^{\frac{4}{3}+\varepsilon}$. Since the analytic conductor satisfies
	\begin{align*}
		C(\pi_1 \otimes \pi_2 \otimes \pi_3)
		\sim_{\pi_1,\pi_2,N} T^8
	\end{align*}
	for $\pi_3 \in \F_{N,T,T^{1/3}}$, we obtain
	
	\begin{cor}[Weyl bound for individual triple product $L$-functions] \label{cor:Weyl_triple_product}
		Let $\pi_1,\pi_2,\pi_3$ be cuspidal automorphic representations of $\PGL_2$, with $\pi_{1,\infty}, \pi_{2,\infty}$ discrete series of the same weight, and $\pi_{3,\infty}$ principal series.
		Let $N \in \Z_{>0}$, and assume $\pi_3$ has level dividing $N$.
		Then for every $\varepsilon > 0$,
		\begin{align} \label{eqn:Weyl_triple_general}
			L(\tfrac{1}{2}, \pi_1 \otimes \pi_2 \otimes \pi_3)
			\lesssim_{\pi_1,\pi_2,N,\varepsilon} C(\pi_1 \otimes \pi_2 \otimes \pi_3)^{\frac{1}{6}+\varepsilon}.
		\end{align}
	\end{cor}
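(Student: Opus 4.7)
The plan is to deduce this corollary directly from Theorem~\ref{thm:Weyl_L} by positivity of central values, combined with a standard computation of the analytic conductor. The crucial input is that the central $L$-value $L(\tfrac{1}{2}, \pi_1 \otimes \pi_2 \otimes \pi_3)$ is nonnegative (as noted in the discussion around \eqref{eqn:C_tilde_approx_L} and just after Theorem~\ref{thm:Weyl_L}), so terms may be freely dropped from a sum without reversing an inequality.

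First, I would let $T$ denote the absolute value of the spectral parameter of $\pi_{3,\infty}$. We may assume $T \geq 1$, since the complementary range $T \lesssim 1$ accounts for only finitely many $\pi_3$ of level dividing $N$ (for fixed $N$), and can be absorbed into the implicit constant. By construction, $\pi_3 \in \F_{N,T,T^{1/3}}$. Applying Theorem~\ref{thm:Weyl_L} to $\pi_1,\pi_2$ with this family, and then discarding every term except the one indexed by $\pi_3$, yields
\begin{align*}
    L(\tfrac{1}{2}, \pi_1 \otimes \pi_2 \otimes \pi_3) \lesssim_{\pi_1,\pi_2,N,\varepsilon} T^{\frac{4}{3}+\varepsilon}.
\end{align*}

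The remaining step is to re-express the right-hand side in terms of the analytic conductor. For fixed $\pi_1,\pi_2$ and bounded level, the non-archimedean part of the conductor of $\pi_1 \otimes \pi_2 \otimes \pi_3$ is $\sim_{\pi_1,\pi_2,N} 1$. The archimedean part is controlled by the eight Langlands parameters at infinity of the degree-$8$ tensor product; since $\pi_{1,\infty}$ and $\pi_{2,\infty}$ are fixed discrete series while $\pi_{3,\infty}$ is principal series with spectral parameter of size $T$, each of these eight parameters has absolute value $\sim T$ for $T$ large. This gives $C(\pi_1 \otimes \pi_2 \otimes \pi_3) \sim_{\pi_1,\pi_2,N} T^8$, which is precisely the asymptotic stated in the text just before the corollary. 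Substituting $T \sim C(\pi_1 \otimes \pi_2 \otimes \pi_3)^{1/8}$ converts the exponent $\tfrac{4}{3}=\tfrac{8}{6}$ into $\tfrac{1}{6}$ (at the cost of replacing $\varepsilon$ by $\varepsilon/8$, which is harmless since $\varepsilon$ is arbitrary), establishing \eqref{eqn:Weyl_triple_general}. There is no serious obstacle at this stage: Theorem~\ref{thm:Weyl_L} supplies all of the analytic content, and the rest is bookkeeping — positivity to pass from a mean-square bound to an individual one, and the conductor asymptotic to match exponents.
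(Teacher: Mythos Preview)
Your proposal is correct and follows exactly the same approach as the paper: the paper also deduces the corollary from Theorem~\ref{thm:Weyl_L} by invoking nonnegativity of the central value to drop all but one term, and then using the conductor asymptotic $C(\pi_1 \otimes \pi_2 \otimes \pi_3) \sim_{\pi_1,\pi_2,N} T^8$ (stated just before the corollary) to convert the exponent $\tfrac{4}{3}$ into $\tfrac{1}{6}$. Your treatment is slightly more explicit about the case $T \lesssim 1$ and the archimedean conductor computation, but otherwise identical.
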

	
	\begin{rem} \label{rem:weight_aspect}
		By similar methods, we expect we can remove the assumption in Corollary~\ref{cor:Weyl_triple_product} that $\pi_3$ is principal series. In other words, we should be able to treat the case where $\pi_3$ is discrete series at infinity, varying in the weight aspect.
		We hope to work this out in detail in the near future.
	\end{rem}
	
	To get a sense of the strength of Corollary~\ref{cor:Weyl_triple_product}, suppose we knew \eqref{eqn:Weyl_triple_general} for all triples $(\pi_1,\pi_2,\pi_3)$ of automorphic representations of $\PGL_2$ (of full level for simplicity), with $\pi_1,\pi_2$ fixed and $\pi_3$ varying.
	Then by taking some of the $\pi_i$ to be Eisenstein, we would obtain Weyl bounds for simpler $L$-functions. This point of view was used to great effect in \cite{Michel--Venkatesh} (though the goal there was to prove general subconvex bounds in a clean fashion, rather than to prove a Weyl bound).
	The following table illustrates how \eqref{eqn:Weyl_triple_general} specializes, for different choices of $\pi_1,\pi_2,\pi_3$, to many of the known cases of the Weyl bound for families of $L$-functions varying in an archimedean aspect.
	In this table, $\pi,\sigma$ denote cuspidal automorphic representations of $\PGL_2$, and again we suppose for simplicity that $\pi,\sigma$ are of full level.
	
	\begin{table}[H]
		\centering
		\begin{tabular}{|c|c|c|}
			\hline
			\textbf{Citation} & \textbf{Weyl bound} & \textbf{Choice of} $\pi_1,\pi_2,\pi_3$ \\
			\hline
			\hline
			\cite{Landau} following & $|\zeta(\tfrac{1}{2}+it)| \lesssim_{\varepsilon} (|t|+1)^{\frac{1}{6}+\varepsilon}$ & $\pi_1 = \pi_2 = 1 \boxplus 1$, \\
			Hardy--Littlewood & & $\pi_3 = |\cdot|_{\A}^{it} \boxplus |\cdot|_{\A}^{-it}$ \\
			\hline
			\cite{Ivic}, \cite{Jutila_01} & $|L(\tfrac{1}{2}, \pi)| \lesssim_{\varepsilon} C(\pi)^{\frac{1}{6}+\varepsilon}$ & $\pi_1 = \pi_2 = 1 \boxplus 1$, \\
			\cite{Jutila--Motohashi_05}
			& & $\pi_3 = \pi$ \\
			\hline
			\cite{Good_82}, \cite{Meurman} & $|L(\tfrac{1}{2}+it, \pi)| \lesssim_{\pi,\varepsilon} (|t|+1)^{\frac{1}{3}+\varepsilon}$ & $\pi_1 = 1 \boxplus 1$, $\pi_2 = \pi$, \\
			\cite{Jutila--Motohashi_05}
			& & $\pi_3 = |\cdot|_{\A}^{it} \boxplus |\cdot|_{\A}^{-it}$ \\
			\hline
			\cite{Lau--Liu--Ye},\cite{Jutila--Motohashi_06} & $|L(\tfrac{1}{2}, \pi \otimes \sigma)| \lesssim_{\sigma,\varepsilon} C(\pi)^{\frac{1}{3}+\varepsilon}$ & $\pi_1 = 1 \boxplus 1$, \\
			& & $\pi_2 = \sigma$, $\pi_3 = \pi$ \\
			\hline
			\cite{Suvitie} & $|L(\tfrac{1}{2}+it, \pi \otimes \sigma)| \lesssim_{\pi,\sigma,\varepsilon} (|t|+1)^{\frac{2}{3}+\varepsilon}$ & $\pi_1 = \pi$, $\pi_2 = \sigma$ \\
			\cite{Blomer--Jana--Nelson} & & $\pi_3 = |\cdot|_{\A}^{it} \boxplus |\cdot|_{\A}^{-it}$ \\
			\hline
		\end{tabular}
	\end{table}
	
	To our knowledge, the only known Weyl bound for a family of $L$-functions genuinely of degree $>4$ is that of Blomer, Jana, and Nelson \cite{Blomer--Jana--Nelson} (generalizing \cite{Suvitie}), who show that \eqref{eqn:Weyl_triple_general} holds when $\pi_1,\pi_2$ are cuspidal and $\pi_1,\pi_2,\pi_3$ have level $1$.
	There is a fundamental difficulty in extending their method to higher (but still bounded) level when $\pi_3$ is also cuspidal --- see Remark~\ref{rem:Gamma_may_be_cocompact}.
	In contrast, our method makes no distinction between level $1$ and level $O(1)$.
	
	The reduction from Theorem~\ref{thm:Weyl_L} to Theorem~\ref{thm:Weyl} goes via Theorems~\ref{thm:Prasad--Loke} and \ref{thm:triple} below.
	
	\begin{thm}[Prasad \cite{Prasad}, Loke \cite{Loke}] \label{thm:Prasad--Loke}
		Let $\pi_1, \pi_2, \pi_3$ be cuspidal automorphic representations of $\PGL_2$.
		Assume $L(\frac{1}{2}, \pi_1 \otimes \pi_2 \otimes \pi_3) \neq 0$.
		Then there exists a unique quaternion algebra $B$, and automorphic representations $\pi_i^B$ of $B^{\times}$ corresponding to $\pi_i$ by Jacquet--Langlands, such that
		\begin{align*}
			\Hom_{B^{\times}(\A)}(\pi_1^B \otimes \pi_2^B \otimes \pi_3^B, \C) \neq 0.
		\end{align*}
		Furthermore, $B$ is unramified at all places $v$ where at least one of $\pi_{1,v}, \pi_{2,v}, \pi_{3,v}$ is unramified.
	\end{thm}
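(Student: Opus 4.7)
My plan is to deduce the global statement from the local Prasad--Loke dichotomy combined with a global root-number parity argument and a Brauer-group construction. The essential local input, due to Prasad at non-archimedean places and Loke at the archimedean place, says the following: for each place $v$ and any irreducible infinite-dimensional unitary representations $\pi_{i,v}$ of $\PGL_2(\Q_v)$, there is a unique choice of local quaternion algebra $B_v$ over $\Q_v$ such that $\Hom_{B_v^{\times}}(\pi_{1,v}^{B_v} \otimes \pi_{2,v}^{B_v} \otimes \pi_{3,v}^{B_v}, \C) \neq 0$; when nonzero this Hom space is one-dimensional, and the selection of split vs.\ non-split is governed by the local epsilon factor, with the non-split $B_v$ chosen precisely when $\varepsilon_v(\tfrac{1}{2}, \pi_{1,v} \otimes \pi_{2,v} \otimes \pi_{3,v}) = -1$.

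Taking this as a black box, I would set $S = \{v : B_v \text{ is non-split}\}$ and observe that $|S|$ is even. Indeed, the global root number factors as $\varepsilon(\tfrac{1}{2}, \pi_1 \otimes \pi_2 \otimes \pi_3) = \prod_v \varepsilon_v = (-1)^{|S|}$, and since $L(\tfrac{1}{2}, \pi_1 \otimes \pi_2 \otimes \pi_3) \neq 0$ by assumption, the functional equation of a self-dual $L$-function forces this root number to be $+1$. By the Hasse--Brauer--Noether theorem there then exists a unique quaternion algebra $B/\Q$ ramified at exactly $S$, locally isomorphic to $B_v$ at every place. Assembling the local trilinear forms as a restricted tensor product produces a nonzero element of $\Hom_{B^{\times}(\A)}(\pi_1^B \otimes \pi_2^B \otimes \pi_3^B, \C)$, where $\pi_i^B$ denotes the global Jacquet--Langlands transfer. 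This transfer is well defined because any $\pi_{i,v}$ that needs to be transferred to a non-split $B_v$ must, by the local dichotomy, be discrete series.

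The ``furthermore'' clause comes almost for free: if $\pi_{i,v}$ is unramified at some $v$, then it is in particular not a discrete series, so it does not admit a local Jacquet--Langlands transfer to a non-split $B_v^{\times}$. Hence the non-split side of the local Prasad dichotomy is vacuously zero at $v$, forcing $B_v$ to be split there. Equivalently, a direct calculation shows that the local root number is $+1$ at any place where some factor is unramified principal series.

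The main obstacle is of course the local input itself. Prasad's proof requires a delicate case analysis over ramification types of the $\pi_{i,v}$ together with explicit epsilon-factor computations, while Loke's archimedean result relies on a corresponding $(\mathfrak{g}, K)$-module analysis. If one takes these as given, the global assembly sketched above is a routine matter of local-to-global combinatorics; uniqueness of $B$ is immediate from uniqueness of each $B_v$.
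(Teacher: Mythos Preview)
The paper does not actually prove this theorem; it is stated with attribution to Prasad and Loke and used as a black box input to Theorem~\ref{thm:triple} and the proof of Theorem~\ref{thm:Weyl_L}. Your sketch correctly outlines the standard argument behind the cited results: the local trilinear-form dichotomy (Prasad at finite places, Loke at infinity) tied to the sign of the local epsilon factor, the global root-number constraint $\varepsilon(\tfrac12,\pi_1\otimes\pi_2\otimes\pi_3)=+1$ forced by nonvanishing of the central value, and the Brauer-group assembly of a global $B$ from the even set of bad places. The ``furthermore'' clause is likewise correctly explained. So there is nothing to compare against in the paper itself; your proposal is an accurate summary of the proof in the cited references, and the paper simply quotes the statement.
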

	
	\begin{thm}[Triple product formula] \label{thm:triple}
		Let $\pi_1,\pi_2,\pi_3$ be cuspidal automorphic representations of $\PGL_2$, with $\pi_{1,\infty}, \pi_{2,\infty}$ discrete series of the same weight $2k$, and $\pi_{3,\infty}$ principal series.
		Let $N \in \Z_{>0}$, and assume $\pi_3$ has level dividing $N$.
		Assume also that $L(\frac{1}{2}, \pi_1 \otimes \pi_2 \otimes \pi_3) \neq 0$.
		Then let $B$ be as in Theorem~\ref{thm:Prasad--Loke}. Since $\pi_{3,\infty}$ is unramified, $B$ is unramified at infinity; identify $B^{\times}(\R)$ with $\GL_2(\R)$. Then there exists a subgroup $\Gamma \subseteq B^{\times}(\Z) \cap \SL_2(\R)$ of index $O_{\pi_1,\pi_2,N}(1)$, holomorphic cusp forms $f_1,f_2 \colon \mathbf{H} \to \C$ for $\Gamma$ of weight $2k$, and a Maass cusp form $\varphi \colon \Gamma \backslash \H \to \R$, such that $f_1,f_2,\varphi$ are Petersson-normalized Hecke eigenforms whose Jacquet--Langlands transfers to $\PGL_2$ generate $\pi_1,\pi_2,\pi_3$ respectively, and
		\begin{align} \label{eqn:triple}
			|\langle y^{2k} f_1 \overline{f_2}, \varphi \rangle_{L^2(\Gamma \backslash \H)}|^2
			\sim_{\pi_1, \pi_2, N} \frac{\Lambda(\frac{1}{2}, \pi_1 \otimes \pi_2 \otimes \pi_3)}{\Lambda(1, \pi_3, \Ad)},
		\end{align}
		where $\Lambda$ denotes the completed $L$-function.
		Moreover, for each $i=1,2$, one can take $f_i$ to depend only on $\pi_i$ and $\Gamma$, and one can take $\varphi$ to depend only on $\pi_3$ and $\Gamma$.
	\end{thm}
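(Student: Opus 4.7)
The plan is to deduce the theorem from the adelic triple product formula of Ichino applied to the Jacquet--Langlands transfers $\pi_i^B$ produced by Theorem~\ref{thm:Prasad--Loke}, with careful local choices of test vectors. In the special case $N=1$ and $\Gamma=\PSL_2(\Z)$ this is essentially Watson's original computation.

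\textbf{Step 1: Adelic setup.} I first invoke Theorem~\ref{thm:Prasad--Loke} to obtain the quaternion algebra $B$, the cuspidal representations $\pi_i^B$ of $B^\times(\A)$, and a nonzero trilinear form
$\ell \in \Hom_{B^\times(\A)}(\pi_1^B \otimes \pi_2^B \otimes \pi_3^B, \C)$.
Since $\pi_{3,\infty}$ is principal series, $B$ is split at $\infty$, allowing the identification $B^\times(\R)\cong \GL_2(\R)$.

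\textbf{Step 2: Ichino's formula.} For factorizable vectors $\phi_i = \otimes_v \phi_{i,v}$, Ichino's formula reads
\begin{equation*}
\frac{|\ell(\phi_1 \otimes \phi_2 \otimes \phi_3)|^2}{\prod_i \langle \phi_i, \phi_i \rangle}
\;=\; \frac{1}{2^c} \cdot \frac{\Lambda(\tfrac{1}{2}, \pi_1 \otimes \pi_2 \otimes \pi_3)}{\prod_i \Lambda(1, \pi_i, \Ad)} \cdot \prod_v I_v,
\end{equation*}
where each $I_v$ is the normalized local integral of matrix coefficients, equal to $1$ at almost every place. So the problem reduces to (i) choosing vectors whose classical avatar is the desired period, (ii) defining $\Gamma$, and (iii) computing or bounding each $I_v$.

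\textbf{Step 3: Local vectors and the choice of $\Gamma$.} At the archimedean place I take $\phi_{i,\infty}$ ($i=1,2$) to be the lowest-weight vector of the discrete series $\pi_{i,\infty}^B$, and $\phi_{3,\infty}$ to be the spherical vector of $\pi_{3,\infty}^B$. At each finite place I take $\phi_{i,v}$ to be a new vector. Let $K_f \subset B^\times(\A_f)$ be the compact open stabilizer of the three finite-place vectors $\phi_{1,f},\phi_{2,f},\phi_{3,f}$. Because $\pi_1,\pi_2$ are fixed and $\pi_3$ has level dividing $N$, I can choose $K_f$ so that $\Gamma := B^\times(\Q) \cap B^\times(\R)^+ K_f$ (modulo center) is a finite-index subgroup of $B^\times(\Z) \cap \SL_2(\R)$ of index $O_{\pi_1,\pi_2,N}(1)$. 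By strong approximation for $B^\times$, the adelic period $\ell(\phi_1 \otimes \phi_2 \otimes \phi_3)$ becomes, up to an explicit volume factor, the classical pairing $\langle y^{2k} f_1 \overline{f_2}, \varphi \rangle_{L^2(\Gamma \backslash \H)}$, where $f_1,f_2,\varphi$ are the Petersson-normalized classical avatars of the chosen vectors. Real-valuedness of $\varphi$ follows from triviality of the central character of $\pi_3$.

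\textbf{Step 4: Evaluation of local factors.} At every finite place where all $\pi_{i,v}^B$ are unramified, $I_v=1$ by the Ichino--Ikeda unramified computation. At the finitely many remaining finite places (dividing $N$, or where $B$ ramifies), explicit new-vector computations give $I_v \sim_{\pi_1,\pi_2,N} 1$. At the archimedean place, $I_\infty$ is an integral over $\PGL_2(\R)$ of a spherical matrix coefficient of $\pi_{3,\infty}$ paired with matrix coefficients of the two weight-$2k$ discrete series; via the Euler integral representation of ${}_2F_1$ it evaluates in closed form to a ratio of Gamma functions that, when combined with the archimedean parts of the $\Lambda$-factors on the right hand side of Ichino's formula, yields a constant $\sim_k 1$ independent of the spectral parameter of $\pi_{3,\infty}$.

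\textbf{Main obstacle.} The principal technical difficulty is the archimedean local computation in Step~4: one must track all normalizations (Ichino's measures, the choice of holomorphic lowest-weight vectors, and the classical Petersson inner product) and verify that the resulting archimedean constant, when combined with the archimedean L-factors, depends only on $k$ with no residual dependence on the varying spectral parameter of $\pi_{3,\infty}$. The finite-place analysis is comparatively routine but requires care at places where $B$ ramifies, since new vectors on the quaternion side must be matched with their $\GL_2$ analogues under Jacquet--Langlands.
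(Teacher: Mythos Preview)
Your proposal is correct and follows essentially the same route as the paper: the paper's proof simply cites Watson's formula for squarefree $N$ and, in general, refers to Ichino's formula together with the local computations in Chapter~6 of Woodbury's thesis, which is exactly the outline you have sketched in more detail. Your identification of the archimedean local integral as the main technical point is apt, and the paper likewise offloads this to the literature rather than carrying it out.
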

	
	\begin{proof}
		When $N$ is squarefree, Theorem~\ref{thm:triple} is a direct consequence of Watson's formula \cite[Theorem 3]{Watson}. In general, one can argue as in Chapter~6 of Woodbury's thesis \cite{Woodbury_Thesis} to deduce Theorem~\ref{thm:triple} from Ichino's formula \cite{Ichino}.
		Indeed, Chapter~6 of \cite{Woodbury_Thesis} proves an analogue of Theorem~\ref{thm:triple} in which the level of $\pi_3$ is allowed to vary as opposed to the spectral parameter.
	\end{proof}
	
	\begin{cor} \label{cor:triple}
		Let the notation be as in Theorem~\ref{thm:triple}, and let $t$ be the spectral parameter of $\pi_{3,\infty}$.
		Then one has
		\begin{align} \label{eqn:triple_bd}
			L(\tfrac{1}{2}, \pi_1 \otimes \pi_2 \otimes \pi_3)
			\sim_{\pi_1, \pi_2, N} e^{\pi t} (t+1)^{-4k+2+o(1)} |\langle y^{2k} f_1 \overline{f_2}, \varphi \rangle_{L^2(\Gamma \backslash \H)}|^2,
		\end{align}
		where the $o(1)$ in the exponent goes to zero as $t \to \infty$.
	\end{cor}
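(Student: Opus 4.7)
The plan is to deduce the corollary directly from Theorem~\ref{thm:triple} by writing the completed L-functions as $\Lambda = L_\infty \cdot L$ and explicitly computing the archimedean ratio. Rearranging \eqref{eqn:triple}, it suffices to prove
\begin{align*}
\frac{L_\infty(1, \pi_3, \Ad)}{L_\infty(\tfrac{1}{2}, \pi_1 \otimes \pi_2 \otimes \pi_3)} \cdot L(1, \pi_3, \Ad) \sim_{\pi_1, \pi_2, N} e^{\pi t}(t+1)^{-4k + 2 + o(1)}.
\end{align*}

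First I would dispose of the finite factor. Since $s = 1$ lies on the edge of the critical strip of the adjoint L-function, the Hoffstein--Lockhart lower bound combined with the standard convexity upper bound gives $(t+1)^{-\varepsilon} \lesssim_{N, \varepsilon} L(1, \pi_3, \Ad) \lesssim_{N, \varepsilon} (t+1)^{\varepsilon}$ for every $\varepsilon > 0$. This factor is then absorbed into the $o(1)$ in the exponent.

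Next I would compute the two archimedean L-factors. The Langlands parameter of $\pi_{i, \infty}$ for $i = 1, 2$ is the two-dimensional irreducible representation $\mathrm{Ind}_{W_\C}^{W_\R}\bigl(z \mapsto (z/\bar z)^{(2k-1)/2}\bigr)$, while $\pi_{3, \infty}$ has parameter $\mu^{it} \oplus \mu^{-it}$ with $\mu$ the character of $W_\R$ corresponding to $|\cdot|$ on $\R^*$. A Clebsch--Gordan-type decomposition of the eight-dimensional triple tensor product, together with the duplication identity $\Gamma_\R(s)\Gamma_\R(s + 1) = \Gamma_\C(s)$, gives
\begin{align*}
L_\infty(s, \pi_1 \otimes \pi_2 \otimes \pi_3) = \Gamma_\C(s + 2k - 1 + it)\Gamma_\C(s + 2k - 1 - it)\Gamma_\C(s + it)\Gamma_\C(s - it),
\end{align*}
and, from $\mathrm{Sym}^2(\mu^{it} \oplus \mu^{-it}) = \mu^{2it} \oplus 1 \oplus \mu^{-2it}$, $L_\infty(1, \pi_3, \Ad) = \Gamma_\R(1)\Gamma_\R(1 + 2it)\Gamma_\R(1 - 2it)$. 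Applying Stirling in the form $|\Gamma(\sigma + it)|^2 \sim 2\pi|t|^{2\sigma - 1}e^{-\pi|t|}$ (and the exact identity $|\Gamma(\tfrac{1}{2} + it)|^2 = \pi/\cosh(\pi t)$) then yields $L_\infty(\tfrac{1}{2}, \pi_1 \otimes \pi_2 \otimes \pi_3) \sim_k (t+1)^{4k - 2}e^{-2\pi t}$ and $L_\infty(1, \pi_3, \Ad) \sim e^{-\pi t}$, whose ratio is precisely $e^{\pi t}(t+1)^{-4k + 2}$.

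No step here is genuinely difficult; the only thing that requires care is bookkeeping of Langlands parameter conventions so that the resulting product of gamma factors is correct. Once this is pinned down, everything else follows from Stirling's formula.
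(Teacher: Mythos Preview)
Your proof is correct and follows essentially the same approach as the paper: both rearrange \eqref{eqn:triple}, estimate $L(1,\pi_3,\Ad)=(t+1)^{o(1)}$ via Hoffstein--Lockhart plus an upper bound, and reduce to a Stirling computation of the archimedean ratio. The only difference is that the paper quotes the simplified form $2^{4-4k}\pi^{1-4k}\Gamma(2k-\tfrac{1}{2}+it)\Gamma(2k-\tfrac{1}{2}-it)$ for the ratio of gamma factors from \cite{Woodbury_real}, whereas you compute $L_\infty(\tfrac{1}{2},\pi_1\otimes\pi_2\otimes\pi_3)$ and $L_\infty(1,\pi_3,\Ad)$ separately from the Langlands parameters before taking the ratio; the resulting Stirling estimates agree.
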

	
	We will only need the upper bound.
	
	\begin{proof}
		The Gamma factor in the right hand side of \eqref{eqn:triple} is
		\begin{align} \label{eqn:Gamma_factor}
			2^{4-4k} \pi^{1-4k} \Gamma\Big(2k-\frac{1}{2} + it\Big) \Gamma\Big(2k-\frac{1}{2} - it\Big)
		\end{align}
		(see, e.g., Appendix A of \cite{Woodbury_real}).
		By Stirling's approximation,
		\begin{align} \label{eqn:Stirling}
			\eqref{eqn:Gamma_factor} \sim_k e^{-\pi t} (t+1)^{4k-2}.
		\end{align}
		Therefore, we can rewrite \eqref{eqn:triple} as
		\begin{align*}
			|\langle y^{2k} f_1 \overline{f_2}, \varphi \rangle_{L^2(\Gamma \backslash \H)}|^2
			\sim_{\pi_1, \pi_2, N} e^{-\pi t} (t+1)^{4k-2} \frac{L(\frac{1}{2}, \pi_1 \otimes \pi_2 \otimes \pi_3)}{L(1, \pi_3, \Ad)}.
		\end{align*}
		The adjoint $L$-value in the denominator is positive and of size $\sim_N (t+1)^{o(1)}$ by \cite[Theorem~2]{Iwaniec} and \cite{Hoffstein--Lockhart}. The claimed estimate \eqref{eqn:triple_bd} follows.
	\end{proof}
	
	\begin{rem} \label{rem:Gamma_may_be_cocompact}
		The methods of Suvitie \cite{Suvitie} and Blomer--Jana--Nelson \cite{Blomer--Jana--Nelson} fail when the quaternion algebra $B$ in Theorem~\ref{thm:Prasad--Loke} is division, because then the lattice $\Gamma$ in Theorem~\ref{thm:triple} is cocompact.
		Both \cite{Suvitie} and \cite{Blomer--Jana--Nelson} use the Kuznetsov formula, and thus crucially use a cusp.
	\end{rem}
	
	Assuming Theorem~\ref{thm:Weyl}, we can now prove Theorem~\ref{thm:Weyl_L} easily.
	
	\begin{proof}[Proof of Theorem~\ref{thm:Weyl_L}]
		Write $\F = \F_{N,T,T^{1/3}}$.
		By the ramification condition in Theorem~\ref{thm:Prasad--Loke} and the index bound in Theorem~\ref{thm:triple}, there is a finite set $\S$ of lattices in $\SL_2(\R)$, depending only on $\pi_1,\pi_2,N$, such that for any $\pi_3 \in \F$, the lattice $\Gamma$ in Theorem~\ref{thm:triple} can be taken to be in $\S$.
		Choose some such $\Gamma_{\pi_3} \in \S$ for each $\pi_3 \in \F$.
		Then define $f_{1,\Gamma_{\pi_3}}, f_{2,\Gamma_{\pi_3}}, \varphi_{\pi_3,\Gamma_{\pi_3}}$ to be the forms which appear in Theorem~\ref{thm:triple}.
		Note that for each $i=1,2$, we may assume by the last sentence of Theorem~\ref{thm:triple} that $f_{i,\Gamma_{\pi_3}}$ only depends on $\pi_3$ through $\Gamma_{\pi_3}$.
		Thus it makes sense to define, for $\Gamma \in \S$,
		\begin{align*}
			f_{i,\Gamma}
			= \begin{cases}
				f_{i,\Gamma_{\pi_3}} &\text{if } \Gamma = \Gamma_{\pi_3} \text{ for some } \pi_3 \in \F, \\
				0 &\text{otherwise}.
			\end{cases}
		\end{align*}
		Similarly, for $\pi_3 \in \F$ and $\Gamma \in \S$, let
		\begin{align*}
			\varphi_{\pi_3,\Gamma}
			= \begin{cases}
				\varphi_{\pi_3,\Gamma_{\pi_3}} &\text{if } \Gamma = \Gamma_{\pi_3}, \\
				0 &\text{otherwise}.
			\end{cases}
		\end{align*}
		Then for each $\pi_3 \in \F$, there exists $\Gamma \in \S$ such that \eqref{eqn:triple_bd} holds with $f_i = f_{i,\Gamma}$ and $\varphi = \varphi_{\pi_3,\Gamma}$.
		Since $\varphi_{\pi_3,\Gamma}$ is either zero or Jacquet--Langlands transfers to $\pi_3$, the nonzero $\varphi_{\pi_3,\Gamma}$ have spectral parameter in $[T-T^{1/3}, T+T^{1/3}]$, and the $\varphi_{\pi_3,\Gamma}$ are orthogonal as $\pi_3$ ranges over $\F$.
		The nonzero $\varphi_{\pi_3,\Gamma}$ are also $L^2$-normalized.
		Therefore, there exists an orthonormal Laplace eigenbasis $\{\varphi_{r,\Gamma}\}$ for $L^2_{\cusp}(\Gamma \backslash \H)$, such that each nonzero $\varphi_{\pi_3,\Gamma}$ is equal to $\varphi_{r,\Gamma}$ for some $r$.
		Let $t_{r,\Gamma}$ be the spectral parameter of $\varphi_{r,\Gamma}$.
		Let $2k$ be the weight of $\pi_{1,\infty}$ and $\pi_{2,\infty}$, and define $C_{r,\Gamma}$ and $\widetilde{C}_{r,\Gamma}$ in terms of $f_{1,\Gamma}, f_{2,\Gamma}, \varphi_{r,\Gamma}, k$ as in Section~\ref{sec:intro}. Then by the upper bound in \eqref{eqn:triple_bd} followed by the definition \eqref{eqn:C_tilde_r_def} of $\widetilde{C}_{r,\Gamma}$,
		\begin{align*}
			\sum_{\pi_3 \in \F} L(\tfrac{1}{2}, \pi_1 \otimes \pi_2 \otimes \pi_3)
			&\lessapprox_{\pi_1,\pi_2,N}
			\sum_{\Gamma \in \S} \, \sum_{|t_{r,\Gamma} - T| \leq T^{1/3}} e^{\pi t_r} (t_r+1)^{-4k+2} |C_{r,\Gamma}|^2
			\\&\sim \sum_{\Gamma \in \S} \, \sum_{|t_{r,\Gamma} - T| \leq T^{1/3}} |\widetilde{C}_{r,\Gamma}|^2.
		\end{align*}
		By Theorem~\ref{thm:Weyl}, Remark~\ref{rem:non-cpt_case}, and the fact that $\S$ depends only on $\pi_1,\pi_2,N$, the right hand side is $\lessapprox_{\pi_1,\pi_2,N} T^{\frac{4}{3}}$.
	\end{proof}
	
	\section{The crossing equation} \label{sec:crossing}
	
	As mentioned in Section~\ref{sec:intro}, by the polarization identity, it is enough to prove Theorem~\ref{thm:Weyl} and Proposition~\ref{prop:convexity} when $f_1 = f_2$.
	\textit{So assume from now on that $f_1 = f_2 =: f$.}
	Recall that we set $G = \PSL_2(\R)$.
	View $f$ and $\varphi_r$ as functions on $\Gamma \backslash G$ in the usual way, so (abusing notation)
	\begin{align} \label{eqn:f_lift}
		f\bigg(
		\begin{pmatrix}
			1 & x \\
			0 & 1
		\end{pmatrix}
		\begin{pmatrix}
			y^{1/2} & 0 \\
			0 & y^{-1/2}
		\end{pmatrix}
		\begin{pmatrix}
			\cos\theta & \sin\theta \\
			-\sin\theta & \cos\theta
		\end{pmatrix}
		\bigg)
		= y^k f(x+iy) e^{2ki\theta}
	\end{align}
	and
	\begin{align*}
		\varphi_r\bigg(
		\begin{pmatrix}
			1 & x \\
			0 & 1
		\end{pmatrix}
		\begin{pmatrix}
			y^{1/2} & 0 \\
			0 & y^{-1/2}
		\end{pmatrix}
		\begin{pmatrix}
			\cos\theta & \sin\theta \\
			-\sin\theta & \cos\theta
		\end{pmatrix}
		\bigg)
		= \varphi_r(x+iy).
	\end{align*}
	Given $g \in G$ and $F$ a function on $\Gamma \backslash G$, write $gF$ for the function on $\Gamma \backslash G$ obtained by right-translation by $g$, namely $gF(x) = F(xg)$.
	Equip $\Gamma \backslash G$ with the Haar probability measure.
	Then
	\begin{align} \label{eqn:C_r_G_def}
		C_r = \langle |f|^2, \varphi_r \rangle_{L^2(\Gamma \backslash G)}
		\qquad \text{and} \qquad
		C_0
		= \langle |f|^2, 1 \rangle_{L^2(\Gamma \backslash G)}
		= \|f\|_{L^2(\Gamma \backslash G)}^2
		= 1.
	\end{align}
	In particular, $C_r \in \R$,
	so $|C_r|^2 = C_r^2$, and similarly $|\widetilde{C}_r|^2 = \widetilde{C}_r^2$.
	
	In the remainder of this section we summarize the derivation of the crossing equation from \cite{Kravchuk--Mazac--Pal}.
	We give a fairly detailed summary because this equation (in the form of Corollary~\ref{cor:crossing}) is the key algebraic input which powers our method.
	An alternative derivation of Corollary~\ref{cor:crossing} is given in the ``Applications" section of \cite{Adve_25}.
	
	The crossing equation comes from the following two observations in \cite{Kravchuk--Mazac--Pal}.
	Aside from these two observations, we will use no other nontrivial information about the $C_r$.
	
	\begin{itemize} \itemsep = 0.5em
		\item For $g_1,g_2,g_3,g_4 \in G$, the ``four-point correlation"
		\begin{align} \label{eqn:abstract_4_pt_cor}
			\int_{\Gamma \backslash G} (g_1f) (g_2f) (g_3\overline{f}) (g_4\overline{f})
		\end{align}
		can be expressed, via spectral expansion, as a linear combination of the $C_r^2$, where the $r$th coefficient is an explicit function of the $g_i$ and the eigenvalue $\lambda_r$. When written in the correct notation, this function takes a particularly simple form.
		
		\item It is not visible from the spectral expansion that switching $g_1$ and $g_2$ preserves \eqref{eqn:abstract_4_pt_cor}. This symmetry places constraints on the $C_r$ and $\lambda_r$.
	\end{itemize}
	
	The fact that the spectral expansion of \eqref{eqn:abstract_4_pt_cor} takes the shape described above can be seen through abstract representation-theoretic considerations as in \cite{Bernstein--Reznikov_04,Bernstein--Reznikov_10,Reznikov_unfolding}.
	This is discussed in Section~\ref{sec:Bernstein--Reznikov}.
	However, the observation that the $r$th coefficient can be written in a simple and explicit form comes from CFT, where there is a very similar formula for spectral expansions of four-point correlation functions (see, e.g., \cite[Section~2]{Mazac--Paulos_I}).
	This formula involves scaling dimensions (analogous to $\lambda_r$) and OPE coefficients (analogous to $C_r$), which are observable quantities associated to a CFT.
	The use of symmetries analogous to $g_1 \leftrightarrow g_2$ to constrain scaling dimensions and OPE coefficients is called the conformal bootstrap in physics.
	
	The paper \cite{Kravchuk--Mazac--Pal} used the constraints derived from the two observations above to study the low energy eigenvalues $\lambda_r$, in particular the first eigenvalue $\lambda_1$.
	Similar results have been obtained for low energy eigenvalues of the Laplacian on hyperbolic 3-manifolds \cite{Bonifacio--Mazac--Pal} and the Dirac operator on spin hyperbolic surfaces \cite{Gesteau--Pal--Simmons-Duffin--Xu}.
	In this paper, we use a subset of the constraints from \cite{Kravchuk--Mazac--Pal} to study the coefficients $C_r$ in the high energy limit $r \to \infty$.
	
	Since \eqref{eqn:abstract_4_pt_cor} is a function of $g_1,g_2,g_3,g_4$ and $\dim G = 3$, we have that \eqref{eqn:abstract_4_pt_cor} is a function of $4 \times 3 = 12$ real variables. Following \cite{Kravchuk--Mazac--Pal}, our first goal is to use symmetry to encode this as a holomorphic function of one complex variable. This is accomplished in Lemma~\ref{lem:fn_of_cross-ratio}.
	
	\textit{A priori}, the translates of $f$ are parameterized by $G$: for each $g \in G$, there is a translate $gf$. However, since $f$ is a weight vector, $gf$ only depends (up to scalars) on the coset $gK \in G/K$ (recall $K = \PSO_2(\R)$).
	The quotient $G/K$ identifies naturally with the upper half-plane. Actually, by applying a M\"obius transformation, we prefer to identify $G/K$ with the unit disc $\mathbf{D}$.
	Then we see that up to scalars, the translates of $f$ are parameterized by the unit disc.
	The following proposition makes this parameterization explicit, and shows that it is in fact holomorphic. The proposition also gives a similar parameterization of the translates of $\overline{f}$.
	
	The disc $\mathbf{D}$ is naturally $\PSU(1,1)/U(1)$, so identifying $G/K$ with $\mathbf{D}$ amounts to choosing an isomorphism $\phi \colon G \to \PSU(1,1)$ which takes $K$ to the diagonal $U(1)$.
	We make the choice
	\begin{align*}
		\phi(g)
		= \begin{pmatrix}
			1 & i \\
			i & 1
		\end{pmatrix}^{-1}
		g
		\begin{pmatrix}
			1 & i \\
			i & 1
		\end{pmatrix},
		\qquad \text{so} \qquad
		\phi
		\begin{pmatrix}
			\cos\theta & \sin\theta \\
			-\sin\theta & \cos\theta
		\end{pmatrix}
		=
		\begin{pmatrix}
			e^{i\theta} & 0 \\
			0 & e^{-i\theta}
		\end{pmatrix}.
	\end{align*}
	Let $\mathbf{D}'
	= \{z \in \widehat{\C} : |z| > 1\}$ be the standard disc around $\infty$ in the Riemann sphere $\widehat{\C} = \C \cup \{\infty\}$.
	We parameterize the translates of $\overline{f}$ by $\mathbf{D}'$.
	
	\begin{prop} \label{prop:coherent_states}
		There exist unique holomorphic functions $\mathscr{O} \colon \mathbf{D} \to C^{\infty}(\Gamma \backslash G)$ and $\widetilde{\mathscr{O}} \colon \mathbf{D}' \to C^{\infty}(\Gamma \backslash G)$, such that
		\begin{align} \label{eqn:O(0)}
			\mathscr{O}(0)
			= f
			\qquad \text{and} \qquad
			\lim_{z \to \infty} z^{2k} \widetilde{\mathscr{O}}(z)
			= \overline{f},
		\end{align}
		and such that for all $g \in G$ and $z \in \mathbf{D}$ (respectively $z \in \mathbf{D}'$),
		\begin{align} \label{eqn:O_equivariance}
			g\mathscr{O}(z)
			= (cz+d)^{-2k} \mathscr{O}(\phi(g) \cdot z)
			\qquad \text{and} \qquad
			g\widetilde{\mathscr{O}}(z)
			= (cz+d)^{-2k} \widetilde{\mathscr{O}}(\phi(g) \cdot z),
		\end{align}
		where
		\begin{align*}
			\phi(g)
			= \begin{pmatrix}
				a & b \\
				c & d
			\end{pmatrix}
			\qquad \text{and} \qquad
			\phi(g) \cdot z
			= \frac{az+b}{cz+d}.
		\end{align*}
	\end{prop}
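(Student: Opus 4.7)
\emph{The plan is: define $\mathscr{O}$ explicitly by the formula forced by the equivariance relation, check well-definedness and equivariance by direct matrix manipulation, and then reduce holomorphy to the standard Lie-algebraic characterization of holomorphic modular forms. The construction of $\widetilde{\mathscr{O}}$ is symmetric.}

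\textbf{Uniqueness and construction of $\mathscr{O}$.} Setting $z=0$ in \eqref{eqn:O_equivariance} and using $\mathscr{O}(0)=f$ forces
\begin{align*}
\mathscr{O}(\phi(g)\cdot 0) = d^{2k}\cdot gf \qquad \text{for every } g\in G,
\end{align*}
where $d$ denotes the lower-right entry of $\phi(g)$. Since $\PSU(1,1)$ acts transitively on $\mathbf{D}$, this determines $\mathscr{O}$ on all of $\mathbf{D}$, proving uniqueness. First I would take the displayed formula as the \emph{definition} of $\mathscr{O}$, then verify well-definedness: two choices $g,g'$ with $\phi(g)\cdot 0=\phi(g')\cdot 0$ differ by right-multiplication by some $k_\theta\in K$, and $\phi(k_\theta)=\mathrm{diag}(e^{i\theta},e^{-i\theta})$ gives $d'=d\,e^{-i\theta}$, while \eqref{eqn:f_lift} gives $k_\theta f=e^{2ki\theta}f$ under right-translation; the factors $e^{\pm 2ki\theta}$ cancel in $d^{2k}\cdot gf$, and the evenness of $2k$ takes care of the $\pm 1$ ambiguity coming from the passage $SU(1,1)\to \PSU(1,1)$. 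Equivariance is then immediate from the identity that the lower-right entry of $\phi(g')\phi(g)$ equals $(c'z+d')\cdot d$ whenever $z=\phi(g)\cdot 0$.

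\textbf{Holomorphy at $z=0$.} By equivariance, holomorphy on all of $\mathbf{D}$ reduces to holomorphy at $z=0$. Next I would differentiate the defining formula along a one-parameter family $g_\varepsilon=\exp(\varepsilon X)$ with $\phi_\ast(X)=\bigl(\begin{smallmatrix} i\alpha & \beta\\ \bar\beta & -i\alpha\end{smallmatrix}\bigr)\in\mathfrak{su}(1,1)$. A short computation gives $\phi(g_\varepsilon)\cdot 0=\varepsilon\beta+O(\varepsilon^2)$ and $d=1-i\varepsilon\alpha+O(\varepsilon^2)$, so
\begin{align*}
\mathscr{O}(\varepsilon\beta)-f = \varepsilon\bigl(\pi(X)f-2ki\alpha\cdot f\bigr)+O(\varepsilon^2),
\end{align*}
where $\pi$ denotes the derived action of right-translation. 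The right-hand side descends to a well-defined function of $\beta\in\C$ via the weight identity $\pi(H)f=2k\,f$ read off from \eqref{eqn:f_lift}. To check $\bar\partial\mathscr{O}(0)=0$, pick $X_1,X_i\in\mathfrak{su}(1,1)$ with $\beta$-entries $1,i$ respectively (both with $\alpha=0$); one computes $X_1+iX_i=2E_-$, where $E_-=\bigl(\begin{smallmatrix} 0&0\\1&0\end{smallmatrix}\bigr)\in\mathfrak{sl}_2(\C)$. Hence $\bar\partial\mathscr{O}(0)$ is proportional to $\pi(E_-)f$, which vanishes by the standard translation of ``$f$ is holomorphic on $\H$'' into ``$f$ is annihilated by the lowering operator $E_-$'' in the right-regular representation on $\Gamma\backslash G$.

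\textbf{The function $\widetilde{\mathscr{O}}$.} The construction is parallel with base point $\infty\in\mathbf{D}'$ in place of $0\in\mathbf{D}$. The stabilizer of $\infty$ in $\PSU(1,1)$ is again the diagonal subgroup $\phi(K)$, and the normalization $\lim_{z\to\infty}z^{2k}\widetilde{\mathscr{O}}(z)=\overline f$ is the unique one compatible with the $(cz+d)^{-2k}$ equivariance factor as $z\to\infty$. The analogous formula is $\widetilde{\mathscr{O}}(\phi(g)\cdot\infty)=c^{2k}\cdot g\overline f$; well-definedness uses $k_\theta\overline f=e^{-2ki\theta}\overline f$ (complex conjugate of \eqref{eqn:f_lift}). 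Passing to the coordinate $w=1/z$ near $\infty$, the analogous holomorphy computation reduces to the identity $\pi(E_+)\overline f=0$. This is the complex conjugate of $\pi(E_-)f=0$ with respect to the real form $\mathfrak{su}(1,1)\subset\mathfrak{sl}_2(\C)$: a direct calculation shows that conjugation with respect to $\mathfrak{su}(1,1)$ swaps $E_-$ and $E_+$, so $\pi(E_+)\overline f=\overline{\pi(E_-)f}=0$.

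The step that actually uses the holomorphy hypothesis on $f$ is the vanishing of $\pi(E_-)f$; everything else is bookkeeping with $2\times 2$ matrices. The main practical obstacle I expect is keeping careful track of conventions for $\phi$, for the decomposition of $\mathfrak{sl}_2(\C)$ with respect to the real form $\mathfrak{su}(1,1)$, and for the relationship between right-translation on $C^\infty(\Gamma\backslash G)$ and the derived $\mathfrak{g}_\C$-action. Once these are fixed, every verification is a short matrix computation plus a single invocation of the standard dictionary between holomorphic modular forms and lowest-weight vectors in holomorphic discrete series.
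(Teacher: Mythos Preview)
Your proof is correct and follows essentially the same route as the paper's sketch: uniqueness via transitivity of $\PSU(1,1)$ on $\mathbf{D}$, existence via the formula forced by equivariance together with the $K$-weight of $f$, and holomorphy reduced by equivariance to the single check $\bar\partial\mathscr{O}(0)=0$, which unwinds to $f$ being annihilated by the lowering operator. The only difference is that the paper shortcuts the construction of $\widetilde{\mathscr{O}}$ by setting $\widetilde{\mathscr{O}}(z)=z^{-2k}\,\overline{\mathscr{O}(\overline{z}^{-1})}$ rather than repeating the argument at the base point $\infty$.
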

	
	For $z \in \mathbf{D}$, the function $\mathscr{O}(z) \in C^{\infty}(\Gamma \backslash G)$ is analogous to a local operator in a CFT.
	
	We first outline the proof of Proposition~\ref{prop:coherent_states}, and then explain how it follows from \cite{Kravchuk--Mazac--Pal}.
	
	\begin{proof}[Sketch of proof]
		Since $\PSU(1,1)$ acts transitively on $\mathbf{D}$ and $\mathbf{D}'$, uniqueness of $\mathscr{O}$ and $\widetilde{\mathscr{O}}$ is clear from \eqref{eqn:O(0)} and \eqref{eqn:O_equivariance}.
		Existence is more interesting.
		Once we have existence of $\mathscr{O}$, we can take
		\begin{align*}
			\widetilde{\mathscr{O}}(z)
			= z^{-2k} \overline{\mathscr{O}(\overline{z}^{-1})},
		\end{align*}
		so it suffices to construct $\mathscr{O}$.
		By the discussion above, there is a unique equivariant map $\mathbf{P}(\mathscr{O}) \colon \mathbf{D} \to \mathbf{P}(C^{\infty}(\Gamma \backslash G))$, taking values in the projectivization of $C^{\infty}(\Gamma \backslash G)$, which maps the origin in $\mathbf{D}$ to the line generated by $f$.
		Since $f$ isn't itself $K$-invariant (though the line it generates is), the map $\mathbf{P}(\mathscr{O})$ doesn't quite lift to an equivariant map $\mathbf{D} \to C^{\infty}(\Gamma \backslash G)$.
		Instead, one can check that there is a unique lift $\mathscr{O}$ satisfying \eqref{eqn:O(0)} and the twisted equivariance property \eqref{eqn:O_equivariance}.
		Since $f$ is smooth, it follows from \eqref{eqn:O_equivariance} that $\mathscr{O}$ is smooth.
		
		It remains to show that $\mathscr{O}$ is holomorphic, i.e., $\overline{\partial}\mathscr{O}$ is identically zero on $\mathbf{D}$. By equivariance, $\overline{\partial}\mathscr{O}$ will vanish identically as soon as it vanishes at one point. So it suffices to check that $\overline{\partial}\mathscr{O}(0) = 0$.
		This is a computation which eventually reduces to the fact that $f$ is a lowest weight vector in the $G$-representation $C^{\infty}(\Gamma \backslash G)$, so $f$ is killed by the lowering operator.
	\end{proof}
	
	\begin{rem} \label{rem:use_of_discrete_series}
		We emphasize that the holomorphy of $\mathscr{O}$ is due to $f$ being lowest weight in $C^{\infty}(\Gamma \backslash G)$, which in turn is due to $f$ being holomorphic as a function on $\H$. This is the only place where we use that $f$ is a holomorphic modular form as opposed to a Maass form.
	\end{rem}
	
	The proof of Proposition~\ref{prop:coherent_states} in \cite{Kravchuk--Mazac--Pal} is a little different: they first define $\mathscr{O}$ by an explicit formula and then check that it obeys \eqref{eqn:O(0)} and \eqref{eqn:O_equivariance}.
	
	\begin{proof}[Proof of Proposition~\ref{prop:coherent_states} from \cite{Kravchuk--Mazac--Pal}]
		As explained above, uniqueness is clear. Existence follows from Definition~2.1, (3.31), and Proposition~3.3 in \cite{Kravchuk--Mazac--Pal} (actually \cite[(3.31)]{Kravchuk--Mazac--Pal} has a typo --- it should say
		\begin{align*}
			u \cdot \mathcal{O}(z) = (\overline{\beta}z+\overline{\alpha})^{-2n} \mathcal{O}(u \cdot z),
		\end{align*}
		as is used for example in \cite[(3.51)]{Kravchuk--Mazac--Pal}).
	\end{proof}
	
	With the above parameterization of the translates of $f$ and $\overline{f}$ in hand, considering \eqref{eqn:abstract_4_pt_cor} is equivalent to considering
	\begin{align} \label{eqn:4_pt_cor}
		\int_{\Gamma \backslash G} \mathscr{O}(z_1) \mathscr{O}(z_2) \widetilde{\mathscr{O}}(z_3) \widetilde{\mathscr{O}}(z_4)
	\end{align}
	for $z_1,z_2 \in \mathbf{D}$ and $z_3,z_4 \in \mathbf{D}'$. The spectral expansion of \eqref{eqn:4_pt_cor}, rather than \eqref{eqn:abstract_4_pt_cor}, is computed in \cite{Kravchuk--Mazac--Pal}, and it is this computation which has a close parallel in CFT.
	
	Let $\pi_r$ be the unitary subrepresentation of $L^2(\Gamma \backslash G)$ generated by $\varphi_r$.
	Then $\pi_r$ is irreducible.
	Let $\D$ be the direct sum of all discrete series subrepresentations of $L^2(\Gamma \backslash G)$.
	Then we have the orthogonal decomposition
	\begin{align} \label{eqn:L^2(Gamma_mod_G)_decomp}
		L^2(\Gamma \backslash G)
		= \D \oplus \bigoplus_{r=0}^{\infty} \pi_r.
	\end{align}
	Let $P_{\D}$ and $P_r$ denote the corresponding orthogonal projections.
	These projections commute with complex conjugation, so for any $u,v \in L^2(\Gamma \backslash G)$,
	\begin{align}
		\int_{\Gamma \backslash G} u v
		= \langle u, \overline{v} \rangle
		&= \langle P_{\D}(u), P_{\D}(\overline{v}) \rangle + \sum_{r=0}^{\infty} \langle P_r(u), P_r(\overline{v}) \rangle
		\notag
		\\&= \int_{\Gamma \backslash G} P_{\D}(u) P_{\D}(v) + \sum_{r=0}^{\infty} \int_{\Gamma \backslash G} P_r(u) P_r(v),
		\label{eqn:projections_commute}
	\end{align}
	where $\langle \cdot,\cdot \rangle = \langle \cdot,\cdot \rangle_{L^2(\Gamma \backslash G)}$.
	Applying this with $u = \mathscr{O}(z_1) \widetilde{\mathscr{O}}(z_3)$ and $v = \mathscr{O}(z_2) \widetilde{\mathscr{O}}(z_4)$,
	\begin{align} \label{eqn:4_pt_spectral_decomp}
		\eqref{eqn:4_pt_cor}
		= I_{\D}(\pvec{z}) + \sum_{r=0}^{\infty} I_r(\pvec{z}),
	\end{align}
	where
	\begin{align*}
		I_{\D}(\pvec{z})
		= \int_{\Gamma \backslash G} P_{\D}(\mathscr{O}(z_1) \widetilde{\mathscr{O}}(z_3)) P_{\D}(\mathscr{O}(z_2) \widetilde{\mathscr{O}}(z_4))
	\end{align*}
	and
	\begin{align*}
		I_r(\pvec{z})
		= \int_{\Gamma \backslash G} P_r(\mathscr{O}(z_1) \widetilde{\mathscr{O}}(z_3)) P_r(\mathscr{O}(z_2) \widetilde{\mathscr{O}}(z_4)).
	\end{align*}
	It follows from abstract functional-analytic considerations that $\RHS\eqref{eqn:4_pt_spectral_decomp}$ converges absolutely and locally uniformly in $\vec{z}$ --- this is explained using Dini's theorem in the paragraph after \cite[Lemma~3.13]{Kravchuk--Mazac--Pal}.
	
	In our notation, \cite[Lemma 3.11]{Kravchuk--Mazac--Pal} says that $I_{\D}$ is identically zero.
	If one is familiar with the conformal bootstrap, then their argument is very natural.
	In Lemma~\ref{lem:vanishing_hom} and Proposition~\ref{prop:I_d_vanishes} below, we give an alternative argument which is perhaps more natural for someone with a background in automorphic forms. Both arguments are purely representation-theoretic.
	
	\begin{lem} \label{lem:vanishing_hom}
		Let $\pi,\pi'$ be discrete series representations of $G$.
		Then
		\begin{align*}
			\Hom_G(\pi \otimes \overline{\pi}, \pi')
			= 0.
		\end{align*}
	\end{lem}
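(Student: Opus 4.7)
The plan is to parametrize any $\Phi \in \Hom_G(\pi \otimes \overline{\pi}, \pi')$ in $K$-weight bases and use $X_+$-equivariance on a single strategically chosen diagonal to force $\Phi \equiv 0$ by induction.

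After replacing $\pi, \pi'$ by complex conjugates if necessary, assume $\pi$ is holomorphic with lowest weight $2k$ and $K$-basis $\{v_m\}_{m \ge 0}$ of weight $2k + 2m$, and $\pi'$ is holomorphic with lowest weight $2k' \ge 2$ and $K$-basis $\{v'_j\}_{j \ge 0}$. Then $\overline{\pi}$ has conjugate basis $\{\overline{v_n}\}_{n \ge 0}$ of weight $-2k - 2n$. Let $X_\pm \in \g_{\C}$ be the standard complex raising/lowering operators (shifting $K$-weight by $\pm 2$). Normalizing $X_+ v_m = v_{m+1}$, one has $X_- v_m = b_m v_{m-1}$ with $b_0 = 0$ and $b_m \neq 0$ for $m \ge 1$; complex conjugation (using $\overline{X_+} = X_-$ in $\g_{\C}$) then gives $X_+ \overline{v_n} = \overline{b_n}\, \overline{v_{n-1}}$ and $X_- \overline{v_n} = \overline{v_{n+1}}$ in $\overline{\pi}$.

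By $K$-equivariance, $\Phi(v_m \otimes \overline{v_n}) = \lambda_{m,n}\, v'_{m-n-k'}$ when $m-n \ge k'$, and $\Phi(v_m \otimes \overline{v_n}) = 0$ otherwise. The key step is to impose $X_+$-equivariance on the diagonal $m-n = k'-1$: there the right hand side $X_+\Phi(v_m \otimes \overline{v_n}) = X_+ \cdot 0 = 0$, while the left hand side, after expanding $X_+$ by the Leibniz rule, is $\lambda_{m+1, n} + \overline{b_n}\, \lambda_{m, n-1}$, which lives on the next diagonal $m-n = k'$. Writing $\mu_n := \lambda_{n+k', n}$, this collapses to the recursion
\begin{align*}
\mu_n + \overline{b_n}\, \mu_{n-1} = 0, \qquad n \ge 0, \qquad \mu_{-1} := 0.
\end{align*}
At $n = 0$ the second term vanishes (since $b_0 = 0$), forcing $\mu_0 = 0$, and inductively $\mu_n \equiv 0$. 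Iterating the same computation on the diagonals $m-n = k', k'+1, \ldots$ --- each time using the previously established vanishing to kill the right hand side --- forces $\lambda_{m,n} = 0$ for all $(m, n)$, so $\Phi = 0$.

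The remaining cases, where $\pi$ or $\pi'$ is antiholomorphic, follow by the symmetric argument using $X_-$-equivariance or by passing to complex conjugates. The main obstacle --- and the place where the discrete series hypothesis on $\pi'$ is essential --- is identifying the starting diagonal: the gap $2k' \ge 2$ between the minimal $K$-type of $\pi'$ and zero is exactly what places the diagonal $m-n = k'-1$ strictly below the support of $\pi'$, zeroing out the right hand side of the $X_+$-equivariance relation so the induction can begin. With $\pi'$ a principal series there would be no such gap, consistent with the fact that principal series do appear in the direct integral decomposition of $\pi \otimes \overline{\pi}$.
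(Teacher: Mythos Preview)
Your proof is correct and rests on the same underlying idea as the paper's: the weights of $v_0 \otimes \overline{v_n}$ are all $\le 0$, strictly below the lowest weight of $\pi'$, so these tensors map to zero, and then $X_+$-equivariance propagates the vanishing. The paper packages the propagation step more slickly by observing that $E = \{v \in \pi : M(v \otimes w) = 0 \text{ for all } w \in \overline{\pi}\}$ is a nonzero closed subrepresentation of the irreducible $\pi$, hence all of $\pi$; your explicit double induction (first along each diagonal in $n$, then across diagonals) is exactly what this subrepresentation argument unpacks to.
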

	
	We will apply Lemma~\ref{lem:vanishing_hom} with $\pi$ smooth and $\pi'$ unitary, but we suppress analytic technicalities in the statement of the lemma because the proof shows that any reasonable interpretation of the statement is true.
	
	\begin{proof}
		By replacing $\pi, \pi'$ with their complex conjugates if necessary, we may assume $\pi, \pi'$ are both lowest weight representations.
		Let $M \in \Hom_G(\pi \otimes \overline{\pi}, \pi')$; we wish to show that $M = 0$.
		Consider
		\begin{align*}
			E = \{v \in \pi : M(v \otimes w) = 0 \text{ for all } w \in \overline{\pi}\}.
		\end{align*}
		Let $v \in \pi$ be a nonzero vector of lowest weight.
		Then for all weight vectors $w \in \overline{\pi}$, the tensor product $v \otimes w$ is a weight vector in $\pi \otimes \overline{\pi}$ of weight $\leq 0$. On the other hand, since $\pi'$ is a lowest weight representation, all of its weights are strictly positive. Thus $M(v \otimes w) = 0$ for all weight vectors $w \in \overline{\pi}$. Since weight vectors span $\overline{\pi}$, we deduce that $v \in E$ and in particular $E \neq 0$.
		It is clear from the definition of $E$ that $E$ is a closed subrepresentation of $\pi$, so $E = \pi$ because $\pi$ is irreducible. This means that $M = 0$.
	\end{proof}
	
	\begin{prop} \label{prop:I_d_vanishes}
		$I_{\D}$ is identically zero.
	\end{prop}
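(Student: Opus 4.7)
The plan is to realize $\mathscr{O}(z_1)\widetilde{\mathscr{O}}(z_3)$ as the image of a vector in $\pi\otimes\overline{\pi}$ under a $G$-equivariant map into $C^\infty(\Gamma\backslash G)$, where $\pi\subset L^2(\Gamma\backslash G)$ is the unitary subrepresentation generated by $f$, and then to apply Lemma~\ref{lem:vanishing_hom}. Specifically, $\pi$ is a holomorphic discrete series (since $f$ is a lowest weight vector by Remark~\ref{rem:use_of_discrete_series}) and $\overline{\pi}$ is the corresponding antiholomorphic discrete series, generated by $\overline{f}$.

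First I would verify that $\mathscr{O}(z_1)\in\pi^\infty$ and $\widetilde{\mathscr{O}}(z_3)\in\overline{\pi}^\infty$ for every $z_1\in\mathbf{D}$, $z_3\in\mathbf{D}'$. By the equivariance property \eqref{eqn:O_equivariance} together with \eqref{eqn:O(0)}, any $\mathscr{O}(z_1)$ is a scalar multiple of some $G$-translate of $f$, so lies in $\pi$; smoothness of $\mathscr{O}$ then places it in $\pi^\infty$, and similarly for $\widetilde{\mathscr{O}}(z_3)$. Next I would note that pointwise multiplication
\begin{align*}
\mu\colon \pi^\infty\otimes\overline{\pi}^\infty\longrightarrow C^\infty(\Gamma\backslash G),
\qquad v\otimes w\longmapsto vw,
\end{align*}
is $G$-equivariant, because the diagonal $G$-action on the tensor product matches the $G$-action on functions via right translation $g(vw)=(gv)(gw)$.

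Then I would compose $\mu$ with the orthogonal projection $P_{\D}$ from \eqref{eqn:L^2(Gamma_mod_G)_decomp}. Since $\D$ is an orthogonal direct sum of discrete series subrepresentations $\pi'\subset L^2(\Gamma\backslash G)$, and $P_{\D}$ commutes with the $G$-action, the composite $P_{\D}\circ\mu$ decomposes into a collection of $G$-equivariant maps $\pi\otimes\overline{\pi}\to\pi'$ (working with smooth vectors throughout, which is harmless since $P_{\D}$ preserves smoothness within $\D$). Each such map vanishes by Lemma~\ref{lem:vanishing_hom}. Consequently $P_{\D}(\mathscr{O}(z_1)\widetilde{\mathscr{O}}(z_3))=0$ for every $(z_1,z_3)\in\mathbf{D}\times\mathbf{D}'$, and therefore $I_{\D}(\pvec{z})=0$ identically.

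The main thing to be careful about is the analytic setup: making sure that $\mathscr{O}(z_1)$ genuinely lies in the $G$-subrepresentation $\pi$ (and not just in a larger space of translates), and that the projection $P_{\D}$ restricts sensibly to the algebraic/smooth setting so that Lemma~\ref{lem:vanishing_hom} applies as stated. Both points are routine once one recalls that $\pi$ is the closure in $L^2$ of the $G$-orbit of $f$ and that smooth vectors in each discrete series component are stable under the ambient $G$-action; no other obstacles arise.
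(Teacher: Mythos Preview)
Your proposal is correct and follows essentially the same approach as the paper: both arguments observe that $\mathscr{O}(z_1)\in\pi$ and $\widetilde{\mathscr{O}}(z_3)\in\overline{\pi}$, that multiplication followed by projection onto any discrete series component $\pi'\subseteq\D$ defines an element of $\Hom_G(\pi\otimes\overline{\pi},\pi')$, and that this $\Hom$ space vanishes by Lemma~\ref{lem:vanishing_hom}. Your write-up is somewhat more explicit about the analytic bookkeeping (smooth vectors, equivariance of multiplication), but the underlying logic is identical.
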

	
	\begin{proof}
		It suffices to show that for any discrete series subrepresentation $\pi' \subseteq \D$,
		\begin{align} \label{eqn:vanishing_projection}
			P_{\pi'}(\mathscr{O}(z_1) \widetilde{\mathscr{O}}(z_3))
			= 0.
		\end{align}
		Let $\pi \subseteq C^{\infty}(\Gamma \backslash G)$ be the smooth discrete series subrepresentation generated by $f$. Then $\LHS\eqref{eqn:vanishing_projection}$ is the image of $\mathscr{O}(z_1) \otimes \widetilde{\mathscr{O}}(z_3)$ under the $G$-equivariant map $\pi \otimes \overline{\pi} \to \pi'$ given by multiplication followed by orthogonal projection. By Lemma~\ref{lem:vanishing_hom}, this map is identically zero.
	\end{proof}
	
	To compute \eqref{eqn:4_pt_cor}, it now only remains to compute $I_r(\pvec{z})$.
	
	Given $z_i,z_j \in \C$, denote $z_{ij} = z_i - z_j$.
	
	\begin{lem} \label{lem:fn_of_cross-ratio}
		Let $z_1,z_2 \in \mathbf{D}$ and $z_3,z_4 \in \mathbf{D}' \setminus \{\infty\}$, and let $r \in \Z_{\geq 0}$.
		Then
		\begin{align} \label{eqn:conf_inv_I_r}
			z_{12}^{2k} z_{34}^{2k} I_r(\pvec{z})
		\end{align}
		depends only on the cross-ratio
		\begin{align} \label{eqn:cross-ratio}
			\frac{z_{12} z_{34}}{z_{13} z_{24}}.
		\end{align}
	\end{lem}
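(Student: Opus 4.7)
The plan is to show that $F(\pvec{z}) := z_{12}^{2k} z_{34}^{2k} I_r(\pvec{z})$ is invariant under the diagonal Möbius action of $\PSL_2(\C)$ on configurations of four points in $\widehat{\C}$, and then to invoke the classical fact that the cross-ratio parameterizes the $\PSL_2(\C)$-orbit space of ordered $4$-tuples of distinct points.

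First I would combine the covariance \eqref{eqn:O_equivariance} of $\mathscr{O}$ and $\widetilde{\mathscr{O}}$ with the $G$-equivariance of the orthogonal projection $P_r$ (since $\pi_r$ is a $G$-subrepresentation of $L^2(\Gamma \backslash G)$) and the $G$-invariance of the Haar measure on $\Gamma \backslash G$ to obtain, for every $g \in G$,
\[
I_r(\phi(g) \cdot \pvec{z}) = \prod_{i=1}^{4} (cz_i + d)^{2k} \, I_r(\pvec{z}),
\]
where $c, d$ are the lower entries of $\phi(g)$. The identity $\phi(g) \cdot z_i - \phi(g) \cdot z_j = z_{ij}/[(cz_i+d)(cz_j+d)]$, together with the fact that $2k \in \Z$ (so that no branches need to be chosen), then makes the cocycle factors cancel in $z_{12}^{2k} z_{34}^{2k} I_r(\pvec{z})$, giving the $\PSU(1,1)$-invariance $F(\phi(g) \cdot \pvec{z}) = F(\pvec{z})$ for all $g \in G$.

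Next I would upgrade this to $\PSL_2(\C)$-invariance by analytic continuation. Since $\mathscr{O}$ and $\widetilde{\mathscr{O}}$ are holomorphic in $z$ and the operations $P_r$ and $(u,v) \mapsto \int_{\Gamma \backslash G} uv$ are continuous and $\C$-linear/bilinear, $I_r$ and hence $F$ are holomorphic in $\pvec{z}$. The $\PSU(1,1)$-invariance just established is therefore an identity between two functions that are holomorphic in $g$ on an open subset of $\PSL_2(\C)$ (determined by $\pvec{z}$) and agree on the totally real submanifold $\PSU(1,1)$ of maximal real dimension. Standard unique analytic continuation extends the identity to a neighborhood of $\PSU(1,1)$ in $\PSL_2(\C)$, and a connectivity argument propagates it along $\PSL_2(\C)$-orbits in the domain. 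The classical characterization of the cross-ratio as the unique $\PSL_2(\C)$-invariant of an ordered $4$-tuple of distinct points of $\widehat{\C}$ then shows that $F(\pvec{z})$ depends only on $\frac{z_{12} z_{34}}{z_{13} z_{24}}$.

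The main obstacle is the analytic-continuation step: a generic element of $\PSL_2(\C) \setminus \PSU(1,1)$ does not preserve the open set $\mathbf{D}^2 \times (\mathbf{D}' \setminus \{\infty\})^2$, so the $\PSL_2(\C)$-invariance of $F$ only holds ``wherever defined.'' One must argue by connectivity that any two configurations in the domain with the same cross-ratio can be joined by a path inside a single $\PSL_2(\C)$-orbit that remains in the domain; this is routine but deserves a careful verification.
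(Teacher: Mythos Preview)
Your proposal is correct and follows essentially the same approach as the paper: establish $\PSU(1,1)$-invariance of $z_{12}^{2k}z_{34}^{2k}I_r(\pvec{z})$ from the twisted equivariance \eqref{eqn:O_equivariance}, then analytically continue to the complexification $\PGL_2(\C)$ using holomorphy, and conclude via the classical characterization of the cross-ratio. The paper's proof is terser and does not spell out the domain/connectivity issue you flag, but the underlying argument is the same.
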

	
	\begin{proof}
		Using the twisted equivariance property \eqref{eqn:O_equivariance}, direct calculation shows that \eqref{eqn:conf_inv_I_r} is preserved under the diagonal action of $\PSU(1,1)$ on $\mathbf{D} \times \mathbf{D} \times \mathbf{D}' \times \mathbf{D}'$ by M\"obius transformations.
		Since \eqref{eqn:conf_inv_I_r} is holomorphic, it follows by analytic continuation that \eqref{eqn:conf_inv_I_r} is preserved not just by M\"obius transformations in $\PSU(1,1)$ but by those in the complexification $\PGL_2(\C)$.
		Thus indeed \eqref{eqn:conf_inv_I_r} depends only on the cross-ratio.
	\end{proof}
	
	Combining \eqref{eqn:4_pt_spectral_decomp}, Proposition~\ref{prop:I_d_vanishes}, and Lemma~\ref{lem:fn_of_cross-ratio}, we have now accomplished our goal of expressing \eqref{eqn:abstract_4_pt_cor} (up to simple prefactors) as a holomorphic function of one complex variable, namely the cross ratio of $\vec{z}$.
	The next step is to compute \eqref{eqn:conf_inv_I_r} explicitly in terms of the cross-ratio.
	The answer will be expressed in terms of the hypergeometric function $H_s$ defined by
	\begin{align} \label{eqn:H_s_def}
		H_s(z)
		= {}_2F_1(s,1-s;1;z).
	\end{align}
	Recall that the Taylor expansion of $z \mapsto {}_2F_1(a,b;c;z)$ at the origin is
	\begin{align} \label{eqn:2F1_series}
		{}_2F_1(a,b;c;z)
		= \sum_{n=0}^{\infty} \frac{(a)_n (b)_n}{(c)_n} \frac{z^n}{n!},
	\end{align}
	where
	\begin{align*}
		(q)_n
		= q(q+1) \cdots (q+n-1)
	\end{align*}
	is the (rising) Pochhammer symbol (and $(q)_0 = 1$).
	Assuming $c \not\in \Z_{\leq 0}$ to avoid division by zero, the series \eqref{eqn:2F1_series} is well-defined and absolutely convergent for $|z| < 1$. When $s=0$ in \eqref{eqn:H_s_def}, inspecting the Taylor coefficients of $H_0$ at the origin shows that $H_0$ is identically $1$.
	
	\begin{prop} \label{prop:conf_block_computation}
		Let $z_1,z_2 \in \mathbf{D}$ and $z_3,z_4 \in \mathbf{D}' \setminus \{\infty\}$, and let $r \in \Z_{\geq 0}$.
		Let $z$ be the cross-ratio \eqref{eqn:cross-ratio}. Then
		\begin{align} \label{eqn:conf_block_computation}
			z_{12}^{2k} z_{34}^{2k} I_r(\pvec{z})
			= C_r^2 z^{2k} H_{s_r}(z).
		\end{align}
	\end{prop}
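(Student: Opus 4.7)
By Lemma~\ref{lem:fn_of_cross-ratio}, $\LHS\eqref{eqn:conf_block_computation}$ is a holomorphic function $F_r(z)$ of the cross-ratio $z$ alone, so the plan is to identify $F_r(z)$ with $C_r^2 z^{2k} H_{s_r}(z)$ by (a) deriving a second-order hypergeometric ODE it satisfies and (b) fixing the normalization via the ``OPE limit'' $z \to 0$. For (b), I would use the $\PGL_2(\C)$-equivariance from the end of the proof of Lemma~\ref{lem:fn_of_cross-ratio} to move to the configuration $z_1 \to 0$ and $z_3 \to \infty$, where the rescaling $z_3^{2k}\widetilde{\mathscr{O}}(z_3) \to \bar f$ from \eqref{eqn:O(0)} absorbs the $z_3^{2k}$-part of the $z_{34}^{2k}$ prefactor. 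The ``left'' insertion then becomes $P_r(f\bar f) = P_r(|f|^2) = C_r\varphi_r$, using \eqref{eqn:C_r_G_def} and that $\varphi_r$ spans the one-dimensional space of $K$-invariants in $\pi_r$. Letting further $z_2 \to z_1 = 0$ and $z_4 \to z_3 = \infty$, i.e.\ $z \to 0$, the right insertion collapses analogously to $C_r\varphi_r$, and since $\varphi_r$ is $L^2$-normalized this gives $I_r(\pvec{z})\cdot z_3^{2k}z_4^{2k} \to C_r^2$; tracking the remaining $z_{12}^{2k}$ against $z_2^{2k}$ from the right insertion yields $F_r(z) \sim C_r^2 z^{2k}$ as $z \to 0$.

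For (a), the Casimir of $G$ acts on $\pi_r$ by the scalar $s_r(1-s_r)$. Differentiating the equivariance \eqref{eqn:O_equivariance} at $g = e$ realizes the complexified Lie algebra of $G$ as explicit first-order differential operators in $z$ acting on $\mathscr{O}(z)$ and $\widetilde{\mathscr{O}}(z)$, with explicit twists by $z^{2k}$. Acting with the Casimir on $P_r(\mathscr{O}(z_2)\widetilde{\mathscr{O}}(z_4))$ in two ways --- as multiplication by $s_r(1-s_r)$ in $\pi_r$, and as a second-order differential operator in $(z_2, z_4)$ which, after using the full $\PGL_2(\C)$-equivariance from Lemma~\ref{lem:fn_of_cross-ratio}, descends to an operator in the single variable $z$ --- yields a second-order linear ODE for $F_r(z)$. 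After the substitution $F_r(z) = C_r^2 z^{2k}G_r(z)$, this ODE should become exactly the hypergeometric equation with parameters $(s_r, 1-s_r; 1)$. Its solution space near $z = 0$ has basis $H_{s_r}(z)$ and a logarithmically singular partner, and since $F_r$ is holomorphic at $z = 0$ (being a spectral projection of a product of smooth functions), only the regular solution $H_{s_r}$ survives; combined with $H_{s_r}(0) = 1$ and the normalization from (b), this forces $F_r(z) = C_r^2 z^{2k}H_{s_r}(z)$.

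The main obstacle is the explicit Casimir computation in (a): converting the Lie algebra action on $\mathscr{O}, \widetilde{\mathscr{O}}$ (with the $z^{2k}$-twists from \eqref{eqn:O_equivariance}) into a differential operator in the cross-ratio, and verifying that the resulting second-order operator is exactly the hypergeometric operator at parameters $(s_r, 1-s_r; 1)$. This is where the specific form of the identity ${}_2F_1(s, 1-s; 1; z) = H_s(z)$ is ``produced'' by the representation theory of $\PSL_2(\R)$. A more direct alternative would be to Taylor-expand the right insertion using $\partial_z\mathscr{O}(z) = E_+\mathscr{O}(z)$ (read off from \eqref{eqn:O_equivariance}) and an analogous expansion for $\widetilde{\mathscr{O}}$ around $\infty$ in terms of $E_-$, then compute the resulting matrix elements of $E_+^n E_-^n$ in $\pi_r$ against $\varphi_r$ using the $\mathfrak{sl}_2$ commutation relations and the Casimir eigenvalue; the telescoping product $\prod_{j=0}^{n-1}(j+s_r)(j+1-s_r) = (s_r)_n(1-s_r)_n$ then reproduces the Pochhammer combination in \eqref{eqn:2F1_series}, summing to $H_{s_r}(z)$, but at the cost of more normalization bookkeeping for the $K$-type basis vectors of $\pi_r$.
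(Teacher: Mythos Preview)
Your proposal is correct and follows essentially the same approach as the paper: derive a second-order ODE in the cross-ratio from the Casimir eigenvalue on $\pi_r$, use holomorphy at $z=0$ to single out the regular hypergeometric solution, and fix the constant as $C_r^2$ via the limit $z_1 = z_2 = 0$, $z_3, z_4 \to \infty$ using $P_r(|f|^2) = C_r\varphi_r$. The only cosmetic differences are that the paper applies the Casimir to the $(z_1,z_3)$ pair rather than $(z_2,z_4)$, and for the normalization first divides by $z^{2k}$ (yielding the cleaner prefactor $z_{13}^{2k}z_{24}^{2k}$) before taking the limit, rather than directly extracting the leading $z^{2k}$-asymptotic of $F_r$.
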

	
	Recall that we defined $s_r$ in Section~\ref{sec:intro} to satisfy $\lambda_r = s_r(1-s_r)$.
	As with Proposition~\ref{prop:coherent_states}, we first sketch the proof of Proposition~\ref{prop:conf_block_computation}, and then say how it follows from \cite{Kravchuk--Mazac--Pal}.
	
	\begin{proof}[Sketch of proof]
		Let $\Delta$ be the Casimir operator for $G$, normalized so that it acts on right $K$-invariant functions on $\Gamma \backslash G$ in the same way that the Laplacian acts on functions on $\Gamma \backslash G/K = \Gamma \backslash \mathbf{H}$. Then $P_r(\Delta-\lambda_r) = 0$ as operators on $L^2(\Gamma \backslash G)$, and consequently
		\begin{align} \label{eqn:P_r(Delta-lamda_r)=0}
			\int_{\Gamma \backslash G} P_r((\Delta-\lambda_r)[\mathscr{O}(z_1) \widetilde{\mathscr{O}}(z_3)]) P_r(\mathscr{O}(z_2) \widetilde{\mathscr{O}}(z_4))
			= 0.
		\end{align}
		One can rewrite $\LHS\eqref{eqn:P_r(Delta-lamda_r)=0}$ by expanding $(\Delta-\lambda_r)[\mathscr{O}(z_1) \widetilde{\mathscr{O}}(z_3)]$ via the product rule for differentiation. After doing this and simplifying, \eqref{eqn:P_r(Delta-lamda_r)=0} becomes a second order ODE for $\LHS\eqref{eqn:conf_block_computation}$ as a function of the cross-ratio $z$, and hence confines $\LHS\eqref{eqn:conf_block_computation}$ to a two-dimensional space of functions.
		One can check that $\RHS\eqref{eqn:conf_block_computation}$ solves the ODE.
		The ODE has a (regular) singular point at the origin, but $\LHS\eqref{eqn:conf_block_computation}$ is holomorphic for $z$ near the origin. This further confines $\LHS\eqref{eqn:conf_block_computation}$ to the one-dimensional subspace of solutions to the ODE which are holomorphic near the origin.
		Thus
		\begin{align*}
			z_{12}^{2k} z_{34}^{2k} I_r(\pvec{z})
			= c z^{2k} H_{s_r}(z)
		\end{align*}
		for some constant $c \in \C$; we wish to show that $c = C_r^2$.
		Dividing both sides by $z^{2k}$,
		\begin{align} \label{eqn:divided_prop}
			z_{13}^{2k} z_{24}^{2k} I_r(\pvec{z})
			= c H_{s_r}(z).
		\end{align}
		Set $z_1 = z_2 = 0$.
		Then by \eqref{eqn:O(0)} and the definition of $I_r(\pvec{z})$,
		\begin{align} \label{eqn:z_4_to_infty_limit}
			\lim_{z_3,z_4 \to \infty} \LHS\eqref{eqn:divided_prop}
			= \int_{\Gamma \backslash G} P_r(f\overline{f}) P_r(f\overline{f})
			= \|P_r(|f|^2)\|_{L^2(\Gamma \backslash G)}^2.
		\end{align}
		Since $|f|^2$ is $K$-invariant, $P_r(|f|^2) \in \pi_r^K = \C\varphi_r$, so
		\begin{align*}
			P_r(|f|^2)
			= \langle P_r(|f|^2), \varphi_r \rangle \varphi_r
			= \langle |f|^2, \varphi_r \rangle \varphi_r
			= C_r\varphi_r.
		\end{align*}
		Inserting this into \eqref{eqn:z_4_to_infty_limit} and using that $\varphi_r$ is $L^2$-normalized, we get
		\begin{align*}
			\lim_{z_3,z_4 \to \infty} \LHS\eqref{eqn:divided_prop}
			= C_r^2.
		\end{align*}
		On the other hand,
		\begin{align*}
			\lim_{z_3,z_4 \to \infty} \RHS\eqref{eqn:divided_prop}
			= c H_{s_r}(0)
			= c.
		\end{align*}
		Equating these two limits, we conclude that indeed $c=C_r^2$.
	\end{proof}
	
	\begin{proof}[Proof of Proposition~\ref{prop:conf_block_computation} from \cite{Kravchuk--Mazac--Pal}]
		In our notation, \cite[(3.88) and Lemma 3.14]{Kravchuk--Mazac--Pal} say
		\begin{align} \label{eqn:KMP_Lem_3.14}
			C_r^2 z_{12}^{-2k} z_{34}^{-2k} \Big(\frac{z}{z-1}\Big)^{2k} H_{s_r}\Big(\frac{z}{z-1}\Big)
			= I_r(z_1,z_2,z_4,z_3).
		\end{align}
		After multiplying both sides by $z_{12}^{2k} z_{34}^{2k}$, this is almost \eqref{eqn:conf_block_computation}. The two differences are that on the left hand side of \eqref{eqn:KMP_Lem_3.14}, one has $\frac{z}{z-1}$ in place of $z$, and on the right hand side of \eqref{eqn:KMP_Lem_3.14}, $z_3,z_4$ are switched. The explanation for this is that $\frac{z}{z-1}$ is the cross-ratio of $z_1,z_2,z_4,z_3$. Thus \eqref{eqn:KMP_Lem_3.14} implies \eqref{eqn:conf_block_computation} by switching $z_3$ and $z_4$.
	\end{proof}
	
	Combining \eqref{eqn:4_pt_spectral_decomp} with Propositions~\ref{prop:I_d_vanishes} and \ref{prop:conf_block_computation}, we finally obtain a formula for \eqref{eqn:4_pt_cor}.
	
	\begin{thm}[Spectral expansion of 4-point function] \label{thm:4_pt_formula}
		Let $z_1,z_2 \in \mathbf{D}$ and $z_3,z_4 \in \mathbf{D}' \setminus \{\infty\}$.
		Let $z$ be the cross-ratio \eqref{eqn:cross-ratio}.
		Then
		\begin{align} \label{eqn:4_pt_formula}
			z_{12}^{2k} z_{34}^{2k} \int_{\Gamma \backslash G} \mathscr{O}(z_1) \mathscr{O}(z_2) \widetilde{\mathscr{O}}(z_3) \widetilde{\mathscr{O}}(z_4)
			= z^{2k} \sum_{r=0}^{\infty} C_r^2 H_{s_r}(z),
		\end{align}
		where the right hand side converges absolutely and locally uniformly in $z$.
	\end{thm}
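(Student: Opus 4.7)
The plan is to simply assemble the three ingredients that have already been put in place: the abstract spectral decomposition \eqref{eqn:4_pt_spectral_decomp}, the vanishing of the discrete-series contribution (Proposition~\ref{prop:I_d_vanishes}), and the closed form for the Laplace-eigenfunction pieces (Proposition~\ref{prop:conf_block_computation}). I expect the proof to be a short two-step deduction rather than a new argument.

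First, I would start from \eqref{eqn:4_pt_spectral_decomp}, which writes
\begin{align*}
\int_{\Gamma \backslash G} \mathscr{O}(z_1) \mathscr{O}(z_2) \widetilde{\mathscr{O}}(z_3) \widetilde{\mathscr{O}}(z_4)
= I_{\D}(\pvec{z}) + \sum_{r=0}^{\infty} I_r(\pvec{z})
\end{align*}
as an identity of absolutely, locally uniformly convergent series (the convergence statement was already justified via Dini's theorem in the discussion immediately following \eqref{eqn:4_pt_spectral_decomp}). By Proposition~\ref{prop:I_d_vanishes}, the term $I_{\D}(\pvec{z})$ drops out, so the sum reduces to $\sum_r I_r(\pvec{z})$.

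Next, I would multiply through by the prefactor $z_{12}^{2k} z_{34}^{2k}$, which is nonzero for the allowed values of $\pvec{z}$, and apply Proposition~\ref{prop:conf_block_computation} term-by-term: this replaces each $z_{12}^{2k} z_{34}^{2k} I_r(\pvec{z})$ with $C_r^2 z^{2k} H_{s_r}(z)$. Factoring out $z^{2k}$ (which is independent of $r$) from the resulting series yields the right-hand side of \eqref{eqn:4_pt_formula}.

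The only point that requires a moment of care is to confirm that the local-uniform absolute convergence is preserved under these manipulations. This is automatic: $z_{12}^{2k} z_{34}^{2k}$ is a fixed (finite) factor on any compact subset of the domain, and the term-by-term substitution is an equality of individual terms, so the convergence statement transfers verbatim from \eqref{eqn:4_pt_spectral_decomp}. There is no real obstacle here, since all of the analytic work has already been done in establishing the cited propositions; the theorem is essentially a bookkeeping step that packages them into a single clean formula for use in subsequent sections.
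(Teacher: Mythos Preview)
Your proposal is correct and matches the paper's own proof essentially verbatim: the paper simply cites \eqref{eqn:4_pt_spectral_decomp}, Proposition~\ref{prop:I_d_vanishes}, and Proposition~\ref{prop:conf_block_computation}, and notes that the convergence assertion is inherited from the locally uniform absolute convergence of $\RHS\eqref{eqn:4_pt_spectral_decomp}$.
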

	
	\begin{proof}
		As stated above, \eqref{eqn:4_pt_formula} is immediate from \eqref{eqn:4_pt_spectral_decomp} and Propositions~\ref{prop:I_d_vanishes} and \ref{prop:conf_block_computation}.
		The convergence assertion holds because $\RHS\eqref{eqn:4_pt_spectral_decomp}$ converges absolutely and locally uniformly in $\vec{z}$.
	\end{proof}
	
	After switching $z_1,z_2$, Theorem~\ref{thm:4_pt_formula} essentially becomes \cite[Theorem~2.3, part~(2)]{Kravchuk--Mazac--Pal}.
	
	\begin{cor}[Crossing equation] \label{cor:crossing}
		Let $z \in \C \setminus [1,\infty)$. Then
		\begin{align} \label{eqn:crossing}
			z^{2k} \sum_{r=0}^{\infty} C_r^2 H_{s_r}(z)
			= \Big(\frac{z}{z-1}\Big)^{2k} \sum_{r=0}^{\infty} C_r^2 H_{s_r}\Big(\frac{z}{z-1}\Big),
		\end{align}
		with both sides converging absolutely and locally uniformly in $z$.
	\end{cor}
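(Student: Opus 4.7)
The plan is to deduce the crossing equation directly from Theorem~\ref{thm:4_pt_formula} by exploiting the symmetry of its left-hand side under the transposition $z_1 \leftrightarrow z_2$. Pointwise multiplication on $\Gamma \backslash G$ commutes, so the four-point integral $\int_{\Gamma \backslash G} \mathscr{O}(z_1) \mathscr{O}(z_2) \widetilde{\mathscr{O}}(z_3) \widetilde{\mathscr{O}}(z_4)$ is manifestly invariant under the swap. Since $2k \in \Z_{>0}$, we have $z_{12}^{2k} = z_{21}^{2k}$, and hence the entire left-hand side of \eqref{eqn:4_pt_formula} is symmetric in $(z_1, z_2)$. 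The right-hand side is not, because the cross-ratio $z$ is not. Equating the two representations of the same integral will yield \eqref{eqn:crossing}.

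The key algebraic input is the transformation of the cross-ratio under the swap. A direct expansion verifies the identity
$$z_{12}z_{34} - z_{13}z_{24} = -z_{14}z_{23},$$
which rearranges to $z - 1 = -z_{14} z_{23}/(z_{13} z_{24})$ and hence to
$$\frac{z_{21} z_{34}}{z_{23} z_{14}} = \frac{z}{z-1}.$$
Thus the cross-ratio of $(z_2, z_1, z_3, z_4)$ equals $z/(z-1)$. Applying Theorem~\ref{thm:4_pt_formula} to $(z_1, z_2, z_3, z_4)$ and to $(z_2, z_1, z_3, z_4)$, and equating the right-hand sides, produces the crossing equation for every $z$ in the set $U$ of cross-ratios achievable by admissible configurations $(z_1, z_2) \in \mathbf{D}^2$, $(z_3, z_4) \in (\mathbf{D}' \setminus \{\infty\})^2$. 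A brief check --- e.g. fix $z_3 = 2$, $z_4 = -2$, $z_1 = 0$, and vary $z_2 \in \mathbf{D}$, which gives cross-ratio $z_2/(z_2 - 2)$ --- shows that $U$ is a nonempty open subset of $\C \setminus [1, \infty)$ containing a punctured neighborhood of the origin. Moreover, $U$ is invariant under the involution $z \mapsto z/(z-1)$, because the parameter space is invariant under the swap.

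The final step, which I expect to be the main technical obstacle, is to extend the identity from $U$ to all of $\C \setminus [1, \infty)$ and to verify that both series converge absolutely and locally uniformly on this full domain. Theorem~\ref{thm:4_pt_formula} gives the convergence on $U$ and --- via the swap --- on the involutory image of $U$, but neither need \emph{a priori} be all of $\C \setminus [1, \infty)$. To propagate the convergence, the natural approach is to use uniform bounds on $H_{s_r}(z)$ for $z$ in compact subsets of $\C \setminus [1, \infty)$ coming from, e.g., the Euler integral representation of ${}_2F_1$, allowing local uniform convergence at one interior point to be upgraded to local uniform convergence on every compact subset. Once both series are shown to define holomorphic functions on $\C \setminus [1, \infty)$, the identity on $U$ extends by analytic continuation to all of $\C \setminus [1, \infty)$, yielding \eqref{eqn:crossing}.
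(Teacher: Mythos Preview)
Your core approach is exactly the paper's: swap $z_1 \leftrightarrow z_2$, note that $z_{12}^{2k} = z_{21}^{2k}$ since $2k$ is even, compute that the cross-ratio transforms as $z \mapsto z/(z-1)$, and equate the two applications of Theorem~\ref{thm:4_pt_formula}.

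The only divergence is in your final paragraph. You anticipate a ``main technical obstacle'' in passing from the open set $U$ of achievable cross-ratios to all of $\C \setminus [1,\infty)$, and you propose to handle it by analytic continuation supported by Euler-integral bounds on $H_{s_r}$. The paper sidesteps this entirely by asserting that $U = \C \setminus [1,\infty)$ exactly: every point of $\C \setminus [1,\infty)$ arises as the cross-ratio of some admissible configuration $(z_1,z_2,z_3,z_4) \in \mathbf{D}^2 \times (\mathbf{D}'\setminus\{\infty\})^2$. Once this is known, Theorem~\ref{thm:4_pt_formula} already gives absolute and locally uniform convergence of both sides of \eqref{eqn:crossing} on the full domain, and the identity holds pointwise with no analytic continuation needed. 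Your proposed workaround is not wrong in spirit, but the bound-comparison step (upgrading convergence on one compact set to another via uniform control of $|H_{s_r}(z)|$) would require real work to make precise, whereas verifying surjectivity of the cross-ratio map onto $\C \setminus [1,\infty)$ is a direct elementary check.
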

	
	\begin{proof}
		The set of complex numbers which arise as the cross-ratio of four points $z_1,z_2,z_3,z_4$ as in Theorem~\ref{thm:4_pt_formula} is exactly $\C \setminus [1,\infty)$.
		So choose such $z_1,z_2,z_3,z_4$ with cross-ratio $z$. The left hand side of \eqref{eqn:4_pt_formula} is the same when $z_1,z_2$ are switched, i.e.,
		\begin{align} \label{eqn:4_pt_switch}
			z_{12}^{2k} z_{34}^{2k} \int_{\Gamma \backslash G}
			\mathscr{O}(z_1) \mathscr{O}(z_2) \widetilde{\mathscr{O}}(z_3) \widetilde{\mathscr{O}}(z_4)
			= z_{21}^{2k} z_{34}^{2k} \int_{\Gamma \backslash G}
			\mathscr{O}(z_2) \mathscr{O}(z_1) \widetilde{\mathscr{O}}(z_3) \widetilde{\mathscr{O}}(z_4).
		\end{align}
		Expressing each side of this equation using Theorem~\ref{thm:4_pt_formula} yields \eqref{eqn:crossing}, because the cross-ratio of $z_2,z_1,z_3,z_4$ is $\frac{z}{z-1}$.
		Theorem~\ref{thm:4_pt_formula} gives convergence as well.
	\end{proof}
	
	Following the terminology in \cite{Kravchuk--Mazac--Pal}, we call $\LHS\eqref{eqn:crossing}$ the \textit{u-channel expansion} of \eqref{eqn:4_pt_switch}, and $\RHS\eqref{eqn:crossing}$ the \textit{t-channel expansion} of \eqref{eqn:4_pt_switch}, or just ``u-channel" and ``t-channel" for short.
	
	Next, we rewrite the crossing equation in the form which will be most useful for us later.
	For $s = \sigma+it$, denote
	\begin{align} \label{eqn:tilde_H_s_def}
		\widetilde{H}_s
		= \begin{cases}
			H_s &\text{if } t \leq 1, \\
			t^{4k-2} e^{-\pi t} H_s &\text{if } t > 1,
		\end{cases}
		\qquad \text{so that} \qquad
		C_r^2 H_{s_r} = \widetilde{C}_r^2 \widetilde{H}_{s_r}.
	\end{align}
	
	\begin{cor}[Crossing equation (renormalized and reparameterized)] \label{cor:crossing_2}
		Let $z \in \C$ with $\re z > 0$. Then
		\begin{align} \label{eqn:crossing_2}
			z^{2k} \sum_{r=0}^{\infty} \widetilde{C}_r^2 \widetilde{H}_{s_r}(1-z^2)
			= z^{-2k} \sum_{r=0}^{\infty} \widetilde{C}_r^2 \widetilde{H}_{s_r}(1-z^{-2}),
		\end{align}
		with both sides converging absolutely and locally uniformly in $z$.
	\end{cor}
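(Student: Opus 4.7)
The plan is to carry out a direct change of variables in the crossing equation of Corollary~\ref{cor:crossing}. Set $w = 1 - z^2$. First I would verify that for $z$ in the right half-plane $\{\re z > 0\}$, the value $w$ lies in $\C \setminus [1,\infty)$, so that Corollary~\ref{cor:crossing} applies. Indeed, $w \in [1,\infty)$ would force $\im(1-z^2) = -2\,\re(z)\,\im(z) = 0$, which combined with $\re z > 0$ forces $\im z = 0$; but then $\re(1-z^2) = 1 - (\re z)^2 \geq 1$ forces $\re z = 0$, a contradiction. The same argument applied to $z^{-1}$ (whose real part equals $\re(z)/|z|^2 > 0$) shows that $1 - z^{-2} \in \C \setminus [1,\infty)$ as well, so each of the four expressions appearing in \eqref{eqn:crossing_2} is well-defined.

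Next I would compute $\frac{w}{w-1} = \frac{1-z^2}{-z^2} = 1 - z^{-2}$. Substituting $w = 1-z^2$ into \eqref{eqn:crossing} then yields
\begin{align*}
(1-z^2)^{2k} \sum_{r=0}^{\infty} C_r^2 H_{s_r}(1-z^2) = (1-z^{-2})^{2k} \sum_{r=0}^{\infty} C_r^2 H_{s_r}(1-z^{-2}).
\end{align*}
Because $2k$ is a positive even integer, the integer-power identities $(1-z^2)^{2k} = (z^2-1)^{2k}$ and $(1-z^{-2})^{2k} = (z^2-1)^{2k} z^{-4k}$ hold without any branch ambiguity. For $z \neq 1$ one then divides both sides by $(z^2-1)^{2k}$ and multiplies by $z^{2k}$ to obtain
\begin{align*}
z^{2k} \sum_{r=0}^{\infty} C_r^2 H_{s_r}(1-z^2) = z^{-2k} \sum_{r=0}^{\infty} C_r^2 H_{s_r}(1-z^{-2}).
\end{align*}
Applying the identity $C_r^2 H_{s_r} = \widetilde{C}_r^2 \widetilde{H}_{s_r}$ from \eqref{eqn:tilde_H_s_def} term by term converts this to \eqref{eqn:crossing_2}, and the remaining case $z = 1$ follows by continuity (both sides evaluate to $\sum_r \widetilde{C}_r^2 \widetilde{H}_{s_r}(0)$ there).

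The absolute and locally uniform convergence in $z$ is inherited directly from the analogous assertion in Corollary~\ref{cor:crossing}, since the maps $z \mapsto 1-z^2$ and $z \mapsto 1-z^{-2}$ are continuous from $\{\re z > 0\}$ into $\C \setminus [1,\infty)$ and thus send compact subsets to compact subsets, while the prefactors $z^{\pm 2k}$ are continuous and nonvanishing on the right half-plane. There is no real obstacle here — the argument is purely bookkeeping — and the only mild point to track is the cancellation $(-1)^{2k} = 1$ in rewriting $(1-z^2)^{2k}$ as $(z^2-1)^{2k}$.
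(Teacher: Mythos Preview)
Your approach is essentially the paper's: set $w = 1-z^2$, apply Corollary~\ref{cor:crossing}, and cancel the common factor $(z^2-1)^{2k}$. The one point that needs a little more care is the convergence assertion near $z=1$. Corollary~\ref{cor:crossing} only gives absolute and locally uniform convergence of $w^{2k}\sum_r C_r^2 H_{s_r}(w)$, so passing to $z^{2k}\sum_r C_r^2 H_{s_r}(1-z^2)$ amounts to multiplying by $z^{2k}/(1-z^2)^{2k}$, which is singular at $z=1$; your appeal to ``continuity'' at $z=1$ presupposes the very convergence you are trying to establish there. The paper fills this gap by first dividing through by $w^{2k}$ to obtain locally uniform convergence of $\sum_r C_r^2 H_{s_r}(w)$ on $\{w\neq 0\}$, and then observing that since each summand is holomorphic near $w=0$, Cauchy's estimates extend the locally uniform convergence across $w=0$. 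With that patch your argument is complete.
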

	
	We also refer to the left/right hand side of \eqref{eqn:crossing_2} as the u-/t-channel.
	We call the hypergeometric $\widetilde{H}_s(1-z^2)$ on the left the \textit{u-channel conformal block}, and the hypergeometric $\widetilde{H}_s(1-z^{-2})$ on the right the \textit{t-channel conformal block}.
	
	\begin{proof}
		Let $w = 1-z^2$. Since $\re z > 0$, we have $w \in \C \setminus [1,\infty)$.
		Applying Corollary~\ref{cor:crossing} with $w$ in place of $z$, and then dividing both sides by $w^{2k}$, we get
		\begin{align} \label{eqn:crossing_divided_by_w^2k}
			\sum_{r=0}^{\infty} C_r^2 H_{s_r}(w)
			= \Big(\frac{1}{w-1}\Big)^{2k} \sum_{r=0}^{\infty} C_r^2 H_{s_r}\Big(\frac{w}{w-1}\Big)
		\end{align}
		for $w \neq 0$, with absolute and locally uniform convergence in $w$.
		By Cauchy's estimates for holomorphic functions, \eqref{eqn:crossing_divided_by_w^2k} extends over $w=0$ with the same quality convergence.
		We have
		\begin{align*}
			\frac{1}{w-1}
			= -z^{-2}
			\qquad \text{and} \qquad
			\frac{w}{w-1}
			= 1-z^{-2}.
		\end{align*}
		Inserting this into \eqref{eqn:crossing_divided_by_w^2k}, and replacing $C_r$ and $H_{s_r}$ by their normalized forms using \eqref{eqn:tilde_H_s_def},
		\begin{align*}
			\sum_{r=0}^{\infty} \widetilde{C}_r^2 \widetilde{H}_{s_r}(1-z^2)
			= z^{-4k} \sum_{r=0}^{\infty} \widetilde{C}_r^2 \widetilde{H}_{s_r}(1-z^{-2}),
		\end{align*}
		with absolute and locally uniform convergence in $z$.
		Multiplying both sides by $z^{2k}$ yields \eqref{eqn:crossing_2}.
	\end{proof}
	
	\begin{cor}[Averaged crossing equation, general form] \label{cor:crossing_averaged}
		Let $\mu$ be a finite compactly supported measure on the right half-plane $\{\re z > 0\}$.
		Denote
		\begin{align*}
			W(s)
			= \int z^{2k} \widetilde{H}_s(1-z^2) \, d\mu(z)
			\qquad \text{and} \qquad
			\widecheck{W}(s)
			= \int z^{-2k} \widetilde{H}_s(1-z^{-2}) \, d\mu(z).
		\end{align*}
		Then
		\begin{align*}
			\sum_{r=0}^{\infty} W(s_r) \widetilde{C}_r^2
			= \sum_{r=0}^{\infty} \widecheck{W}(s_r) \widetilde{C}_r^2,
		\end{align*}
		with absolute convergence on both sides.
	\end{cor}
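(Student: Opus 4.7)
The plan is to integrate the crossing equation of Corollary~\ref{cor:crossing_2} termwise against $d\mu(z)$ and apply Fubini's theorem. The hypotheses on $\mu$, namely that it is finite and has compact support inside the right half-plane $\{\re z > 0\}$, dovetail with the absolute and locally uniform convergence already asserted in Corollary~\ref{cor:crossing_2}. There is no serious obstacle here; the proof is a routine interchange argument.

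First I would observe that on the compact set $K := \supp\mu \subseteq \{\re z > 0\}$, the maps $z \mapsto z^{\pm 2k}$ and $z \mapsto 1 - z^{\mp 2}$ are continuous, and the image of $K$ under $z \mapsto 1-z^{\mp 2}$ is a compact subset of the domain of $\widetilde{H}_{s_r}$. Locally uniform absolute convergence in Corollary~\ref{cor:crossing_2} gives a finite constant $A$ with
\begin{equation*}
\sup_{z \in K} \sum_{r=0}^{\infty} \widetilde{C}_r^2 \, |z|^{2k} \, |\widetilde{H}_{s_r}(1-z^2)| \;\leq\; A,
\end{equation*}
and similarly for the $t$-channel expression. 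Since $|\mu|(K) < \infty$, integrating this bound shows
\begin{equation*}
\int_K \sum_{r=0}^{\infty} \widetilde{C}_r^2 \, |z|^{2k} \, |\widetilde{H}_{s_r}(1-z^2)| \, d|\mu|(z) \;\leq\; A \cdot |\mu|(K) \;<\; \infty,
\end{equation*}
so Fubini's theorem applies to the $u$-channel side, and analogously to the $t$-channel side.

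Having justified the interchange, I would integrate both sides of \eqref{eqn:crossing_2} against $d\mu(z)$ and pull the sum outside the integral. On the left, the $r$-th term becomes $\widetilde{C}_r^2 \int z^{2k} \widetilde{H}_{s_r}(1-z^2) \, d\mu(z) = W(s_r)\widetilde{C}_r^2$; on the right, it becomes $\widecheck{W}(s_r)\widetilde{C}_r^2$. The equality in Corollary~\ref{cor:crossing_2} then yields the claimed identity. Absolute convergence of each resulting spectral sum follows from the same Fubini bound via the pointwise estimate $|W(s_r)| \leq \int |z|^{2k} |\widetilde{H}_{s_r}(1-z^2)| \, d|\mu|(z)$ and likewise for $\widecheck{W}(s_r)$. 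The only mildly delicate point is to confirm that the ``absolute and locally uniform convergence'' stated in Corollary~\ref{cor:crossing_2} does give a uniform bound on $\sum_r |\cdot|$ over compact sets; this is the standard interpretation and is precisely what was used to prove that corollary.
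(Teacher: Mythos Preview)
Your proposal is correct and follows essentially the same approach as the paper: integrate \eqref{eqn:crossing_2} against $\mu$ and interchange sum and integral, justified by the locally uniform (absolute) convergence in Corollary~\ref{cor:crossing_2} together with the compact support and finiteness of $\mu$. Your write-up is simply a more detailed version of the paper's one-line justification.
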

	
	\begin{proof}
		Integrate \eqref{eqn:crossing_2} against $\mu$, and pass the integral through the sums. This is justified because the sums converge locally uniformly in $z$. The absolute convergence in Corollary~\ref{cor:crossing_averaged} follows from the absolute convergence in Corollary~\ref{cor:crossing_2}.
	\end{proof}
	
	\section{Strategy and heuristics} \label{sec:strategy_heuristics}
	
	From now on, fix some small constant $\varepsilon > 0$, and \textit{allow all implicit constants to depend on $\varepsilon$}. Theorem~\ref{thm:Weyl} and Proposition~\ref{prop:convexity} are trivial for $T \lesssim 1$, so \textit{assume also that $T \gg 1$}.
	
	Let $H = T^{\frac{1}{3}+2\varepsilon}$.
	To prove Theorem~\ref{thm:Weyl}, we aim to prove the Lindel\"of-on-average bound
	\begin{align} \label{eqn:Lindelof_on_average}
		\sum_{|t_r-T| \leq H} \widetilde{C}_r^2
		\lessapprox TH.
	\end{align}
	In Section~\ref{sec:mu_def}, we will engineer a finite measure $\mu$ with compact support in $\{\re z > 0\}$, such that if $W$ is defined as in Corollary~\ref{cor:crossing_averaged}, then
	\begin{align} \label{eqn:W_approx_cutoff}
		W(s_r)
		\approx \1_{|t_r-T| \lesssim H}.
	\end{align}
	Proposition~\ref{prop:I_u_principal} makes this precise.
	
	Define $\widecheck{W}$ as in Corollary~\ref{cor:crossing_averaged} with the same choice of $\mu$. Then we have the averaged crossing equation
	\begin{align} \label{eqn:insert_W_approx_cutoff}
		\sum_{|t_r-T| \lesssim H}
		\widetilde{C}_r^2
		= \sum_{r=0}^{\infty} \widecheck{W}(s_r) \widetilde{C}_r^2,
	\end{align}
	where the cutoff to $|t_r-T| \lesssim H$ on the left hand side is interpreted as $W(s_r)$; this notation is justified by \eqref{eqn:W_approx_cutoff}.
	Our goal now is to upper bound the right hand side by $TH$.
	We will trivially bound $\widecheck{W}(s_r)$ when $t_r \leq 1$, and in Proposition~\ref{prop:I_t_principal} we will give an asymptotic for $\widecheck{W}(s_r)$ when $t_r > 1$.
	Since the statements and proofs of these estimates are rather involved (though elementary), in this section we only make use of the five qualitative properties of $\widecheck{W}$ below.
	Taking these properties for granted, the plan for the remainder of this section is to explain heuristically why we are able to bound $\RHS\eqref{eqn:insert_W_approx_cutoff}$ by $TH$ (yielding \eqref{eqn:Lindelof_on_average}), and why we would not be able to do this if $H$ were smaller than $T^{\frac{1}{3}}$.
	Since $\lambda_0 = 0$, we have $s_0 = 1$.
	Nevertheless, for clarity, we do not simplify $s_0$ to $1$ below.
	
	\begin{enumerate} \itemsep = 0.5em
		\item $\widecheck{W}(s_0)$ is the expected size of $\LHS\eqref{eqn:insert_W_approx_cutoff}$, so $\widecheck{W}(s_0) \sim TH$ (in particular $\widecheck{W}(s_0)$ is positive).
		
		\item $\widecheck{W}(s_0) \ggg |\widecheck{W}(s_r)|$ for $r>0$.
		
		\item $\widecheck{W}(s_r)$ is essentially supported on $\{t_r \lesssim T/H\}$.
		
		\item $\widecheck{W}(\tfrac{1}{2}+it)$ is oscillatory (quantitatively, it oscillates at frequencies $\sim \log T$, though we won't need this for the heuristics in this section or for the proof of Theorem~\ref{thm:Weyl}).
		
		\item $|\widecheck{W}(\tfrac{1}{2}+it)|$ is flat, meaning that if we define
		\begin{align*}
			M(N) = \max_{t \in [N,2N]} |\widecheck{W}(\tfrac{1}{2}+it)|,
		\end{align*}
		then for any fixed $1 \leq N \lesssim T/H$, there is a subset $S \subseteq [N,2N]$ of measure $\gtrsim N$ such that
		\begin{align*}
			|\widecheck{W}(\tfrac{1}{2}+it)| \gtrsim M(N)
			\qquad \text{for} \qquad
			t \in S.
		\end{align*}
		Informally, a nonnegative function is flat if on any dyadic interval, it achieves its maximum (up to a constant) a positive proportion of the time.
		For many purposes, upper bounding a flat function by its maximum on each dyadic interval is sharp.
	\end{enumerate}
	
	\begin{rem}
		The property (2) is because among all spherical irreducible unitary representations of $G$, the trivial representation is furthest from being tempered.
	\end{rem}
	
	\begin{rem}[Comparison with the Mellin transform]
		Qualitatively, $\widecheck{W}$ behaves like the Mellin transform of $x \mapsto W(\tfrac{1}{2}+ix)$. By \eqref{eqn:W_approx_cutoff}, the Mellin transform $\mathcal{M}(W)$ at $s=\sigma+it$ is approximately
		\begin{align*}
			\M(W)(s)
			\approx \M(\1_{|\cdot-T| \lesssim H})(s)
			= \int_{|x-T| \lesssim H} x^s \, \frac{dx}{x}
			= T^{s-1} H \1_{|t| \lesssim T/H},
		\end{align*}
		where the cutoff on the right hand side is not compactly supported but has rapidly decaying tails.
		Thus $\mathcal{M}(W)$ obeys (2)--(5), including the quantitative part of (4). It also obeys the ``$\GL_1$ version of (1)," namely $\mathcal{M}(W)(s_0) \sim H$. The reason $\widecheck{W}(s_0)$ is bigger by a factor of $T$ than $\M(W)(s_0)$ is that the Weyl law for $\PGL_2$ is bigger by a factor of $T$ than the Weyl law for $\GL_1$.
		More concretely, if $\Lambda_d$ is the Laplace spectrum of a fixed compact $d$-manifold for $d=1,2$, then $\Lambda_2 \cap [T,2T]$ is denser by a factor of $T$ than $\Lambda_1 \cap [T,2T]$.
		The dimensions 1 and 2 are relevant because the Riemmanian symmetric spaces associated to $\GL_1$ and $\PGL_2$ (i.e., the line and the hyperbolic plane) have dimensions 1 and 2, respectively.
		In summary, $\widecheck{W}$ and $\M(W)$ share all five properties (1)--(5).
		It would be interesting to have a conceptual explanation for this.
		One could imagine an explanation as follows. It was pointed out by A. Venkatesh that there is an integral operator mapping $W$ to $\widecheck{W}$, and that after suitable normalization, the kernel of this integral operator can be interpreted as a 6j symbol (the analogous appearance of the 6j symbol in the conformal bootstrap is well-known \cite{Karateev--Kravchuk--Simmons-Duffin,Kravchuk18,Liu--Perlmutter--Rosenhaus--Simmons-Duffin}).
		Perhaps this operator is a Fourier integral operator with canonical relation and principal symbol (defined in \cite{Hormander,Duistermaat}) similar to those of the Mellin transform.
	\end{rem}
	
	By \eqref{eqn:insert_W_approx_cutoff} and (3),
	\begin{align*}
		\sum_{|t_r-T| \lesssim H} \widetilde{C}_r^2
		\approx \sum_{t_r \lesssim T/H} \widecheck{W}(s_r) \widetilde{C}_r^2.
	\end{align*}
	On the right side, in view of (2) and (5), we expect that either the $r=0$ term will dominate because it is big, or the terms where $t_r \sim T/H$ will dominate because there are many of them.
	Indeed, one generally expects that for sums with flat amplitude, the extremal ranges should dominate (up to constants), unless there is a good reason why not.
	Assuming this,
	\begin{align} \label{eqn:C_r_average=main+error}
		\sum_{|t_r-T| \lesssim H} \widetilde{C}_r^2
		\approx \widecheck{W}(s_0) + \sum_{t_r \sim T/H} \widecheck{W}(s_r) \widetilde{C}_r^2
	\end{align}
	(here we have used that $\widetilde{C}_0 = C_0 = 1$ by \eqref{eqn:C_r_G_def}).
	We have fixed $H = T^{\frac{1}{3}+2\varepsilon}$ with an eye toward the Weyl bound, but we could have run the same argument with $H = T$, in which case \eqref{eqn:C_r_average=main+error} would be
	\begin{align*}
		\sum_{t_r \lesssim T} \widetilde{C}_r^2
		\approx \widecheck{W}(s_0) + \sum_{t_r \sim 1} \widecheck{W}(s_r) \widetilde{C}_r^2.
	\end{align*}
	In this case the sum on the right hand side contains only $O(1)$ many terms, so $\widecheck{W}(s_0)$ dominates by (2). Using this followed by (1), we get
	\begin{align} \label{eqn:heuristic_convexity}
		\sum_{t_r \lesssim T} \widetilde{C}_r^2
		\approx \widecheck{W}(s_0)
		\sim T^2.
	\end{align}
	This (heuristically) establishes Proposition~\ref{prop:convexity}, the convexity bound.
	
	Coming back to the Weyl bound and \eqref{eqn:C_r_average=main+error}, in view of (1) and (2) we denote
	\begin{align} \label{eqn:X_E_def}
		X = \widecheck{W}(s_0) \sim TH,
		\qquad
		E_{\true}
		= \Big|\sum_{t_r \sim T/H} \widecheck{W}(s_r) \widetilde{C}_r^2\Big|,
		\qquad
		E_{\Abs}
		= \sum_{t_r \sim T/H} |\widecheck{W}(s_r)| \widetilde{C}_r^2,
	\end{align}
	having in mind that $X$ is the main term in \eqref{eqn:C_r_average=main+error}, $E_{\true}$ is the ``true error" in \eqref{eqn:C_r_average=main+error}, and $E_{\Abs}$ is the bigger error obtained from estimating $E_{\true}$ by taking the absolute values inside the sum.
	By (4), the sum defining $E_{\true}$ is oscillatory, and the oscillation is fast enough that it appears very difficult to get any cancellation (see Remark~\ref{rem:Weyl_barrier}).
	Therefore, the best we can do is to trivially bound $E_{\true}$ by $E_{\Abs}$.
	This is the only place where our argument is not sharp.
	We estimate $E_{\Abs}$ using the convexity bound:
	\begin{align} \label{eqn:E_abs_convexity_bd}
		E_{\Abs}
		\lesssim M \sum_{t_r \sim T/H} \widetilde{C}_r^2
		\lesssim M(T/H)^2
		\qquad \text{where} \qquad
		M = \max_{t \sim T/H} |\widecheck{W}(\tfrac{1}{2}+it)|.
	\end{align}
	Since $|\widecheck{W}(\tfrac{1}{2}+it)|$ is flat by (5), we believe that this is sharp.
	Inserting this into \eqref{eqn:C_r_average=main+error},
	\begin{align} \label{eqn:expected_rigorous_bd}
		\sum_{|t_r-T| \lesssim H} \widetilde{C}_r^2
		= X + O(M(T/H)^2)
	\end{align}
	(we have changed $\approx$ in \eqref{eqn:C_r_average=main+error} to $=$ in \eqref{eqn:expected_rigorous_bd} because the error in $\approx$ should be much less than the error term in \eqref{eqn:expected_rigorous_bd}).
	We will be able to establish \eqref{eqn:expected_rigorous_bd} rigorously. Since the main term $X$ is of size $TH$, this implies \eqref{eqn:Lindelof_on_average} (i.e., we win) if and only if
	\begin{align} \label{eqn:win_condition}
		M(T/H)^2
		\lessapprox TH.
	\end{align}
	After a long computation involving hypergeometric asymptotics and stationary phase, we will find
	\begin{align} \label{eqn:M_size}
		M \sim \frac{H^{\frac{3}{2}}}{T^{\frac{1}{2}}}
	\end{align}
	(this follows from Proposition~\ref{prop:I_t_principal}).
	As a consequence, \eqref{eqn:win_condition} holds if and only if $H \gtrapprox T^{\frac{1}{3}}$. We have $H = T^{\frac{1}{3}+2\varepsilon}$, so we win, though just barely.
	On the other hand, to prove \eqref{eqn:Lindelof_on_average} for $H$ below $T^{\frac{1}{3}}$, we would need a bound for $E_{\true}$ better than $E_{\Abs}$.
	
	For the rigorous proof, there is no way to get around computing $M$.
	However, assuming some more heuristics, we can see that we should win without appealing to \eqref{eqn:M_size}, as follows.
	By \eqref{eqn:C_r_average=main+error},
	\begin{align} \label{eqn:expected_rigorous_bd_2}
		\sum_{|t_r-T| \lesssim H} \widetilde{C}_r^2
		= X + O(E_{\Abs}).
	\end{align}
	Since $X \sim TH$ and we can upper bound $E_{\Abs}$ sharply, \eqref{eqn:expected_rigorous_bd_2} implies \eqref{eqn:Lindelof_on_average} (i.e., we win) as long as
	\begin{align} \label{eqn:win_condition_2}
		E_{\Abs}
		\lessapprox X.
	\end{align}
	Let us suppose that the sum defining $E_{\true}$ has square root cancellation.
	Then
	\begin{align} \label{eqn:E_sqrt_cancellation}
		E_{\Abs}
		\sim \sqrt{(T/H)^2} E_{\true}
		= (T/H) E_{\true},
	\end{align}
	because by Weyl's law, this sum contains $\sim (T/H)^2$ many terms.
	We can also characterize $E_{\true}$ as the deviation of $\LHS\eqref{eqn:C_r_average=main+error}$ from the main term $X$.
	The generalization of square root cancellation to sums of pseudorandom \textit{nonnegative} numbers is square root deviation from the expected value.
	This would suggest
	\begin{align} \label{eqn:E_sqrt_deviation}
		E_{\true}
		\sim \frac{X}{\sqrt{TH}},
	\end{align}
	because $\LHS\eqref{eqn:C_r_average=main+error}$ is a sum over $\sim TH$ many terms.
	Putting \eqref{eqn:E_sqrt_cancellation} and \eqref{eqn:E_sqrt_deviation} together,
	\begin{align} \label{eqn:E_abs_size}
		E_{\Abs}
		\sim \frac{T^{\frac{1}{2}}}{H^{\frac{3}{2}}} X.
	\end{align}
	Thus
	\begin{align} \label{eqn:win_iff_H>T^1/3}
		\eqref{eqn:win_condition_2}
		\iff
		H \gtrapprox T^{\frac{1}{3}},
	\end{align}
	and as above, we win because $H = T^{\frac{1}{3}+2\varepsilon}$.
	We find it appealing that \eqref{eqn:win_iff_H>T^1/3} can be predicted without having to compute either the left or right hand sides of \eqref{eqn:win_condition_2} (we know how big $\RHS\eqref{eqn:win_condition_2}$ is, but we didn't use this in our derivation of \eqref{eqn:win_iff_H>T^1/3}).
	
	These heuristics also imply \eqref{eqn:M_size}: by \eqref{eqn:E_abs_size} and the expectation that \eqref{eqn:E_abs_convexity_bd} is sharp,
	\begin{align*}
		M(T/H)^2
		\sim \frac{T^{\frac{1}{2}}}{H^{\frac{3}{2}}} X.
	\end{align*}
	Inserting $X \sim TH$ and rearranging yields \eqref{eqn:M_size}.
	
	\section{Choice of averaged crossing equation} \label{sec:mu_def}
	
	To execute the strategy in Section~\ref{sec:strategy_heuristics}, we need to choose a measure $\mu$ on the right half-plane $\{\re z > 0\}$ which makes \eqref{eqn:W_approx_cutoff} hold, where as above, we define $W,\widecheck{W}$ as in Corollary~\ref{cor:crossing_averaged}.
	In particular, restricting \eqref{eqn:W_approx_cutoff} to the tempered part of the spectrum, we want $\mu$ to satisfy
	\begin{align} \label{eqn:mu_property}
		\int z^{2k} \widetilde{H}_{\frac{1}{2}+it}(1-z^2) \, d\mu(z)
		\approx \1_{|t-T| \lesssim H}
	\end{align}
	for $t \geq 0$; the left hand side is $W(\frac{1}{2}+it)$.
	
	Recall that Corollary~\ref{cor:crossing_averaged} is proved by averaging the crossing equation \eqref{eqn:crossing_2} against $\mu$.
	In general, one obtains asymptotic information from equations like \eqref{eqn:crossing_2} by sending the free parameter to the boundary of its domain, so in this case sending $z$ to $i\R$.
	This suggests that we take $\mu$ to be supported near $i\R$, with the distance to $i\R$ depending on $T$.
	The conformal block $\widetilde{H}_s(1-z^2)$ has singularities at $z=0$ and $z=\infty$, coming from the singularities of the hypergeometric.
	Thus in fact it makes sense to take $\mu$ supported either near $0$ or near $\infty$.
	Because of the $z \leftrightarrow z^{-1}$ symmetry of the crossing equation \eqref{eqn:crossing_2}, we might as well take $\mu$ supported near $0$ (it turns out that $\mu$ near $0$ is good for localizing in the u-channel, while by symmetry $\mu$ near $\infty$ is good for localizing in the t-channel).
	
	The critical range of $t$ is $t \sim T$, because this is where the right hand side of \eqref{eqn:mu_property} is nonzero.
	So suppose for now that $t \sim T$.
	Then for $z$ near $0$ in a certain range depending on $T$, we can approximate
	\begin{align} \label{eqn:hyp_approx_Bessel}
		\widetilde{H}_{\frac{1}{2}+it}(1-z^2)
		\approx \frac{1}{\pi} t^{4k-2} K_0(2tz)
	\end{align}
	in terms of the Bessel function $K_0$.
	This approximation is accurate up to a power saving error in $T$ (see \eqref{eqn:u_asymptotic_in_Bessels}).
	Right now, we will not say exactly what the range of $z$ is in which this holds, because it will hold when we need it.
	The appearance of the Bessel function can be understood as follows.
	Making the change of variable $w = tz$, \eqref{eqn:hyp_approx_Bessel} becomes
	\begin{align*}
		\widetilde{H}_{\frac{1}{2}+it}\Big(1 - \frac{w^2}{t^2}\Big)
		\approx \frac{1}{\pi} t^{4k-2} K_0(2w).
	\end{align*}
	The left hand side satisfies an ODE in $w$ which has regular singular points at $0,t,\infty$ (coming from the regular singular points $0,1,\infty$ of the hypergeometric ODE).
	As $t \to \infty$, two of these singular points coalesce at $\infty$, leading to an irregular singularity at $\infty$.
	This phenomenon is known as confluence.
	The simplest ODEs with a regular singularity at $0$ and an irregular singularity at $\infty$ are the Bessel ODEs.
	
	Suppose that $z$ is not only in the range where \eqref{eqn:hyp_approx_Bessel} holds, but also satisfies $|z| \gg T^{-1}$.
	Then since $t \sim T$, we have $|tz| \gg 1$, and we can asymptotically expand $K_0$ at infinity to get
	\begin{align} \label{eqn:hyp_approx_exp}
		\widetilde{H}_{\frac{1}{2}+it}(1-z^2)
		\approx \frac{t^{4k-\frac{5}{2}}}{2\sqrt{\pi z}} e^{-2tz}.
	\end{align}
	This is made precise by Proposition~\ref{prop:u_block_asymptotic}.
	
	With \eqref{eqn:hyp_approx_exp} in mind, we take $\mu$ to be the measure supported on the line $\re z = T^{-1}$, given by
	\begin{align} \label{eqn:mu_def}
		d\mu(z)
		= T^{-4k+\frac{5}{2}} H \1_{|y| \lesssim H^{-1}} e^{2iTy} z^{-2k+\frac{1}{2}} \, dy
		\qquad \text{where} \qquad
		z = T^{-1} + iy
	\end{align}
	(below we will take $\1_{|y| \lesssim H^{-1}}$ to be an explicit cutoff which is not quite smooth, but which is essentially so because it is exponentially small at its singular points).
	This is motivated by the following rough calculation: by \eqref{eqn:hyp_approx_exp}, continuing to denote $z = T^{-1}+iy$,
	\begin{align*}
		W(\tfrac{1}{2}+it)
		= \LHS\eqref{eqn:mu_property}
		&\approx \frac{t^{4k-\frac{5}{2}}}{2\sqrt{\pi}} \int_{|y| \lesssim H^{-1}} z^{2k-\frac{1}{2}} e^{-2tz} \, d\mu(z)
		\\&= \frac{1}{2\sqrt{\pi}} \Big(\frac{t}{T}\Big)^{4k-\frac{5}{2}} H \int_{|y| \lesssim H^{-1}} e^{-2t(T^{-1}+iy)} e^{2iTy} \, dy
		\\&= \frac{1}{2\sqrt{\pi}} \Big(\frac{t}{T}\Big)^{4k-\frac{5}{2}} H e^{-2t/T} \int_{|y| \lesssim H^{-1}} e^{2i(T-t)y} \, dy
		\\&\approx \1_{|t-T| \lesssim H},
	\end{align*}
	as desired.
	For technical reasons, we choose the cutoff $\1_{|y| \lesssim H^{-1}}$ in the definition of $\mu$ to be the truncated Gaussian $\1_{|y| \leq T^{\varepsilon}/H} \, e^{-(Hy)^2}$. Then
	\begin{align} \label{eqn:final_W_def}
		W(s)
		= T^{-4k+\frac{5}{2}} H \int_{-T^{\varepsilon}/H}^{T^{\varepsilon}/H} e^{-(Hy)^2} e^{2iTy} z^{\frac{1}{2}} \widetilde{H}_s(1-z^2) \, dy
	\end{align}
	and
	\begin{align} \label{eqn:final_W_check_def}
		\widecheck{W}(s)
		= T^{-4k+\frac{5}{2}} H \int_{-T^{\varepsilon}/H}^{T^{\varepsilon}/H} e^{-(Hy)^2} e^{2iTy} z^{-4k+\frac{1}{2}} \widetilde{H}_s(1-z^{-2}) \, dy,
	\end{align}
	where
	\begin{align} \label{eqn:y_range}
		z = T^{-1} + iy
		\qquad \text{with} \qquad
		|y| \leq T^{\varepsilon}/H = T^{-\frac{1}{3}-\varepsilon}.
	\end{align}
	Note that both $W$ and $\widecheck{W}$ are real-valued for $s \in (\frac{1}{2},1] \cup (\frac{1}{2}+i\R_{\geq 0})$, because under the involution $y \leftrightarrow -y$ of the domain of integration, each integrand goes to its complex conjugate.
	
	Applying Corollary~\ref{cor:crossing_averaged} with the above choice of $\mu$, we arrive at the specific averaged crossing equation which we will use to prove the Weyl bound.
	
	\begin{cor}[Our choice of averaged crossing equation] \label{cor:crossing_averaged_specific}
		Let $W,\widecheck{W}$ as in \eqref{eqn:final_W_def} and \eqref{eqn:final_W_check_def}. Then
		\begin{align*}
			\sum_{r=0}^{\infty} W(s_r) \widetilde{C}_r^2
			= \sum_{r=0}^{\infty} \widecheck{W}(s_r) \widetilde{C}_r^2,
		\end{align*}
		with absolute convergence on both sides.
	\end{cor}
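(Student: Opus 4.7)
The plan is to obtain Corollary~\ref{cor:crossing_averaged_specific} as a direct specialization of the general averaged crossing equation (Corollary~\ref{cor:crossing_averaged}), applied to the specific measure $\mu$ prescribed in \eqref{eqn:mu_def}. Since the statement is essentially a change of notation once one picks $\mu$, there are only three things to verify, and the main conceptual content has already been packaged into Corollary~\ref{cor:crossing_averaged}.

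First, I would confirm that $\mu$ qualifies as a valid test measure. Its support lies on the compact segment $\{z = T^{-1} + iy : |y| \leq T^{\varepsilon}/H\}$, which is contained in $\{\re z > 0\}$ because $\re z = T^{-1} > 0$. The density $T^{-4k+\frac{5}{2}} H \, e^{-(Hy)^2} e^{2iTy} z^{-2k+\frac{1}{2}}$ is continuous in $y$, so $\mu$ has finite total variation. The one nuance is that $\mu$ is a complex measure rather than a positive one; however, the proof of Corollary~\ref{cor:crossing_averaged} only uses that each side of the crossing equation \eqref{eqn:crossing_2} converges absolutely and locally uniformly in $z$, which makes integration against any finite compactly supported complex Borel measure legitimate and validates the interchange of sum and integral. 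So Corollary~\ref{cor:crossing_averaged} applies verbatim to our $\mu$.

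Second, I would carry out the algebraic simplification showing that the integrals defining $W(s)$ and $\widecheck{W}(s)$ in Corollary~\ref{cor:crossing_averaged} reduce to \eqref{eqn:final_W_def} and \eqref{eqn:final_W_check_def}. Substituting the explicit density of $\mu$, the powers of $z$ combine as $z^{2k} \cdot z^{-2k+\frac{1}{2}} = z^{\frac{1}{2}}$ in the u-channel integral and $z^{-2k} \cdot z^{-2k+\frac{1}{2}} = z^{-4k+\frac{1}{2}}$ in the t-channel integral; pulling out the constant prefactor $T^{-4k+\frac{5}{2}} H$ then produces exactly the formulas \eqref{eqn:final_W_def} and \eqref{eqn:final_W_check_def}.

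Third, I would simply invoke Corollary~\ref{cor:crossing_averaged} with this $\mu$ to conclude the stated identity, with absolute convergence of both sides inherited directly from that corollary. Since every step is bookkeeping, there is no real obstacle; the hard work is deferred to the subsequent analysis in Sections~\ref{sec:complementary}--\ref{sec:t_averaging}, where the nontrivial task of extracting asymptotics for $W(s_r)$ and $\widecheck{W}(s_r)$ (and verifying that the resulting $W$ really does localize to $|t_r - T| \lesssim H$ as suggested heuristically in Section~\ref{sec:mu_def}) will be carried out.
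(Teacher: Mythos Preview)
Your proposal is correct and follows the same approach as the paper, which simply says that the corollary is obtained by applying Corollary~\ref{cor:crossing_averaged} with the specific choice of $\mu$ from \eqref{eqn:mu_def}. Your write-up is in fact more explicit than the paper's, which leaves the verification that $\mu$ is admissible and the algebraic reduction to \eqref{eqn:final_W_def}--\eqref{eqn:final_W_check_def} implicit.
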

	
	\section{Comparison with Bernstein--Reznikov} \label{sec:Bernstein--Reznikov}
	
	In this section, we allow $f$ to be either holomorphic or Maass (we continue to take $f_1 = f_2 = f$).
	If $f$ is Maass, assume $f$ is real-valued, and set $k=0$ in \eqref{eqn:C_tilde_r_def} and \eqref{eqn:f_lift}.
	Recall from the table in Section~\ref{sec:intro} that Bernstein and Reznikov proved
	
	\begin{thm}[\cite{Bernstein--Reznikov_10}]
		Assume $f$ is Maass. Then
		\begin{align*}
			\sum_{|t_r-T| \leq T^{\frac{1}{3}}} \widetilde{C}_r^2
			\lessapprox T^{\frac{5}{3}}.
		\end{align*}
	\end{thm}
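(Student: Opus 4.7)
The plan is to adapt the averaged-crossing-equation strategy of Sections \ref{sec:strategy_heuristics}--\ref{sec:mu_def} to the Maass setting, with the non-factorizable test measure of \eqref{eqn:mu_def} replaced by a factorizable one. First, I would establish a Maass analog of Corollary \ref{cor:crossing_2}. The derivation of Section \ref{sec:crossing} would require modification since $f$ now generates a spherical principal (or complementary) series $\pi_f \subseteq L^2(\Gamma \backslash G)$ rather than a discrete series: the coherent states $\mathscr{O}(z)$ should be built from the $K$-finite vectors in $\pi_f$, and the spectral expansion of the resulting four-point function yields hypergeometric conformal blocks of the form $H_{s_r}$ but with parameters reflecting the non-holomorphic nature of $f$. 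One must also track a non-vanishing contribution from the discrete series part of $L^2(\Gamma \backslash G)$, since Lemma \ref{lem:vanishing_hom} no longer applies when $\pi = \pi_f$ is principal series.

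Next, I would choose a \emph{factorizable} test measure $\mu$: a Gaussian bump of width $\sim T^{-1}$ centered at the origin, modulated by $e^{2iTy}$ to localize in the spectral parameter $t_r$. The Bessel approximation \eqref{eqn:hyp_approx_Bessel} then gives $W(s_r) \approx \1_{|t_r - T| \lesssim T^{1/3}}$ as desired. However, the corresponding t-channel weight $\widecheck{W}$ is suboptimal: a stationary-phase analysis along the lines of Section \ref{sec:t_averaging} yields
\[
\max_{t \sim T^{2/3}} |\widecheck{W}(\tfrac{1}{2}+it)| \sim T^{\frac{1}{3}},
\]
whereas the non-factorizable choice \eqref{eqn:mu_def} achieves the optimal estimate \eqref{eqn:M_size} of size $\sim 1$ at the corresponding scale. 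This factor-of-$T^{1/3}$ loss is the price of using a factorizable test vector.

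Finally, I would insert these estimates into the averaged crossing equation. The t-channel sum is essentially supported on $t_r \lesssim T^{2/3}$ (by property (3) of Section \ref{sec:strategy_heuristics}, which still holds in the factorizable setting); bounding it using the convexity bound (Proposition \ref{prop:convexity}, adapted to $k=0$) gives $\sum_{t_r \lesssim T^{2/3}} \widetilde{C}_r^2 \lesssim T^{4/3}$, whence $E_{\Abs} \lesssim T^{\frac{1}{3}} \cdot T^{\frac{4}{3}} = T^{\frac{5}{3}}$. The main term $X \sim T^{4/3}$ is smaller than this error, so the claimed bound follows. The main obstacle I anticipate is the careful derivation of the Maass analog of Section \ref{sec:crossing}: the discrete-series contribution to the spectral expansion must either be shown to decay sufficiently fast or be absorbed into the bookkeeping of the crossing equation via its own conformal-block-like contributions, which must then be estimated analogously by stationary phase.
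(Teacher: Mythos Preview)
This theorem is a citation to \cite{Bernstein--Reznikov_10}; the paper does not give its own proof but summarizes Bernstein--Reznikov's argument in Section~\ref{subsec:abstract_heuristics}. Their method is quite different from what you propose. They work directly in the representation-theoretic framework of Proposition~\ref{prop:crossing_ineq} and take the test vectors to be \emph{pure tensors of weight vectors}: $\Psi = f_n \otimes f_n \otimes \overline{f_n} \otimes \overline{f_n}$ with $4n = T + O(1)$, and a companion $\widecheck{\Psi}$ built from $f_n, f_{n+1}$ satisfying $\Psi|_{\Gamma\backslash G} \leq \widecheck{\Psi}|_{\Gamma\backslash G}$. Both $W$ and $\widecheck{W}$ are then manifestly nonnegative (they are squared $L^2$-norms, see \eqref{eqn:I_r^mod_BR}), and \cite[Proposition~9.1]{Bernstein--Reznikov_10} gives the asymptotic \eqref{eqn:W_BR_asymptotic}: $W(s_r) \approx \1_{t_r \lesssim T}\, t_r^{-5/3} \Ai((4n-t_r)/t_r^{1/3})^2$. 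The entire loss to $T^{5/3}$ comes from the $(1-x)^{-1/2}$ tail of $\Ai(x)^2$ as $x \to -\infty$, which makes \eqref{eqn:W_lower_bd} non-sharp on the \emph{u-channel} side. Your diagnosis locates the loss on the t-channel side (a suboptimal $\widecheck{W}$), which is backwards.

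Your proposal also has internal problems. First, ``factorizable test measure'' is a misnomer: as Section~\ref{subsec:abstract_heuristics} explains, averaging the crossing equation against any nontrivial measure $\mu$ on $z$-space produces a \emph{non}-factorizable $\Psi$; Bernstein--Reznikov's factorizable vectors are genuine pure tensors of $K$-finite vectors, not measures. Second, the numerology is off: a Gaussian in $y$ of width $\sim T^{-1}$ has Fourier-dual width $\sim T$, so it would localize $W$ to $|t_r - T| \lesssim T$, not $T^{1/3}$; to get width $T^{1/3}$ you need $y$-width $\sim T^{-1/3}$, which is exactly the paper's choice \eqref{eqn:mu_def}. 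Third, the claim $\max_{t \sim T^{2/3}} |\widecheck{W}| \sim T^{1/3}$ is asserted without computation; with the paper's own measure one gets $M \sim 1$ at that scale by \eqref{eqn:M_size}, and you have not identified what in your choice would degrade this by exactly $T^{1/3}$. Finally, deriving a Maass analogue of Corollary~\ref{cor:crossing_2} is a genuine obstacle the paper itself does not resolve (see the discussion around Remark~\ref{rem:use_of_discrete_series}).
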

	
	This is weaker than Lindel\"of on average, because the exponent on the right hand side is $\frac{5}{3}$ rather than $\frac{4}{3}$.
	This is enough for a subconvex bound, but not for the Weyl bound.
	
	Our method and \cite{Bernstein--Reznikov_10} can be placed in a common representation-theoretic framework.
	We describe this framework in Subsection~\ref{subsec:abstract_framework}, using notation similar to Bernstein and Reznikov's.
	Then in Subsection~\ref{subsec:abstract_heuristics}, we explain that the difference between our method and theirs is in the choice of test vectors. We also explain heuristically why their approach to choosing test vectors, which uses positivity in a crucial way, cannot give the Weyl bound.
	
	\subsection{Common representation-theoretic framework} \label{subsec:abstract_framework}
	
	Let $\pi$ be the smooth subrepresentation of $C^{\infty}(\Gamma \backslash G)$ generated by $f$.
	Let $I \in \Hom_G(\pi \otimes \pi \otimes \overline{\pi} \otimes \overline{\pi},\C)$ be the integration functional
	\begin{align*}
		I(\psi_1 \otimes \psi_2 \otimes \psi_3 \otimes \psi_4)
		= \int_{\Gamma \backslash G} \psi_1 \psi_2 \psi_3 \psi_4.
	\end{align*}
	Applying \eqref{eqn:projections_commute} with $u = \psi_1\psi_3$ and $v = \psi_2\psi_4$, we get
	\begin{align} \label{eqn:I_decomp_on_pure_tensors}
		I(\psi_1 \otimes \psi_2 \otimes \psi_3 \otimes \psi_4)
		= I_{\D}(\psi_1 \otimes \psi_2 \otimes \psi_3 \otimes \psi_4) + \sum_{r=0}^{\infty} I_r(\psi_1 \otimes \psi_2 \otimes \psi_3 \otimes \psi_4),
	\end{align}
	where $I_{\D}, I_r \in \Hom_G(\pi \otimes \pi \otimes \overline{\pi} \otimes \overline{\pi},\C)$ are defined by
	\begin{align} \label{eqn:I_D_def}
		I_{\D}(\psi_1 \otimes \psi_2 \otimes \psi_3 \otimes \psi_4)
		= \int_{\Gamma \backslash G} P_{\D}(\psi_1 \psi_3) P_{\D}(\psi_2 \psi_4)
	\end{align}
	and
	\begin{align} \label{eqn:I_r_def}
		I_r(\psi_1 \otimes \psi_2 \otimes \psi_3 \otimes \psi_4)
		= \int_{\Gamma \backslash G} P_r(\psi_1 \psi_3) P_r(\psi_2 \psi_4)
	\end{align}
	(here the notation is as in \eqref{eqn:L^2(Gamma_mod_G)_decomp}).
	Pure tensors span $\pi \otimes \pi \otimes \overline{\pi} \otimes \overline{\pi}$, so \eqref{eqn:I_decomp_on_pure_tensors} implies
	\begin{align} \label{eqn:I_decomp_general}
		I = I_{\D} + \sum_{r=0}^{\infty} I_r.
	\end{align}
	
	\begin{prop} \label{prop:I_decomp_without_D}
		Let $\Psi \in \pi \otimes \pi \otimes \overline{\pi} \otimes \overline{\pi}$.
		If $f$ is Maass, assume $\Psi$ is a linear combination of terms of the form $\psi_1 \otimes \psi_2 \otimes \psi_3 \otimes \psi_4$ with at least one of $\psi_1\psi_3$ and $\psi_2\psi_4$ being $K$-invariant. Then
		\begin{align*}
			I(\Psi)
			= \sum_{r=0}^{\infty} I_r(\Psi).
		\end{align*}
	\end{prop}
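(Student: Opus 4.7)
The plan is to use the decomposition \eqref{eqn:I_decomp_general} and reduce the proposition to the vanishing statement $I_{\D}(\Psi) = 0$. The holomorphic and Maass cases will require separate arguments, with the key representation-theoretic input being different in each case.

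In the holomorphic case, no extra hypothesis on $\Psi$ is needed. I would extend Proposition~\ref{prop:I_d_vanishes} from tensors of the form $\mathscr{O}(z_1)\otimes \mathscr{O}(z_2)\otimes \widetilde{\mathscr{O}}(z_3)\otimes \widetilde{\mathscr{O}}(z_4)$ to arbitrary elements of $\pi\otimes\pi\otimes\overline{\pi}\otimes\overline{\pi}$, using exactly the same input. Namely, since $\pi$ is a (smooth) discrete series, Lemma~\ref{lem:vanishing_hom} forces the $G$-equivariant map $\pi\otimes\overline{\pi}\to\pi'$ given by multiplication followed by orthogonal projection to vanish for every discrete series subrepresentation $\pi'\subseteq\D$. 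Summing over $\pi'\subseteq\D$ yields $P_{\D}(\psi\psi')=0$ for all $\psi\in\pi$, $\psi'\in\overline{\pi}$. Plugging into \eqref{eqn:I_D_def} shows $I_{\D}$ vanishes on pure tensors, hence identically by bilinearity.

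In the Maass case, I would invoke the $K$-invariance hypothesis directly. By linearity it suffices to treat a single pure tensor $\psi_1\otimes\psi_2\otimes\psi_3\otimes\psi_4$ with, say, $\psi_1\psi_3$ being $K$-invariant. Since $\D$ is a $G$-subrepresentation of $L^2(\Gamma\backslash G)$, the orthogonal projection $P_{\D}$ is $G$-equivariant and in particular commutes with $K$, so $P_{\D}(\psi_1\psi_3)$ is $K$-invariant. But $\D$ is a Hilbert sum of discrete series representations of $\PSL_2(\R)$, each of which has minimal $K$-type of weight $\geq 2$ and therefore no nonzero $K$-fixed vector. Hence $P_{\D}(\psi_1\psi_3)=0$, and the defining formula \eqref{eqn:I_D_def} gives $I_{\D}(\psi_1\otimes\psi_2\otimes\psi_3\otimes\psi_4)=0$.

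There is no real obstacle here — the proof is essentially a bookkeeping extension of Proposition~\ref{prop:I_d_vanishes}. The only subtle point to flag is \emph{why} the $K$-invariance hypothesis is essential in the Maass case: when $\pi$ is a unitary principal series, the tensor product $\pi\otimes\overline{\pi}$ does contain discrete series, so $P_{\D}(\psi_1\psi_3)$ need not vanish for generic $\psi_1,\psi_3$; the $K$-invariance assumption circumvents this by exploiting the non-spherical nature of discrete series. Combining the two cases with \eqref{eqn:I_decomp_general} yields $I(\Psi)=\sum_r I_r(\Psi)$ as claimed.
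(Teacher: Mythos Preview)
Your proposal is correct and follows essentially the same route as the paper: reduce via \eqref{eqn:I_decomp_general} to $I_{\D}(\Psi)=0$, then in the holomorphic case invoke the proof of Proposition~\ref{prop:I_d_vanishes} (i.e., Lemma~\ref{lem:vanishing_hom}) to kill $P_{\D}(\psi\psi')$, and in the Maass case use that $\D$ has no $K$-fixed vectors to kill $P_{\D}$ of the $K$-invariant product. Your added remark on why the $K$-invariance hypothesis is genuinely needed in the Maass case is a nice clarification beyond what the paper spells out.
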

	
	\begin{proof}
		In view of \eqref{eqn:I_decomp_general}, it suffices to show that $I_{\D}(\Psi) = 0$.
		If $f$ is holomorphic, so $\pi$ is discrete series, then this follows from the proof of Proposition~\ref{prop:I_d_vanishes}.
		If $f$ is Maass, then since we're assuming $\Psi$ is of the above form, it suffices to check that
		\begin{align*}
			I_{\D}(\psi_1 \otimes \psi_2 \otimes \psi_3 \otimes \psi_4) = 0
			\qquad \text{whenever} \qquad
			\psi_1\psi_3 \text{ or } \psi_2\psi_4 \text{ is } K\text{-invariant}.
		\end{align*}
		Indeed, the $K$-invariance condition forces at least one of the two factors in the integrand in \eqref{eqn:I_D_def} to be zero.
	\end{proof}
	
	Define $M_r \in \Hom_G(\pi \otimes \overline{\pi}, \pi_r)$ to be the ``multiplication followed by projection" map
	\begin{align*}
		M_r(\psi \otimes \psi')
		= P_r(\psi \psi').
	\end{align*}
	If $f$ is Maass, then $f$ is real-valued, so $\overline{\pi} = \pi$ inside $C^{\infty}(\Gamma \backslash G)$, and commutativity of multiplication implies that $M_r$ factors through $\Sym^2\pi$.
	Thus in general, if we denote
	\begin{align*}
		\Pi
		= \begin{cases}
			\pi \otimes \overline{\pi} &\text{if } f \text{ is holomorphic}, \\
			\Sym^2\pi &\text{if } f \text{ is Maass},
		\end{cases}
	\end{align*}
	then $M_r \in \Hom_G(\Pi, \pi_r)$.
	This notation is useful because
	\begin{align} \label{eqn:Hom_one-dim}
		\dim \Hom_G(\Pi, \pi_r) = 1;
	\end{align}
	this is discussed in \cite[Section~2.4.2]{Bernstein--Reznikov_04}.
	Furthermore, there exists a unique map $M_r^{\Mod} \in \Hom_G(\Pi,\pi_r)$ such that $M_r^{\Mod}(f \otimes \overline{f}) = \varphi_r$.
	Since $f \otimes \overline{f}$ is $K$-invariant,
	\begin{align*}
		M_r(f \otimes \overline{f})
		= P_r(|f|^2)
		= P_{\pi_r^K}(|f|^2)
		= \langle |f|^2, \varphi_r \rangle \varphi_r
		= C_r \varphi_r.
	\end{align*}
	Thus by the one-dimensionality \eqref{eqn:Hom_one-dim},
	\begin{align} \label{eqn:M_r=C_rM_r^mod}
		M_r = C_r M_r^{\Mod}.
	\end{align}
	Following \cite{Bernstein--Reznikov_10}, we use the superscript ``mod" because we think of $M_r^{\Mod}$ as a purely representation-theoretically defined ``model" of $M_r$. Indeed, we see from the unique characterization of $M_r^{\Mod}$ that its definition is independent of the inclusions $\pi \hookrightarrow C^{\infty}(\Gamma \backslash G)$ and $\pi_r \hookrightarrow L^2(\Gamma \backslash G)$, whereas the definition of $M_r$ clearly depends on the realizations of $\pi$ and $\pi_r$ as spaces of functions on $\Gamma \backslash G$.
	
	Define $I_r^{\Mod} \in \Hom_G(\pi \otimes \pi \otimes \overline{\pi} \otimes \overline{\pi}, \C)$ by
	\begin{align*}
		I_r^{\Mod}(\psi_1 \otimes \psi_2 \otimes \psi_3 \otimes \psi_4)
		= \int_{\Gamma \backslash G} M_r^{\Mod}(\psi_1 \psi_3) M_r^{\Mod}(\psi_2 \psi_4).
	\end{align*}
	Then by \eqref{eqn:M_r=C_rM_r^mod} and the definition \eqref{eqn:I_r_def} of $I_r$,
	\begin{align} \label{eqn:I_r=C_r^2I_r^mod}
		I_r = C_r^2 I_r^{\Mod}.
	\end{align}
	Similarly to \eqref{eqn:tilde_H_s_def}, let
	\begin{align} \label{eqn:I_tilde_def}
		\widetilde{I}_r^{\Mod}
		= \begin{cases}
			I_r^{\Mod} &\text{if } t_r \leq 1, \\
			t_r^{4k-2} e^{-\pi t_r} I_r^{\Mod} &\text{if } t_r > 1,
		\end{cases}
		\qquad \text{so that} \qquad
		I_r
		= C_r^2 I_r^{\Mod}
		= \widetilde{C}_r^2 \widetilde{I}_r^{\Mod}.
	\end{align}
	
	Since $\pi \subseteq C^{\infty}(\Gamma \backslash G)$, an element $\Psi \in \pi \otimes \pi \otimes \overline{\pi} \otimes \overline{\pi}$ is naturally a function on $(\Gamma \backslash G)^4$. Let $\Psi|_{\Gamma \backslash G}$ denote the restriction of $\Psi$ to the diagonal, and view $\Psi|_{\Gamma \backslash G}$ as a function on $\Gamma \backslash G$. For example, if $\Psi$ is a pure tensor $\psi_1 \otimes \psi_2 \otimes \psi_3 \otimes \psi_4$, then $\Psi|_{\Gamma \backslash G}$ is the pointwise product of the $\psi_i$.
	Consequently, for general $\Psi$,
	\begin{align} \label{eqn:I_formula_general_input}
		I(\Psi)
		= \int_{\Gamma \backslash G} \Psi|_{\Gamma \backslash G}.
	\end{align}
	
	\begin{prop}[Representation-theoretic crossing inequality] \label{prop:crossing_ineq}
		Let $\Psi,\widecheck{\Psi} \in \pi \otimes \pi \otimes \overline{\pi} \otimes \overline{\pi}$ satisfy the pointwise inequality $\Psi|_{\Gamma \backslash G} \leq \widecheck{\Psi}|_{\Gamma \backslash G}$.
		If $f$ is Maass, assume $\Psi,\widecheck{\Psi}$ both obey the hypothesis of Proposition~\ref{prop:I_decomp_without_D}.
		Denote
		\begin{align*}
			W(s_r)
			= \widetilde{I}_r^{\Mod}(\Psi)
			\qquad \text{and} \qquad
			\widecheck{W}(s_r)
			= \widetilde{I}_r^{\Mod}(\widecheck{\Psi}).
		\end{align*}
		Then
		\begin{align} \label{eqn:crossing_ineq}
			\sum_{r=0}^{\infty} W(s_r) \widetilde{C}_r^2
			\leq \sum_{r=0}^{\infty} \widecheck{W}(s_r) \widetilde{C}_r^2.
		\end{align}
	\end{prop}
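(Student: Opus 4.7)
The plan is simply to unpack both sides of \eqref{eqn:crossing_ineq} via the definitions of $W, \widecheck{W}, \widetilde{I}_r^{\Mod}$, collapse the resulting spectral sums to honest integrals over $\Gamma \backslash G$ using Proposition~\ref{prop:I_decomp_without_D}, and then apply the hypothesis pointwise.

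Concretely, from \eqref{eqn:I_tilde_def} we have $I_r = \widetilde{C}_r^2 \widetilde{I}_r^{\Mod}$, and by the definitions $W(s_r) = \widetilde{I}_r^{\Mod}(\Psi)$ and $\widecheck{W}(s_r) = \widetilde{I}_r^{\Mod}(\widecheck{\Psi})$ this gives
\[
\sum_{r=0}^{\infty} W(s_r) \widetilde{C}_r^2 = \sum_{r=0}^{\infty} I_r(\Psi), \qquad \sum_{r=0}^{\infty} \widecheck{W}(s_r) \widetilde{C}_r^2 = \sum_{r=0}^{\infty} I_r(\widecheck{\Psi}).
\]
I then invoke Proposition~\ref{prop:I_decomp_without_D}: if $f$ is holomorphic, its hypothesis is vacuous (the argument of Proposition~\ref{prop:I_d_vanishes} already kills $I_{\D}$ via Lemma~\ref{lem:vanishing_hom}); if $f$ is Maass, it is exactly the standing assumption on $\Psi$ and $\widecheck{\Psi}$. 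In either case the two spectral sums collapse to $I(\Psi)$ and $I(\widecheck{\Psi})$, which by \eqref{eqn:I_formula_general_input} equal $\int_{\Gamma \backslash G} \Psi|_{\Gamma \backslash G}$ and $\int_{\Gamma \backslash G} \widecheck{\Psi}|_{\Gamma \backslash G}$, respectively. Since both restrictions are real-valued (as required for the pointwise inequality to make sense), the hypothesis $\Psi|_{\Gamma \backslash G} \leq \widecheck{\Psi}|_{\Gamma \backslash G}$ and monotonicity of the integral yield \eqref{eqn:crossing_ineq}.

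There is no genuine obstacle here: the proposition is a purely formal consequence of the spectral decomposition \eqref{eqn:L^2(Gamma_mod_G)_decomp}, the tautology \eqref{eqn:I_formula_general_input}, and Proposition~\ref{prop:I_decomp_without_D}. The only point that warrants a sanity check is the implicit interchange of summation and integration hiding in the identity $\sum_r I_r(\Psi) = I(\Psi)$; but this is already absorbed into Proposition~\ref{prop:I_decomp_without_D} itself, whose proof rests on Parseval applied to the pair $\psi_1\psi_3, \psi_2\psi_4 \in L^2(\Gamma \backslash G)$ as in \eqref{eqn:projections_commute}.
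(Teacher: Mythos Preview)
Your proof is correct and follows essentially the same approach as the paper: both arguments combine \eqref{eqn:I_tilde_def}, Proposition~\ref{prop:I_decomp_without_D}, and \eqref{eqn:I_formula_general_input} to reduce the inequality of spectral sums to the pointwise inequality of integrals. The only cosmetic difference is that the paper first deduces $I(\Psi) \leq I(\widecheck{\Psi})$ from the pointwise hypothesis and then spectrally expands, whereas you run the logic in reverse.
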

	
	\begin{proof}
		By \eqref{eqn:I_formula_general_input} and the pointwise inequality $\Psi|_{\Gamma \backslash G} \leq \widecheck{\Psi}|_{\Gamma \backslash G}$, we have $I(\Psi) \leq I(\widecheck{\Psi})$.
		Spectrally expanding both sides of this inequality via Proposition~\ref{prop:I_decomp_without_D}, and then inserting \eqref{eqn:I_tilde_def}, we obtain \eqref{eqn:crossing_ineq}.
	\end{proof}
	
	Our strategy and Bernstein--Reznikov's can now be stated in the same language: in order to upper bound
	\begin{align} \label{eqn:to_estimate_for_us_and_BR}
		\sum_{|t_r-T| \leq T^{\frac{1}{3}}} \widetilde{C}_r^2,
	\end{align}
	choose $\Psi,\widecheck{\Psi}$ as in Proposition~\ref{prop:crossing_ineq} such that
	\begin{align} \label{eqn:W_lower_bd}
		W(s_r)
		\gtrsim \1_{|t_r-T| \lesssim T^{1/3}}
	\end{align}
	(maybe up to a small additive error), and then trivially bound $\RHS\eqref{eqn:crossing_ineq}$ by dyadic decomposition and the convexity bound (Proposition~\ref{prop:convexity}).
	We call $\Psi,\widecheck{\Psi}$ \textit{test vectors}.
	
	\subsection{Difference in choice of test vectors} \label{subsec:abstract_heuristics}
	
	We depart from \cite{Bernstein--Reznikov_10} in our choice of test vectors $\Psi,\widecheck{\Psi}$.
	In this subsection, we first explain how Corollary~\ref{cor:crossing_averaged_specific} can be seen as a special case of Proposition~\ref{prop:crossing_ineq}, and then we describe Bernstein--Reznikov's choice of $\Psi,\widecheck{\Psi}$ and explain why it does not give a sharp bound for \eqref{eqn:to_estimate_for_us_and_BR}.
	
	Given $\vec{z} = (z_1,z_2,z_3,z_4)$ as in Proposition~\ref{prop:conf_block_computation}, let
	\begin{align} \label{eqn:Psi_z_def}
		\Psi_{\vec{z}}
		= z_{12}^{2k} z_{34}^{2k} \,
		\mathscr{O}(z_1) \otimes \mathscr{O}(z_2) \otimes \widetilde{\mathscr{O}}(z_3) \otimes \widetilde{\mathscr{O}}(z_4).
	\end{align}
	Then Proposition~\ref{prop:conf_block_computation} says that
	\begin{align} \label{eqn:conf_block_computation_restated}
		I_r(\Psi_{\vec{z}})
		= C_r^2 z^{2k} H_{s_r}(z),
	\end{align}
	where $z$ is the cross-ratio \eqref{eqn:cross-ratio} of $\vec{z}$. Thus by \eqref{eqn:I_r=C_r^2I_r^mod},
	\begin{align} \label{eqn:I_r^mod_computation}
		I_r^{\Mod}(\Psi_{\vec{z}})
		= z^{2k} H_{s_r}(z)
	\end{align}
	(technically \eqref{eqn:conf_block_computation_restated} doesn't imply \eqref{eqn:I_r^mod_computation} when $C_r = 0$, but the proof of Proposition~\ref{prop:conf_block_computation} gives \eqref{eqn:I_r^mod_computation}).
	Let $\widecheck{\Psi}_{\vec{z}}$ be defined by the same formula \eqref{eqn:Psi_z_def} but with $z_1,z_2$ switched.
	The cross-ratio of $z_2,z_1,z_3,z_4$ is $\frac{z}{z-1}$, so \eqref{eqn:I_r^mod_computation} is equivalent to
	\begin{align} \label{eqn:I_r^mod_check_computation}
		I_r^{\Mod}(\widecheck{\Psi}_{\vec{z}})
		= \Big(\frac{z}{z-1}\Big)^{2k} H_{s_r}\Big(\frac{z}{z-1}\Big).
	\end{align}
	Since multiplication is commutative and the exponent $2k$ is even,
	\begin{align} \label{eqn:pointwise_equality}
		\Psi_{\vec{z}}|_{\Gamma \backslash G}
		= \widecheck{\Psi}_{\vec{z}}|_{\Gamma \backslash G}.
	\end{align}
	Thus, applying Proposition~\ref{prop:crossing_ineq} with $\Psi = \Psi_{\vec{z}}$ and $\widecheck{\Psi} = \widecheck{\Psi}_{\vec{z}}$, we get \eqref{eqn:crossing_ineq} with equality because \eqref{eqn:pointwise_equality} is an equality.
	Using \eqref{eqn:I_tilde_def} to rewrite the summands in \eqref{eqn:crossing_ineq} as
	\begin{align*}
		W(s_r) \widetilde{C}_r^2
		= I_r^{\Mod}(\Psi) C_r^2
		\qquad \text{and} \qquad
		\widecheck{W}(s_r) \widetilde{C}_r^2
		= I_r^{\Mod}(\widecheck{\Psi}) C_r^2,
	\end{align*}
	and then inserting \eqref{eqn:I_r^mod_computation} and \eqref{eqn:I_r^mod_check_computation}, we obtain the crossing equation \eqref{eqn:crossing} from Corollary~\ref{cor:crossing}.
	Now, the averaged crossing equation which we will use to prove the Weyl bound, namely Corollary~\ref{cor:crossing_averaged_specific}, was derived by averaging Corollary~\ref{cor:crossing} over $z$.
	Thus Corollary~\ref{cor:crossing_averaged_specific} can be obtained from Proposition~\ref{prop:crossing_ineq} by taking $\Psi,\widecheck{\Psi}$ to be suitable weighted averages of $\Psi_{\vec{z}}, \widecheck{\Psi}_{\vec{z}}$ over $\vec{z}$, with the same weight. More precisely, there exists a measure $\nu$, such that Proposition~\ref{prop:crossing_ineq} specializes to Corollary~\ref{cor:crossing_averaged_specific} when
	\begin{align*}
		\Psi = \int \Psi_{\vec{z}} \, d\nu(\pvec{z})
		\qquad \text{and} \qquad
		\widecheck{\Psi}
		= \int \widecheck{\Psi}_{\vec{z}} \, d\nu(\pvec{z}).
	\end{align*}
	Here the pushforward of $\nu$ via the cross-ratio map $\vec{z} \mapsto z$ is related to the measure \eqref{eqn:mu_def} by the same change of variable used to get from Corollary~\ref{cor:crossing} to Corollary~\ref{cor:crossing_2}.
	
	In summary, our choice of test vector $\Psi$ is a certain weighted average of the pure tensors \eqref{eqn:Psi_z_def}, and our choice of $\widecheck{\Psi}$ is given by switching the first and second coordinates in $\Psi$.
	The weight with respect to which we average was chosen carefully in Section~\ref{sec:mu_def} (in slightly different language), to make \eqref{eqn:W_lower_bd} sharp.
	
	In contrast, with Bernstein and Reznikov's choice of $\Psi$, \eqref{eqn:W_lower_bd} is not sharp.
	This is the source of their loss in estimating \eqref{eqn:to_estimate_for_us_and_BR}; the rest of their argument is sharp.
	They build $\Psi, \widecheck{\Psi}$ as follows.
	For each $n \in \Z_{\geq 0}$, let $f_n \in \pi$ be an element of weight $2n$ with $\|f_n\|_{L^2(\Gamma \backslash G)} = 1$. Fix some $n$ such that $4n = T+O(1)$, and let
	\begin{align*}
		\Psi = f_n \otimes f_n \otimes \overline{f_n} \otimes \overline{f_n}
	\end{align*}
	and
	\begin{align*}
		\widecheck{\Psi}
		= f_n \otimes f_n \otimes \overline{f_n} \otimes \overline{f_n}
		+ 2 f_n \otimes f_{n+1} \otimes \overline{f_n} \otimes \overline{f_{n+1}}
		+ f_{n+1} \otimes f_{n+1} \otimes \overline{f_{n+1}} \otimes \overline{f_{n+1}}.
	\end{align*}
	Then
	\begin{align*}
		\Psi|_{\Gamma \backslash G}
		= |f_n|^4
		\qquad \text{and} \qquad
		\widecheck{\Psi}|_{\Gamma \backslash G}
		= (|f_n|^2 + |f_{n+1}|^2)^2.
	\end{align*}
	In particular, the pointwise inequality $\Psi|_{\Gamma \backslash G} \leq \widecheck{\Psi}|_{\Gamma \backslash G}$ clearly holds, so $\Psi,\widecheck{\Psi}$ can be used as test vectors in Proposition~\ref{prop:crossing_ineq}.
	One has
	\begin{align} \label{eqn:I_r^mod_BR}
		I_r^{\Mod}(\Psi)
		= \|P_r(|f_n|^2)\|_{L^2(\Gamma \backslash G)}^2
		\qquad \text{and} \qquad
		I_r^{\Mod}(\widecheck{\Psi})
		= \|P_r(|f_n|^2) + P_r(|f_{n+1}|^2)\|_{L^2(\Gamma \backslash G)}^2.
	\end{align}
	Define $W,\widecheck{W}$ as in Proposition~\ref{prop:crossing_ineq}.
	Then by \eqref{eqn:I_r^mod_BR}, both $W$ and $\widecheck{W}$ are nonnegative. The nonnegativity of $W$ is a good sign, because \eqref{eqn:W_lower_bd} is supposed to hold.
	The nonnegativity of $\widecheck{W}$ is a bad sign, because it indicates that \eqref{eqn:W_lower_bd} is not sharp. Indeed, if \eqref{eqn:W_lower_bd} were sharp, then by the property (4) in Section~\ref{sec:strategy_heuristics}, we would expect $\widecheck{W}$ to be oscillatory.
	Bernstein and Reznikov compute an asymptotic for $W$: in our notation, \cite[Proposition~9.1]{Bernstein--Reznikov_10} implies that
	\begin{align} \label{eqn:W_BR_asymptotic}
		W(s_r)
		\approx \1_{t_r \lesssim T} \, t_r^{-5/3} \Ai\Big(\frac{4n-t_r}{t_r^{1/3}}\Big)^2,
	\end{align}
	where $\Ai(\cdot)^2$ is the square of the Airy function. Technically this doesn't satisfy \eqref{eqn:W_lower_bd}, but it does after renormalizing by the constant $T^{\frac{5}{3}}$, which is all that matters; if one wishes, one can multiply $\Psi, \widecheck{\Psi}$ by $T^{\frac{5}{3}}$ to make \eqref{eqn:W_lower_bd} hold on the nose. We shall ignore this trivial issue.
	If $\Ai(\cdot)^2$ were essentially a bump function at the origin, then \eqref{eqn:W_lower_bd} would be sharp. However, we don't expect \eqref{eqn:W_lower_bd} to be sharp, and indeed it is not --- $\Ai(x)^2$ has a tail as $x \to -\infty$ which decays like $(1-x)^{-\frac{1}{2}}$. This tail is visible in the following plot of $\Ai(\cdot)^2$.
	\begin{figure}[H]
		\centering
		\includegraphics[width=0.7\textwidth]{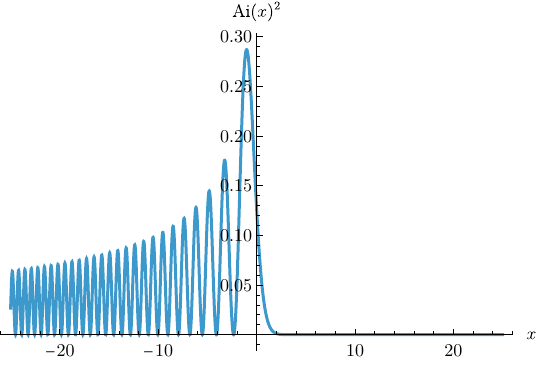}
	\end{figure}
	
	Consequently, Bernstein and Reznikov's bound of $T^{\frac{5}{3}}$ is not Lindel\"of on average for \eqref{eqn:to_estimate_for_us_and_BR}, but rather for
	\begin{align*}
		\sum_{T \leq t_r \lesssim T} \Big(1 + \frac{t_r-T}{T^{\frac{1}{3}}}\Big)^{-\frac{1}{2}} \widetilde{C}_r^2
	\end{align*}
	(recall that in \eqref{eqn:W_BR_asymptotic}, $n$ satisfies $4n = T+O(1)$).
	Indeed, assuming Lindel\"of, the dominant contribution to this sum is from terms with $t_r \in [2T,3T]$, say, in which case the coefficient in front of $\widetilde{C}_r^2$ is $\sim T^{-\frac{1}{3}}$. There are $\sim T^2$ many such terms, so the Lindel\"of-on-average bound is $T^2 T^{-\frac{1}{3}} = T^{\frac{5}{3}}$.
	
	To conclude, the reason we improve on \cite{Bernstein--Reznikov_10} is that our choice of $\Psi$ makes \eqref{eqn:W_lower_bd} sharp.
	
	\begin{rem}[Non-factorizable test vectors] \label{rem:non-factor}
		In order to make \eqref{eqn:W_lower_bd} sharp, we have to take $\Psi$ to be quite complicated. Specifically, our $\Psi \in \pi \otimes \pi \otimes \overline{\pi} \otimes \overline{\pi}$ is not a finite linear combination of pure tensors, but rather a weighted average over a continuous family of the pure tensors \eqref{eqn:Psi_z_def}.
		The possibility of using non-factorizable test vectors to go beyond \cite{Bernstein--Reznikov_10} is discussed in \cite[Section~1.3]{Blomer--Jana--Nelson}, and at the top of p.1179 they propose a class of potential test vectors.
		For $\Psi$ in their class, $W$ is a square and hence nonnegative, which makes \eqref{eqn:W_lower_bd} easier to verify.
		Our $\Psi$ lies even outside of their class, and indeed our $W$ is not nonnegative.
		However, we are able to show that $W$ is bounded below by a small negative constant (see Proposition~\ref{prop:I_u_principal} and Figure~\ref{fig:I_u}).
		This is why we say after \eqref{eqn:W_lower_bd} that we allow a small additive error.
		In \cite{Reznikov_unfolding}, Reznikov used a method similar to \cite{Bernstein--Reznikov_10} to give a purely analytic proof of the Weyl bound for a certain family of $L$-functions of degree 4 rather than 8.
		In this case, the test vector $\Psi$ is an element of $\pi \otimes \pi$ instead of $\pi \otimes \pi \otimes \pi \otimes \pi$ (here $\pi = \overline{\pi}$ because $f$ is Maass in \cite{Reznikov_unfolding} and \cite{Bernstein--Reznikov_10}).
		The problem of choosing $\Psi$ to make \eqref{eqn:W_lower_bd} sharp is much easier in \cite{Reznikov_unfolding} --- it amounts to inverting the Fourier transform --- but nevertheless as Reznikov points out, $\Psi$ still ends up being rather complicated. In particular, $\Psi$ is not a finite linear combination of pure tensors of weight vectors. However, $\Psi$ can be presented as a pure tensor of two non-$K$-finite vectors.
		This is in contrast with our situation, where $\Psi$ genuinely does not factor.
	\end{rem}
	
	\section{Proofs of convexity and Weyl bounds assuming results below} \label{sec:main_proofs}
	
	In this section we prove Proposition~\ref{prop:convexity} (the convexity bound) and Theorem~\ref{thm:Weyl} (the Weyl bound) assuming certain estimates for the conformal blocks and for $W, \widecheck{W}$ (as defined by \eqref{eqn:final_W_def} and \eqref{eqn:final_W_check_def}). These estimates will be proved in Sections~\ref{sec:complementary}--\ref{sec:t_averaging}.
	
	The convexity bound implies that $\widetilde{C}_r$ has polynomial growth. Before we can prove this, we first need to show that $\widetilde{C}_r$ has subexponential growth.
	
	\begin{prop} \label{prop:tildeC_r_subexp}
		Let $\delta > 0$ be arbitrarily small. Then
		\begin{align*}
			\widetilde{C}_r^2
			\lesssim_{\delta} e^{\delta t_r}.
		\end{align*}
	\end{prop}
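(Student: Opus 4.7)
The plan is to apply the $u$-channel of the crossing equation (Corollary~\ref{cor:crossing_2}) at a small positive real value $z_0$, and to exploit the nonnegativity of the blocks $\widetilde{H}_{s_r}(1-z_0^2)$ to extract a pointwise bound from absolute convergence. Fix $\delta > 0$ and choose $z_0 = z_0(\delta) \in (0,1)$ small enough that $2\arcsin(z_0) < \delta$; taking $z_0 = \delta/4$ will suffice once $\delta$ is small. Corollary~\ref{cor:crossing_2} then asserts absolute convergence of the $u$-channel sum $\sum_r \widetilde{C}_r^2\, \widetilde{H}_{s_r}(1-z_0^2)$, and if I can arrange that each summand is nonnegative with a subexponential lower bound on the blocks, the bound $\widetilde{C}_r^2 \lesssim_\delta e^{\delta t_r}$ will follow term by term.

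\textbf{Positivity of the blocks.} First I would verify that $\widetilde{H}_{s_r}(1-z_0^2) \geq 0$ for every $r$. For $s = \tfrac{1}{2}+it$ the Taylor coefficients of $H_s$ at the origin are $\frac{(s)_n (1-s)_n}{(n!)^2} = \frac{|(s)_n|^2}{(n!)^2} \geq 0$, so $H_s(w) \geq 1$ for $w \in [0,1)$. The finitely many $s_r$ in $(\tfrac{1}{2},1]$ are handled by the same Pochhammer formula with real positive arguments. Since $1-z_0^2 \in (0,1)$, term-by-term absolute convergence in Corollary~\ref{cor:crossing_2} yields $\widetilde{C}_r^2\, \widetilde{H}_{s_r}(1-z_0^2) \lesssim_{z_0} 1$ for every $r$.

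\textbf{Lower bound on the blocks.} The heart of the argument is a subexponential lower bound of the form $\widetilde{H}_{s_r}(1-z_0^2) \gtrsim_{z_0} t_r^{-O(1)} e^{-\delta t_r}$. The naive positivity $H_s(1-z_0^2) \geq 1$ only produces $\widetilde{H}_{s_r}(1-z_0^2) \gtrsim t_r^{4k-2} e^{-\pi t_r}$, which after inversion yields the useless $\widetilde{C}_r^2 \lesssim e^{\pi t_r}$, so a true asymptotic is needed. I would get it from the Legendre identification $H_s(1-z^2) = P_{s-1}(2z^2-1)$ combined with the Mehler--Dirichlet integral
\[
P_{-\frac{1}{2}+it}(\cos\theta) = \sqrt{\frac{2}{\pi}} \int_0^\theta \frac{\cosh(t\phi)}{\sqrt{\cos\phi - \cos\theta}}\, d\phi, \qquad \theta \in (0,\pi),
\]
bounding $\cosh(t\phi) \geq \frac{1}{2}e^{t\phi}$ and applying Laplace's method at the endpoint $\phi = \theta$. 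With $\cos\theta = 2z_0^2-1$ one checks $\pi - \theta = 2\arcsin(z_0)$, which produces
\[
\widetilde{H}_{s_r}(1-z_0^2) \gtrsim_{z_0} t_r^{4k-\frac{5}{2}}\, e^{-2 t_r \arcsin(z_0)}
\]
for all sufficiently large $t_r$. Alternatively, for small $z_0$ one can simply quote Proposition~\ref{prop:u_block_asymptotic}. The bounded range of $t_r$ contributes only finitely many terms and is handled trivially using the strict positivity from the previous paragraph.

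\textbf{Conclusion and main obstacle.} Combining these two ingredients,
\[
\widetilde{C}_r^2 \lesssim_{z_0} t_r^{\frac{5}{2}-4k}\, e^{2 t_r \arcsin(z_0)} \lesssim_\delta e^{\delta t_r},
\]
where the final inequality uses $2\arcsin(z_0) < \delta$ to absorb the polynomial prefactor into a small additional piece of $e^{\delta t_r}$. The delicate step is the lower bound in the previous paragraph: the trivial Taylor positivity $H_s \geq 1$ is too weak, and one really does need to cancel most of the $e^{\pi t_r}$ sitting in $\widetilde{H}_{s_r}$ against the $e^{t_r \theta}$ growth of the Legendre function, which is visible only through Mehler--Dirichlet (or the section on hypergeometric asymptotics, Section~\ref{sec:asymptotics}).
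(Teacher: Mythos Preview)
Your main argument via the u-channel at a small real $z_0 \in (0,1)$, combined with the Mehler--Dirichlet integral for $P_{-1/2+it}(\cos\theta)$, is correct and yields the result. It differs from the paper's route: the paper works in the \emph{t-channel}, choosing a complex $z$ with $\re z > 0$ just off the imaginary axis and invoking Corollary~\ref{cor:t_block_lower_bd} (itself a consequence of the t-channel asymptotic, Proposition~\ref{prop:t_block_asymptotic}) to get $|\widetilde{H}_{s_r}(1-z^{-2})| \gtrsim_\delta e^{-\delta t_r}$, then uses absolute convergence of the t-channel sum. Your approach has the virtue of relying only on classical Legendre asymptotics and on the positivity of the u-channel blocks at real arguments, so it is more self-contained; the paper's approach simply reuses the t-channel machinery developed anyway for the main theorem.

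One caveat: your aside that ``for small $z_0$ one can simply quote Proposition~\ref{prop:u_block_asymptotic}'' does not work. For fixed $z_0 > 0$ and $t \to \infty$, the main term in \eqref{eqn:u_block_asymptotic} is of order $e^{-2tz_0}$ times a polynomial in $tz_0^3$, and this is eventually dominated by the error term $t^{-M}$ for any fixed $M$; so that proposition only gives an upper bound in this regime, not the lower bound you need. The Mehler--Dirichlet argument is the right tool here and stands on its own. (A minor point: the constant in front of the Mehler--Dirichlet integral is $\sqrt{2}/\pi$, not $\sqrt{2/\pi}$, but this is irrelevant to the lower bound.)
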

	
	\begin{proof}
		Corollary~\ref{cor:t_block_lower_bd} below says that there exists $z \in \C$ depending on $\delta$, with $\re z > 0$, such that for $r$ sufficiently large,
		\begin{align*}
			|\widetilde{H}_{s_r}(1-z^{-2})|
			\gtrsim_{\delta} e^{-\delta t_r}.
		\end{align*}
		On the other hand, we know that the right hand side of the crossing equation \eqref{eqn:crossing_2} converges. This forces $\widetilde{C}_r^2 \lesssim_{\delta} e^{\delta t_r}$.
	\end{proof}
	
	\subsection{Convexity bound}
	\label{subsec:convexity}
	
	To prove the convexity bound, we apply the crossing equation \eqref{eqn:crossing_2} with $z = T^{-1}$. We then have the following estimates for the conformal blocks.
	
	\begin{prop}[Lower bound for u-channel conformal block at $z=T^{-1}$] \label{prop:u_block_at_T^-1}
		Let $s \in (\frac{1}{2},1] \cup (\frac{1}{2}+i\R_{\geq 0})$, and write $s = \sigma+it$. Then
		\begin{align} \label{eqn:u_block_at_T^-1}
			\widetilde{H}_s(1-T^{-2})
			\gtrsim T^{4k-2} \1_{t \sim T}.
		\end{align}
	\end{prop}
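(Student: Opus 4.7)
The plan is to reduce to the tempered regime and apply a Bessel confluence approximation to the u-channel conformal block.

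First, the indicator $\1_{t \sim T}$ on the right-hand side vanishes unless $t \sim T$. For $s$ in the complementary series $(\tfrac{1}{2}, 1]$ we have $t = 0$, so for $T \gg 1$ the bound holds trivially, and similarly for principal series $s = \tfrac{1}{2} + it$ with $t \not\sim T$. So we may assume $s = \tfrac{1}{2} + it$ with $t \sim T$, in particular $t \gg 1$. By \eqref{eqn:tilde_H_s_def}, the desired inequality is then equivalent to
\begin{equation*}
    H_{\frac{1}{2}+it}(1-T^{-2}) \gtrsim e^{\pi t}.
\end{equation*}

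The key input is the Bessel confluence announced around \eqref{eqn:hyp_approx_Bessel}, to be established in Section~\ref{sec:asymptotics}: for $t$ large and $z$ small with $tz$ of order $1$,
\begin{equation*}
    \widetilde{H}_{\frac{1}{2}+it}(1-z^2) = \tfrac{1}{\pi} t^{4k-2} K_0(2tz) + \widetilde{O}(t^{4k-2-\gamma})
\end{equation*}
for some absolute $\gamma > 0$. Specializing to $z = T^{-1}$ and $t \sim T$, the argument $2tz = 2t/T$ lies in a fixed compact subinterval of $(0,\infty)$ on which $K_0$ is continuous and strictly positive; hence $K_0(2t/T) \gtrsim 1$, the main term is $\gtrsim t^{4k-2} \sim T^{4k-2}$, the error is negligible compared to this, and the bound follows.

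The main obstacle is making the Bessel confluence quantitative with a power-saving error term. Heuristically this is the classical fact that the hypergeometric ODE satisfied by $H_{\frac{1}{2}+it}(1-z^2)$ degenerates, under the rescaling $z \mapsto z/t$ and the limit $t \to \infty$, to the modified Bessel equation with $K_0$ as its bounded solution at the origin. The quantitative version will be derived in Section~\ref{sec:asymptotics}, for instance by comparing an integral representation of ${}_2F_1$ to the analogous one for $K_0$ and estimating the resulting error using the Stirling-type asymptotics with parameters of size $\sim t$.
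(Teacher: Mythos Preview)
Your approach matches the paper's: reduce to $t \sim T$ and invoke the Bessel confluence (the paper does this via Corollary~\ref{cor:u_block_size_z>0}, derived from \eqref{eqn:u_asymptotic_in_Bessels} exactly as you sketch). However, there is a gap in the case you call ``trivial.'' When the indicator vanishes, the inequality \eqref{eqn:u_block_at_T^-1} asserts $\widetilde{H}_s(1-T^{-2}) \geq 0$, and this is not automatic for a general hypergeometric. The paper supplies the missing step: by \eqref{eqn:2F1_series}, the Taylor coefficients of $H_s$ at the origin are $(s)_n(1-s)_n/(n!)^2$, which for $s \in (\tfrac{1}{2},1]$ are products of positive factors and for $s = \tfrac{1}{2}+it$ equal $|(s)_n|^2/(n!)^2 \geq 0$; since the series has radius of convergence $1$, it converges at $1-T^{-2}$ and gives $\widetilde{H}_s(1-T^{-2}) \geq 0$ for every $s$ in the stated range. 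This nonnegativity is not cosmetic: it is exactly what is used in the proof of Proposition~\ref{prop:convexity} to drop all terms with $t_r \not\sim T$ from the u-channel sum.
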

	
	\begin{proof}
		It follows from \eqref{eqn:2F1_series} that the Taylor series for $\widetilde{H}_s$ at the origin has nonnegative coefficients. It also has radius of convergence $1$ (so it converges at $1-T^{-2}$).
		Therefore $\widetilde{H}_s(1-T^{-2}) \geq 0$.
		When $t \sim T$, Corollary~\ref{cor:u_block_size_z>0} says that
		\begin{align*}
			\widetilde{H}_s(1-T^{-2})
			\sim T^{4k-2}.
		\end{align*}
		Thus \eqref{eqn:u_block_at_T^-1} holds.
	\end{proof}
	
	\begin{prop}[Upper bound for t-channel conformal block at $z=T^{-1}$] \label{prop:t_block_at_T^-1}
		There is a constant $c \gtrsim 1$, such that the following holds.
		Let $s \in (\frac{1}{2},1] \cup (\frac{1}{2}+i\R_{\geq 0})$, and write $s = \sigma+it$. Then
		\begin{align*}
			|\widetilde{H}_s(1-T^2)|
			\lessapprox T^{-2(1-\sigma)} e^{-ct}.
		\end{align*}
	\end{prop}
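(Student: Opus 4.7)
The plan is to apply the standard large-argument connection formula for $_2F_1$ at $w = 1-T^2$, and then to estimate the resulting Gamma ratios via Stirling. Since $|1/w| \sim T^{-2}$ is small, the identity
\begin{align*}
H_s(w) = \frac{\Gamma(1-2s)}{\Gamma(1-s)^2}(-w)^{-s}\bigl[1+O(T^{-2})\bigr] + \frac{\Gamma(2s-1)}{\Gamma(s)^2}(-w)^{s-1}\bigl[1+O(T^{-2})\bigr]
\end{align*}
holds away from $2s-1 \in \Z$. The two powers $(-w)^{-s}$ and $(-w)^{s-1}$ have absolute values $T^{-2\sigma}$ and $T^{-2(1-\sigma)}$, and for $\sigma\in[\tfrac12,1]$ the second is the larger; this is what sets the power of $T$ in the final bound.

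Next, for $t \geq 1$ I would apply Stirling in the form $|\Gamma(a+ib)| \sim \sqrt{2\pi}\,|b|^{a-1/2}e^{-\pi|b|/2}$ to both Gamma ratios. The argument $1-2s$ has imaginary part $\sim 2t$ and contributes a factor $e^{-\pi t}$; each squared denominator $\Gamma(1-s)^2$, $\Gamma(s)^2$ also contributes $e^{-\pi t}$, so the exponentials cancel and both ratios are of polynomial size $\sim t^{-1/2}$. This yields $|H_s(1-T^2)| \lesssim t^{-1/2}T^{-2(1-\sigma)}$. Multiplying by the normalization $t^{4k-2}e^{-\pi t}$ from \eqref{eqn:tilde_H_s_def} and picking any fixed $c\in(0,\pi)$, the surviving polynomial factor $t^{4k-5/2}$ is absorbed by $e^{-(\pi-c)t}$, giving the claimed bound $|\widetilde H_s(1-T^2)| \lesssim T^{-2(1-\sigma)}e^{-ct}$. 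For $t \leq 1$ the normalization is trivial and the Gamma ratios are $O(1)$ on a compact region.

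The main obstacle is the pair of degenerate points $s = 1$ and $s = \tfrac{1}{2}$ of the connection formula. At $s=1$, the series $_2F_1(1,0;1;z)$ terminates so $H_1 \equiv 1$, and the claim is immediate; near $s=1$ one can also verify by a direct limit that the leading coefficient in the connection formula goes to zero while the other matches. At $s = \tfrac{1}{2}$, where the two exponents coalesce, the logarithmic form of the connection formula produces a main term $\sim T^{-1}\log T$ in place of $\sim T^{-1}$, and the $\log T$ factor is absorbed by $\lessapprox$. Uniformity across these endpoints is straightforward from explicit continuity of both sides.
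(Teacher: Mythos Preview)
Your connection-formula-plus-Stirling approach is correct and, for $t>1$, is essentially what the paper does: the cited Corollary~\ref{cor:t-block_triv_bd} rests on Proposition~\ref{prop:t_block_asymptotic}, whose proof begins from the same identity~\eqref{eqn:functional_eqn_1/w}. One detail you skate over: the remainder $[1+O(T^{-2})]$ is not uniform in $t$, since the $n$th Taylor coefficient of ${}_2F_1(s,s;2s;u)$ at $s=\tfrac12+it$ has magnitude $\sim (t/2)^n/n!$. The paper fixes this by a further quadratic transformation to the $\rho$-variable, after which the resulting ${}_2F_1(\tfrac12,\tfrac12\pm it;1\pm it;\rho^2)$ has coefficients bounded uniformly in $t$. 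In your version the non-uniformity is ultimately harmless, because the extra factor (at worst $e^{O(t/T^2)}$) is absorbed by the gap between $e^{-\pi t}$ and $e^{-ct}$; but this should be stated explicitly.

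For $t\leq 1$ the paper takes a different route, citing Proposition~\ref{prop:t_complementary}, which is proved via Euler's integral representation and gives the uniform bound $|H_s(1-z^{-2})| \lesssim |z|^{2(1-\sigma)}\log(|z|^{-1})$ with no degeneracy analysis. Your connection-formula route also works, but the sentence ``the Gamma ratios are $O(1)$ on a compact region'' is false as written --- they blow up at $s=\tfrac12$ --- and uniformity there genuinely requires the cancellation you sketch afterward. The Euler-integral approach buys a cleaner argument that avoids this case-splitting entirely.
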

	
	\begin{proof}
		This follows from Proposition~\ref{prop:t_complementary} when $t \leq 1$, and Corollary~\ref{cor:t-block_triv_bd} when $t > 1$.
	\end{proof}
	
	From Proposition~\ref{prop:t_block_at_T^-1}, we get an asymptotic for the t-channel in \eqref{eqn:crossing_2} at $z=T^{-1}$:
	
	\begin{prop} \label{prop:convexity_asymptotic}
		We have
		\begin{align} \label{eqn:convexity_asymptotic}
			T^{-2k} \sum_{r=0}^{\infty} \widetilde{C}_r^2 \widetilde{H}_{s_r}(1-T^{-2})
			= T^{2k}(1 + \widetilde{O}(T^{-2(1-\sigma_1)})).
		\end{align}
	\end{prop}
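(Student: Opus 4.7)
The plan is to reduce the left-hand side of \eqref{eqn:convexity_asymptotic} to a t-channel sum via the crossing equation and then estimate the resulting sum termwise. Since $\re T^{-1} > 0$, Corollary~\ref{cor:crossing_2} at $z = T^{-1}$ gives
\[
T^{-2k} \sum_{r=0}^{\infty} \widetilde{C}_r^2 \widetilde{H}_{s_r}(1-T^{-2}) = T^{2k} \sum_{r=0}^{\infty} \widetilde{C}_r^2 \widetilde{H}_{s_r}(1-T^2),
\]
with absolute convergence on both sides. It therefore suffices to show that the t-channel inner sum equals $1 + \widetilde{O}(T^{-2(1-\sigma_1)})$.

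Next, I would isolate the $r=0$ contribution and check that it equals exactly $1$. Indeed, $\lambda_0 = 0$ forces $s_0 = 1$ and $t_0 = 0$, so $\widetilde{H}_{s_0} = H_1 = {}_2F_1(1,0;1;\cdot)$; the Pochhammer symbol $(0)_n$ vanishes for $n \geq 1$, collapsing the series \eqref{eqn:2F1_series} to its constant term, so $H_1 \equiv 1$. Combined with $\widetilde{C}_0 = C_0 = 1$ from \eqref{eqn:C_r_G_def}, the $r=0$ summand is exactly $1$.

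For the tail $r \geq 1$, I would combine Propositions~\ref{prop:t_block_at_T^-1} and \ref{prop:tildeC_r_subexp}. The former yields $|\widetilde{H}_{s_r}(1-T^2)| \lessapprox T^{-2(1-\sigma_r)} e^{-c t_r}$ for some $c \gtrsim 1$. Since the $\lambda_r$ are nondecreasing in $r$ and $s \mapsto s(1-s)$ is decreasing on $[\tfrac{1}{2},1]$, one has $\sigma_r \leq \sigma_1$ for all $r \geq 1$ (with $\sigma_r = \tfrac{1}{2}$ as soon as $\lambda_r \geq \tfrac{1}{4}$), so $T^{-2(1-\sigma_r)} \leq T^{-2(1-\sigma_1)}$. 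Picking any $\delta < c$ in Proposition~\ref{prop:tildeC_r_subexp} gives $\widetilde{C}_r^2 \lesssim_\delta e^{\delta t_r}$, and Weyl's law (which ensures $\#\{r : t_r \leq N\} \sim N^2$) makes $\sum_{r \geq 1} e^{(\delta - c) t_r}$ a convergent sum of size $O(1)$. Thus
\[
\sum_{r \geq 1} \widetilde{C}_r^2 \,|\widetilde{H}_{s_r}(1-T^2)| \lessapprox T^{-2(1-\sigma_1)} \sum_{r \geq 1} e^{(\delta - c) t_r} \lesssim T^{-2(1-\sigma_1)},
\]
which is the desired $\widetilde{O}(T^{-2(1-\sigma_1)})$ bound.

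There is no serious obstacle in this proposition: every estimate is quoted from an earlier result, and the only step requiring genuine care is the absolute convergence of the tail, which rests entirely on the subexponential growth furnished by Proposition~\ref{prop:tildeC_r_subexp}. That proposition is a genuine prerequisite --- without it one could not justify termwise estimation of the t-channel sum at a point as close to the boundary of the crossing region as $z = T^{-1}$ --- but it is already in hand.
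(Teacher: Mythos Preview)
Your proof is correct and follows essentially the same route as the paper: apply the crossing equation \eqref{eqn:crossing_2} at $z=T^{-1}$, isolate the $r=0$ term in the t-channel (which equals $1$), and bound the tail using Proposition~\ref{prop:t_block_at_T^-1} together with the subexponential growth from Proposition~\ref{prop:tildeC_r_subexp} and Weyl's law. Your additional observation that $\sigma_r \le \sigma_1$ for $r\ge 1$ makes explicit a step the paper leaves implicit.
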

	
	\begin{proof}
		Taking $z=T^{-1}$ in \eqref{eqn:crossing_2}, and separating the $r=0$ term in the t-channel from the rest,
		\begin{align} \label{eqn:crossing_z=1/T}
			T^{-2k} \sum_{r=0}^{\infty} \widetilde{C}_r^2 \widetilde{H}_{s_r}(1-T^{-2})
			= T^{2k} \Big(1 + \sum_{r=1}^{\infty} \widetilde{C}_r^2 \widetilde{H}_{s_r}(1-T^2)\Big).
		\end{align}
		By Proposition~\ref{prop:t_block_at_T^-1}, there exists $c \gtrsim 1$, such that
		\begin{align*}
			|\widetilde{H}_{s_r}(1-T^2)|
			\lessapprox T^{-2(1-\sigma_r)} e^{-ct_r}.
		\end{align*}
		By Weyl's law and the subexponential growth of the $\widetilde{C}_r$ (Proposition~\ref{prop:tildeC_r_subexp}), we deduce that the sum on the right hand side of \eqref{eqn:crossing_z=1/T} is $\widetilde{O}(T^{-2(1-\sigma_1}))$.
	\end{proof}
	
	The convexity bound easily follows:
	
	\begin{proof}[Proof of Proposition~\ref{prop:convexity}]
		By the lower bound in Proposition~\ref{prop:u_block_at_T^-1}, followed by Proposition~\ref{prop:convexity_asymptotic},
		\begin{align*}
			\sum_{t_r \sim T} \widetilde{C}_r^2
			\lesssim T^{-4k+2} \sum_{r=0}^{\infty} \widetilde{C}_r^2 \widetilde{H}_{s_r}(1-T^{-2})
			\sim T^2.
		\end{align*}
		Since this holds for all $T$, we get the convexity bound \eqref{eqn:convexity} by dyadic decomposition.
	\end{proof}
	
	\subsection{Weyl bound}
	
	For the Weyl bound, we use the averaged crossing equation in Corollary~\ref{cor:crossing_averaged_specific}, rather than \eqref{eqn:crossing_2}.
	\textit{So from now on, let $W,\widecheck{W}$ be defined by \eqref{eqn:final_W_def} and \eqref{eqn:final_W_check_def}.
		Whenever the variable $z$ is used in the context of \eqref{eqn:final_W_def} and \eqref{eqn:final_W_check_def}, we understand that $z$ is of the form \eqref{eqn:y_range}.}
	
	When $t_r \leq 1$, trivial bounds for $W(s_r)$ and $\widecheck{W}(s_r)$ using Propositions~\ref{prop:u_complementary} and \ref{prop:t_complementary} will suffice.
	
	\begin{prop}[Trivial bound for $W$ when $t \leq 1$] \label{prop:I_u_complementary}
		Let $s \in (\frac{1}{2},1] \cup (\frac{1}{2}+i\R_{\geq 0})$, and write $s = \sigma+it$. Assume $t \leq 1$. Then
		\begin{align*}
			|W(s)|
			\lessapprox T^{-4k+\frac{5}{2}} H^{-\frac{1}{2}}.
		\end{align*}
	\end{prop}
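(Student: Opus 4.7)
The plan is to estimate the integral in \eqref{eqn:final_W_def} directly. Since $t \leq 1$, the definition \eqref{eqn:tilde_H_s_def} gives $\widetilde{H}_s = H_s$, so the task reduces to a pointwise bound on $z^{1/2}H_s(1-z^2)$ along $z = T^{-1}+iy$, which I then integrate against the Gaussian cutoff $e^{-(Hy)^2}$.

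The key input is a pointwise estimate for $H_s(1-z^2)$ in the bounded-$t$ regime, which I expect to be the content of Proposition~\ref{prop:u_complementary} in Section~\ref{sec:complementary}. This should yield $|H_s(1-z^2)| \lessapprox 1$: as $w \to 1$ the hypergeometric $H_s(w) = {}_2F_1(s,1-s;1;w)$ has only a logarithmic singularity because the parameters satisfy $c = a+b$, and such a $\log T$ loss is harmless under $\lessapprox$. Combined with the trivial inequality $|z|^{1/2} \leq T^{-1/2}+|y|^{1/2}$, this gives
\begin{align*}
|z^{1/2}\widetilde{H}_s(1-z^2)|
\lessapprox T^{-1/2}+|y|^{1/2}.
\end{align*}

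Plugging this into \eqref{eqn:final_W_def} and bounding $|e^{2iTy}| \leq 1$, I split the integral into two pieces. The first is
\begin{align*}
T^{-4k+5/2}H\cdot T^{-1/2}\int_{-T^{\varepsilon}/H}^{T^{\varepsilon}/H} e^{-(Hy)^2}\,dy
\;\sim\; T^{-4k+2},
\end{align*}
a standard Gaussian computation. For the second, the substitution $u = Hy$ gives
\begin{align*}
T^{-4k+5/2}H\int_{-T^{\varepsilon}/H}^{T^{\varepsilon}/H} |y|^{1/2}\,e^{-(Hy)^2}\,dy
\;\sim\; T^{-4k+5/2}H^{-1/2}.
\end{align*}
The truncation at $|y| = T^{\varepsilon}/H$ contributes only a super-polynomially small tail since $e^{-T^{2\varepsilon}}$ is negligible. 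Because $H \leq T$, the second piece dominates the first, giving the claimed bound $|W(s)| \lessapprox T^{-4k+5/2}H^{-1/2}$.

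Since we are in the bounded-$t$ regime, no oscillation needs to be exploited in the conformal block and no stationary phase argument is required; the only nontrivial input is the pointwise hypergeometric estimate, and everything else is routine Gaussian integration. Accordingly, I do not anticipate any real obstacle beyond identifying the correct complementary-range bound to cite.
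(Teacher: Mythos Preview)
Your proposal is correct and follows essentially the same approach as the paper: take absolute values inside the integral \eqref{eqn:final_W_def}, invoke Proposition~\ref{prop:u_complementary} to bound $|H_s(1-z^2)| \lessapprox 1$ (the logarithmic singularity being absorbed into $\lessapprox$), and then integrate $|z|^{1/2}$ against the Gaussian to get $H^{-1/2}$. The only cosmetic difference is that the paper bounds $\int e^{-(Hy)^2}|z|^{1/2}\,dy \lesssim H^{-3/2}$ in one stroke, whereas you split $|z|^{1/2} \le T^{-1/2} + |y|^{1/2}$ and handle the two pieces separately; both routes give the same answer.
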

	
	\begin{proof}
		Taking absolute values inside the integral defining $W$, and then using Proposition~\ref{prop:u_complementary},
		\begin{align*}
			|W(s)|
			&\lessapprox T^{-4k+\frac{5}{2}} H \int_{-T^{\varepsilon}/H}^{T^{\varepsilon}/H} e^{-(Hy)^2} |z|^{\frac{1}{2}} \, dy
			\lesssim T^{-4k+\frac{5}{2}} H^{-\frac{1}{2}}.
			\qedhere
		\end{align*}
	\end{proof}
	
	\begin{prop}[Trivial bound for $\widecheck{W}$ when $t \leq 1$] \label{prop:I_t_complementary}
		Let $s \in (\frac{1}{2},1] \cup (\frac{1}{2}+i\R_{\geq 0})$, and write $s = \sigma+it$. Assume $t \leq 1$. Then
		\begin{align} \label{eqn:I_t_complemenetary}
			|\widecheck{W}(s)|
			\lessapprox TH.
		\end{align}
	\end{prop}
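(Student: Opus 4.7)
The plan is to mirror the proof of Proposition~\ref{prop:I_u_complementary}: take absolute values inside the integral defining $\widecheck{W}(s)$, apply a trivial bound on the t-channel conformal block via Proposition~\ref{prop:t_complementary}, and then estimate the resulting one-dimensional integral in $y$.

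First, from the definition \eqref{eqn:final_W_check_def},
\begin{align*}
|\widecheck{W}(s)|
\leq T^{-4k+\frac{5}{2}} H \int_{-T^{\varepsilon}/H}^{T^{\varepsilon}/H} e^{-(Hy)^2} |z|^{-4k+\frac{1}{2}} |\widetilde{H}_s(1-z^{-2})| \, dy,
\end{align*}
where $z = T^{-1}+iy$. Since $|y| \leq T^{-\frac{1}{3}-\varepsilon}$, the quantity $|z^{-2}|$ is large, so this is exactly the regime addressed by Proposition~\ref{prop:t_complementary} (the same ingredient used in Proposition~\ref{prop:t_block_at_T^-1}). For $t \leq 1$, the factor $e^{-ct}$ there is harmless, and the bound specializes to $|\widetilde{H}_s(1-z^{-2})| \lessapprox |z|^{2(1-\sigma)}$, which is $\leq 1$ for $\sigma \in [\tfrac{1}{2},1]$ and $|z| \leq 1$.

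Combining the two factors of $|z|$ yields an integrand bounded by $|z|^{-4k+\frac{5}{2}-2\sigma}$. Since $|z|^2 = T^{-2}+y^2$, we have $|z| \sim T^{-1}$ when $|y| \lesssim T^{-1}$ and $|z| \sim |y|$ when $|y| \gtrsim T^{-1}$. For $k \geq 1$ and $\sigma \in [\tfrac{1}{2},1]$, the exponent $-4k+\frac{5}{2}-2\sigma$ is strictly less than $-1$, so the integral is dominated by the peak near $y=0$:
\begin{align*}
\int_{-T^{\varepsilon}/H}^{T^{\varepsilon}/H} e^{-(Hy)^2} |z|^{-4k+\frac{5}{2}-2\sigma} \, dy
\lesssim T^{4k-\frac{5}{2}+2\sigma} \cdot T^{-1}
= T^{4k-\frac{7}{2}+2\sigma}.
\end{align*}

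Substituting this back gives $|\widecheck{W}(s)| \lessapprox T^{-4k+\frac{5}{2}} \cdot H \cdot T^{4k-\frac{7}{2}+2\sigma} = T^{2\sigma-1} H \leq TH$, since $\sigma \leq 1$, establishing \eqref{eqn:I_t_complemenetary}. The only delicate point is verifying that Proposition~\ref{prop:t_complementary} indeed applies uniformly for $z$ along the vertical segment $\re z = T^{-1}$, $|\im z| \leq T^{-\frac{1}{3}-\varepsilon}$ (i.e., that $1-z^{-2}$ lies in the region covered by that proposition). This should be automatic from the statement of Proposition~\ref{prop:t_complementary}, since $|1-z^{-2}| \gtrsim T^{\frac{2}{3}} \gg 1$ throughout this range and the argument stays away from $[1,\infty)$. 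The rest of the calculation is a routine split of the integration region at $|y| \sim T^{-1}$.
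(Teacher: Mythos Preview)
Your proof is correct and follows essentially the same approach as the paper: take absolute values inside \eqref{eqn:final_W_check_def}, apply Proposition~\ref{prop:t_complementary}, and observe that the resulting integral of $|z|^{-4k+\frac{1}{2}}$ (or your sharper $|z|^{-4k+\frac{5}{2}-2\sigma}$) is dominated by the region $|y|\lesssim T^{-1}$. The only difference is that the paper uses the cruder bound $|H_s(1-z^{-2})|\lessapprox 1$ from Proposition~\ref{prop:t_complementary} and lands directly on $TH$, whereas you retain the factor $|z|^{2(1-\sigma)}$ and obtain the slightly finer $T^{2\sigma-1}H\leq TH$; also, the hypothesis to check for Proposition~\ref{prop:t_complementary} is simply $\re z>0$ and $|z|\ll 1$ (both immediate here), not a condition on $1-z^{-2}$.
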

	
	\begin{proof}
		Taking absolute values inside the integral defining $\widecheck{W}$, and then using Proposition~\ref{prop:t_complementary},
		\begin{align*}
			|\widecheck{W}(s)|
			&\lessapprox T^{-4k+\frac{5}{2}} H \int_{\R} |z|^{-4k+\frac{1}{2}} \, dy
			\lesssim TH.
		\end{align*}
		The second inequality is because the dominant contribution to the integral $\int_{\R} |z|^{-4k+\frac{1}{2}} \, dy$ comes from $|y| \lesssim T^{-1}$.
	\end{proof}
	
	When $t_r > 1$, the following asymptotics for $W(s_r)$ and $\widecheck{W}(s_r)$ are proved in Sections~\ref{sec:u_averaging} and \ref{sec:t_averaging}, respectively. They are illustrated in Figures~\ref{fig:I_u} and \ref{fig:I_t} below.
	
	\begin{prop}[Asymptotic for $W$ when $t>1$] \label{prop:I_u_principal}
		Let $s = \frac{1}{2}+it$ with $t>1$. Then
		\begin{align} \label{eqn:I_u_asymptotic}
			W(s)
			= (1+\widetilde{O}(T^{-6\varepsilon})) \frac{1}{2} \Big(\frac{t}{T}\Big)^{4k-\frac{5}{2}} e^{-2t/T}  \exp\Big(-\Big(\frac{t-T}{H}\Big)^2\Big) + \widetilde{O}(\1_{t \lessapprox T} H/T + t^{-\infty}).
		\end{align}
		In particular,
		\begin{align} \label{eqn:I_2_bd_in_reduction}
			\1_{|t-T| \leq H}
			\lesssim W(s) + \widetilde{O}(\1_{t \lessapprox T} H/T + t^{-\infty}).
		\end{align}
	\end{prop}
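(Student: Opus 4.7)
The plan is to replace the hypergeometric in \eqref{eqn:final_W_def} by its confluent asymptotic and evaluate the resulting Gaussian oscillatory integral in $y$. Concretely, I will rely on a uniform asymptotic of the form
\[
z^{1/2}\,\widetilde{H}_{\frac{1}{2}+it}(1-z^2) \;=\; \frac{t^{4k-\frac{5}{2}}}{2\sqrt{\pi}}\,e^{-2tz}\,\bigl(1+E(t,z)\bigr),
\]
to be proved in Section~\ref{sec:asymptotics}, valid for $t > 1$ and $z = T^{-1}+iy$ with $|y| \le T^{\varepsilon}/H$, where $E(t,z) = \widetilde{O}(T^{-6\varepsilon})$ is a power-saving error. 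This will be the composition of the two approximations discussed around \eqref{eqn:hyp_approx_Bessel}--\eqref{eqn:hyp_approx_exp}: first $\widetilde{H}_s(1-z^2) \approx \pi^{-1} t^{4k-2} K_0(2tz)$, capturing the confluent limit of the hypergeometric ODE; then the asymptotic expansion of $K_0$ at infinity, valid because $|tz| \ge t/T$ on our contour. The factor $z^{1/2}$ inserted into the definition of $\mu$ is tailored precisely to cancel the $z^{-1/2}$ arising from the $K_0$ asymptotic.

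Having this asymptotic, I would substitute it into \eqref{eqn:final_W_def} and write $e^{-2tz} = e^{-2t/T}e^{-2ity}$, so that the leading part of the integrand becomes
\[
T^{-4k+\frac{5}{2}}H \cdot \frac{t^{4k-\frac{5}{2}}}{2\sqrt{\pi}} \, e^{-2t/T} \, e^{-(Hy)^2} e^{2i(T-t)y}.
\]
Extending the cutoff $|y| \le T^{\varepsilon}/H$ to all of $\R$ introduces only a super-polynomially small error (from the Gaussian $e^{-(Hy)^2}$), and the resulting integral is the Fourier transform of a Gaussian, namely $(\sqrt{\pi}/H)\exp(-((t-T)/H)^2)$. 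Collecting constants produces exactly the main term of \eqref{eqn:I_u_asymptotic}, with the relative error $1+\widetilde{O}(T^{-6\varepsilon})$ inherited from $E(t,z)$. The additive errors will come from two sources: subleading terms in the asymptotic expansion of $K_0$ carry an extra $1/(tz)$ factor and, upon integration against the Gaussian, produce an error of size $\widetilde{O}(H/T)$ uniformly for $t \lesssim T$; and for $t \gg T$, the ubiquitous prefactor $e^{-2t/T}$ is bounded by $(t/T)^{-N}$ for any $N$, yielding the $\widetilde{O}(t^{-\infty})$ contribution. The bound \eqref{eqn:I_2_bd_in_reduction} is then immediate from \eqref{eqn:I_u_asymptotic}: when $|t-T| \le H$, each of $(t/T)^{4k-\frac{5}{2}}$, $e^{-2t/T}$, and $\exp(-((t-T)/H)^2)$ is $\sim 1$, so the main term is $\gtrsim 1$ and dominates the error.

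The hard part will be establishing the uniform confluent asymptotic with the claimed $T^{-6\varepsilon}$ relative error across the full range of $t > 1$ and $|y| \le T^{\varepsilon}/H$. The delicate regime is $t \sim T$, where $|tz| \sim 1$ at the endpoints of the contour and the $K_0$ asymptotic at infinity is only marginally valid; a careful contour deformation or quantitative transition formula between the two branches of the hypergeometric will be needed, and any loss here propagates directly into the quality of \eqref{eqn:I_u_asymptotic}. Once this uniform asymptotic is in hand, the remaining steps — evaluating a Gaussian Fourier integral and controlling subleading tails — are essentially elementary.
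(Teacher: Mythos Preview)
Your approach is correct for $t \gtrsim T$ and matches the paper there: Proposition~\ref{prop:u_block_asymptotic} supplies the confluent expansion on the whole $y$-contour, the Gaussian integral evaluates to $\sqrt{\pi}H^{-1}\exp(-((t-T)/H)^2)$, and the higher terms $c_j(tz^3)^j$ in the expansion account for the $\widetilde{O}(T^{-6\varepsilon})$ relative error. The gap is in the regime $1 < t \ll T$, and you have misidentified the delicate range. The hypothesis $\re z \gtrsim t^{-2/3+\varepsilon}|z|$ in Proposition~\ref{prop:u_block_asymptotic}, applied at the contour endpoint $|z| \sim T^{\varepsilon}/H = T^{-1/3-\varepsilon}$, forces $t \gtrsim T$; for smaller $t$ one has $|tz| \ll 1$ near the endpoints, so neither the Bessel approximation nor its large-argument asymptotic applies, and the identity $z^{1/2}\widetilde{H}_s(1-z^2) \approx \frac{t^{4k-5/2}}{2\sqrt{\pi}}e^{-2tz}$ is off by a power of $T$. (Incidentally, for $t \sim T$ one has $|tz| \sim T^{2/3-\varepsilon}$ at the endpoints, not $\sim 1$.) A trivial bound in this range is also insufficient: for $t$ just below $T/2$, bounding $|\widetilde{H}_s(1-z^2)| \lessapprox t^{4k-2}$ and integrating gives $|W(s)| \lessapprox T^{1/3}$, far larger than the target $H/T$.

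The paper therefore uses a different mechanism for $1 < t \le \tfrac{1}{2}T$ (Section~\ref{sec:u_averaging}, subsections 10.3--10.4): since $e^{2iTy}$ oscillates at frequency $T$ while $\widetilde{H}_s(1-z^2)$ varies only at frequency $\lesssim t$, one gets cancellation by non-stationary phase. Concretely, the paper derives from the hypergeometric ODE a second-order operator $\mathcal{P}_{Y,T}$ annihilating $e^{2TY\tilde z}\,(Y\tilde z)^{1/2}\widetilde{H}_s(1-(Y\tilde z)^2)$ on each dyadic piece $|y| \sim Y$, checks that the zeroth-order coefficient of its adjoint is bounded away from $0$ (this is where $t \le \tfrac{1}{2}T$ enters), and iterates the adjoint to gain arbitrarily many powers of $(TY)^{-1}$. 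The remaining contribution from $|y| \lesssim T^{-1+\eta}$ is handled by the trivial bound from Proposition~\ref{prop:u_block_triv_bd} and is already $\lessapprox H/T$. This ODE-based integration by parts is the missing ingredient in your outline.
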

	
	\begin{prop}[Asymptotic for $\widecheck{W}$ when $t>1$] \label{prop:I_t_principal}
		Let $s = \frac{1}{2}+it$ with $t>1$. Then
		\begin{align}
			\widecheck{W}(s)
			= e^{-2} H t^{-\frac{1}{2}} \exp\Big(-\Big(\frac{t}{T/H}\Big)^2\Big) \cos(\alpha(t))
			+ \widetilde{O}(\1_{t \lessapprox T/H} H t^{-1} + t^{-\infty}),
			\label{eqn:I_t_asymptotic}
		\end{align}
		where
		\begin{align} \label{eqn:alpha_def}
			\alpha(t)
			= \frac{3\pi}{4} + 2t\Big(1 + \log(2) - \log\Big(\frac{t}{T}\Big) + \frac{1}{4}\Big(\frac{t}{T}\Big)^2\Big).
		\end{align}
		In particular,
		\begin{align} \label{eqn:I_4_bd_in_reduction}
			|\widecheck{W}(s)|
			\lesssim \1_{t \lessapprox T/H} H t^{-\frac{1}{2}} + t^{-\infty}.
		\end{align}
	\end{prop}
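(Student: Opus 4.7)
The plan is to substitute an asymptotic expansion for the t-channel conformal block $\widetilde H_s(1-z^{-2})$ near $z=0$ into the defining formula \eqref{eqn:final_W_check_def} for $\widecheck W(s)$, and then evaluate the resulting oscillatory integral by stationary phase. Since $z = T^{-1}+iy$ has small modulus, $1-z^{-2}$ is a large complex number. The classical connection formula expressing ${}_2F_1(s,1-s;1;w)$ at large $|w|$ as a combination of $(-w)^{-s}$ and $(-w)^{-(1-s)}$ times convergent hypergeometrics of argument $1/w$, combined with Stirling's approximation for the prefactor Gamma quotients, will yield an asymptotic of the form
\begin{align*}
\widetilde H_s(1-z^{-2}) = t^{4k-2}e^{-\pi t}\bigl[A_s\,z^{1+2it}\Phi_s(z) + \overline{A_s}\,z^{1-2it}\Phi_{1-s}(z)\bigr] + (\text{smaller}),
\end{align*}
where $|A_s|\sim t^{-1/2}$, $\arg A_s \sim -2t\log 2 + \pi/4$, and $\Phi_s(z) = {}_2F_1(s,s;2s;-z^2) = 1 + O(tz^2)$. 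This is the type of estimate that Section~\ref{sec:asymptotics} will supply. Plugging into \eqref{eqn:final_W_check_def} produces two oscillatory integrals $J_\pm$, and since the integrand is invariant under composing $y\mapsto -y$ with complex conjugation, one has $J_+ = \overline{J_-}$, so $\widecheck W(s) = 2\,\mathrm{Re}\,J_- + (\text{error})$.

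After the change of variables $u = Ty$, the integral $J_-$ takes the schematic form
\begin{align*}
\text{const}\cdot A_s T^{-2it}\int_{|u|\lesssim T^{2/3-\varepsilon}} e^{-(Hu/T)^2}\,e^{2iu}\,(1+iu)^{-4k+3/2+2it}\,\Phi_s(z)\,du,
\end{align*}
whose phase $\phi(u) = 2u + t\log(1+u^2) + (3/2-4k)\arctan u$ has stationary points determined, to leading order, by $u^2 + tu + 1 = 0$, namely $u_\pm = (-t\pm\sqrt{t^2-4})/2$, both negative for $t > 2$. The amplitude also contains a factor $e^{-2t\arctan u - \pi t}$: at $u_+ \approx -1/t$ this is $\sim e^{2-\pi t}$ and hence exponentially small, while at $u_- \approx -t$, using $\arctan(-t) = -\pi/2 + \arctan(1/t)$, it is $\sim e^{-2}$, so the main contribution comes from $u_-$. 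There the Gaussian $e^{-(Hu/T)^2}$ evaluates to $e^{-(tH/T)^2}$, producing the claimed localization, and $\phi''(u_-)\approx -2/t$ gives the stationary-phase prefactor $\sqrt{2\pi/|\phi''|}\sim\sqrt{\pi t}$. Assembling all pieces produces the amplitude $\tfrac{1}{2}e^{-2}Ht^{-1/2}e^{-(tH/T)^2}$. The linear-in-$t$ and $t\log(t/T)$ parts of $\alpha(t)$ come from $\phi(u_-) + \arg A_s - 2t\log T$, while the cubic term $\tfrac{1}{4}(t/T)^2$ in \eqref{eqn:alpha_def} must come from the phase of $\Phi_s(z)$ evaluated at $z\approx (1-it)/T$, which works out to $\sim t^3/(2T^2)$; taking $2\,\mathrm{Re}\,J_-$ then produces $\cos\alpha(t)$.

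The error bound $\widetilde O(\1_{t\lessapprox T/H}Ht^{-1} + t^{-\infty})$ will arise from (i) the next subleading term of the hypergeometric expansion, which is of relative size $1/t$ and so produces the $Ht^{-1}$ piece; (ii) the subleading stationary-phase correction, also of relative size $1/t$; and (iii) integration by parts over the non-stationary region, exploiting $|\phi'(u)|\gtrsim 1$ away from $u_\pm$ together with the Gaussian tail, to yield the $t^{-\infty}$ decay when $t\gtrsim T/H$. The main obstacle I expect is the careful bookkeeping of the subleading hypergeometric contribution: the piece responsible for the $\tfrac{1}{4}(t/T)^2$ phase correction sits at the same relative order as terms that should land in the error, so it must be retained inside the exponent but discarded in the amplitude. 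A secondary difficulty is the transition regime $t\sim T/H$, where the stationary point $u_-$ lies at the edge of the effective Gaussian support; this should be handled uniformly by absorbing the Gaussian into a complex-valued effective phase and performing stationary phase with imaginary contribution to $\phi''$, rather than treating the Gaussian as an amplitude.
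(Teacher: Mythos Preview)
Your approach is essentially the paper's: use the connection formula for ${}_2F_1$ at large argument (this is exactly Proposition~\ref{prop:t_block_asymptotic}), reduce by the $y\mapsto -y$ symmetry, and apply stationary phase. Two bookkeeping points: your phase should be $2u - t\log(1+u^2)+\cdots$ (not $+t\log$), so the stationary points are at $u\approx t$ and $u\approx 1/t$, both \emph{positive}; the amplitude factor $e^{2t\arctan u-\pi t}$ then correctly gives $e^{-2}$ at the former and $e^{2-\pi t}$ at the latter. Where the paper differs is organizational: it first restricts to $y>0$ and uses Corollary~\ref{cor:t-block_triv_bd} to excise the small-$y$ region $y\ll t/T$ (so the secondary stationary point never appears), then invokes Proposition~\ref{prop:t_block_asymptotic_separated} to package the subleading phase $\phi(z)$ into a symbol $\rho$ with uniform derivative bounds, so that standard stationary phase applies directly; the $(t/T)^2$ correction you worried about emerges only at the very end when evaluating $\rho$ at the stationary point. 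This cleanly separates the two concerns you flagged, and the transition regime $t\sim T/H$ requires no special treatment because the Gaussian is simply absorbed into the amplitude $g$.
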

	
	We will only need the inequalities \eqref{eqn:I_2_bd_in_reduction} and \eqref{eqn:I_4_bd_in_reduction} rather than the asymptotics \eqref{eqn:I_u_asymptotic} and \eqref{eqn:I_t_asymptotic}. However, the asymptotics require little extra work to prove, and they are useful as a sanity check, because they can be tested numerically --- see Figures~\ref{fig:I_u} and \ref{fig:I_t} below.
	They are also useful for heuristics.
	For example, the properties (3)--(5) of $\widecheck{W}$ in Section~\ref{sec:strategy_heuristics} follow from \eqref{eqn:I_t_asymptotic}, and the property (2) also partially follows from \eqref{eqn:I_t_asymptotic}.
	
	\begin{figure}[H]
		\centering
		\includegraphics[width=0.86\textwidth]{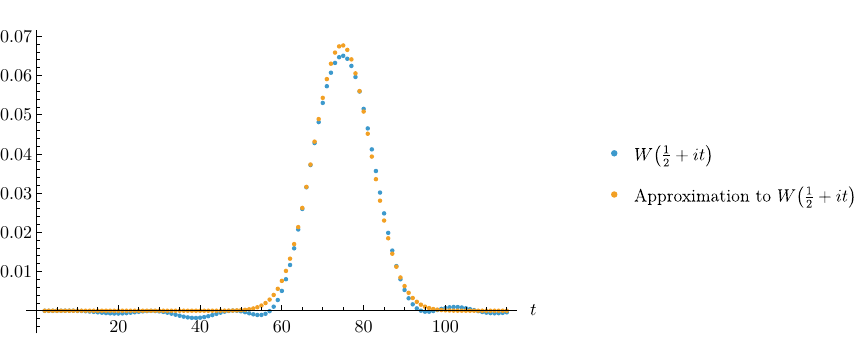}
		\caption{Take $k = 1$, $T = 75$, and $\varepsilon = \frac{1}{10}$. Then the points $(t, W(\frac{1}{2}+it))$, for each integer $t \in [2,115]$, are plotted in blue.
			The approximations to these points given by the asymptotic \eqref{eqn:I_u_asymptotic} are plotted in orange.}
		\label{fig:I_u}
	\end{figure}
	
	\begin{figure}[H]
		\centering
		\includegraphics[width=0.86\textwidth]{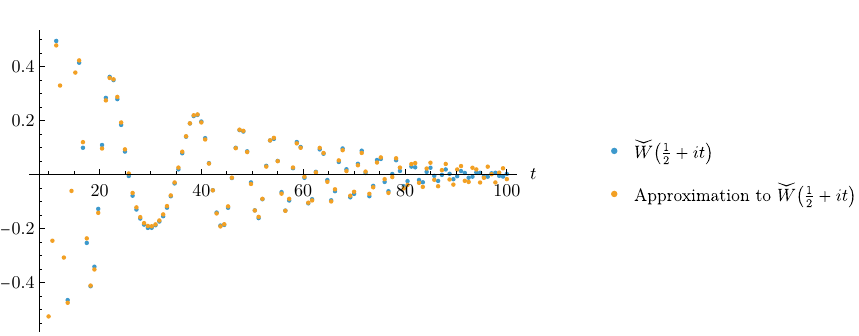}
		\caption{Take $k = 1$, $T = 1000$, and $\varepsilon = \frac{1}{40}$. Then the points $(t, \widecheck{W}(\frac{1}{2}+it))$, for $t \in [10,100]$ of the form $10 + \frac{3}{4}n$ with $n \in \Z$, are plotted in blue. The approximations to these points given by the asymptotic \eqref{eqn:I_t_asymptotic} are plotted in orange.
		}
		\label{fig:I_t}
	\end{figure}
	
	We now prove the Weyl bound, Theorem~\ref{thm:Weyl}.
	
	\begin{proof}[Proof of Theorem~\ref{thm:Weyl}]
		By \eqref{eqn:I_2_bd_in_reduction} in Proposition~\ref{prop:I_u_principal},
		\begin{align*}
			\sum_{|t_r-T| \leq H} \widetilde{C}_r^2
			= \sum_{t_r>1} \1_{|t_r-T| \leq H} \widetilde{C}_r^2
			\lesssim \sum_{t_r > 1} W(s_r) \widetilde{C}_r^2
			+ \widetilde{O}\Big(\frac{H}{T} \sum_{1 < t_r \lessapprox T} \widetilde{C}_r^2 + \sum_{t_r>1} t_r^{-\infty} \widetilde{C}_r^2\Big).
		\end{align*}
		By the convexity bound \eqref{eqn:convexity}, the error term is $\lessapprox TH$.
		Therefore
		\begin{align} \label{eqn:reduction:bdd_by_I_u}
			\sum_{|t_r-T| \leq H} \widetilde{C}_r^2
			\lessapprox \sum_{t_r > 1} W(s_r) \widetilde{C}_r^2 + TH.
		\end{align}
		By the averaged crossing equation in Corollary~\ref{cor:crossing_averaged_specific}, the sum on the right can be written as
		\begin{align*}
			\sum_{t_r > 1} W(s_r) \widetilde{C}_r^2
			&= \sum_{r=0}^{\infty} W(s_r) \widetilde{C}_r^2
			- \sum_{t_r \leq 1} W(s_r) \widetilde{C}_r^2
			\\&= \sum_{r=0}^{\infty} \widecheck{W}(s_r) \widetilde{C}_r^2
			- \sum_{t_r \leq 1} W(s_r) \widetilde{C}_r^2
			\\&= \sum_{t_r > 1} \widecheck{W}(s_r) \widetilde{C}_r^2
			+ \sum_{t_r \leq 1} (\widecheck{W}(s_r) - W(s_r)) \widetilde{C}_r^2.
		\end{align*}
		By Propositions~\ref{prop:I_u_complementary} and \ref{prop:I_t_complementary}, together with the fact that there are only $O(1)$ many $r$ with $t_r \leq 1$, the second sum on the right is $\widetilde{O}(TH)$.
		So
		\begin{align*}
			\sum_{t_r > 1} W(s_r) \widetilde{C}_r^2
			= \sum_{t_r > 1} \widecheck{W}(s_r) \widetilde{C}_r^2 + \widetilde{O}(TH).
		\end{align*}
		Inserting this into \eqref{eqn:reduction:bdd_by_I_u},
		\begin{align} \label{eqn:reduction:bdd_by_I_t}
			\sum_{|t_r-T| \leq H} \widetilde{C}_r^2
			\lessapprox \sum_{t_r > 1} \widecheck{W}(s_r) \widetilde{C}_r^2 + TH.
		\end{align}
		By \eqref{eqn:I_4_bd_in_reduction} in Proposition~\ref{prop:I_t_principal}, followed by dyadic decomposition and the convexity bound \eqref{eqn:convexity}, the sum on the right satisfies
		\begin{align}
			\Big|\sum_{t_r > 1} \widecheck{W}(s_r) \widetilde{C}_r^2\Big|
			&\leq \sum_{t_r > 1} |\widecheck{W}(s_r)| \widetilde{C}_r^2
			\notag
			\\&\lesssim H \sum_{1<t_r \lessapprox T/H} t_r^{-\frac{1}{2}} \widetilde{C}_r^2
			+ \sum_{t_r > 1} t_r^{-\infty} \widetilde{C}_r^2
			\notag
			\\&\lesssim H \sum_{\substack{N \in 2^{\mathbf{N}} \\ N \lessapprox T/H}} N^{-\frac{1}{2}} \sum_{t_r \sim N} \widetilde{C}_r^2
			+ 1
			\notag
			\\&\lesssim H \sum_{\substack{N \in 2^{\mathbf{N}} \\ N \lessapprox T/H}} N^{\frac{3}{2}}
			\notag
			\\&\lessapprox H(T/H)^{\frac{3}{2}}
			= T^{\frac{3}{2}} H^{-\frac{1}{2}}.
			\label{eqn:t_principal_contribution}
		\end{align}
		Plugging this into \eqref{eqn:reduction:bdd_by_I_t} gives
		\begin{align} \label{eqn:t_contribution+Lindelof}
			\sum_{|t_r-T| \leq H} \widetilde{C}_r^2
			\lessapprox T^{\frac{3}{2}} H^{-\frac{1}{2}} + TH.
		\end{align}
		Recall that $H = T^{\frac{1}{3}+2\varepsilon}$.
		Since $H \geq T^{\frac{1}{3}}$, we have $T^{\frac{3}{2}} H^{-\frac{1}{2}} \leq TH$. Since $\varepsilon$ was arbitrary, we conclude the Weyl bound \eqref{eqn:Weyl}.
	\end{proof}
	
	\begin{rem}[Weyl is a barrier] \label{rem:Weyl_barrier}
		If $H$ were smaller than $T^{\frac{1}{3}}$, then $\RHS\eqref{eqn:t_principal_contribution} = T^{\frac{3}{2}} H^{-\frac{1}{2}}$ would be bigger than $TH$, and \eqref{eqn:t_contribution+Lindelof} would be weaker than Lindel\"of on average.
		Therefore, to get a sub-Weyl bound by our method, we would need to obtain cancellation in $\LHS\eqref{eqn:t_principal_contribution}$. This is exactly as predicted in Section~\ref{sec:strategy_heuristics}.
		Indeed, by Proposition~\ref{prop:I_t_principal}, $\LHS\eqref{eqn:t_principal_contribution}$ is essentially a sum over $t_r \lesssim T/H$, and if we approximate this sum by the top dyadic range, then
		\begin{align*}
			\LHS\eqref{eqn:t_principal_contribution}
			\approx \Big|\sum_{t_r \sim T/H} \widecheck{W}(s_r) \widetilde{C}_r^2 \Big|
			= E_{\true},
		\end{align*}
		where the last equality uses the notation \eqref{eqn:X_E_def} from Section~\ref{sec:strategy_heuristics}.
		It seems difficult to prove a nontrivial bound for this sum (without using the crossing equation which would put us back where we started).
		One reason is that $\widecheck{W}(\tfrac{1}{2}+it)$ oscillates quickly for $t \sim T/H$. Specifically, by \eqref{eqn:I_t_asymptotic} in Proposition~\ref{prop:I_t_principal}, it oscillates at frequency $\alpha'(t) \sim \log T$, where $\alpha(t)$ is as in \eqref{eqn:alpha_def}.
		For comparison, the cutoff $\1_{|t-T| \lesssim T^{\frac{1}{3}}}$ oscillates at frequency $\sim T^{-\frac{1}{3}}$, far less than $\log T$. However, it already tests the limits of our abilities to sum this cutoff against $\widetilde{C}_r^2$. Another sign that the frequency $\log T$ is too fast to handle is that the distribution of the spectral parameters $t_r$ is not known at the dual scale $\frac{1}{\log T}$. For example, it is conjectured but not known that, for every $c > 0$ and $T \gg_c 1$, there exists $r$ with $t_r \in [T, T+\frac{c}{\log T}]$. When $c$ is a sufficiently large constant, this is true by \cite{Randol} (in constant negative curvature) or \cite{Berard} (in variable negative curvature).
	\end{rem}
	
	\section{Trivial bounds for conformal blocks when $t \leq 1$}
	\label{sec:complementary}
	
	Let $s \in (\frac{1}{2},1] \cup (\frac{1}{2}+i\R_{\geq 0})$. Write $s = \sigma+it$. The next two propositions are proved in Subsections~\ref{subsec:complementary:u} and \ref{subsec:complementary:t}, respectively.
	
	\begin{prop}[Trivial bound for u-channel conformal block when $t \leq 1$] \label{prop:u_complementary}
		Assume $t \leq 1$.
		Let $z \in \C$ with $\re z > 0$ and $|z| \ll 1$. Then
		\begin{align} \label{eqn:u_complementary_bd}
			|H_s(1-z^2)|
			\lesssim \log(|z|^{-1}).
		\end{align}
	\end{prop}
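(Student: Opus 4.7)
The plan is to reduce the bound to a classical identity for the hypergeometric ${}_2F_1(a,b;c;w)$ at $w = 1$ in the logarithmic case $c = a+b$, which is exactly our situation because $a+b = s + (1-s) = 1 = c$. Since $\re z > 0$ and $|z| \ll 1$, the argument $1-z^2$ lies in a neighborhood of $1$ inside $\C \setminus [1,\infty)$, where the principal branch of $H_s$ is defined, and $\log z$ is well-defined via the principal branch because $\re z > 0$.

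I would apply the standard connection formula (see, e.g., NIST DLMF 15.8.10)
\begin{align*}
	H_s(1-z^2)
	= -\frac{1}{\Gamma(s)\Gamma(1-s)} \sum_{n=0}^{\infty} \frac{(s)_n (1-s)_n}{(n!)^2} \bigl[2\log z - 2\psi(n+1) + \psi(s+n) + \psi(1-s+n)\bigr] z^{2n},
\end{align*}
valid (as an absolutely convergent series) whenever $|z^2| < 1$, with the principal branch of $\log z$. The $n=0$ term is
\begin{align*}
	-\frac{1}{\Gamma(s)\Gamma(1-s)} \bigl[2\log z - 2\psi(1) + \psi(s) + \psi(1-s)\bigr].
\end{align*}
Since $\Gamma(s)\Gamma(1-s) = \pi/\sin(\pi s)$ is bounded away from $0$ on the compact set $\{\sigma \in [\tfrac{1}{2},1],\ t \in [0,1]\}$, and $\psi(s), \psi(1-s)$ are bounded on the same set (with the exception $s = 1$, where $\psi(1-s)$ has a pole but $1/\Gamma(1-s)$ vanishes to matching order, so the product is bounded by L'Hôpital), the $n=0$ term contributes $O(|\log z|) + O(1) = O(\log(|z|^{-1}))$.

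For the tail $n \geq 1$, I would note that since $t \leq 1$ the parameters $|s|, |1-s| \leq 2$, so trivially $|(s)_n (1-s)_n/(n!)^2| \lesssim 1$, and the bracketed expression is $O(\log(|z|^{-1}) + \log n)$ uniformly in $s$ (using standard digamma asymptotics). Hence the tail is bounded by a constant multiple of $\sum_{n=1}^{\infty} (\log(|z|^{-1}) + \log n) |z|^{2n}$, which is $O(|z|^2 \log(|z|^{-1})) = O(1)$ since $|z| \ll 1$. Combining the two contributions yields \eqref{eqn:u_complementary_bd}.

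The only subtle point is the uniformity at the endpoint $s = 1$ (where $\Gamma(1-s)$ has a simple pole), but this is genuinely a non-issue: either one observes $H_1 \equiv 1$ directly from \eqref{eqn:2F1_series} and dispatches that case by hand, or one notes that $(1-s)_n$ vanishes to matching order so that the identity extends continuously to $s = 1$. No other obstacle is expected — the argument is routine once the right hypergeometric identity is in hand.
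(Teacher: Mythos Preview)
Your proof is correct and takes a genuinely different route from the paper's. The paper uses Euler's integral representation
\[
H_s(1-z^2)=\frac{\sin(\pi s)}{\pi}\int_0^1 (1-x)^{s-1}x^{-s}(x+z^2(1-x))^{s-1}\,dx,
\]
splits the integral at $x=\tfrac12$, and after a change of variables $x=r^2y$ bounds the pieces directly; the factor $|\sin(\pi s)|\lesssim 1-\sigma$ then absorbs the $\frac{1}{1-\sigma}$ that appears near $s=1$. Your argument instead quotes the logarithmic connection formula for ${}_2F_1(a,b;a+b;\cdot)$ and bounds the resulting power series in $z^2$ term by term. Your approach is shorter and more direct once the identity is in hand, and it dispatches the endpoint $s=1$ cleanly (the $(1-s)_n$ factor kills the tail). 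The paper's approach is more self-contained, relying only on the elementary Euler integral rather than the connection formula, and the same estimate of the integral is recycled verbatim for the t-channel block in Proposition~\ref{prop:t_complementary}.

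One small imprecision: from $|s|,|1-s|\le 2$ the \emph{trivial} bound is $|(s)_n(1-s)_n/(n!)^2|\le (n+1)^2$, not $\lesssim 1$. The sharper bound $\lesssim 1/n$ does hold (via $\Gamma(s+n)\sim\Gamma(n+1)n^{s-1}$), but either estimate suffices since you are summing against $|z|^{2n}$ with $|z|\ll 1$. This does not affect the validity of your argument.
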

	
	\begin{prop}[Trivial bound for t-channel conformal block when $t \leq 1$] \label{prop:t_complementary}
		Assume $t \leq 1$.
		Let $z \in \C$ with $\re z > 0$ and $|z| \ll 1$. Then
		\begin{align} \label{eqn:t_complementary_bd}
			|H_s(1-z^{-2})|
			\lesssim |z|^{2(1-\sigma)} \log(|z|^{-1})
			\leq \log(|z|^{-1}).
		\end{align}
	\end{prop}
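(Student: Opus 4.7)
The plan is to apply a connection formula to reduce $H_s(1 - z^{-2})$ to an expression involving hypergeometrics evaluated at the small argument $z^2$, so that the main features become explicit. First, Pfaff's transformation gives
\begin{equation*}
H_s(1 - z^{-2}) = z^{2(1-s)} \cdot {}_2F_1(1-s, 1-s; 1; 1 - z^2),
\end{equation*}
where $z^{2(1-s)}$ uses the principal branch (well-defined since $\re z > 0$). Then the standard $w \to 1$ connection formula for ${}_2F_1(a,a;1;\cdot)$ produces
\begin{equation*}
H_s(1 - z^{-2}) = \frac{\Gamma(2s-1)}{\Gamma(s)^2} \, z^{2(1-s)} F_1(z^2) + \frac{\Gamma(1-2s)}{\Gamma(1-s)^2} \, z^{2s} F_2(z^2),
\end{equation*}
where $F_1 = {}_2F_1(1-s, 1-s; 2-2s; \cdot)$ and $F_2 = {}_2F_1(s, s; 2s; \cdot)$, each equal to $1 + O(|z|^2)$ for $|z| \ll 1$.

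The bound now splits into two cases. When $|2s - 1| \geq c_0$ for a small fixed $c_0 > 0$, the Gamma factors are uniformly bounded on the compact region $\sigma \in [\tfrac{1}{2}, 1]$, $t \in [0, 1]$, and $F_1, F_2 = O(1)$; since $\sigma \geq \tfrac{1}{2}$ forces $|z|^{2\sigma} \leq |z|^{2(1-\sigma)}$ for $|z| < 1$, both terms are $\lesssim |z|^{2(1-\sigma)}$, which is stronger than claimed and needs no logarithm.

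The main obstacle is the regime $s$ near $\tfrac{1}{2}$, where the Gamma factors each have a simple pole and the bound must come from cancellation. Writing $s = \tfrac{1}{2} + u$ with $|u|$ small and using $\Gamma(\pm 2u) = \pm \tfrac{1}{2u} + O(1)$, $\Gamma(\tfrac{1}{2} \pm u)^2 = \pi + O(u)$, the formula rearranges to
\begin{equation*}
H_s(1 - z^{-2}) = \frac{z}{2 u \pi} \bigl[ z^{-2u} F_1(z^2) - z^{2u} F_2(z^2) \bigr] + O\bigl( |z|^{2(1-\sigma)} \bigr),
\end{equation*}
where the $O$ term absorbs the regular parts of the Gamma expansions. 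The bracket vanishes at $u = 0$ (since $F_1$ and $F_2$ coincide there), so the mean value theorem applied to $u \mapsto z^{-2u} F_1(z^2) - z^{2u} F_2(z^2)$ produces a factor of $|u|$ which cancels the pole $1/u$. Differentiating in $u$ introduces $\partial_u z^{\pm 2u} = \pm 2 (\log z) z^{\pm 2u}$, which costs the factor $\log|z|^{-1}$; bounding uniformly for $\re u \in [0, c_0]$ via $|z^{\pm 2u}| \leq |z|^{-2 \re u}$ then yields the desired estimate $|z|^{2(1-\sigma)} \log|z|^{-1}$.

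The second inequality $|z|^{2(1-\sigma)} \leq 1$ is immediate from $|z| \leq 1$ and $\sigma \geq \tfrac{1}{2}$, so only the first estimate is substantive. The delicate point throughout is the mean-value step near $s = \tfrac{1}{2}$, where uniformity in both $u$ and $z$ must be tracked carefully; the rest of the argument is routine manipulation of Gamma functions and hypergeometric series.
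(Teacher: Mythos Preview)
Your argument is correct but follows a different route from the paper. The paper proceeds exactly as in the u-channel case: it writes $H_s(1-z^{-2})$ via Euler's integral representation, pulls out the factor $z^{2(1-s)}$, and then estimates the remaining integral
\[
\frac{\sin(\pi s)}{\pi}\int_0^1 x^{s-1}(1-x)^{-s}(x+z^2(1-x))^{s-1}\,dx
\]
by the identical splitting and substitution used for Proposition~\ref{prop:u_complementary}. The factor $\sin(\pi s)$ from the reflection formula automatically tames the would-be singularity at $s=\tfrac12$, so no case analysis is needed.

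Your approach instead combines Pfaff's transformation with the $w\to 1$ connection formula, and then handles the pole at $s=\tfrac12$ by a cancellation/mean-value argument. This has the advantage of making the origin of the logarithm completely explicit (it is literally $\partial_u z^{2u}$), and in the region $|2s-1|\ge c_0$ you actually get the stronger bound $|z|^{2(1-\sigma)}$ with no logarithm at all. The cost is the case split and the bookkeeping near $s=\tfrac12$: one must control the $u$-derivatives of $F_1,F_2$ uniformly and phrase the ``mean value'' step as the fundamental theorem of calculus along a segment (since $u$ is complex). These are routine, and you flag them correctly. The paper's approach is shorter because it simply recycles the u-channel estimate wholesale; yours is more structural.
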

	
	Assume in the remainder of this section that $t \leq 1$. This assumption will often be used to estimate $|\zeta^s| \sim |\zeta|^{\sigma}$ for $\zeta \in \C \setminus (-\infty,0]$.
	
	\subsection{u-Channel} \label{subsec:complementary:u}
	
	Here we prove Proposition~\ref{prop:u_complementary}.
	This is trivial when $s=1$ because $H_0$ is identically $1$, so we may assume $\sigma < 1$.
	Then by Euler's integral representation \cite[(9.1.6)]{Lebedev} for the hypergeometric ${}_2F_1$,
	\begin{align*}
		H_s(1-z^2)
		= {}_2F_1(1-s,s;1;1-z^2)
		= \frac{1}{\Gamma(s) \Gamma(1-s)} \int_{0}^{1} x^{s-1} (1-x)^{-s} (1 - x(1-z^2))^{s-1} \, dx.
	\end{align*}
	Replacing $x$ with $1-x$ and using Euler's reflection formula for the Gamma function,
	\begin{align} \label{eqn:Euler_integral_rep_u}
		H_s(1-z^2)
		= \frac{\sin(\pi s)}{\pi} \int_{0}^{1} (1-x)^{s-1} x^{-s} (x + z^2(1-x))^{s-1} \, dx.
	\end{align}
	Let $I(x)$ denote the integrand, so
	\begin{align*}
		I(x)
		= (1-x)^{s-1} x^{-s} (x+z^2(1-x))^{s-1}.
	\end{align*}
	Bearing in mind that $|z| \ll 1$, we trivially have
	\begin{align} \label{eqn:I_half_to_1_u}
		\int_{\frac{1}{2}}^{1} |I(x)| \, dx
		\lesssim \int_0^1 (1-x)^{\sigma-1} \, dx
		\lesssim 1,
	\end{align}
	where the second inequality is because $1-\sigma \leq \frac{1}{2}$.
	Write $z = re^{i\theta}$ in polar coordinates. Then
	\begin{align*}
		\int_{0}^{\frac{1}{2}} |I(x)| \, dx
		\lesssim \int_{0}^{\frac{1}{2}} x^{-\sigma} |x+r^2e^{2i\theta}(1-x)|^{\sigma-1} \, dx.
	\end{align*}
	Making the substitution $x = r^2y$,
	\begin{align*}
		\int_{0}^{\frac{1}{2}} |I(x)| \, dx
		&\lesssim r^2 \int_{0}^{\frac{1}{2}r^{-2}} (r^2y)^{-\sigma} |r^2y+r^2e^{2i\theta}(1-r^2y)|^{\sigma-1} \, dy
		\\&= \int_{0}^{\frac{1}{2}r^{-2}} y^{-\sigma} |y+e^{2i\theta}(1-r^2y)|^{\sigma-1} \, dy.
	\end{align*}
	Rearranging the expression in absolute values,
	\begin{align*}
		\int_{0}^{\frac{1}{2}} |I(x)| \, dx
		\lesssim \int_{0}^{\frac{1}{2}r^{-2}} y^{-\sigma} |e^{2i\theta} + (1-r^2e^{2i\theta})y|^{\sigma-1} \, dy.
	\end{align*}
	Since $r \ll 1$, the absolute value on the right is $\gtrsim 1$ when $y \leq \frac{1}{2}$ and $\gtrsim y$ when $y \geq 2$. Therefore
	\begin{align*}
		\int_{0}^{\frac{1}{2}} |I(x)| \, dx
		&\lesssim \int_{0}^{\frac{1}{2}} y^{-\sigma} \, dy + \int_{\frac{1}{2}}^{2} |e^{2i\theta}
		+ (1-r^2e^{2i\theta})y|^{\sigma-1} \, dy
		+ \int_{2}^{\frac{1}{2}r^{-2}} y^{-1} \, dy
		\\&\lesssim \frac{1}{1-\sigma} + \int_{\frac{1}{2}}^{2} |e^{2i\theta}
		+ (1-r^2e^{2i\theta})y|^{\sigma-1} \, dy
		+ \log(r^{-1}).
	\end{align*}
	Using the inequality $|\zeta| \geq |\re \zeta|$ and then making the substitution $u = (1-r^2\cos(2\theta))y$,
	\begin{align*}
		\int_{0}^{\frac{1}{2}} |I(x)| \, dx
		\lesssim \frac{1}{1-\sigma} + \log(r^{-1}) + \int_{u \sim 1} |\cos(2\theta) + u|^{\sigma-1} \, du.
	\end{align*}
	The integral on the right hand side is $\lesssim 1$ because $1-\sigma \leq \frac{1}{2}$, so
	\begin{align} \label{eqn:I_0_to_half_u}
		\int_{0}^{\frac{1}{2}} |I(x)| \, dx
		\lesssim \frac{1}{1-\sigma} + \log(r^{-1}).
	\end{align}
	Inserting \eqref{eqn:I_half_to_1_u} and \eqref{eqn:I_0_to_half_u} into \eqref{eqn:Euler_integral_rep_u},
	\begin{align*}
		|H_s(1-z^2)|
		\lesssim \frac{|\sin(\pi s)|}{1-\sigma} + |\sin(\pi s)| \log(r^{-1}).
	\end{align*}
	In the range of $s$ that we are considering, we have $|\sin(\pi s)| \lesssim 1-\sigma \leq \frac{1}{2}$. Thus
	\begin{align*}
		|H_s(1-z^2)|
		\lesssim \log(r^{-1}).
	\end{align*}
	This is the desired bound \eqref{eqn:u_complementary_bd}.
	\qed
	
	\subsection{t-Channel} \label{subsec:complementary:t}
	
	The proof of Proposition~\ref{prop:t_complementary} is closely analogous to that of Proposition~\ref{prop:u_complementary}. Again, the result is trivial when $s=1$, so assume $\sigma < 1$. Then Euler's integral representation \cite[(9.1.6)]{Lebedev} gives
	\begin{align*}
		H_s(1-z^{-2})
		= {}_2F_1(1-s;s;1;1-z^{-2})
		= \frac{1}{\Gamma(s) \Gamma(1-s)} \int_{0}^{1} x^{s-1} (1-x)^{-s} (1 - x(1-z^{-2}))^{s-1} \, dx.
	\end{align*}
	Pulling a factor of $z^{2(1-s)}$ out of the integrand and using Euler's reflection formula,
	\begin{align*}
		H_s(1-z^{-2})
		= z^{2(1-s)} \frac{\sin(\pi s)}{\pi} \int_0^1 x^{s-1} (1-x)^{-s} (x + z^2(1-x))^{s-1} \, dx.
	\end{align*}
	The only differences between this and \eqref{eqn:Euler_integral_rep_u} are the factor of $z^{2(1-s)}$ in front and the fact that $x$ and $1-x$ have switched places in the first two factors in the integrand. Estimating the integral in the same way as \eqref{eqn:Euler_integral_rep_u} yields
	\begin{align*}
		\Big|\frac{\sin(\pi s)}{\pi} \int_0^1 x^{s-1} (1-x)^{-s} (x + z^2(1-x))^{s-1} \, dx\Big|
		\lesssim \log(|z|^{-1}),
	\end{align*}
	from which the desired bound \eqref{eqn:t_complementary_bd} follows.
	\qed
	
	\section{Asymptotics for conformal blocks when $t>1$}
	\label{sec:asymptotics}
	
	Suppose $r \in \Z_{\geq 0}$ is such that $t_r>1$ (or more generally $t_r>0$). Then $\sigma_r = \frac{1}{2}$, so $t_r$ determines $s_r = \frac{1}{2}+it_r$. Therefore, in this section, the variable $t$ should be thought of as a spectral parameter, and we free the variables $s$ and $\sigma$ so that they have no special meaning.
	
	The two propositions below are the most precise asymptotics for the u- and t-channel conformal blocks that we will use. They are proven in Subsections~\ref{subsec:u_asymptotic} and \ref{subsec:t_asymptotic}, respectively. Along the way, we will establish some more crude but useful estimates.
	
	\begin{prop}[Asymptotic for u-channel conformal block when $t>1$] \label{prop:u_block_asymptotic}
		There exist constants $c_j \in \Q$ with $c_0 = 1$, such that the following holds. Let $t > 1$. Let $z \in \C$ with $t^{-1} \lesssim |z| \lesssim 1$ and $\re z \gtrsim t^{-\frac{2}{3}+\varepsilon} |z|$. Then for each $M \geq 0$, there exists $J \in \Z_{\geq 0}$ depending only on $M$ and $\varepsilon$, such that
		\begin{align} \label{eqn:u_block_asymptotic}
			\widetilde{H}_{\frac{1}{2}+it}(1-z^2)
			= \frac{t^{4k-\frac{5}{2}}}{2\sqrt{\pi z}} \Big[e^{-2tz} \sum_{j=0}^{J} c_j (tz^3)^j + O_M(e^{-2t\re z} |tz|^{-1} + t^{-M})\Big].
		\end{align}
	\end{prop}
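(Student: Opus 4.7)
The strategy is uniform asymptotic analysis of $H_s(1-z^2) = {}_2F_1(s,1-s;1;1-z^2)$ as $s = \tfrac{1}{2}+it$ with $t \to \infty$, based on its confluence to the modified Bessel function $K_0(2tz)$. Since $c-a-b = 0$ in this ${}_2F_1$, the connection formula between series at $0$ and series at $1$ is the logarithmic case, yielding
\begin{align*}
H_s(1-z^2) = -\frac{\sin(\pi s)}{\pi}\sum_{n \geq 0}\frac{(s)_n(1-s)_n}{(n!)^2}z^{2n}\bigl[2\log z + \psi(s+n)+\psi(1-s+n) - 2\psi(n+1)\bigr]
\end{align*}
for $|z|<1$. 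For $s = \tfrac{1}{2}+it$ one has $(s)_n(1-s)_n = \prod_{j=0}^{n-1}\bigl[(j+\tfrac{1}{2})^2 + t^2\bigr]$ and $\sin(\pi s) = \tfrac{1}{2}e^{\pi t}(1+O(e^{-2\pi t}))$. Replacing these $t$-dependent data by their leading values at $t = \infty$ and applying the identity $K_0(w) = -(\log(w/2)+\gamma)I_0(w) + \sum_{n \ge 1}(w/2)^{2n}H_n/(n!)^2$ (with $H_n$ the harmonic numbers) recovers the heuristic \eqref{eqn:hyp_approx_Bessel}.

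To obtain a quantitative expansion I would asymptotically expand $(s)_n(1-s)_n = t^{2n}\bigl(1 + \sum_{\ell \ge 1}\xi_\ell(n)\,t^{-2\ell}\bigr)$ and $\psi(s+n)+\psi(1-s+n) = 2\log t + \sum_{\ell \ge 1}q_\ell(n)\,t^{-2\ell}$ via Stirling's series, with explicit polynomials $\xi_\ell, q_\ell$ of degrees $3\ell$ and $2\ell$. Summing against the kernel $(tz)^{2n}/(n!)^2$ turns a polynomial in $n$ into a polynomial differential operator in $D = (tz)\partial_{tz}$ acting on $I_0(2tz)$ or on $K_0(2tz) + (\log(tz)+\gamma)I_0(2tz)$; the Leibniz rule on the $\log(tz)+\gamma$ factor generates additional $I_0$-type terms. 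I would then apply the large-argument Bessel asymptotic $K_\nu(w) = \sqrt{\pi/(2w)}\,e^{-w}\bigl[\sum_{k = 0}^{K}a_k(\nu)w^{-k} + O(|w|^{-K-1})\bigr]$ (valid in the sector $|\arg w| \le \pi - \delta$ that the hypothesis $\re z \gtrsim t^{-2/3+\varepsilon}|z|$ places $2tz$ into), together with its analogue for $I_\nu$, then multiply by the $\widetilde{H}$-normalization $t^{4k-2}e^{-\pi t}$ and extract the common prefactor $\frac{t^{4k-5/2}}{2\sqrt{\pi z}}e^{-2tz}$. Every surviving monomial then has the form $t^{-2\ell}z^m(tz)^{-k}$ with $m \le 3\ell$: those with $m = 3\ell$ and $k = 0$ equal $(tz^3)^\ell$ and enter the finite sum $\sum_j c_j(tz^3)^j$; those with an extra $(tz)^{-1}$ fit into the $O(e^{-2t\re z}|tz|^{-1})$ error; those with an extra $1/t$ fit into the $O(t^{-M})$ error once the truncation index $L$ is chosen large in terms of $M$. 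Taking $J = L$ completes the expansion.

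The main obstacles are twofold. First, the $I_0$-contributions appearing in the intermediate step a priori carry $e^{+2tz}$ growth, and one must check that these growing contributions cancel order by order in $1/t^{2\ell}$ so that only the $e^{-2tz}$ behaviour of \eqref{eqn:u_block_asymptotic} survives; this cancellation reflects the fact that $H_s(1-z^2)$ is the ``log-type'' Frobenius solution at $z = 0$, which in the confluent Bessel limit is the $K_0$-line rather than the $I_0$-line. Second, the bookkeeping of monomials must be uniform in $z$ across the whole range $t^{-1} \lesssim |z| \lesssim 1$. For $|z| \lesssim t^{-1/3}$ the $(tz^3)^j$ expansion is a genuine asymptotic series and the Bessel remainder supplies the dominant error; for $|z| \gtrsim t^{-1/3}$ the hypothesis $\re z \gtrsim t^{-2/3+\varepsilon}|z|$ forces $e^{-2t\re z} \le e^{-2t^\varepsilon}$, so both sides of \eqref{eqn:u_block_asymptotic} are dominated by the error terms and the statement holds trivially in that regime.
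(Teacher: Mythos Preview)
Your approach differs genuinely from the paper's. You work with the logarithmic connection formula for $H_s(1-z^2)$ around $z=0$, expand the Pochhammer and digamma data via Stirling, and resum to recognize $K_0$ and its derivatives. The paper instead starts from the Barnes contour integral
\[
H_{\frac{1}{2}+it}(1-z^2) = \frac{1}{2\pi i\,\Gamma(\tfrac{1}{2}+it)^2\Gamma(\tfrac{1}{2}-it)^2}\int_{\sigma-i\infty}^{\sigma+i\infty}\Gamma(s)^2\Gamma(\tfrac{1}{2}+it-s)\Gamma(\tfrac{1}{2}-it-s)\,z^{-2s}\,ds,
\]
truncates to $|s|\le t^{2/3-\varepsilon/2}$ using exponential decay of the integrand, applies Stirling to $\Gamma(\tfrac{1}{2}\pm it-s)$, and then recognizes $\frac{1}{4\pi i}\int\Gamma(s)^2(tz)^{-2s}\,ds = K_0(2tz)$ via the Mellin--Barnes representation of $K_0$. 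Higher-order Stirling terms contribute $(z\partial_z)^\ell K_0(2tz)$, never $I_0$. This is the decisive advantage of the paper's route: it lands on $K_0$ directly.

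Your proposal has a real gap at precisely the first obstacle you flag. The cancellation of the $I_0$-type (i.e.\ $e^{+2tz}$) contributions is not proved; saying it ``reflects'' the Frobenius structure is a heuristic. To close the gap you would need either an explicit order-by-order algebraic verification, or an argument that your formal expansion solves the hypergeometric ODE and is uniquely selected by its $z\to 0$ behaviour --- essentially a separate analysis. The Barnes integral avoids this entirely.

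There is a second issue you do not flag: interchanging the infinite $n$-sum with the asymptotic expansion in $1/t$. Your Stirling expansions of $(s)_n(1-s)_n$ and $\psi(s+n)+\psi(1-s+n)$ are uniform only for $n$ small compared to $t$, but the mass of the Bessel-type sum sits near $n\sim|tz|$, which can be as large as $t^{2/3}$ in the regime $|z|\lesssim t^{-1/3}$ (and larger otherwise, where you rely on the trivial bound). At $n\sim t^{2/3}$ the leading correction $\sum_{j<n}(j+\tfrac{1}{2})^2/t^2 \sim n^3/(3t^2)$ is of order one, so term-by-term expansion in $1/t^2$ is not obviously justified and requires a separate tail estimate. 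The paper's truncation in the $s$-integral is justified by exponential decay of the \emph{unexpanded} integrand (their Lemma bounding $|\Gamma(s)^2\Gamma(\tfrac{1}{2}\pm it-s)z^{-2s}|$), which handles this cleanly.
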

	
	Recall that implicit constants are allowed to depend on $\varepsilon$.
	Inspecting the proof of Proposition~\ref{prop:u_block_asymptotic} shows that one can take
	\begin{align*}
		c_j = \frac{(-1)^j}{3^j(j!)}.
	\end{align*}
	At first, it may seem odd that \eqref{eqn:u_block_asymptotic} is an asymptotic expansion in powers of $tz^3$, but that we never assume $tz^3$ is small.
	However, if $|tz^3| \gtrsim 1$, then the hypothesis $\re z \gtrsim t^{-\frac{2}{3}+\varepsilon}|z|$ forces
	\begin{align*}
		\re tz
		\gtrsim t^{\frac{1}{3}+\varepsilon}|z|
		= t^{\varepsilon} |tz^3|^{\frac{1}{3}}
		\gtrsim t^{\varepsilon},
	\end{align*}
	and so the exponential decay of the factor $e^{-2tz}$ dominates the polynomial growth of the sum on the right hand side of \eqref{eqn:u_block_asymptotic}.
	
	\begin{prop}[Asymptotic for t-channel conformal block when $t>1$] \label{prop:t_block_asymptotic}
		Let $t > 1$. Let $z \in \C$ with $\re z > 0$, $\im z \geq 0$, and $|z| \ll 1$. Denote
		\begin{align*}
			\phi(z)
			= -\log(1+\sqrt{1-z^2})
			\qquad \text{and} \qquad
			\psi(z)
			= \frac{\pi^{-\frac{1}{2}} e^{-i\pi/4}}{1+\sqrt{1-z^2}}.
		\end{align*}
		Then
		\begin{align} \label{eqn:t_block_asymptotic}
			\widetilde{H}_{\frac{1}{2}+it}(1-z^{-2})
			= t^{4k-\frac{5}{2}} z \psi(z) e^{-2it[\log(-iz) + \phi(z)]} (1 + O(t^{-1} + |z|^4)) + O(t^{4k-\frac{5}{2}} |z|e^{-\pi t}).
		\end{align}
	\end{prop}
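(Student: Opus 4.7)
The plan is to apply endpoint stationary phase analysis to a suitable integral representation of the Legendre function $P_{-\frac{1}{2}+it}(2z^{-2}-1)$. First I would use the identity $H_{\frac{1}{2}+it}(w) = P_{-\frac{1}{2}+it}(1-2w)$, which follows from the hypergeometric representation $P_\nu(x) = {}_2F_1(-\nu,\nu+1;1;(1-x)/2)$, to reduce to understanding $P_{-\frac{1}{2}+it}(2z^{-2}-1)$. Setting $v = \sqrt{1-z^2}$ so that $\cosh\xi = 2z^{-2}-1$ gives $\sinh\xi = 2z^{-2}v$, and using the factorization $2-z^2 + 2v\cos\theta = (1+ve^{i\theta})(1+ve^{-i\theta})$, Laplace's classical integral $P_\nu(\cosh\xi) = \pi^{-1}\int_0^\pi(\cosh\xi+\sinh\xi\cos\theta)^\nu d\theta$ becomes
$$H_{\frac{1}{2}+it}(1-z^{-2}) = \frac{z^{1-2it}}{\pi}\int_0^\pi [(1+ve^{i\theta})(1+ve^{-i\theta})]^{-\frac{1}{2}+it}\,d\theta.$$

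Next I would analyze the integral by endpoint stationary phase. Setting $f(\theta) = \log[(1+ve^{i\theta})(1+ve^{-i\theta})]$, a direct computation gives $f'(\theta) = -2v\sin\theta/[(1+ve^{i\theta})(1+ve^{-i\theta})]$, vanishing only at $\theta\in\{0,\pi\}$, with $f(0)=2\log(1+v)$, $f''(0)=-2v/(1+v)^2$ and $f(\pi)=2\log(1-v)$, $f''(\pi)=2v/(1-v)^2$. Integration by parts shows that the interior of $(0,\pi)$ contributes $O(t^{-\infty})$, while a half-Gaussian evaluation at $\theta = 0$ yields $\tfrac{1}{2}(1+v)^{2it}\sqrt{\pi/(tv)}\,e^{-i\pi/4}(1+O(1/t))$. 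Multiplying by $z^{1-2it}/\pi$ and by the normalization $t^{4k-2}e^{-\pi t}$ defining $\widetilde H$, and simplifying via the identity $e^{-\pi t}\bigl((1+v)/z\bigr)^{2it} = e^{-2it[\log(-iz)+\phi(z)]}$, I would recover a main term of amplitude $t^{4k-\frac{5}{2}}z/(2\sqrt{\pi v})\cdot e^{-i\pi/4}$. Reconciliation with the stated $z\psi(z)$ is then a Taylor-expansion check: $(1+v)/(2\sqrt v) = 1 + z^4/32 + O(z^6)$, so the two amplitude forms agree to multiplicative error $1 + O(|z|^4)$, exactly as claimed.

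The $\theta = \pi$ endpoint is treated by an analogous half-Gaussian, yielding a contribution proportional to $(1-v)^{-1+2it}$ whose modulus, after multiplying by all prefactors, is at most $t^{4k-\frac{5}{2}}|z|e^{-\pi t}$: indeed $1-v \sim z^2/2$ for small $z$, and combining the resulting $z^{-2it}\cdot(1-v)^{2it}$ with $e^{-\pi t}$ produces exactly the order-of-magnitude stated in the error term $O(t^{4k-\frac{5}{2}}|z|e^{-\pi t})$. In the upper half-plane $\im z > 0$ this second endpoint is further exponentially suppressed relative to the main term, which is consistent with (and stronger than) what is claimed.

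The main obstacle is tracking the analysis uniformly in complex $z$ with $\re z > 0$ and $\im z \geq 0$. For real $z$, the ``main term'' and the $O(|z|e^{-\pi t})$ error are of the same size, so the substantive content of the proposition really lies in the complex case, where one must verify that the Laplace integral analytically continues, choose consistent branches of $v = \sqrt{1-z^2}$, $(1\pm v)^{2it}$, and $z^{-2it}$, and check that the endpoint Gaussian integrals remain absolutely convergent along suitably deformed steepest-descent contours (which reduces to verifying $\re[(1/2-it)v/(1\pm v)^2] > 0$ throughout the relevant $z$-region, a condition satisfied since $v$ stays close to $1$). With those verifications in hand, the stated $(1+O(t^{-1}+|z|^4))$ relative error and $O(|z|e^{-\pi t})$ additive error follow from standard stationary-phase bookkeeping, with the $t^{-1}$ coming from the next term in the half-Gaussian expansion and the $|z|^4$ from the identification of the two equivalent amplitudes discussed above.
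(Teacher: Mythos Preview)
Your Laplace-integral approach is valid and genuinely different from the paper's. The paper uses no integral representation: it applies the hypergeometric connection formula at $w=\infty$ (Lebedev (9.5.9)) to split $H_{\frac12+it}(w)$ into two pieces proportional to $(-w)^{-\frac12\pm it}$, then a quadratic transformation (Lebedev (9.6.12)) rewrites each as ${}_2F_1(\tfrac12,\tfrac12\pm it;1\pm it;\rho^2)$ with $\rho=O(|z|^2)$; since these ${}_2F_1$'s have Taylor coefficients bounded uniformly in $t$, they equal $1+O(|z|^4)$, and Stirling on the $\Gamma$-prefactors finishes. The two connection-formula pieces correspond exactly to your endpoints $\theta=0,\pi$, so the methods are dual; the paper's route buys uniformity in $z$ and analytic continuation for free, while yours is more self-contained but has to earn these.

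The one step in your sketch that needs more than you give it is the $\theta=\pi$ endpoint: there the amplitude $[(1+ve^{i\theta})(1+ve^{-i\theta})]^{-1/2}$ has size $|1-v|^{-1}\sim|z|^{-2}$ and its higher $\theta$-derivatives blow up like $|z|^{-2j}$, so the usual stationary-phase remainder bounds are not uniform in $z$, and ``analogous half-Gaussian'' hides this. The fix is to rescale via $\theta=\pi-(1-v)\tilde\eta$ before expanding; in the $\tilde\eta$ variable both amplitude and phase have bounded derivatives (the factor $(1-v)^2$ cancels against $f''(\pi)=2v/(1-v)^2$), and the uniform $O(t^{4k-5/2}|z|e^{-\pi t})$ bound for that endpoint then follows from standard stationary phase. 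With this rescaling and the contour checks you already flag, your argument completes.
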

	
	Assume in the remainder of this section that $t>1$. Then by the definition \eqref{eqn:tilde_H_s_def},
	\begin{align*}
		\widetilde{H}_{\frac{1}{2}+it}
		= t^{4k-2} \, e^{-\pi t} H_{\frac{1}{2}+it}.
	\end{align*}
	
	\subsection{u-Channel} \label{subsec:u_asymptotic}
	
	Let $z \in \C$ with $\re z > 0$.
	Our starting point is Barnes's contour integral representation \cite[(15.6.7)]{DLMF}, which gives
	\begin{align} \label{eqn:contour_integral_rep}
		H_{\frac{1}{2}+it}(1-z^2)
		= \frac{1}{2\pi i \Gamma(\frac{1}{2}+it)^2 \Gamma(\frac{1}{2}-it)^2} \int_{\sigma-i\infty}^{\sigma+i\infty} \Gamma(s)^2 \Gamma\Big(\frac{1}{2}+it-s\Big) \Gamma\Big(\frac{1}{2}-it-s\Big) z^{-2s} \, ds
	\end{align}
	for any $\sigma \in (0,\frac{1}{2})$.
	For a proof of this integral representation, see \cite[Section 14.53]{Whittaker--Watson}.
	By Euler's reflection formula,
	\begin{align*}
		\Gamma\Big(\frac{1}{2}+it\Big) \Gamma\Big(\frac{1}{2}-it\Big)
		= \frac{\pi}{\sin(\pi(\frac{1}{2}+it))}
		= \frac{2\pi}{e^{\pi t} + e^{-\pi t}}.
	\end{align*}
	Therefore, multiplying both sides of \eqref{eqn:contour_integral_rep} by $\Gamma(\frac{1}{2}+it)^2 \Gamma(\frac{1}{2}-it)^2$,
	\begin{align} \label{eqn:contour_integral_rep_2}
		\Big(\frac{2\pi}{e^{\pi t} + e^{-\pi t}}\Big)^2
		H_{\frac{1}{2}+it}(1-z^2)
		= \frac{1}{2\pi i} \int_{\sigma-i\infty}^{\sigma+i\infty} \Gamma(s)^2 \Gamma\Big(\frac{1}{2}+it-s\Big) \Gamma\Big(\frac{1}{2}-it-s\Big) z^{-2s} \, ds.
	\end{align}
	The next lemma gives an upper bound for the integrand.
	
	\begin{lem} \label{lem:contour_integrand_bd}
		Let $\sigma \in (0,\frac{1}{2})$. Let $s \in \C$ with $\re s = \sigma$.
		Let $\delta \in [0,1]$ be such that $\re z \geq \delta|z|$. Then
		\begin{align} \label{eqn:contour_integrand_bd}
			\Big|\Gamma(s)^2 \Gamma\Big(\frac{1}{2}+it-s\Big) \Gamma\Big(\frac{1}{2}-it-s\Big) z^{-2s}\Big|
			\lesssim_{\sigma} e^{-\pi t} |z|^{-2\sigma} |s|^{2\sigma-1} e^{-2\delta|s|}.
		\end{align}
	\end{lem}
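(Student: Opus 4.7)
The plan is to apply Stirling's formula separately to each of the four Gamma factors, compute $|z^{-2s}|$ explicitly, and combine.  Writing $s = \sigma+iu$ with $u \in \R$, the standard asymptotic $|\Gamma(a+iv)| \sim \sqrt{2\pi}\,|v|^{a-1/2}e^{-\pi|v|/2}$ as $|v| \to \infty$, together with continuity of $\Gamma$ on bounded sets of the vertical line (where it has no poles, since $\re s = \sigma \in (0,\tfrac{1}{2})$ and $\re(\tfrac{1}{2}\pm it - s) = \tfrac{1}{2}-\sigma > 0$), yields
\[
|\Gamma(s)|^2 \lesssim_\sigma (1+|u|)^{2\sigma-1} e^{-\pi|u|}, \qquad \bigl|\Gamma\bigl(\tfrac{1}{2}\pm it - s\bigr)\bigr| \lesssim_\sigma (1+|u\mp t|)^{-\sigma} e^{-\pi|u\mp t|/2}.
\]
Since $(1+|u\pm t|)^{-\sigma} \leq 1$ and $|u-t|+|u+t| = 2\max(|t|,|u|)$, the product of the four Gamma factors is bounded by a constant times $(1+|u|)^{2\sigma-1} \exp(-\pi|u| - \pi\max(|t|,|u|))$.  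The polynomial factor is in turn $\lesssim_\sigma (\sigma^2+u^2)^{(2\sigma-1)/2} = |s|^{2\sigma-1}$, because $\sqrt{\sigma^2+u^2} \lesssim_\sigma 1+|u|$ and $2\sigma-1 < 0$.

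For the remaining factor, one has $|z^{-2s}| = |z|^{-2\sigma} e^{2u\arg z}$ in the principal branch.  The hypothesis $\re z \geq \delta|z|$ gives $\cos(\arg z) \geq \delta$, hence $|\arg z| \leq \arccos\delta$.  The elementary inequality $\arccos\delta \leq \tfrac{\pi}{2} - \delta$ (equivalent to $\sin\delta \leq \delta$) then yields $|\arg z| \leq \tfrac{\pi}{2}-\delta$, so
\[
-\pi|u| + 2u\arg z \leq -\pi|u| + 2|u|\bigl(\tfrac{\pi}{2}-\delta\bigr) = -2\delta|u|.
\]
Because $|s| \leq \sigma + |u|$, this gives $-\pi|u| + 2u\arg z \leq -2\delta|s| + 2\delta\sigma$, where $2\delta\sigma \leq 1$ is absorbed into the implicit constant.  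Since $t \geq 0$ in this section, $e^{-\pi\max(|t|,|u|)} \leq e^{-\pi t}$.  Assembling these estimates yields the claimed bound \eqref{eqn:contour_integrand_bd}.

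I do not expect any real obstacle.  The only non-routine point is the inequality $\arccos\delta \leq \tfrac{\pi}{2}-\delta$, which provides precisely the slack needed for the oscillatory factor $e^{2u\arg z}$ coming from $z^{-2s}$ to be absorbed by the Stirling decay $e^{-\pi|u|}$, leaving the margin $e^{-2\delta|u|}$ required by the conclusion.  The rest is bookkeeping of exponents and polynomial factors.
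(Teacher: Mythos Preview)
Your proof is correct and follows essentially the same route as the paper: Stirling bounds on each Gamma factor, the estimate $|z^{-2s}| = |z|^{-2\sigma}e^{2(\im s)\arg z}$, and the elementary inequality $\sin x \leq x$ (equivalently your $\arccos\delta \leq \tfrac{\pi}{2}-\delta$, or the paper's $\tfrac{\pi}{2}-|\theta| \geq \cos\theta$) to convert the hypothesis $\re z \geq \delta|z|$ into the margin $e^{-2\delta|s|}$. The only cosmetic difference is that the paper states Stirling in the form $|\Gamma(w)| \lesssim |w|^{\re w - 1/2}e^{-\pi|w|/2}$ and works with $|s|$ throughout, whereas you use the version with $|\im w|$ and convert $|u|$ to $|s|$ at the end via $|u| \geq |s|-\sigma$.
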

	
	\begin{proof}
		Write $z = re^{i\theta}$ with $|\theta| < \frac{\pi}{2}$.
		For any $\sigma_0 > 0$, on the vertical strip $\sigma_0 \leq \re w \leq 1$, the Gamma function is bounded by
		\begin{align*}
			|\Gamma(w)|
			\lesssim_{\sigma_0} |w|^{\re w - \frac{1}{2}} e^{-\frac{\pi}{2}|w|}.
		\end{align*}
		Therefore the left hand side of \eqref{eqn:contour_integrand_bd} satisfies
		\begin{align*}
			\LHS\eqref{eqn:contour_integrand_bd}
			\lesssim_{\sigma} |s|^{2\sigma-1} e^{-\pi|s|} |s-it|^{-\sigma} |s+it|^{-\sigma} e^{-\frac{\pi}{2}(|s-it|+|s+it|)} r^{-2\sigma} e^{2\theta\im s}.
		\end{align*}
		Crudely estimate
		\begin{align*}
			|s\pm it|^{-\sigma}
			\lesssim_{\sigma} 1
		\end{align*}
		and
		\begin{align*}
			e^{-\frac{\pi}{2}(|s-it|+|s+it|)}
			\leq e^{-\pi t}
		\end{align*}
		and
		\begin{align*}
			e^{-\pi|s|} e^{2\theta \im s}
			\lesssim e^{-(\pi-2|\theta|)|s|}.
		\end{align*}
		Putting these estimates together,
		\begin{align} \label{eqn:contour_integrand_bd_almost}
			\LHS\eqref{eqn:contour_integrand_bd}
			\lesssim_{\sigma} e^{-\pi t} r^{-2\sigma} |s|^{2\sigma-1} e^{-(\pi-2|\theta|) |s|}.
		\end{align}
		Observe that
		\begin{align*}
			\pi-2|\theta|
			= 2(\tfrac{\pi}{2} - |\theta|)
			\geq 2\sin(\tfrac{\pi}{2}-|\theta|)
			= 2\cos\theta
			\geq 2\delta.
		\end{align*}
		Inserting this into \eqref{eqn:contour_integrand_bd_almost} yields \eqref{eqn:contour_integrand_bd}.
	\end{proof}
	
	Trivially estimating the right hand side of \eqref{eqn:contour_integral_rep_2} using Lemma~\ref{lem:contour_integrand_bd} gives
	
	\begin{prop}
		\label{prop:u_block_triv_bd}
		Assume $|z| \geq T^{-O(1)}$ and $\re z \geq T^{-O(1)} |z|$. Then
		\begin{align} \label{eqn:u_block_triv_bd}
			|\widetilde{H}_{\frac{1}{2}+it}(1-z^2)|
			\lessapprox t^{4k-2}.
		\end{align}
	\end{prop}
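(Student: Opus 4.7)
The plan is to prove Proposition~\ref{prop:u_block_triv_bd} by trivially estimating the Barnes contour integral representation \eqref{eqn:contour_integral_rep_2} with the pointwise bound from Lemma~\ref{lem:contour_integrand_bd}, then optimizing the free parameter $\sigma \in (0,\tfrac{1}{2})$. By the hypotheses, there is a fixed $C > 0$ such that $|z| \geq T^{-C}$ and $\re z \geq T^{-C}|z|$, so we may apply Lemma~\ref{lem:contour_integrand_bd} with $\delta = T^{-C}$.

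Fix $\sigma \in (0, \tfrac{1}{2})$ to be chosen at the end, and parametrize the contour by $s = \sigma + i\tau$. Lemma~\ref{lem:contour_integrand_bd} gives, for the integrand of \eqref{eqn:contour_integral_rep_2},
\begin{align*}
\bigl|\Gamma(s)^2 \Gamma(\tfrac{1}{2}+it-s) \Gamma(\tfrac{1}{2}-it-s) z^{-2s}\bigr|
\lesssim_{\sigma} e^{-\pi t} |z|^{-2\sigma} (|\tau|+1)^{2\sigma-1} e^{-2\delta(|\tau|+1)}.
\end{align*}
Integrating in $\tau$ by substituting $u = 2\delta(|\tau|+1)$ yields
\begin{align*}
\int_{\R} (|\tau|+1)^{2\sigma-1} e^{-2\delta(|\tau|+1)}\, d\tau \lesssim_{\sigma} \delta^{-2\sigma},
\end{align*}
so the full integral in \eqref{eqn:contour_integral_rep_2} is $\lesssim_{\sigma} e^{-\pi t} (|z|\delta)^{-2\sigma}$.

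Next, for $t > 1$ we have $\bigl(\tfrac{2\pi}{e^{\pi t}+e^{-\pi t}}\bigr)^2 \sim e^{-2\pi t}$, so multiplying both sides of \eqref{eqn:contour_integral_rep_2} by the reciprocal and then by $t^{4k-2}e^{-\pi t}$ produces
\begin{align*}
|\widetilde{H}_{\tfrac{1}{2}+it}(1-z^2)|
= t^{4k-2} e^{-\pi t} |H_{\tfrac{1}{2}+it}(1-z^2)|
\lesssim_{\sigma} t^{4k-2} (|z|\delta)^{-2\sigma}.
\end{align*}
Because $|z|\delta \geq T^{-2C}$, we obtain $(|z|\delta)^{-2\sigma} \leq T^{4C\sigma}$. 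Given any $\varepsilon' > 0$, choose $\sigma = \varepsilon'/(4C)$; this yields $|\widetilde{H}_{\tfrac{1}{2}+it}(1-z^2)| \lesssim_{\varepsilon'} t^{4k-2}\, T^{\varepsilon'}$, which is precisely the $\lessapprox$ bound \eqref{eqn:u_block_triv_bd}.

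There is no real obstacle here: the integral representation already decouples the $t$-behavior (carried by the factor $e^{-\pi t}$ in the integrand, cancelling $e^{-\pi t}$ in $\widetilde{H}$ after accounting for the reciprocal Gamma factors) from the $z$-behavior (controlled by $|z|^{-2\sigma}$). The only care needed is ensuring that the sidewise decay factor $e^{-2\delta|s|}$ in Lemma~\ref{lem:contour_integrand_bd} provides enough convergence of the $\tau$-integral; this in turn is why the hypothesis $\re z \gtrsim T^{-O(1)} |z|$ is needed, and why the bound must be $\lessapprox$ rather than $\lesssim$ (the small loss $T^{\varepsilon'}$ comes from optimizing $\sigma$ against the polynomially small $\delta$).
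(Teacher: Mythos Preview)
Your proof is correct and follows essentially the same approach as the paper's: both apply Lemma~\ref{lem:contour_integrand_bd} to the Barnes integral \eqref{eqn:contour_integral_rep_2}, integrate to pick up a factor polynomial in $T$, and then send $\sigma \to 0$ to absorb this into $\lessapprox$. The only difference is cosmetic---you compute the $\tau$-integral via an explicit substitution, while the paper treats the exponential factor as a cutoff to $|s| \lesssim T^{O(1)}$ and bounds the truncated integral directly.
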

	
	\begin{proof}
		Let $\sigma \in (0,\frac{1}{2})$.
		Combining \eqref{eqn:contour_integral_rep_2} with Lemma~\ref{lem:contour_integrand_bd},
		\begin{align*}
			\Big(\frac{2\pi}{e^{\pi t} + e^{-\pi t}}\Big)^2
			|H_{\frac{1}{2}+it}(1-z^2)|
			\lesssim_{\sigma} e^{-\pi t} |z|^{-2\sigma} \int_{\sigma-i\infty}^{\sigma+i\infty} |s|^{2\sigma-1} e^{-T^{-O(1)}|s|} |ds|.
		\end{align*}
		Writing $|z|^{-2\sigma} \leq T^{O(\sigma)}$ and multiplying both sides by $e^{\pi t}$,
		\begin{align*}
			e^{-\pi t} |H_{\frac{1}{2}+it}(1-z^2)|
			\lesssim_{\sigma} T^{O(\sigma)} \int_{\sigma-i\infty}^{\sigma+i\infty} |s|^{2\sigma-1} e^{-T^{-O(1)}|s|} |ds|.
		\end{align*}
		The exponential factor in the integrand is essentially a cutoff to $|s| \lesssim T^{O(1)}$, so
		\begin{align*}
			e^{-\pi t} |H_{\frac{1}{2}+it}(1-z^2)|
			\lesssim_{\sigma} T^{O(\sigma)} \int_{\substack{\re s = \sigma \\ |s| \lesssim T^{O(1)}}} |s|^{2\sigma-1} |ds|
			\lesssim_{\sigma} T^{O(\sigma)}.
		\end{align*}
		Since this holds for arbitrary $\sigma > 0$, we deduce
		\begin{align*}
			e^{-\pi t} |H_{\frac{1}{2}+it}(1-z^2)|
			\lessapprox 1.
		\end{align*}
		This is equivalent to \eqref{eqn:u_block_triv_bd}.
	\end{proof}
	
	We now turn to the main result of this subsection, Proposition~\ref{prop:u_block_asymptotic}.
	
	\begin{proof}[Proof of Proposition~\ref{prop:u_block_asymptotic}]
		Let $z$ be as in Proposition~\ref{prop:u_block_asymptotic}, so $t^{-1} \lesssim |z| \lesssim 1$ and $\re z \gtrsim t^{-\frac{2}{3}+\varepsilon}|z|$.
		Fix $\sigma \in (0,\frac{1}{2})$, say $\sigma = \frac{1}{4}$ for concreteness.
		Then by \eqref{eqn:contour_integral_rep_2}, together with Lemma~\ref{lem:contour_integrand_bd} to bound the tails in \eqref{eqn:contour_integral_rep_2},
		\begin{align} \label{eqn:truncate_integral_1}
			\Big(\frac{2\pi}{e^{\pi t} + e^{-\pi t}}\Big)^2
			H_{\frac{1}{2}+it}(1-z^2)
			= \frac{1}{2\pi i} \int_{\substack{\re s = \sigma \\ |s| \leq t^{\frac{2}{3}-\frac{\varepsilon}{2}}}} \Gamma(s)^2 \Gamma\Big(\frac{1}{2}+it-s\Big) \Gamma\Big(\frac{1}{2}-it-s\Big) z^{-2s} \, ds + O(e^{-\pi t} t^{-\infty}).
		\end{align}
		Stirling's approximation says that for $w \in \C$ with, say, $\re w \geq -0.49$ and $|\im w| \gtrsim 1$,
		\begin{align*}
			\log\Gamma\Big(\frac{1}{2}+w\Big)
			= w\log w - w + \frac{1}{2} \log(2\pi) + \sum_{n=0}^{\infty} \frac{a_n}{w^{2n+1}} + O(|w|^{-\infty})
		\end{align*}
		for some constants $a_n \in \Q$.
		This infinite series is generally divergent, so it should be interpreted in the usual asymptotic sense, namely that for any $M \geq 0$, there exists $N \in \Z_{\geq 0}$ such that if the series is replaced by the $N$th partial sum, then the error is $O_M(|w|^{-M})$. Since the domain of integration in \eqref{eqn:truncate_integral_1} is truncated to $|s| \leq t^{\frac{2}{3}-\frac{\varepsilon}{2}}$, we have $|\im(\pm it-s)| \sim t$. Thus by Stirling,
		\begin{align*}
			\log\Gamma\Big(\frac{1}{2} \pm it - s\Big)
			= (\pm it-s)\log(\pm it-s) - (\pm it-s) + \frac{1}{2}\log(2\pi) + \sum_{n=0}^{\infty} \frac{a_n}{(\pm it-s)^{2n+1}} + O(t^{-\infty}).
		\end{align*}
		Summing this asymptotic for $\log\Gamma(\frac{1}{2} + it - s)$ and $\log\Gamma(\frac{1}{2} - it - s)$,
		and then writing $\pm it-s = \pm it(1+\pm i\frac{s}{t})$ and expanding in powers of $s/t = O(t^{-\frac{1}{3}-\frac{\varepsilon}{2}})$,
		\begin{align*}
			\log\Gamma\Big(\frac{1}{2} + it - s\Big)
			+ \log\Gamma\Big(\frac{1}{2} - it - s\Big)
			= -\pi t - 2s\log t + \log(2\pi) + \sum_{j=1}^{\infty} \sum_{\ell=0}^{j} b_{j,\ell} \frac{s^{2\ell+1}}{t^{2j}} + O(t^{-\infty})
		\end{align*}
		for some constants $b_{j,\ell} \in \Q$. Exponentiating,
		\begin{align*}
			\Gamma\Big(\frac{1}{2} + it - s\Big) \Gamma\Big(\frac{1}{2} - it - s\Big)
			= 2\pi e^{-\pi t} t^{-2s} \exp\Big(\sum_{j=1}^{\infty} \sum_{\ell=0}^{j} b_{j,\ell} \frac{s^{2\ell+1}}{t^{2j}} + O(t^{-\infty})\Big).
		\end{align*}
		Since $|s| \leq t^{\frac{2}{3}-\frac{\varepsilon}{2}}$, each term in the series is $O(t^{-\frac{3}{2}\varepsilon})$ (indeed, the asymptotically largest term is $s^3/t^2 = O(t^{-\frac{3}{2}\varepsilon})$, corresponding to $j=\ell = 1$).
		In addition, since the series is interpreted in the asymptotic sense, we are only interested in finitely many terms at a time. 
		We can therefore Taylor expand the exponential, to get
		\begin{align*}
			\Gamma\Big(\frac{1}{2} + it - s\Big) \Gamma\Big(\frac{1}{2} - it - s\Big)
			= 2\pi e^{-\pi t} t^{-2s} (1+O(t^{-\infty}))
			\sum_{j=0}^{\infty} \sum_{\ell=0}^{3j} c_{j,\ell} \frac{s^{\ell}}{t^{2j}}
		\end{align*}
		for some constants $c_{j,\ell} \in \Q$ with $c_{0,0} = 1$. Inserting this into \eqref{eqn:truncate_integral_1},
		\begin{align*}
			\Big(\frac{2\pi}{e^{\pi t} + e^{-\pi t}}\Big)^2 H_{\frac{1}{2}+it}(1-z^2)
			&= \frac{e^{-\pi t}}{i} \int_{\substack{\re s = \sigma \\ |s| \leq t^{\frac{2}{3}-\frac{\varepsilon}{2}}}} \Gamma(s)^2 (tz)^{-2s} (1+O(t^{-\infty}))
			\sum_{j=0}^{\infty} \sum_{\ell=0}^{3j} c_{j,\ell} \frac{s^{\ell}}{t^{2j}} \, ds
			\\&+ O(e^{-\pi t} t^{-\infty}).
		\end{align*}
		Rearranging factors of $2$, $\pi$, and $e^{\pi t}$, and using that $e^{\pi t}+e^{-\pi t} = e^{\pi t} (1+O(t^{-\infty}))$,
		\begin{align*}
			\pi e^{-\pi t} H_{\frac{1}{2}+it}(1-z^2)
			= \frac{1}{4\pi i} \int_{\substack{\re s = \sigma \\ |s| \leq t^{\frac{2}{3}-\frac{\varepsilon}{2}}}} \Gamma(s)^2 (tz)^{-2s} (1+O(t^{-\infty}))
			\sum_{j=0}^{\infty} \sum_{\ell=0}^{3j} c_{j,\ell} \frac{s^{\ell}}{t^{2j}} \, ds
			+ O(t^{-\infty}).
		\end{align*}
		Switching the sums and the integral (which is valid because the sums are interpreted in the asymptotic sense and hence are really finite sums), and absorbing  the error term in the integral into the error term outside the integral,
		\begin{align*}
			\pi e^{-\pi t} H_{\frac{1}{2}+it}(1-z^2)
			= \sum_{j=0}^{\infty} t^{-2j} \sum_{\ell=0}^{3j} \frac{c_{j,\ell}}{4\pi i} \int_{\substack{\re s = \sigma \\ |s| \leq t^{\frac{2}{3}-\frac{\varepsilon}{2}}}} \Gamma(s)^2 (tz)^{-2s} s^{\ell} \, ds
			+ O(t^{-\infty}).
		\end{align*}
		By a similar computation to Lemma~\ref{lem:contour_integrand_bd}, the integrand decays exponentially once $|s|$ exceeds $t^{\frac{2}{3}-\varepsilon}$, so in particular the integrand is $\lesssim |s|^{-\infty}$ for $|s| > t^{\frac{2}{3}-\frac{\varepsilon}{2}}$.
		Thus we can complete the integral:
		\begin{align*}
			\pi e^{-\pi t} H_{\frac{1}{2}+it}(1-z^2)
			= \sum_{j=0}^{\infty} t^{-2j} \sum_{\ell=0}^{3j} \frac{c_{j,\ell}}{4\pi i} \int_{\sigma-i\infty}^{\sigma+i\infty} \Gamma(s)^2 (tz)^{-2s} s^{\ell} \, ds + O(t^{-\infty}).
		\end{align*}
		Rewrite this as
		\begin{align*}
			\pi e^{-\pi t} H_{\frac{1}{2}+it}(1-z^2)
			= \sum_{j=0}^{\infty} t^{-2j} \sum_{\ell=0}^{3j} \frac{d_{j,\ell}}{4\pi i} (z\partial_z)^{\ell}
			\int_{\sigma-i\infty}^{\sigma+i\infty} \Gamma(s)^2 (tz)^{-2s} \, ds + O(t^{-\infty}),
		\end{align*}
		with $d_{j,\ell} = (-1)^{\ell} 2^{-\ell} c_{j,\ell}$. So $d_{j,\ell} \in \Q$ and $d_{0,0} = 1$. Using the integral representation
		\begin{align*}
			K_0(w) = \frac{1}{4\pi i}\int_{\sigma-i\infty}^{\sigma+i\infty} \Gamma(s)^2 (w/2)^{-2s} \, ds
		\end{align*}
		for the Bessel function $K_0$ \cite[(10.32.13)]{DLMF}, the above simplifies to
		\begin{align*}
			\pi e^{-\pi t} H_{\frac{1}{2}+it}(1-z^2)
			= \sum_{j=0}^{\infty} t^{-2j} \sum_{\ell=0}^{3j} d_{j,\ell} (z\partial_z)^{\ell}
			[K_0(2tz)] + O(t^{-\infty}).
		\end{align*}
		Expanding $(z\partial_z)^{\ell}$ using the product rule shows that $(z\partial_z)^{\ell}$ is a $\Z$-linear combination of $\{z^m\partial_z^m\}_{0 \leq m \leq \ell}$. Therefore
		\begin{align} \label{eqn:u_asymptotic_in_Bessels}
			\pi e^{-\pi t} H_{\frac{1}{2}+it}(1-z^2)
			= \sum_{j=0}^{\infty} t^{-2j} \sum_{m=0}^{3j} e_{j,m} (tz)^m K_0^{(m)}(2tz) + O(t^{-\infty})
		\end{align}
		for some constants $e_{j,m} \in \Q$ with $e_{0,0} = 1$.
		By \cite[(5.6.9)]{Lebedev}, the $m$th derivative $K_0^{(m)}$ is $(-1)^m$ times a finite convex combination of $\{K_{\nu}\}_{\nu \in \Z}$. By \cite[(5.11.9)]{Lebedev}, for $w \in \C \setminus (-\infty,0]$ with $|w| \gtrsim 1$ and $|\Arg w| \leq \pi-\delta$ for some $\delta > 0$,
		\begin{align*}
			K_{\nu}(w)
			= e^{-w} \sqrt{\frac{\pi}{2w}} \, (1+O_{\nu,\delta}(|w|^{-1})).
		\end{align*}
		It follows that for $w$ satisfying the same conditions,
		\begin{align*}
			K_0^{(m)}(w)
			= (-1)^m e^{-w} \sqrt{\frac{\pi}{2w}} \, (1+O_{m,\delta}(|w|^{-1})).
		\end{align*}
		By assumption, $|z| \gtrsim t^{-1}$, and so $|tz| \gtrsim 1$. In addition, by assumption, $\re z > 0$. Thus we can apply the above asymptotic for $K_0^{(m)}$ to obtain
		\begin{align*}
			\pi e^{-\pi t} H_{\frac{1}{2}+it}(1-z^2)
			&= e^{-2tz} \sqrt{\frac{\pi}{4tz}}\sum_{j=0}^{\infty} t^{-2j} \sum_{m=0}^{3j} f_{j,m} (tz)^m (1+O_m(|tz|^{-1})) + O(t^{-\infty})
			\\&= e^{-2tz} \frac{\sqrt{\pi}}{2\sqrt{tz}} \sum_{j=0}^{\infty} f_{j,3j} t^{-2j} (tz)^{3j} (1+O_j(|tz|^{-1})) + O(t^{-\infty})
		\end{align*}
		with $f_{j,m} = (-1)^m e_{j,m}$. Again, $f_{j,m} \in \Q$ and $f_{0,0} = 1$. Denote $c_j = f_{j,3j}$, so $c_j \in \Q$ with $c_0 = 1$. Then multiplying both sides above by $\frac{1}{\pi} t^{4k-2}$, and simplifying,
		\begin{align} \label{eqn:u_asymptotic_before_claim}
			\widetilde{H}_{\frac{1}{2}+it}(1-z^2)
			= \frac{t^{4k-\frac{5}{2}}}{2\sqrt{\pi z}} \Big[e^{-2tz} \sum_{j=0}^{\infty} c_j (tz^3)^j (1+O_j(|tz|^{-1}))\Big] + O(t^{-\infty}).
		\end{align}
		We claim that we can rewrite the right hand side to get
		\begin{align} \label{eqn:u_asymptotic_claim}
			\widetilde{H}_{\frac{1}{2}+it}(1-z^2)
			= \frac{t^{4k-\frac{5}{2}}}{2\sqrt{\pi z}} \Big[e^{-2tz} \sum_{j=0}^{\infty} c_j (tz^3)^j + O(e^{-2t\re z}|tz|^{-1} + t^{-\infty})\Big].
		\end{align}
		If $|tz^3| \lesssim 1$, then this is clear. If $|tz^3| \gtrsim 1$, then as mentioned above, our assumption $\re z \gtrsim t^{-\frac{2}{3}+\varepsilon} |z|$ implies the lower bound
		\begin{align*}
			\re(tz)
			\gtrsim t^{\frac{1}{3}+\varepsilon}|z|
			= t^{\varepsilon} |tz^3|^{\frac{1}{3}}
			\gtrsim t^{\varepsilon},
		\end{align*}
		and hence the exponential factor on the right hand side of \eqref{eqn:u_asymptotic_before_claim} is extremely small and dominates the rest of the asymptotic.
		In particular, $\RHS\eqref{eqn:u_asymptotic_before_claim} \lesssim t^{-\infty}$. Thus \eqref{eqn:u_asymptotic_claim} holds in this case as well. In summary, we have proved \eqref{eqn:u_asymptotic_claim} both when $|tz^3| \lesssim 1$ and when $|tz^3| \gtrsim 1$, so $\eqref{eqn:u_asymptotic_claim}$ holds in general. Since \eqref{eqn:u_asymptotic_claim} is interpreted in the asymptotic sense, the content of \eqref{eqn:u_asymptotic_claim} is precisely Proposition~\ref{prop:u_block_asymptotic}.
		This concludes the proof.
	\end{proof}
	
	\begin{cor} \label{cor:u_block_rapid_decay}
		Assume $t \geq T\log^2T$. Let $z$ be as in \eqref{eqn:y_range}. Then
		\begin{align} \label{eqn:u_block_rapid_decay}
			|\widetilde{H}_{\frac{1}{2}+it}(1-z^2)|
			\lesssim t^{-\infty}.
		\end{align}
	\end{cor}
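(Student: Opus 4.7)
The plan is to apply Proposition~\ref{prop:u_block_asymptotic} to $z$ as in \eqref{eqn:y_range} and exploit the exponential factor $e^{-2tz}$, which provides super-polynomial decay once $t/T \gtrsim \log^2 T$.

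First I would verify the hypotheses of Proposition~\ref{prop:u_block_asymptotic}. Writing $z = T^{-1}+iy$ with $|y| \leq T^{-1/3-\varepsilon}$, one has $\re z = T^{-1}$ and $T^{-1} \leq |z| \lesssim T^{-1/3-\varepsilon}$. Since $t \geq T\log^2 T \gtrsim T$, the bounds $t^{-1} \lesssim |z| \lesssim 1$ are immediate. Moreover, $\re z/|z| \gtrsim T^{-2/3+\varepsilon} \geq t^{-2/3+\varepsilon}$, where the last inequality uses $t \geq T$ together with the negative exponent $-2/3+\varepsilon$.

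With the asymptotic \eqref{eqn:u_block_asymptotic} in hand, the main point is to control the factor $|e^{-2tz}| = e^{-2t/T}$ in the presence of the prefactor $t^{4k-5/2}|z|^{-1/2} \leq t^{4k-5/2} T^{1/2}$ and the polynomial sum, where $|tz^3|^j \leq (t/T)^j T^{-3\varepsilon j}$. I would split $e^{-2t/T} = e^{-t/T}\cdot e^{-t/T}$ and use the two factors differently: one copy satisfies $e^{-t/T} \leq e^{-\log^2 T} \lesssim_N T^{-N}$ for any $N \geq 0$, while the other copy absorbs all polynomial growth in $t/T$ via the elementary inequality $x^a e^{-x} \lesssim_a 1$. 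A short bookkeeping then shows that, for any $n$, the main term of \eqref{eqn:u_block_asymptotic} is bounded by $C_{J,k,N}\, T^{O_k(1)-3\varepsilon J -N}$ after taking $M$ (and hence $J$) and $N$ sufficiently large. The two error terms of \eqref{eqn:u_block_asymptotic} are controlled the same way, being in fact smaller than the main term.

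The one subtlety — and where I expect the main obstacle to lie — is the passage from $T^{-\infty}$ decay to $t^{-\infty}$ decay, since $t$ may be arbitrarily large compared to $T$. For $t \leq T^{1+3\varepsilon}$ the two are polynomially equivalent, so the argument above suffices. For $t > T^{1+3\varepsilon}$, the factor $(tT^{-1-3\varepsilon})^J$ in the main term grows with $t$; here I would replace the use of $e^{-t/T} \leq e^{-\log^2 T}$ by the standard bound $e^{-2t/T} \leq C_m(T/t)^m$ applied with $m = n + 4k + J$. This produces a factor of $t^{-n-5/2}$ whose coefficient is $T^{n+4k-2-3\varepsilon J}$, which is $\leq 1$ as soon as $J \geq (n+4k-2)/(3\varepsilon)$. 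Since the constants in Proposition~\ref{prop:u_block_asymptotic} allow $J$ to be taken as large as we wish by enlarging $M$, this closes the argument and yields $|\widetilde{H}_{\frac{1}{2}+it}(1-z^2)| \lesssim_n t^{-n}$ for every $n$, which is the content of \eqref{eqn:u_block_rapid_decay}.
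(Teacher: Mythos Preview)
Your argument is correct and starts the same way as the paper: verify the hypotheses of Proposition~\ref{prop:u_block_asymptotic} and let $e^{-2t\re z}=e^{-2t/T}$ supply the decay. Where you case-split on $t\lessgtr T^{1+3\varepsilon}$ and, in the large-$t$ case, exploit the $T^{-3\varepsilon J}$ saving from $|z|^3\lesssim T^{-1-3\varepsilon}$ together with the freedom to enlarge $J$, the paper does it in one stroke: it bounds the sum crudely by $t^{O_M(1)}$ and then observes that $|z|^{-1/2}e^{-2t/T}\lesssim t^{-\infty}$ directly from $t/T\geq\log^2T$ (setting $u=t/T$, one has $\log t=\log T+\log u\leq 2\sqrt{u}$, hence $m\log t\leq 2u$ once $T\gg_m 1$). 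This bypasses both your case distinction and the need to let $J$ grow with $n$. Your route also works; just note a small slip in the bookkeeping: in the case $t>T^{1+3\varepsilon}$ the exponent of $T$ comes out to $n+4k+\tfrac12-3\varepsilon J$ rather than $n+4k-2-3\varepsilon J$, so the correct threshold is $J\geq(n+4k+\tfrac12)/(3\varepsilon)$.
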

	
	\begin{proof}
		Note that $z$ obeys the assumptions of Proposition~\ref{prop:u_block_asymptotic}. Therefore, we can take absolute values in \eqref{eqn:u_block_asymptotic} to get
		\begin{align*}
			|\widetilde{H}_{\frac{1}{2}+it}(1-z^2)|
			\lesssim_M t^{4k-\frac{5}{2}} |z|^{-\frac{1}{2}} [e^{-2t/T} t^{O_M(1)} + t^{-M}]
		\end{align*}
		for any $M$. Since $|z| \geq T^{-1}$ and $t \geq T \log^2 T$, we have $|z|^{-\frac{1}{2}} e^{-2t/T} \lesssim t^{-\infty}$. We conclude \eqref{eqn:u_block_rapid_decay}.
	\end{proof}
	
	A corollary of \eqref{eqn:u_asymptotic_in_Bessels} in the proof of Proposition~\ref{prop:u_block_asymptotic} is
	
	\begin{cor} \label{cor:u_block_size_z>0}
		Assume $t \gg 1$.
		Assume also that $z$ is real and positive, and that $z \sim t^{-1}$. Then
		\begin{align} \label{eqn:u_block_size_z>0}
			0 \leq \widetilde{H}_{\frac{1}{2}+it}(1-z^2)
			\sim t^{4k-2}.
		\end{align}
	\end{cor}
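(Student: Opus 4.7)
The plan is to combine the positivity of the Taylor coefficients of $H_s$ at the origin (for $s = \tfrac{1}{2}+it$) with the Bessel-function identity \eqref{eqn:u_asymptotic_in_Bessels} derived in the course of proving Proposition~\ref{prop:u_block_asymptotic}. Since $z$ is real with $z \sim t^{-1}$ and $t \gg 1$, the argument $1 - z^2$ lies in $(0,1)$, which is the setting in which both ingredients apply most cleanly.

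For the nonnegativity $\widetilde{H}_{\frac{1}{2}+it}(1-z^2) \geq 0$, I would inspect the series \eqref{eqn:2F1_series}: the $n$th Taylor coefficient of $H_s$ at $0$ equals $(s)_n (1-s)_n / (n!)^2$. For $s = \tfrac{1}{2}+it$ one has
\[
(s+j)(1-s+j) = \bigl(j+\tfrac{1}{2}\bigr)^2 + t^2 > 0
\]
for every integer $j \geq 0$, so each coefficient is a strictly positive real number. Since $1-z^2 \in (0,1)$, the series converges to a positive real number, and hence so does $\widetilde{H}_{\frac{1}{2}+it}(1-z^2) = t^{4k-2} e^{-\pi t} H_{\frac{1}{2}+it}(1-z^2)$.

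For the matching upper and lower bounds of order $t^{4k-2}$, I would invoke \eqref{eqn:u_asymptotic_in_Bessels}, in which the leading constant is $e_{0,0} = 1$. Under the hypothesis $z \sim t^{-1}$, the quantity $2tz$ lies in a fixed compact subinterval of $(0,\infty)$, so each factor $(tz)^m K_0^{(m)}(2tz)$ is $O(1)$; consequently the sum of the $j \geq 1$ contributions is $O(t^{-2})$. The leading $j=0$ term is $K_0(2tz)$, and on this compact range $K_0$ is bounded below by a positive constant (immediate from continuity, or directly from $K_0(w) = \int_0^\infty e^{-w\cosh\tau}\,d\tau$). Truncating \eqref{eqn:u_asymptotic_in_Bessels} at $j=0$ therefore gives
\[
\pi e^{-\pi t} H_{\frac{1}{2}+it}(1-z^2) = K_0(2tz) + O(t^{-2}) \sim 1,
\]
and multiplying by $t^{4k-2}/\pi$ yields $\widetilde{H}_{\frac{1}{2}+it}(1-z^2) \sim t^{4k-2}$.

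No step here presents a serious obstacle: positivity of the Taylor coefficients reduces to the pairing identity above, and the asymptotic follows from truncating an expansion already established in the proof of Proposition~\ref{prop:u_block_asymptotic}. The only minor points requiring care are the uniform boundedness of the derivatives $K_0^{(m)}$ and the uniform positivity of $K_0$ on the compact range of $2tz$, both of which are standard facts about the modified Bessel function.
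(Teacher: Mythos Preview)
Your proposal is correct and follows essentially the same route as the paper: positivity comes from the nonnegativity of the Taylor coefficients of $H_{\frac{1}{2}+it}$ at the origin, and the size estimate comes from truncating \eqref{eqn:u_asymptotic_in_Bessels} at $j=0$ to get $\pi e^{-\pi t} H_{\frac{1}{2}+it}(1-z^2) = K_0(2tz) + O(t^{-2}) \sim 1$. Your write-up is in fact slightly more explicit than the paper's, spelling out the coefficient computation $(s+j)(1-s+j) = (j+\tfrac{1}{2})^2 + t^2$ and the reason $K_0$ is bounded away from zero on the relevant compact range.
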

	
	\begin{proof}
		As explained in the proof of Proposition~\ref{prop:u_block_at_T^-1}, $\widetilde{H}_{\frac{1}{2}+it}(1-z^2)$ is nonnegative because $\widetilde{H}_{\frac{1}{2}+it}$ has nonnegative Taylor coefficients at the origin.
		
		Now, the assumptions on $z$ in Corollary~\ref{cor:u_block_size_z>0} imply the assumptions on $z$ in Proposition~\ref{prop:u_block_asymptotic}, so \eqref{eqn:u_asymptotic_in_Bessels} is valid.
		Since $tz \sim 1$, the sum over $m$ in \eqref{eqn:u_asymptotic_in_Bessels} is $O_j(1)$. Therefore, retaining only the $j=0$ term in \eqref{eqn:u_asymptotic_in_Bessels},
		\begin{align*}
			\pi e^{-\pi t} H_{\frac{1}{2}+it}(1-z^2)
			= K_0(2tz) + O(t^{-2}).
		\end{align*}
		The $K_0$ Bessel function is positive and of size $\sim 1$ on $\{x \sim 1\}$. Since $t \gg 1$, it follows that
		\begin{align*}
			e^{-\pi t} H_{\frac{1}{2}+it}(1-z^2)
			\sim 1.
		\end{align*}
		This is equivalent to \eqref{eqn:u_block_size_z>0}.
	\end{proof}
	
	\subsection{t-Channel} \label{subsec:t_asymptotic}
	
	We only need one simple lemma before beginning the proof of Proposition~\ref{prop:t_block_asymptotic}.
	
	\begin{lem} \label{lem:imag_part_bd}
		Let $h$ be a holomorphic function defined on a neighborhood of the origin. Assume $h$ is real-valued on the real line. Let $z \in \C$ with $|z| \ll_h 1$. Then
		\begin{align*}
			|\im h(z)|
			\lesssim_h |\im z|.
		\end{align*}
	\end{lem}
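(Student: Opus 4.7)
The plan is to reduce everything to a mean value theorem argument. Since $h$ is holomorphic on some neighborhood $U$ of the origin, fix $r > 0$ small enough that the closed disk $\overline{D(0,r)} \subset U$; then $M := \sup_{D(0,r)} |h'| < \infty$ and depends only on $h$. I will work throughout with $z$ satisfying $|z| \leq r/2$, which is the meaning of $|z| \ll_h 1$ in the statement.

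Write $z = x + iy$ with $x, y \in \R$. Since $|x| \leq |z| \leq r/2$, the line segment $[x, z] = \{x + ity : t \in [0,1]\}$ lies inside $\overline{D(0,r)}$, so I can integrate $h'$ along it:
\begin{align*}
h(z) - h(x) = \int_0^1 h'(x+ity)\, iy\, dt.
\end{align*}
Taking absolute values gives $|h(z) - h(x)| \leq M|y| = M |\im z|$.

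The key point now is that $h(x)$ is real, by the assumption that $h$ is real-valued on $\R \cap U$. Therefore $\im h(z) = \im(h(z) - h(x))$, and hence
\begin{align*}
|\im h(z)| \leq |h(z) - h(x)| \leq M\, |\im z|,
\end{align*}
which is the desired bound. There is no genuine obstacle here; the argument is essentially immediate once one notices that the statement is invariant under subtracting $h(x)$, which is real and therefore does not contribute to $\im h(z)$. One could alternatively give a power series proof using the Schwarz reflection principle to conclude that the Taylor coefficients of $h$ at $0$ are real, but the integral form is cleaner and avoids any discussion of convergence.
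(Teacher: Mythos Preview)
Your proof is correct and follows essentially the same approach as the paper: both subtract the real value $h(x)$, bound $|\im h(z)|$ by $|h(z)-h(x)|$, and control the latter by integrating $h'$ along the vertical segment from $x$ to $z$. The only difference is the parameterization of that integral, which is immaterial.
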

	
	\begin{proof}
		Let $z = x+iy$. Since $h$ is real-valued on the real line, $\im h(x) = 0$. Thus we can write
		\begin{align*}
			&|\im h(z)|
			= |\im(h(z)-h(x))|
			\leq |h(z)-h(x)|
			= \Big|\int_{0}^{y} h'(x+i\tau) \, d\tau\Big|
			\lesssim_h |y|.
			\qedhere
		\end{align*}
	\end{proof}
	
	\begin{proof}[Proof of Proposition~\ref{prop:t_block_asymptotic}]
		By \cite[(9.5.9)]{Lebedev}, one has the functional equation
		\begin{align}
			H_{\frac{1}{2}+it}(w)
			&= \frac{\Gamma(-2it)}{\Gamma(\frac{1}{2}-it)^2} (-w)^{-\frac{1}{2}-it} {}_2F_1\Big(\frac{1}{2}+it,\frac{1}{2}+it; 1+2it; w^{-1}\Big)
			\notag
			\\&+ \frac{\Gamma(2it)}{\Gamma(\frac{1}{2}+it)^2} (-w)^{-\frac{1}{2}+it} {}_2F_1\Big(\frac{1}{2}-it,\frac{1}{2}-it; 1-2it; w^{-1}\Big)
			\quad \text{for } \quad w \in \C \setminus [0,\infty).
			\label{eqn:functional_eqn_1/w}
		\end{align}
		Denote
		\begin{align} \label{eqn:rho_variable_def}
			\rho(u)
			= \frac{1-\sqrt{1-u}}{1+\sqrt{1-u}}
			= \frac{u}{(1+\sqrt{1-u})^2},
		\end{align}
		and note that
		\begin{align} \label{eqn:4wrho(1/w)_formula}
			4w \rho(w^{-1})
			= \Big(\frac{1 + \sqrt{1-w^{-1}}}{2}\Big)^{-2}.
		\end{align}
		Then by \cite[(9.6.12)]{Lebedev} and \eqref{eqn:4wrho(1/w)_formula}, the hypergeometrics on the right hand side of \eqref{eqn:functional_eqn_1/w} satisfy the further functional equations
		\begin{align*}
			(-w)^{-\frac{1}{2}-it} {}_2F_1\Big(\frac{1}{2}+it,\frac{1}{2}+it; 1+2it; w^{-1}\Big)
			= (-4\rho(w^{-1}))^{\frac{1}{2}+it} {}_2F_{1}\Big(\frac{1}{2},\frac{1}{2}+it; 1+it; \rho(w^{-1})^2\Big)
		\end{align*}
		and
		\begin{align*}
			(-w)^{-\frac{1}{2}+it} {}_2F_1\Big(\frac{1}{2}-it,\frac{1}{2}-it; 1-2it; w^{-1}\Big)
			= (-4\rho(w^{-1}))^{\frac{1}{2}-it} {}_2F_{1}\Big(\frac{1}{2},\frac{1}{2}-it; 1-it; \rho(w^{-1})^2\Big),
		\end{align*}
		again for $w \in \C \setminus [0,\infty)$.
		Inserting these into \eqref{eqn:functional_eqn_1/w},
		\begin{align*}
			H_{\frac{1}{2}+it}(w)
			&= \frac{\Gamma(-2it)}{\Gamma(\frac{1}{2}-it)^2} (-4\rho(w^{-1}))^{\frac{1}{2}+it} {}_2F_{1}\Big(\frac{1}{2},\frac{1}{2}+it; 1+it; \rho(w^{-1})^2\Big)
			\\&+ \frac{\Gamma(2it)}{\Gamma(\frac{1}{2}+it)^2} (-4\rho(w^{-1}))^{\frac{1}{2}-it} {}_2F_{1}\Big(\frac{1}{2},\frac{1}{2}-it; 1-it; \rho(w^{-1})^2\Big)
			\quad \text{for } \quad w \in \C \setminus [0,\infty).
		\end{align*}
		Observe from \eqref{eqn:2F1_series} that the coefficients in the Taylor expansion of ${}_2F_{1}(\frac{1}{2},\frac{1}{2}\pm it; 1\pm it; \rho)$ (as a function of $\rho$) at $\rho=0$ are bounded independent of $t$. Therefore
		\begin{align*}
			{}_2F_{1}\Big(\frac{1}{2},\frac{1}{2}\pm it; 1\pm it; \rho\Big)
			= 1+O(\rho)
			\qquad \text{for} \qquad
			|\rho| \ll 1.
		\end{align*}
		Noting that $|\rho(w^{-1})| \lesssim |w|^{-1}$ for $|w| \gg 1$, we get
		\begin{align*}
			H_{\frac{1}{2}+it}(w)
			= \frac{\Gamma(-2it)}{\Gamma(\frac{1}{2}-it)^2} (-4\rho(w^{-1}))^{\frac{1}{2}+it} (1+O(|w|^{-2}))
			+ \frac{\Gamma(2it)}{\Gamma(\frac{1}{2}+it)^2} (-4\rho(w^{-1}))^{\frac{1}{2}-it} (1+O(|w|^{-2}))
		\end{align*}
		for $w \in \C \setminus [0,\infty)$ with $|w| \gg 1$.
		Taking $w=1-z^{-2}$ with $z$ as in the statement of Proposition~\ref{prop:t_block_asymptotic},
		\begin{align*}
			H_{\frac{1}{2}+it}(1-z^{-2})
			&= \frac{\Gamma(-2it)}{\Gamma(\frac{1}{2}-it)^2} \Big(-4\rho\Big(\frac{1}{1-z^{-2}}\Big)\Big)^{\frac{1}{2}+it} (1+O(|z|^4))
			\\&+ \frac{\Gamma(2it)}{\Gamma(\frac{1}{2}+it)^2} \Big(-4\rho\Big(\frac{1}{1-z^{-2}}\Big)\Big)^{\frac{1}{2}-it} (1+O(|z|^4)).
		\end{align*}
		By Stirling's approximation,
		\begin{align*}
			\frac{\Gamma(-2it)}{\Gamma(\frac{1}{2}-it)^2} 4^{\frac{1}{2}+it}
			= \frac{e^{i\pi/4}}{\sqrt{\pi t}} (1+O(t^{-1}))
			\qquad \text{and} \qquad
			\frac{\Gamma(2it)}{\Gamma(\frac{1}{2}+it)^2} 4^{\frac{1}{2}-it}
			= \frac{e^{-i\pi/4}}{\sqrt{\pi t}} (1+O(t^{-1})).
		\end{align*}
		Thus,
		\begin{align}
			H_{\frac{1}{2}+it}(1-z^{-2})
			&= \frac{e^{i\pi/4}}{\sqrt{\pi t}} \Big(-\rho\Big(\frac{1}{1-z^{-2}}\Big)\Big)^{\frac{1}{2}+it} (1+O(t^{-1}+|z|^4))
			\notag
			\\&+ \frac{e^{-i\pi/4}}{\sqrt{\pi t}} \Big(-\rho\Big(\frac{1}{1-z^{-2}}\Big)\Big)^{\frac{1}{2}-it} (1+O(t^{-1}+|z|^4)).
			\label{eqn:last_t-approx_with_rho}
		\end{align}
		Expanding the definition of $\rho$ and simplifying,
		\begin{align*}
			-\rho\Big(\frac{1}{1-z^{-2}}\Big)
			= \Big(\frac{z}{1+\sqrt{1-z^2}}\Big)^2.
		\end{align*}
		Taking logs,
		\begin{align*}
			\log\Big[-\rho\Big(\frac{1}{1-z^{-2}}\Big)\Big]
			= 2\log\Big(\frac{z}{1+\sqrt{1-z^2}}\Big)
			= 2\log z - 2\log(1+\sqrt{1-z^2})
		\end{align*}
		(here we are using the assumption in Proposition~\ref{prop:t_block_asymptotic} that $\re z > 0$ to make the first equality hold with the principal branch of $\log$).
		As in the statement of Proposition~\ref{prop:t_block_asymptotic}, denote
		\begin{align*}
			\phi(z) = -\log(1+\sqrt{1-z^2}),
		\end{align*}
		so that
		\begin{align*}
			\log\Big[-\rho\Big(\frac{1}{1-z^{-2}}\Big)\Big]
			= 2(\log z + \phi(z)).
		\end{align*}
		Writing $\zeta^{\frac{1}{2} \pm it} = \zeta^{\frac{1}{2}} \exp(\pm it \log \zeta)$ in \eqref{eqn:last_t-approx_with_rho},
		\begin{align}
			H_{\frac{1}{2}+it}(1-z^{-2})
			= \frac{z}{1+\sqrt{1-z^2}} &\Big[\frac{e^{i\pi/4}}{\sqrt{\pi t}} \exp(2it[\log z + \phi(z)]) (1+O(t^{-1}+|z|^4))
			\notag
			\\&+ \frac{e^{-i\pi/4}}{\sqrt{\pi t}} \exp(-2it[\log z + \phi(z)]) (1+O(t^{-1}+|z|^4)) \Big].
			\label{eqn:first_t-approx_without_rho}
		\end{align}
		By assumption in Proposition~\ref{prop:t_block_asymptotic}, $\im z \geq 0$, so $\Arg z \geq 0$. Moreover, since by assumption $|z| \ll 1$, we have $\Arg z \gg \im z$. Thus by Lemma~\ref{lem:imag_part_bd},
		\begin{align*}
			\im[\log z + \phi(z)]
			= \Arg z + O(\im z)
			\gtrsim \Arg z.
		\end{align*}
		In particular, $\log z + \phi(z)$ has nonnegative imaginary part.
		It follows from this nonnegativity that the exponential in the first term in \eqref{eqn:first_t-approx_without_rho} has absolute value $\leq 1$. Therefore
		\begin{align*}
			H_{\frac{1}{2}+it}(1-z^{-2})
			= \frac{z}{1+\sqrt{1-z^2}} \Big[\frac{e^{-i\pi/4}}{\sqrt{\pi t}} \exp(-2it[\log z + \phi(z)]) (1+O(t^{-1}+|z|^4)) + O(t^{-\frac{1}{2}})\Big].
		\end{align*}
		Multiplying both sides by $t^{4k-2} e^{-\pi t}$ transforms this into an estimate for $\widetilde{H}_{\frac{1}{2}+it}$:
		\begin{align*}
			\widetilde{H}_{\frac{1}{2}+it}(1-z^{-2})
			= t^{4k-\frac{5}{2}} \frac{\pi^{-\frac{1}{2}} e^{-i\pi/4} z}{1+\sqrt{1-z^2}} e^{-\pi t} \exp(-2it[\log z + \phi(z)]) (1+O(t^{-1}+|z|^4)) + O(t^{4k-\frac{5}{2}} |z| e^{-\pi t}).
		\end{align*}
		Absorbing $e^{-\pi t}$ into the exponential,
		\begin{align*}
			\widetilde{H}_{\frac{1}{2}+it}(1-z^{-2})
			= t^{4k-\frac{5}{2}} \frac{\pi^{-\frac{1}{2}} e^{-i\pi/4} z}{1+\sqrt{1-z^2}} \exp(-2it[\log(-iz) + \phi(z)]) (1+O(t^{-1}+|z|^4)) + O(t^{4k-\frac{5}{2}} |z| e^{-\pi t}).
		\end{align*}
		This is the desired estimate \eqref{eqn:t_block_asymptotic}.
	\end{proof}
	
	\begin{cor} \label{cor:t_block_lower_bd}
		Let $\delta > 0$ be arbitrarily small. Then there exists $z \in \C$ with $\re z > 0$, such that for $t \gg 1$,
		\begin{align*}
			|\widetilde{H}_{\frac{1}{2}+it}(1-z^{-2})|
			\gtrsim_{\delta} e^{-\delta t}.
		\end{align*}
	\end{cor}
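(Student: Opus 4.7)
The plan is to apply Proposition~\ref{prop:t_block_asymptotic} to a fixed $z$ whose argument is chosen just below $\pi/2$, so that the oscillating exponential in the leading term of \eqref{eqn:t_block_asymptotic} has modulus $e^{-O(\delta)t}$ rather than the $e^{-\pi t}$ that arises when $z$ is real and positive. Concretely, given $\delta > 0$, I would take $z = r e^{i(\pi/2 - \eta)}$ with $\eta = \delta/8$ and $r>0$ a small positive constant depending on $\delta$, to be fixed below. This $z$ satisfies the hypotheses $\re z > 0$, $\im z \geq 0$, $|z| \ll 1$ of Proposition~\ref{prop:t_block_asymptotic}.

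The key step is to compute $\im[\log(-iz) + \phi(z)]$. Since $\re z > 0$, the principal branch yields $\log(-iz) = \log r + i(\arg z - \pi/2) = \log r - i\eta$, so $\im \log(-iz) = -\eta$. Because $\phi$ is holomorphic near $0$ and real-valued on the real axis, Lemma~\ref{lem:imag_part_bd} gives $|\im \phi(z)| \lesssim |\im z| \lesssim r$. Hence
\begin{align*}
\bigl|e^{-2it[\log(-iz)+\phi(z)]}\bigr|
= e^{-2\eta t + O(rt)}.
\end{align*}
I would then fix $r$ small enough (depending on $\delta$) that the absolute constant hidden in $O(rt)$ times $r$ is at most $\delta/8$, making this modulus $\gtrsim e^{-\delta t/2}$. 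Combined with $|z\psi(z)| \sim r$, the leading term of \eqref{eqn:t_block_asymptotic} then has modulus $\gtrsim_\delta t^{4k-5/2} e^{-\delta t/2}$.

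It remains to absorb the two error terms in \eqref{eqn:t_block_asymptotic}. The relative factor $1 + O(t^{-1} + |z|^4)$ is bounded away from $0$ once $r$ is small and $t$ is large. The additive error $O(t^{4k-5/2} |z| e^{-\pi t})$ is dominated by the main term since $e^{-\pi t} \ll e^{-\delta t/2}$ once $\delta < 2\pi$ and $t \gg_\delta 1$. Putting these together yields $|\widetilde{H}_{1/2+it}(1-z^{-2})| \gtrsim_\delta t^{4k-5/2} e^{-\delta t/2} \gg_\delta e^{-\delta t}$ for $t \gg_\delta 1$, using $4k - 5/2 \geq 3/2$. I do not expect any real obstacle here: the only substantive observation is that choosing $\arg z$ just below $\pi/2$ cancels the $e^{-\pi t}$ prefactor built into the normalization \eqref{eqn:tilde_H_s_def} of $\widetilde{H}_s$, leaving a residual $e^{-O(\delta)t}$ decay that is well-controlled, and the remainder is bookkeeping with implicit constants.
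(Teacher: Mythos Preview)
Your proposal is correct and takes essentially the same approach as the paper: choose $z$ in the first quadrant with argument close to $\pi/2$ so that the exponent in the main term of Proposition~\ref{prop:t_block_asymptotic} has small real part, then absorb the error terms. The only cosmetic difference is that the paper parameterizes $z = x + iy$ with $y$ fixed and $x \to 0$ (invoking continuity from the limit $z = iy$, where the exponent vanishes exactly), whereas you use polar coordinates $z = r e^{i(\pi/2-\eta)}$ and Lemma~\ref{lem:imag_part_bd} to control $\im\phi(z)$ explicitly.
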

	
	\begin{proof}
		Fix $0 < y \ll 1$, and let $z = x+iy$ for some $x > 0$ with $x \ll_{\delta} 1$.
		We have
		\begin{align*}
			\re(2i[\log(-i(iy)) + \phi(iy)])
			= 0,
		\end{align*}
		so by continuity, we may assume $x$ is small enough that
		\begin{align*}
			\re(2i[\log(-iz) + \phi(z)])
			\leq \tfrac{1}{2} \delta.
		\end{align*}
		Then from Proposition~\ref{prop:t_block_asymptotic}, we see that indeed
		\begin{align*}
			|\widetilde{H}_{\frac{1}{2}+it}(1-z^{-2})|
			\gtrsim_{\delta} e^{-\delta t}
		\end{align*}
		for $t \gg 1$.
	\end{proof}
	
	In the range that is relevant for us, the exponential term in \eqref{eqn:t_block_asymptotic} is controlled by the following lemma.
	
	\begin{lem} \label{lem:t_exponential_bd}
		Let $z = T^{-1} + iy$ for some $y \in [0, T^{-\frac{1}{3}}]$. Then
		\begin{align*}
			\re(2it[\log(-iz) + \phi(z)])
			\gtrsim \frac{t}{Ty+1}.
		\end{align*}
		In particular, this real part is positive.
	\end{lem}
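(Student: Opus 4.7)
The plan is to unwind the real part by hand: since $\re(2it[\log(-iz)+\phi(z)])=-2t\,\im[\log(-iz)+\phi(z)]$, it suffices to show that
\[
-\im[\log(-iz)+\phi(z)]\gtrsim\frac{1}{Ty+1}.
\]
Multiplication by $2t>0$ will then give both positivity and the quantitative bound.

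First I will compute $\im\log(-iz)$ explicitly. For $z=T^{-1}+iy$ the point $-iz=y-iT^{-1}$ lies in the closed fourth quadrant, so $\im\log(-iz)=-\arctan(1/(Ty))$ for $y>0$, and equals $-\pi/2$ at $y=0$; with the convention $\arctan(1/0):=\pi/2$ the formula is uniform in $y\in[0,T^{-1/3}]$.

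Next I will bound $|\im\phi(z)|$, which is where the key saving lies. The function $\phi(z)=-\log(1+\sqrt{1-z^2})$ depends only on $z^2$, so I write $\phi(z)=\widetilde\phi(z^2)$ with $\widetilde\phi$ holomorphic near $0$ and real on $\R$. Applying Lemma~\ref{lem:imag_part_bd} to $\widetilde\phi$ at $z^2$ (which satisfies $|z^2|\lesssim T^{-2/3}\ll 1$ since $T\gg 1$) gives $|\im\phi(z)|\lesssim|\im z^2|=2y/T$. This is much sharper than the naive bound $|\im\phi(z)|\lesssim y$ that Lemma~\ref{lem:imag_part_bd} would give for $\phi$ directly, and the extra factor $1/T$ coming from the evenness of $\phi$ is essential.

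Combining the two pieces yields $-\im[\log(-iz)+\phi(z)]\geq \arctan(1/(Ty))-Cy/T$ for an absolute constant $C$. Setting $x=Ty\in[0,T^{2/3}]$ turns the remaining inequality into $\arctan(1/x)-Cx/T^2\gtrsim 1/(x+1)$, which I will verify by a short case split. If $x\leq 1$ then $\arctan(1/x)\geq\pi/4$ and $Cx/T^2\leq C/T^2\leq\pi/8$ (for $T\gg 1$), so the LHS is $\gtrsim 1\gtrsim 1/(x+1)$. If $1\leq x\leq T^{2/3}$, concavity of $\arctan$ on $[0,\infty)$ together with $\arctan(0)=0$ gives $\arctan(1/x)\geq\pi/(4x)$, while $Cx/T^2\leq CT^{-2/3}/x\leq\pi/(8x)$ for $T\gg 1$, so the LHS is $\geq\pi/(8x)\gtrsim 1/(x+1)$. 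The only real subtlety is the evenness step; without the $1/T$ gain on $|\im\phi(z)|$, the $Cy/T$ correction would overwhelm $\arctan(1/(Ty))$ in the regime $Ty\sim T^{2/3}$.
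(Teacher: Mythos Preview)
Your proof is correct and follows essentially the same approach as the paper: both arguments separate $\log(-iz)$ from $\phi(z)$, compute the former explicitly (your $\arctan(1/(Ty))$ is the paper's $\Arg(y+iT^{-1})$), and handle the latter by writing $\phi(z)=\widetilde\phi(z^2)$ and applying Lemma~\ref{lem:imag_part_bd} to gain the crucial extra factor of $T^{-1}$. The only difference is cosmetic: where the paper finishes with the one-line observation $T^{-1}y\ll 1/(Ty+1)$, you carry out an explicit case split in the variable $x=Ty$.
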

	
	\begin{proof}
		Write
		\begin{align*}
			\re(2it[\log(-iz) + \phi(z)])
			&= -2t\im\log(y-iT^{-1}) -2t\im\phi(T^{-1}+iy)
			\\&= 2t\Arg(y+iT^{-1}) + O(t|\im\phi(T^{-1}+iy)|).
		\end{align*}
		We can lower bound
		\begin{align*}
			\Arg(y+iT^{-1})
			= \Arg(Ty+i)
			\gtrsim \frac{1}{Ty+1},
		\end{align*}
		so we will be done if we can show that
		\begin{align} \label{eqn:im_phi_goal}
			|\im\phi(T^{-1}+iy)|
			\ll \frac{1}{Ty+1}.
		\end{align}
		By inspection of the definition of $\phi$, we can write $\phi(w) = \tilde{\phi}(w^2)$, where $\tilde{\phi}$ is holomorphic near $0$ and real-valued on the real line. Then
		\begin{align*}
			\phi(T^{-1}+iy)
			= \tilde{\phi}(T^{-2} - y^2 + 2iT^{-1}y).
		\end{align*}
		It follows from Lemma~\ref{lem:imag_part_bd} that
		\begin{align*}
			|\im\phi(T^{-1}+iy)|
			\lesssim T^{-1} y
			\ll \frac{1}{Ty+1}
		\end{align*}
		(here the second inequality is because $T \gg 1$ and $y \in [0,T^{-\frac{1}{3}}]$).
		Thus \eqref{eqn:im_phi_goal} holds.
	\end{proof}
	
	\begin{cor} \label{cor:t-block_triv_bd}
		There is a constant $c \gtrsim 1$, such that the following holds.
		Let $z = T^{-1}+iy$ for some $y \in [0,T^{-\frac{1}{3}}]$. Then
		\begin{align*}
			|\widetilde{H}_{\frac{1}{2}+it}(1-z^{-2})|
			\lesssim t^{4k-\frac{5}{2}} |z| \exp\Big(-\frac{ct}{Ty+1}\Big)
			\leq t^{4k-\frac{5}{2}} |z|.
		\end{align*}
	\end{cor}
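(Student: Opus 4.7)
The plan is to combine Proposition~\ref{prop:t_block_asymptotic} with Lemma~\ref{lem:t_exponential_bd}; essentially everything we need is already assembled and the corollary is just a matter of bookkeeping.

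First, I would check that $z = T^{-1}+iy$ with $y \in [0, T^{-1/3}]$ satisfies the hypotheses of Proposition~\ref{prop:t_block_asymptotic}. Indeed, $\re z = T^{-1} > 0$, $\im z = y \geq 0$, and since $T \gg 1$ we have $|z| \leq T^{-1} + T^{-1/3} \lesssim T^{-1/3} \ll 1$. Therefore Proposition~\ref{prop:t_block_asymptotic} applies and gives
\begin{align*}
	\widetilde{H}_{\frac{1}{2}+it}(1-z^{-2})
	= t^{4k-\frac{5}{2}} z \psi(z) e^{-2it[\log(-iz)+\phi(z)]} (1+O(t^{-1}+|z|^4)) + O(t^{4k-\frac{5}{2}} |z| e^{-\pi t}).
\end{align*}
Since $|z| \ll 1$, the prefactor $\psi(z) = \pi^{-1/2} e^{-i\pi/4}/(1+\sqrt{1-z^2})$ satisfies $|\psi(z)| \lesssim 1$, and the factor $1+O(t^{-1}+|z|^4)$ is $O(1)$.

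Next I would control the exponential. Since $|e^w| = e^{\re w}$,
\begin{align*}
	|e^{-2it[\log(-iz)+\phi(z)]}|
	= \exp(-\re(2it[\log(-iz)+\phi(z)])),
\end{align*}
and Lemma~\ref{lem:t_exponential_bd} shows that $\re(2it[\log(-iz)+\phi(z)]) \gtrsim t/(Ty+1)$. Therefore there exists $c \gtrsim 1$ such that
\begin{align*}
	|e^{-2it[\log(-iz)+\phi(z)]}|
	\leq \exp\Bigl(-\frac{ct}{Ty+1}\Bigr).
\end{align*}

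Finally, the additive error $t^{4k-5/2}|z|e^{-\pi t}$ is dominated by $t^{4k-5/2}|z|\exp(-ct/(Ty+1))$, because $Ty+1 \geq 1$ forces $ct/(Ty+1) \leq ct \leq \pi t$ provided $c \leq \pi$ (which we may arrange by shrinking the implicit constant). Combining these estimates yields
\begin{align*}
	|\widetilde{H}_{\frac{1}{2}+it}(1-z^{-2})|
	\lesssim t^{4k-\frac{5}{2}} |z| \exp\Bigl(-\frac{ct}{Ty+1}\Bigr),
\end{align*}
which is the first inequality in the corollary; the second follows trivially since $\exp(-ct/(Ty+1)) \leq 1$. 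There is no real obstacle here — the only thing to be careful about is choosing the constant $c$ small enough that it also absorbs the $e^{-\pi t}$ remainder, but this is a cosmetic adjustment.
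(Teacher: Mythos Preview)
Your proof is correct and follows exactly the approach the paper indicates: combine Proposition~\ref{prop:t_block_asymptotic} with Lemma~\ref{lem:t_exponential_bd}. You have simply filled in the bookkeeping details (checking hypotheses, bounding $\psi(z)$, and absorbing the $e^{-\pi t}$ remainder by shrinking $c$) that the paper leaves implicit.
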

	
	\begin{proof}
		Combine Proposition~\ref{prop:t_block_asymptotic} and Lemma~\ref{lem:t_exponential_bd}.
	\end{proof}
	
	In the critical range, the following proposition separates the smooth and oscillatory parts of the asymptotic \eqref{eqn:t_block_asymptotic} in Proposition~\ref{prop:t_block_asymptotic} (after multiplying both sides of \eqref{eqn:t_block_asymptotic} by $z^p$ for any fixed $p \in \R$).
	
	\begin{prop} \label{prop:t_block_asymptotic_separated}
		Assume $1 < t \leq T^{\frac{2}{3}}$. Let $p \in \R$. Then there exists a smooth function $\rho$ on $[2T^{-1}, T^{-\frac{1}{3}}]$
		obeying the derivative bounds
		\begin{align} \label{eqn:rho_deriv_bds}
			|\rho^{(j)}(y)|
			\lesssim_{p,j} y^{-j}
			\qquad \textnormal{for all} \qquad
			j \geq 0 \textnormal{ and } y \in [2T^{-1}, T^{-\frac{1}{3}}],
		\end{align}
		such that for $z = T^{-1}+iy$ with $y \in [2T^{-1}, T^{-\frac{1}{3}}]$,
		\begin{align} \label{eqn:t_asymp_via_rho}
			z^p \widetilde{H}_{\frac{1}{2}+it}(1-z^{-2})
			= t^{4k-\frac{5}{2}} y^{p+1}\rho(y) e^{-2it\log y} + O_p(t^{4k-\frac{7}{2}} y^{p+1}).
		\end{align}
		Explicitly, one can take
		\begin{align} \label{eqn:rho_def}
			\rho(y)
			= \Big(\frac{1}{Ty}+i\Big)^{p+1} \psi(z) \exp\Big(-2it\Big[\log\Big(1-\frac{i}{Ty}\Big) + \phi(z)\Big]\Big),
		\end{align}
		where $\phi,\psi$ are as in Proposition~\ref{prop:t_block_asymptotic}.
		To be clear, although $\rho$ depends on $p,t,T$, the implicit constants in \eqref{eqn:rho_deriv_bds} are independent of $t,T$.
	\end{prop}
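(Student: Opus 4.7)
The plan is to start from Proposition~\ref{prop:t_block_asymptotic} and perform an algebraic rearrangement that pulls out the explicit $e^{-2it\log y}$ oscillation and the $y^{p+1}$ prefactor, leaving a smooth amplitude $\rho(y)$. After that, verifying the main-term error and the derivative bounds on $\rho$ is where the real work lies.

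For the rearrangement, the plan is to multiply both sides of \eqref{eqn:t_block_asymptotic} by $z^p$ and then write $z = iy(1 - i/(Ty))$. This identity gives $z^{p+1} = y^{p+1}(1/(Ty)+i)^{p+1}$ and $-iz = y(1-i/(Ty))$, hence $\log(-iz) = \log y + \log(1-i/(Ty))$ with the principal branch (valid since $y > 0$). Substituting these into \eqref{eqn:t_block_asymptotic} immediately produces the desired main term $t^{4k-5/2} y^{p+1} \rho(y) e^{-2it\log y}$ with $\rho(y)$ given by \eqref{eqn:rho_def}. To match the stated error, I will use two inputs: first, the $O(t^{4k-5/2}|z|^{p+1} e^{-\pi t})$ tail from Proposition~\ref{prop:t_block_asymptotic} is negligible because $|z| \sim y$ and $e^{-\pi t} \ll t^{-M}$; second, the multiplicative factor $(1 + O(t^{-1} + |z|^4))$ produces an error proportional to $|\rho(y)|(t^{-1}+|z|^4) \cdot t^{4k-5/2} y^{p+1}$, and the crucial inequality $ty^2 \leq t T^{-2/3} \leq 1$ (using $t \leq T^{2/3}$ and $y \leq T^{-1/3}$) forces $|z|^4 \lesssim y^4 \lesssim t^{-2} \leq t^{-1}$, which together with the amplitude bound $|\rho(y)| = O(1)$ (to be established) produces the claimed error of $O_p(t^{4k-7/2}y^{p+1})$.

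For the amplitude bound $|\rho(y)| \lesssim_p 1$, each factor can be controlled trivially: $|1/(Ty)+i| \in [1,\sqrt{5}/2]$ since $Ty \geq 2$; $|\psi(z)| \lesssim 1$ since $|z| \ll 1$; and most importantly, the exponential is bounded by $1$ because Lemma~\ref{lem:t_exponential_bd} (combined with $\log(-iz) = \log y + \log(1-i/(Ty))$) says the real part of $2it[\log(1-i/(Ty))+\phi(z)]$ is nonnegative. For the derivative bounds, the cleanest strategy is to change variables to $\alpha = Ty \in [2, T^{2/3}]$; since $\partial_y = T\partial_\alpha$, the target bound $|\rho^{(j)}(y)| \lesssim_{p,j} y^{-j}$ translates to the equivalent bound $|\partial_\alpha^j \rho| \lesssim_{p,j} \alpha^{-j}$. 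In these variables $\log(1 - i/(Ty)) = \log(1-i/\alpha) =: h(\alpha)$ depends only on $\alpha$, and an elementary computation (from $h'(\alpha) = i/[\alpha(\alpha-i)]$) gives $|h^{(j)}(\alpha)| \lesssim_j \alpha^{-(j+1)}$. Together with the estimate $|e^{-2ith(\alpha)}| \leq e^{-t/\alpha}$ (using $\arctan(1/\alpha) \geq 1/(2\alpha)$ for $\alpha \geq 1$), Fa\`a~di~Bruno applied to $e^{-2ith(\alpha)}$ yields sums of terms of the form $e^{-t/\alpha} \cdot t^m/\alpha^{j+m}$; writing $u = t/\alpha$, each term equals $(u^m e^{-u})/\alpha^j$, and $\sup_u u^m e^{-u} < \infty$ gives $|(e^{-2ith})^{(j)}| \lesssim_j \alpha^{-j}$. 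The algebraic factor $(1/\alpha+i)^{p+1}$ is smooth in $1/\alpha$ with $j$-th derivative of order $\alpha^{-(j+1)}$, and the $\phi(z)$- and $\psi(z)$-dependent factors (with $z = (1+i\alpha)/T$) contribute derivatives of size $T^{-j}$ or better, which again is $\leq \alpha^{-j}$ since $\alpha \leq T$.

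The main obstacle will be handling the $\phi(z)$-correction inside the exponential of $\rho$: the factor $\exp(-2it\phi(z))$ contributes $O(t)$ upon a single differentiation, which is not $O(1/y)$ on its own. The resolution is that $\phi(z) = -\log 2 + z^2/4 + O(z^4)$ and $dz/d\alpha = i/T$, so $\partial_\alpha[-2it\phi(z)]$ is of order $t \cdot z/T \cdot 1/T = O(tzT^{-2})$, which after the bound $|z| \leq \alpha/T + 1/T$ and using $t \leq T^{2/3}$ becomes $O(T^{-4/3}\alpha)$; combined with the $e^{-t/\alpha}$ damping and the bound $ty^2 \leq 1$, this is dominated by $\alpha^{-j}$ after expansion via Leibniz. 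Carrying this through cleanly for all $j$ is the technical bulk of the proof, but once the change of variables to $\alpha$ is made and the cancellation $ty^2 \leq 1$ is invoked, it reduces to a bookkeeping exercise of the same type already handled in Section~\ref{sec:asymptotics}.
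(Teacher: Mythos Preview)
Your proposal is correct and follows essentially the same route as the paper: the paper also starts from Proposition~\ref{prop:t_block_asymptotic}, rewrites the main term via $z = y(i + (Ty)^{-1})$ to extract $y^{p+1}e^{-2it\log y}$, controls the error using $ty^2 \leq 1$, and then verifies the derivative bounds \eqref{eqn:rho_deriv_bds} by factoring $\rho = \rho_1^{p+1}\rho_2\, e^{-\rho_3-\rho_4}$ (precisely your four factors) and invoking Lemma~\ref{lem:t_exponential_bd} to supply the damping $e^{-ct/(Ty)}$ that absorbs the $(t/(Ty))^j$ growth coming from differentiating the $\log(1-i/(Ty))$ piece. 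Your change of variable to $\alpha = Ty$ and the $u^m e^{-u}$ bookkeeping via Fa\`a di Bruno are a cosmetic repackaging of the same estimate; the paper works directly in $y$ and phrases the final step as $(t/(Ty)+1)^j\, e^{-ct/(Ty)} \lesssim_j 1$.
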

	
	\begin{proof}
		By Proposition~\ref{prop:t_block_asymptotic},
		\begin{align*}
			t^{-4k+\frac{5}{2}} z^p\widetilde{H}_{\frac{1}{2}+it}(1-z^{-2})
			= z^{p+1} \psi(z) e^{-2it[\log(-iz) + \phi(z)]} (1 + O(t^{-1} + |z|^4)) + O(|z|^{p+1}e^{-\pi t}).
		\end{align*}
		By the positivity assertion in Lemma~\ref{lem:t_exponential_bd} and the fact that $|z| \sim y$,
		\begin{align*}
			t^{-4k+\frac{5}{2}} z^p\widetilde{H}_{\frac{1}{2}+it}(1-z^{-2})
			= z^{p+1} \psi(z) e^{-2it[\log(-iz) + \phi(z)]} + O_p(t^{-1}y^{p+1} + y^{p+5} + y^{p+1}e^{-\pi t}).
		\end{align*}
		Since $t \leq T^{\frac{2}{3}}$ and $y \leq T^{-\frac{1}{3}}$, the first error term dominates the other two, so
		\begin{align} \label{eqn:before_inserting_rho}
			t^{-4k+\frac{5}{2}} z^p\widetilde{H}_{\frac{1}{2}+it}(1-z^{-2})
			= z^{p+1} \psi(z) e^{-2it[\log(-iz) + \phi(z)]} + O_p(t^{-1}y^{p+1}).
		\end{align}
		Writing $z = y(\frac{1}{Ty}+i)$, we can express the main term on the right hand side as
		\begin{align*}
			z^{p+1} \psi(z) e^{-2it[\log(-iz) + \phi(z)]}
			= y^{p+1} \rho(y) e^{-2it\log y},
		\end{align*}
		where $\rho$ is given by \eqref{eqn:rho_def}.
		Then multiplying both sides of \eqref{eqn:before_inserting_rho} by $t^{4k-\frac{5}{2}}$ yields \eqref{eqn:t_asymp_via_rho}.
		
		It remains to check that $\rho$ obeys the derivative bounds in \eqref{eqn:rho_deriv_bds}.
		Write
		\begin{align} \label{eqn:rho_factored}
			\rho
			= \rho_1^{p+1} \rho_2 \exp(-\rho_3-\rho_4),
		\end{align}
		where
		\begin{align*}
			\rho_1(y) = \frac{1}{Ty} + i,
			\qquad
			\rho_2(y) = \psi(z),
			\qquad
			\rho_3(y) = 2it\log\Big(1-\frac{i}{Ty}\Big),
			\qquad
			\rho_4(y) = 2it\phi(z).
		\end{align*}
		Trivially
		\begin{align} \label{eqn:rho_1_deriv_bds}
			|\rho_1(y)| \sim 1
			\qquad \text{and} \qquad
			|\rho_1^{(j)}(y)|
			\lesssim_j \frac{1}{Ty^{j+1}}
			\lesssim y^{-j}
			\qquad \text{for all} \qquad
			j \geq 1,
		\end{align}
		and
		\begin{align} \label{eqn:rho_2_deriv_bds}
			|\rho_2^{(j)}(y)|
			\lesssim_j 1
			\lesssim y^{-j}
			\qquad \text{for all} \qquad
			j \geq 0.
		\end{align}
		Taylor expanding the logarithm in $\rho_3$ (which is valid because $Ty \geq 2$), we see that
		\begin{align} \label{eqn:rho_3_deriv_bds}
			|\rho_3^{(j)}(y)|
			\lesssim_j \Big(\frac{t}{Ty}\Big) y^{-j}
			\qquad \text{for all} \qquad
			j \geq 0.
		\end{align}
		Since $\phi(z)$ is a function of $z^2$, we have $\phi'(0) = 0$, so $|\phi'(z)| \lesssim |z| \lesssim y$. The higher derivatives are trivially bounded by $1$. Therefore
		\begin{align*}
			|\rho_4'(y)|
			\lesssim ty
			\qquad \text{and} \qquad
			|\rho_4^{(j)}(y)|
			\lesssim_j t
			\qquad \text{for all} \qquad
			j \geq 2.
		\end{align*}
		Since $t \leq T^{\frac{2}{3}}$ and $y \leq T^{-\frac{1}{3}}$,
		\begin{align*}
			|\rho_4'(y)|
			\lesssim T^{\frac{1}{3}}
			\leq y^{-1}
			\qquad \text{and} \qquad
			|\rho_4^{(j)}(y)|
			\lesssim_j T^{\frac{2}{3}}
			\leq y^{-2}
			\qquad \text{for all} \qquad
			j \geq 2.
		\end{align*}
		Thus
		\begin{align} \label{eqn:rho_4_deriv_bds}
			|\rho_4^{(j)}(y)|
			\lesssim_j y^{-j}
			\qquad \text{for all} \qquad
			j \geq 1.
		\end{align}
		Computing the derivatives of $\rho$ using \eqref{eqn:rho_factored} and the product and chain rules, and then applying \eqref{eqn:rho_1_deriv_bds}, \eqref{eqn:rho_2_deriv_bds}, \eqref{eqn:rho_3_deriv_bds}, and \eqref{eqn:rho_4_deriv_bds},
		\begin{align} \label{eqn:rho_product_chain_rule_bd}
			|\rho^{(j)}(y)|
			\lesssim_{p,j} \Big(\frac{t}{Ty}+1\Big)^j y^{-j} \exp(-\re[\rho_3(y) + \rho_4(y)])
			\qquad \text{for all} \qquad
			j \geq 0.
		\end{align}
		By Lemma~\ref{lem:t_exponential_bd} and the fact that $Ty \geq 2$ (in particular $Ty \gtrsim 1$),
		\begin{align*}
			\re[\rho_3(y)+\rho_4(y)]
			= \re(2it[\log(-iy^{-1}z) + \phi(z)])
			= \re(2it[\log(-iz) + \phi(z)])
			\geq \frac{ct}{Ty}
		\end{align*}
		for some $c \gtrsim 1$. Inserting this into \eqref{eqn:rho_product_chain_rule_bd},
		\begin{align*}
			|\rho^{(j)}(y)|
			\lesssim_{p,j} \Big(\frac{t}{Ty}+1\Big)^j y^{-j} \exp\Big(-\frac{ct}{Ty}\Big)
			\qquad \text{for all} \qquad
			j \geq 0.
		\end{align*}
		The product of the first and third terms is $\lesssim_j 1$, so we conclude
		\begin{align*}
			|\rho^{(j)}(y)|
			\lesssim_{p,j} y^{-j}
			\qquad \text{for all} \qquad
			j \geq 0.
		\end{align*}
		This is the desired estimate \eqref{eqn:rho_deriv_bds}.
	\end{proof}
	
	\section{Averaging the u-channel conformal block when $t>1$} \label{sec:u_averaging}
	
	In this section we prove Proposition~\ref{prop:I_u_principal}.
	Let $s = \frac{1}{2}+it$ with $t>1$ as in the statement of the proposition.
	
	\subsection{The rapidly decaying range: $t \ggg T$} 
	
	Suppose $t \geq T \log^2 T$. Then taking absolute values in the integral \eqref{eqn:final_W_def} defining $W(s)$ and applying Corollary~\ref{cor:u_block_rapid_decay} gives $|W(s)| \lesssim t^{-\infty}$.
	Thus Proposition~\ref{prop:I_u_principal} holds in the range $t \geq T\log^2T$.
	
	\subsection{The oscillatory range: $T \lesssim t \lessapprox T$}
	
	Now suppose $T \lesssim t \lessapprox T$.
	Then to prove Proposition~\ref{prop:I_u_principal}, we must show that
	\begin{align} \label{eqn:I_u_asymptotic_t_sim_T}
		W(s)
		= (1+\widetilde{O}(T^{-6\varepsilon})) \frac{1}{2} \Big(\frac{t}{T}\Big)^{4k-\frac{5}{2}} e^{-2t/T}  \exp\Big(-\Big(\frac{t-T}{H}\Big)^2\Big) + \widetilde{O}(H/T).
	\end{align}
	For $z$ as in \eqref{eqn:y_range}, Proposition~\ref{prop:u_block_asymptotic} (with $M=1$) tells us that there exists $J \geq 0$ depending only on $\varepsilon$, such that
	\begin{align*}
		\widetilde{H}_s(1-z^2)
		= \frac{t^{4k-\frac{5}{2}}}{2\sqrt{\pi z}} \Big[ e^{-2tz} \sum_{j=0}^{J} c_j (tz^3)^j + O(e^{-2t\re z} |tz|^{-1} + t^{-1}) \Big],
	\end{align*}
	where the $c_j$ are absolute constants with $c_0 = 1$.
	Noting that $e^{-2t\re z} \leq 1$ and $|z| \leq 1$, we see that the error term is $\lesssim |tz|^{-1}$.
	Therefore
	\begin{align} \label{eqn:I_u = sum I_u,j}
		W(s)
		= \sum_{j=0}^{J} W_j(t) + O(E(t)),
	\end{align}
	where
	\begin{align*}
		W_j(t)
		= \frac{c_j}{2\sqrt{\pi}} \Big(\frac{t}{T}\Big)^{4k-\frac{5}{2}} H t^j \int_{-T^{\varepsilon}/H}^{T^{\varepsilon}/H} e^{-(Hy)^2} e^{2iTy-2tz} z^{3j} \, dy
	\end{align*}
	and
	\begin{align*}
		E(t) = \Big(\frac{t}{T}\Big)^{4k-\frac{5}{2}} H \int_{-T^{\varepsilon}/H}^{T^{\varepsilon}/H} e^{-(Hy)^2} |tz|^{-1} \, dy.
	\end{align*}
	Since $t \lessapprox T$, we have $(t/T)^{4k-\frac{5}{2}} \lessapprox 1$.
	Since $t \gtrsim T$ and $z = T^{-1}+iy$, we also have $|tz|^{-1} \lesssim (T|y|+1)^{-1}$.
	It follows that up to a log factor, the dominant contribution to $E(t)$ comes from $|y| \lesssim T^{-1}$.
	Thus
	\begin{align} \label{eqn:K_bd}
		E(t)
		\lessapprox H/T.
	\end{align}
	
	It remains to estimate $W_j(t)$.
	We are only interested in $j \leq J$, and $J$ depends only on $\varepsilon$, so \textit{in the remainder of this subsection we suppress dependence of implicit constants on $j$.}
	Completing the integral defining $W_j(t)$ to an integral over $\R$,
	\begin{align} \label{eqn:I_u,j_approx_by_completion}
		W_j(t)
		= \widetilde{W}_j(t) + O(T^{-\infty}),
	\end{align}
	where
	\begin{align*}
		\widetilde{W}_j(t)
		= \frac{c_j}{2\sqrt{\pi}} \Big(\frac{t}{T}\Big)^{4k-\frac{5}{2}} H t^j \int_{\R} e^{-(Hy)^2} e^{2iTy-2tz} z^{3j} \, dy.
	\end{align*}
	Writing $z$ in terms of $y$,
	\begin{align*}
		\widetilde{W}_{j}(t)
		= \frac{c_j}{2\sqrt{\pi}} \Big(\frac{t}{T}\Big)^{4k-\frac{5}{2}} H t^j e^{-2t/T} \int_{\R} e^{-(Hy)^2} e^{2i(T-t)y} (T^{-1}+iy)^{3j} \, dy.
	\end{align*}
	Rescaling $y$ by $H$,
	\begin{align*}
		\widetilde{W}_{j}(t)
		= \frac{c_j}{2\sqrt{\pi}} \Big(\frac{t}{T}\Big)^{4k-\frac{5}{2}} t^j e^{-2t/T} \int_{\R} e^{-y^2} e^{2i(T-t)H^{-1}y} (T^{-1}+iH^{-1}y)^{3j} \, dy.
	\end{align*}
	Expanding the $3j$th power,
	\begin{align*}
		\widetilde{W}_{j}(t)
		= \frac{c_j}{2\sqrt{\pi}} \Big(\frac{t}{T}\Big)^{4k-\frac{5}{2}} t^j e^{-2t/T} \sum_{\ell+m=3j} \binom{3j}{\ell} H^{-\ell} T^{-m} \int_{\R} (iy)^{\ell} e^{-y^2} e^{2i(T-t)H^{-1}y} \, dy.
	\end{align*}
	Denote
	\begin{align} \label{eqn:u_g_def}
		g(\xi)
		= \int_{\R} e^{-y^2} e^{i\xi y} \, dy
		= \sqrt{\pi} e^{-\frac{1}{4}\xi^2}.
	\end{align}
	Then we can express the above as
	\begin{align} \label{eqn:expressed_via_g}
		\widetilde{W}_{j}(t)
		= \frac{c_j}{2\sqrt{\pi}} \Big(\frac{t}{T}\Big)^{4k-\frac{5}{2}} t^j e^{-2t/T} \sum_{\ell+m=3j} \binom{3j}{\ell} H^{-\ell} T^{-m} g^{(\ell)}\Big(\frac{2(T-t)}{H}\Big).
	\end{align}
	
	If $|t-T| \geq H\log T$, then $|\widetilde{W}_{j}(t)| \lesssim T^{-\infty}$ because of the decay of $g^{(\ell)}$, so $|W_j(s)| \lesssim T^{-\infty}$, and the desired bound \eqref{eqn:I_u_asymptotic_t_sim_T} follows from \eqref{eqn:I_u = sum I_u,j} and \eqref{eqn:K_bd}.
	
	Suppose instead that $|t-T| \lessapprox H$.
	Explicitly differentiating $g$ gives
	\begin{align} \label{eqn:g_deriv_ineq}
		|g^{(\ell)}(\xi)|
		\lessapprox_{\ell} g(\xi)
		\qquad \text{for} \qquad |\xi| \lessapprox 1.
	\end{align}
	Therefore
	\begin{align}
		|\widetilde{W}_{j}(t)|
		&\lessapprox T^j e^{-2t/T} \sum_{\ell+m=3j} H^{-\ell} T^{-m} g\Big(\frac{2(T-t)}{H}\Big)
		\notag
		\\&\lesssim T^j H^{-3j} e^{-2t/T} g\Big(\frac{2(T-t)}{H}\Big)
		\notag
		\\&= T^{-6\varepsilon j} e^{-2t/T} g\Big(\frac{2(T-t)}{H}\Big).
		\label{eqn:I_2,j_final_bd}
	\end{align}
	When $j=0$, we have $c_0 = 1$, so \eqref{eqn:expressed_via_g} specializes to
	\begin{align} \label{eqn:I_2,0_formula}
		\widetilde{W}_0(t)
		= \frac{1}{2\sqrt{\pi}} \Big(\frac{t}{T}\Big)^{4k-\frac{5}{2}} e^{-2t/T} g\Big(\frac{2(T-t)}{H}\Big).
	\end{align}
	Putting \eqref{eqn:I_2,j_final_bd} and \eqref{eqn:I_2,0_formula} together,
	\begin{align*}
		\sum_{j=0}^{J} \widetilde{W}_{j}(t)
		= (1+\widetilde{O}(T^{-6\varepsilon})) \frac{1}{2\sqrt{\pi}} \Big(\frac{t}{T}\Big)^{4k-\frac{5}{2}} e^{-2t/T}  g\Big(\frac{2(T-t)}{H}\Big).
	\end{align*}
	Inserting \eqref{eqn:I_u,j_approx_by_completion} and \eqref{eqn:u_g_def},
	\begin{align*}
		\sum_{j=0}^{J} W_{j}(t)
		= (1+\widetilde{O}(T^{-6\varepsilon})) \frac{1}{2} \Big(\frac{t}{T}\Big)^{4k-\frac{5}{2}} e^{-2t/T}  \exp\Big(-\Big(\frac{t-T}{H}\Big)^2\Big) + O(T^{-\infty}).
	\end{align*}
	Combined with \eqref{eqn:I_u = sum I_u,j} and \eqref{eqn:K_bd}, this implies \eqref{eqn:I_u_asymptotic_t_sim_T}.
	
	\subsection{The smooth range: $1<t \ll T$ and $|y| \ggg T^{-1}$}
	
	At this point, we have proved Proposition~\ref{prop:I_u_principal} whenever $t \gtrsim T$. In this subsection, suppose $1 < t \leq \frac{1}{2}T$.
	In this range, the hypothesis in Proposition~\ref{prop:u_block_asymptotic} that $\re z \gtrsim t^{-\frac{2}{3}+\varepsilon} |z|$ may not hold for all $z$ as in \eqref{eqn:y_range}.
	Therefore, we cannot use Proposition~\ref{prop:u_block_asymptotic}.
	Instead, we use that because $t \leq \frac{1}{2}T$, the function $y \mapsto \widetilde{H}_s(1-z^2)$ is smooth at scale $T^{-1}$, and so the integral defining $W(s)$ exhibits cancellation coming from the phase $e^{2iTy}$.
	We make this precise by an integration by parts argument.
	
	Fix $\eta > 0$ small. Then split
	\begin{align}
		T^{4k-\frac{5}{2}} H^{-1} |W(s)|
		&\leq \Big|\int_{|y| \lesssim T^{-1+\eta}} e^{-(Hy)^2} e^{2iTy} z^{\frac{1}{2}} \widetilde{H}_s(1-z^2) \, dy \Big|
		\notag
		\\&+ \Big|\int_{T^{-1+\eta} \lesssim |y| \leq T^{\varepsilon}/H} e^{-(Hy)^2} e^{2iTy} z^{\frac{1}{2}} \widetilde{H}_s(1-z^2) \, dy \Big|
		\label{eqn:I_2_split}
	\end{align}
	(with a smooth cutoff at $|y| \sim T^{-1+\eta}$).
	The first integral on the right is estimated in the next subsection.
	In this subsection we treat the second integral: we show by integration by parts that
	\begin{align} \label{eqn:u_smooth_goal}
		\Big|\int_{T^{-1+\eta} \lesssim |y| \leq T^{\varepsilon}/H} e^{-(Hy)^2} e^{2iTy} z^{\frac{1}{2}} \widetilde{H}_s(1-z^2) \, dy \Big|
		\lesssim_{\eta} T^{-\infty}.
	\end{align}
	By the exponential decay of the Gaussian in the integrand together with the trivial bound in Proposition~\ref{prop:u_block_triv_bd}, the sharp cutoff to $|y| \leq T^{\varepsilon}/H = T^{-\frac{1}{3}-\varepsilon}$ can be replaced by a smooth cutoff to $|y| \lesssim T^{-\frac{1}{3}}$.
	Then by dyadic decomposition, \eqref{eqn:u_smooth_goal} reduces to the estimate
	\begin{align} \label{eqn:u_smooth_after_dyadic}
		\Big|\int_{|y| \sim Y} e^{-(Hy)^2} e^{2iTy} z^{\frac{1}{2}} \widetilde{H}_s(1-z^2) \, dy \Big|
		\lesssim_{\eta} T^{-\infty}
	\end{align}
	for $T^{-1+\eta} \leq Y \leq T^{-\frac{1}{3}}$.
	
	\textit{For the remainder of this section, implicit constants may depend on $\eta$.}
	Let
	\begin{align*}
		F(z)
		= z^{\frac{1}{2}} \widetilde{H}_s(1-z^2),
		\qquad
		F_Y(\tilde{z})
		= F(Y\tilde{z}),
		\qquad
		F_{Y,T}(\tilde{z})
		= e^{2TY\tilde{z}} F_Y(\tilde{z}).
	\end{align*}
	Then the left hand side of \eqref{eqn:u_smooth_after_dyadic} is
	\begin{align*}
		\LHS\eqref{eqn:u_smooth_after_dyadic}
		= \Big|\int_{|y| \sim Y} e^{-(Hy)^2} e^{2iTy} F(z) \, dy \Big|.
	\end{align*}
	Substituting $y = Y\tilde{y}$,
	\begin{align} \label{eqn:u_smooth_after_substitution}
		\LHS\eqref{eqn:u_smooth_after_dyadic}
		= Y\Big|\int_{|\tilde{y}| \sim 1} e^{-(HY\tilde{y})^2} e^{2iTY\tilde{y}} F_Y(\tilde{z}) \, d\tilde{y} \Big|,
	\end{align}
	where we denote
	\begin{align*}
		\tilde{z}
		= z/Y
		= T^{-1}/Y + iy/Y
		= (TY)^{-1} + i\tilde{y}.
	\end{align*}
	The function $\tilde{y} \mapsto e^{-(HY\tilde{y})^2}$ obeys uniform derivative bounds for $|\tilde{y}| \sim 1$, i.e., the $j$th derivative at $\tilde{y}$ is $O_j(1)$ for each $j \geq 0$.
	Therefore, this function can be absorbed into the smooth cutoff in \eqref{eqn:u_smooth_after_substitution}, giving
	\begin{align*}
		\LHS\eqref{eqn:u_smooth_after_dyadic}
		\lesssim Y\Big|\int_{|\tilde{y}| \sim 1} e^{2iTY\tilde{y}} F_Y(\tilde{z}) \, d\tilde{y} \Big|.
	\end{align*}
	Since $Y \geq T^{-1+\eta}$, we have $(TY)^{-1} \leq T^{-\eta}$, so $|\tilde{y}| \sim 1$ if and only if $|\tilde{z}| \sim 1$.
	Thus, writing the above as an integral over $\tilde{z}$ rather than $\tilde{y}$,
	\begin{align*}
		\LHS\eqref{eqn:u_smooth_after_dyadic}
		&\lesssim Y\Big|\int_{(TY)^{-1}-i\infty}^{(TY)^{-1}+i\infty} \1_{|\tilde{z}| \sim 1} e^{2TY(\tilde{z}-(TY)^{-1})} F_Y(\tilde{z}) \, d\tilde{z} \Big|
		\\&\lesssim Y\Big|\int_{(TY)^{-1}-i\infty}^{(TY)^{-1}+i\infty} \1_{|\tilde{z}| \sim 1} F_{Y,T}(\tilde{z}) \, d\tilde{z} \Big|.
	\end{align*}
	For notational convenience, let $\L \subseteq \C$ denote the line $\re\tilde{z} = (TY)^{-1}$.
	Then the inequality above means that there exists $\chi \in C_c^{\infty}(\L)$ obeying uniform derivative bounds (see below) and with support contained in $\{|\tilde{z}| \sim 1\}$, such that
	\begin{align*}
		\LHS\eqref{eqn:u_smooth_after_dyadic}
		\lesssim Y\Big|\int_{\L} \chi(\tilde{z}) F_{Y,T}(\tilde{z}) \, d\tilde{z}\Big|.
	\end{align*}
	Here ``uniform derivative bounds" means
	\begin{align*}
		|\chi^{(j)}(\tilde{z})|
		\lesssim_j 1
		\qquad \text{for all} \qquad
		j \geq 0 \text{ and } \tilde{z} \in \L.
	\end{align*}
	Thus to prove \eqref{eqn:u_smooth_after_dyadic}, it suffices to show that
	\begin{align} \label{eqn:u_smooth_goal_2}
		\Big|\int_{\L} \chi(\tilde{z}) F_{Y,T}(\tilde{z}) \, d\tilde{z}\Big|
		\lesssim T^{-\infty}.
	\end{align}
	
	The first step toward proving \eqref{eqn:u_smooth_goal_2} is to show that $F_{Y,T}$ obeys an ODE which can be analyzed explicitly.
	The hypergeometric ODE is
	\begin{align*}
		w(1-w) H_s''(w)
		+ (1-2w) H_s'(w)
		- \Big(\frac{1}{4}+t^2\Big) H_s(w)
		= 0.
	\end{align*}
	This translates to the ODE
	\begin{align*}
		F''(z)
		- \frac{2z}{1-z^2} F'(z)
		- \frac{\frac{1}{4} + 4t^2 - \frac{1}{4}z^{-2}}{1-z^2} F(z)
		= 0
	\end{align*}
	for $F$, which in turn yields the ODE
	\begin{align*}
		F_Y''(\tilde{z})
		- \frac{2Y^2\tilde{z}}{1-Y^2\tilde{z}^2} F_Y'(\tilde{z})
		- \frac{\frac{1}{4}Y^2 + 4(tY)^2 - \frac{1}{4}\tilde{z}^{-2}}{1-Y^2\tilde{z}^2} F_Y(\tilde{z})
		= 0
	\end{align*}
	for $F_Y$.
	In other words, $F_Y$ is killed by the differential operator
	\begin{align*}
		\P_Y
		= \partial_{\tilde{z}}^2
		- \frac{2Y^2\tilde{z}}{1-Y^2\tilde{z}^2} \partial_{\tilde{z}}
		- \frac{\frac{1}{4}Y^2 + 4(tY)^2 - \frac{1}{4}\tilde{z}^{-2}}{1-Y^2\tilde{z}^2}.
	\end{align*}
	Let
	\begin{align*}
		\P_{Y,T}
		= e^{2TY\tilde{z}} \circ \P_Y \circ e^{-2TY\tilde{z}},
		\qquad \text{so} \qquad
		\P_{Y,T}F_{Y,T}
		= 0.
	\end{align*}
	Expanding $\P_{Y,T}$ by the product rule,
	\begin{align*}
		\P_{Y,T}
		= \partial_{\tilde{z}}^2
		- \Big(\frac{2Y^2\tilde{z}}{1-Y^2\tilde{z}^2} + 4TY\Big) \partial_{\tilde{z}}
		+ 4T^2Y^2
		+ \frac{4TY^3\tilde{z}}{1-Y^2\tilde{z}^2}
		- \frac{\frac{1}{4}Y^2 + 4(tY)^2 - \frac{1}{4}\tilde{z}^{-2}}{1-Y^2\tilde{z}^2}.
	\end{align*}
	The formal adjoint of $\P_{Y,T}$ (with respect to the symmetric pairing on $L^2(\L)$ rather than the Hermitian pairing) is
	\begin{align*}
		\P_{Y,T}^*
		&= \partial_{\tilde{z}}^2
		+ \partial_{\tilde{z}} \circ \Big(\frac{2Y^2\tilde{z}}{1-Y^2\tilde{z}^2} + 4TY\Big)
		+ 4T^2Y^2
		+ \frac{4TY^3\tilde{z}}{1-Y^2\tilde{z}^2}
		- \frac{\frac{1}{4}Y^2 + 4(tY)^2 - \frac{1}{4}\tilde{z}^{-2}}{1-Y^2\tilde{z}^2}
		\\&= \partial_{\tilde{z}}^2
		+ \Big(\frac{2Y^2\tilde{z}}{1-Y^2\tilde{z}^2} + 4TY\Big) \partial_{\tilde{z}}
		+ \frac{2Y^2 + 2Y^4\tilde{z}^2}{(1-Y^2\tilde{z}^2)^2}
		+ 4T^2Y^2
		+ \frac{4TY^3\tilde{z}}{1-Y^2\tilde{z}^2}
		- \frac{\frac{1}{4}Y^2 + 4(tY)^2 - \frac{1}{4}\tilde{z}^{-2}}{1-Y^2\tilde{z}^2}.
	\end{align*}
	Write this as
	\begin{align} \label{eqn:adjoint_phi_psi}
		\P_{Y,T}^*
		= \partial_{\tilde{z}}^2
		+ TY \varphi_{Y,T}(\tilde{z}) \partial_{\tilde{z}}
		+ (TY)^2 \psi_{Y,T}(\tilde{z}),
	\end{align}
	where
	\begin{align*}
		\varphi_{Y,T}(\tilde{z})
		= 4 + \frac{2T^{-1}Y\tilde{z}}{1-Y^2\tilde{z}^2}
	\end{align*}
	and
	\begin{align} \label{eqn:psi_Y,T_def}
		\psi_{Y,T}(\tilde{z})
		= 4
		+ \frac{2T^{-2}(1 + Y^2\tilde{z}^2)}{(1-Y^2\tilde{z}^2)^2}
		+ \frac{4T^{-1}Y\tilde{z}}{1-Y^2\tilde{z}^2}
		- \frac{\frac{1}{4}T^{-2} + 4(t/T)^2 - \frac{1}{4}(TY)^{-2}\tilde{z}^{-2}}{1-Y^2\tilde{z}^2}.
	\end{align}
	Recalling the inequalities $t \leq \frac{1}{2}T$ and $T^{-1+\eta} \leq Y \leq T^{-\frac{1}{3}}$, we see that when $|\tilde{z}| \sim 1$,
	\begin{align*}
		|\varphi_{Y,T}(\tilde{z})| \lesssim 1
		\qquad \text{and} \qquad
		|\psi_{Y,T}(\tilde{z})| \lesssim 1.
	\end{align*}
	It follows from Cauchy's estimates for derivatives of holomorphic functions that when $|\tilde{z}| \sim 1$,
	\begin{align} \label{eqn:phi_psi_deriv_bds}
		|\varphi_{Y,T}^{(j)}(\tilde{z})| \lesssim_j 1
		\qquad \text{and} \qquad
		|\psi_{Y,T}^{(j)}(\tilde{z})| \lesssim_j 1
		\qquad \text{for all} \qquad
		j \geq 0.
	\end{align}
	Observe furthermore that when $|\tilde{z}| \sim 1$,
	\begin{align} \label{eqn:psi_bdd_below}
		|\psi_{Y,T}(\tilde{z})|
		\sim 1;
	\end{align}
	indeed, of the four terms in \eqref{eqn:psi_Y,T_def}, the second and third have absolute value $\ll 1$, and the fourth has absolute value $\leq 4(\frac{1}{2})^2 + o(1) = 1+o(1)$ because $t \leq \frac{1}{2}T$, so the last three terms put together are still too small to cancel out the first term, which is $4$.
	From \eqref{eqn:phi_psi_deriv_bds} and \eqref{eqn:psi_bdd_below}, it follows that when $|\tilde{z}| \sim 1$,
	\begin{align*}
		\Big|\Big(\frac{1}{\psi_{Y,T}}\Big)^{(j)}(\tilde{z})\Big| \lesssim_j 1
		\qquad \text{for all} \qquad j \geq 0.
	\end{align*}
	In summary, $\varphi_{Y,T}$, $\psi_{Y,T}$, and $1/\psi_{Y,T}$ all obey uniform derivative bounds on $\{|\tilde{z}| \sim 1\}$.
	
	Now, the reason we are interested in $\P_{Y,T}^*$ is that $F_{Y,T}$ is orthogonal to the image of $\P_{Y,T}^*$ acting on $C_c^{\infty}(\L)$: we have
	\begin{align*}
		\int_{\L} \P_{Y,T}^*(g) F_{Y,T}
		= \int_{\L} g \P_{Y,T} F_{Y,T}
		= 0
	\end{align*}
	for all $g \in C_c^{\infty}(\L)$.
	Thus in \eqref{eqn:u_smooth_goal_2}, we can modify $\chi$ by any element of $\P_{Y,T}^* C_c^{\infty}(\L)$ without changing the value of the integral.
	
	\begin{lem} \label{lem:congruent_replacement}
		Let $\chi_0 \in C_c^{\infty}(\L)$, with support contained in $\{|\tilde{z}| \sim 1\}$, obey the uniform derivative bounds $\|\chi_0^{(j)}\|_{L^{\infty}(\L)} \lesssim_j 1$ for all $j \geq 0$. Then for any $n \geq 0$, there exists $\chi_n \in C_c^{\infty}(\L)$ with support contained in the support of $\chi_0$, obeying the uniform derivative bounds $\|\chi_n^{(j)}\|_{L^{\infty}(\L)} \lesssim_{j,n} 1$ for all $j \geq 0$, such that 
		\begin{align*}
			\chi_0 \equiv (TY)^{-n} \chi_n
			\mod \P_{Y,T}^* C_c^{\infty}(\L).
		\end{align*}
		This congruence condition means that there exists $g \in C_c^{\infty}(\L)$ such that
		\begin{align} \label{eqn:congruence_def}
			\P_{Y,T}^*g = \chi_0 - (TY)^{-n} \chi_n.
		\end{align}
	\end{lem}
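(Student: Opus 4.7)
The plan is to induct on $n$, with the base case $n=0$ being trivial (take $g = 0$). The key observation driving the inductive step is that $(TY)^2 \psi_{Y,T}$ is the leading term of $\P_{Y,T}^*$ in powers of $TY \geq T^{\eta} \geq 1$, and $|\psi_{Y,T}| \sim 1$ on the region $\{|\tilde z| \sim 1\}$ containing the support of $\chi_n$ throughout the induction. Therefore one can approximately invert $\P_{Y,T}^*$ there, losing only a factor of $(TY)^{-1}$ per step.

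Concretely, given $\chi_n$ satisfying the required properties, I would set $g_n = (TY)^{-2}\chi_n/\psi_{Y,T}$, interpreted as zero outside $\supp \chi_n$; this is smooth and compactly supported because $\psi_{Y,T}$ is nonvanishing on $\{|\tilde z| \sim 1\} \supseteq \supp \chi_n$. Expanding $\P_{Y,T}^* g_n$ using \eqref{eqn:adjoint_phi_psi} and the product rule yields
\begin{align*}
\P_{Y,T}^* g_n = \chi_n + (TY)^{-1}\varphi_{Y,T}\partial_{\tilde z}(\chi_n/\psi_{Y,T}) + (TY)^{-2}\partial_{\tilde z}^2(\chi_n/\psi_{Y,T}),
\end{align*}
which rearranges to $\chi_n = \P_{Y,T}^* g_n + (TY)^{-1} \chi_{n+1}$, where
\begin{align*}
\chi_{n+1} = -\varphi_{Y,T}\,\partial_{\tilde z}(\chi_n/\psi_{Y,T}) - (TY)^{-1} \partial_{\tilde z}^2(\chi_n/\psi_{Y,T}).
\end{align*}
Combined with the inductive hypothesis $\chi_0 - (TY)^{-n}\chi_n = \P_{Y,T}^* h_n$ for some $h_n \in C_c^\infty(\L)$, this gives $\chi_0 - (TY)^{-(n+1)}\chi_{n+1} = \P_{Y,T}^*\bigl(h_n + (TY)^{-n} g_n\bigr)$, completing the induction provided $\chi_{n+1}$ inherits the required properties.

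Verifying the latter is straightforward: $\supp \chi_{n+1} \subseteq \supp \chi_n \subseteq \supp \chi_0$, since differentiation and multiplication by smooth functions preserve support. The uniform derivative bounds $\|\chi_{n+1}^{(j)}\|_{L^\infty(\L)} \lesssim_{j,n+1} 1$ follow from the product rule together with the bounds on $\chi_n^{(j)}$ (induction hypothesis), on $\varphi_{Y,T}^{(j)}$, and on $(\psi_{Y,T}^{-1})^{(j)}$ established in \eqref{eqn:phi_psi_deriv_bds}--\eqref{eqn:psi_bdd_below}, together with $(TY)^{-1} \leq 1$. I do not expect any real obstacle here; the entire argument is a routine Borel-summation-style inversion. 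The only subtle ingredient is the uniform lower bound $|\psi_{Y,T}| \sim 1$, which has already been verified upstream and which crucially uses the hypothesis $t \leq \tfrac{1}{2}T$ of the current subsection.
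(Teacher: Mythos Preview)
Your proof is correct and is essentially identical to the paper's: both approximately invert $\P_{Y,T}^*$ by dividing by its leading term $(TY)^2\psi_{Y,T}$, take $g = (TY)^{-2}\chi_0/\psi_{Y,T}$, and obtain the same remainder $\chi_1 = -\varphi_{Y,T}(\chi_0/\psi_{Y,T})' - (TY)^{-1}(\chi_0/\psi_{Y,T})''$. The paper phrases it as ``prove the $n=1$ case and iterate,'' while you write out the induction explicitly, but the content is the same.
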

	
	\begin{proof}
		It suffices to prove this for $n=1$, because the general case can be obtained from the $n=1$ case by iteration.
		Let
		\begin{align*}
			g = \frac{\chi_0}{(TY)^2 \psi_{Y,T}}.
		\end{align*}
		Then define $\chi_1$ to make \eqref{eqn:congruence_def} hold with $n=1$, so take
		\begin{align*}
			\chi_1 = TY(\chi_0 - \P_{Y,T}^*g).
		\end{align*}
		Clearly both $g$ and $\chi_1$ have support contained in that of $\chi_0$.
		By \eqref{eqn:adjoint_phi_psi},
		\begin{align*}
			\P_{Y,T}^*g
			= \frac{1}{(TY)^2} \Big(\frac{\chi_0}{\psi_{Y,T}}\Big)''
			+ \frac{1}{TY} \varphi_{Y,T} \Big(\frac{\chi_0}{\psi_{Y,T}}\Big)'
			+ \chi_0.
		\end{align*}
		Plugging this into the definition of $\chi_1$ above,
		\begin{align*}
			\chi_1 = -\frac{1}{TY} \Big(\frac{\chi_0}{\psi_{Y,T}}\Big)'' - \varphi_{Y,T} \Big(\frac{\chi_0}{\psi_{Y,T}}\Big)'.
		\end{align*}
		Since $\chi_0$, $\varphi_{Y,T}$, and $1/\psi_{Y,T}$ all obey uniform derivative bounds, so does $\chi_1$ (recall $TY \geq T^{\eta} \geq 1$).
	\end{proof}
	
	Now, let $n \geq 0$ be arbitrarily large, and apply Lemma~\ref{lem:congruent_replacement} with $\chi_0 = \chi$, where $\chi$ is as in \eqref{eqn:u_smooth_goal_2}.
	Then we get a function $\chi_n \in C_c^{\infty}(\L)$ obeying derivative bounds depending only on $n$, such that the support of $\chi_n$ is contained in that of $\chi$, and such that $\chi \equiv (TY)^{-n} \chi_n \mod \P_{Y,T}^* C_c^{\infty}(\L)$.
	Using this congruence and then the fact that $TY \geq T^{\eta}$,
	\begin{align*}
		\LHS\eqref{eqn:u_smooth_goal_2}
		= (TY)^{-n} \Big|\int_{\L} \chi_n(\tilde{z}) F_{Y,T}(\tilde{z}) \, d\tilde{z}\Big|
		\lesssim_n T^{-n\eta} \|F_{Y,T}\|_{L^{\infty}(\L \cap \{|\tilde{z}| \sim 1\})}.
	\end{align*}
	By Proposition~\ref{prop:u_block_triv_bd}, the $L^{\infty}$ norm on the right hand side is
	$\lesssim T^{O(1)}$.
	Since $n$ was arbitrary, we conclude \eqref{eqn:u_smooth_goal_2}.
	Above, we reduced \eqref{eqn:u_smooth_goal} to \eqref{eqn:u_smooth_goal_2}.
	Thus \eqref{eqn:u_smooth_goal} holds, and we have accomplished our goal for this subsection.
	
	\subsection{The trivially bounded range: $1 < t \lesssim T$ and $|y| \lessapprox T^{-1}$}
	
	Let $1 < t \leq \frac{1}{2}T$ as above. In the previous subsection we showed that the second term in \eqref{eqn:I_2_split} is $\lesssim T^{-\infty}$.
	It remains to estimate the first term, which is
	\begin{align} \label{eqn:first_term_in_I_2_split}
		\Big|\int_{|y| \lesssim T^{-1+\eta}} e^{-(Hy)^2} e^{2iTy} z^{\frac{1}{2}} \widetilde{H}_s(1-z^2) \, dy \Big|.
	\end{align}
	Taking the absolute values inside,
	\begin{align*}
		\eqref{eqn:first_term_in_I_2_split}
		\lesssim T^{-\frac{1}{2} + \frac{\eta}{2}} \int_{|y| \lesssim T^{-1+\eta}} |\widetilde{H}_s(1-z^2)| \, dy.
	\end{align*}
	Applying Proposition~\ref{prop:u_block_triv_bd},
	\begin{align*}
		\eqref{eqn:first_term_in_I_2_split}
		\lessapprox T^{-\frac{1}{2}+\frac{\eta}{2}} T^{-1+\eta} t^{4k-2}
		= T^{-\frac{3}{2} + \frac{3}{2}\eta} t^{4k-2}.
	\end{align*}
	Since $t \lesssim T$, it follows that
	\begin{align*}
		\eqref{eqn:first_term_in_I_2_split}
		\lessapprox T^{4k - \frac{7}{2} + \frac{3}{2}\eta}.
	\end{align*}
	Inserting this and \eqref{eqn:u_smooth_goal} into \eqref{eqn:I_2_split}, we obtain
	\begin{align*}
		T^{4k-\frac{5}{2}} H^{-1} |W(s)|
		\lessapprox T^{4k-\frac{7}{2}+\frac{3}{2}\eta}
	\end{align*}
	(recall that implicit constants may depend on $\eta$).
	Since $\eta$ was arbitrary, we conclude that
	\begin{align*}
		|W(s)| \lessapprox H/T.
	\end{align*}
	Since $t \leq \frac{1}{2}T$, Proposition~\ref{prop:I_u_principal} follows.
	
	We have now proved Proposition~\ref{prop:I_u_principal} in all cases.
	
	\section{Averaging the t-channel conformal block when $t>1$} \label{sec:t_averaging}
	
	In this section we prove Proposition~\ref{prop:I_t_principal}. As above, let $s = \frac{1}{2}+it$ with $t>1$. Recall that the definition \eqref{eqn:final_W_check_def} of $\widecheck{W}(s)$ is
	\begin{align*}
		\widecheck{W}(s)
		= T^{-4k+\frac{5}{2}} H \int_{-T^{\varepsilon}/H}^{T^{\varepsilon}/H} I(y) \, dy,
	\end{align*}
	where the integrand $I(y)$ is
	\begin{align} \label{eqn:t_integrand_def}
		I(y)
		= e^{-(Hy)^2} e^{2iTy} z^{-4k+\frac{1}{2}} \widetilde{H}_s(1-z^{-2})
	\end{align}
	with $z = T^{-1} + iy$ as usual.
	We have $I(-y) = \overline{I(y)}$, so
	\begin{align*}
		\widecheck{W}(s)
		= 2T^{-4k+\frac{5}{2}} H \re \int_{0}^{T^{\varepsilon}/H} I(y) \, dy.
	\end{align*}
	Thus to prove Proposition~\ref{prop:I_t_principal}, it suffices to show that
	\begin{align} \label{eqn:half_I_4_bd}
		\int_{0}^{T^{\varepsilon}/H} I(y) \, dy
		= \frac{1}{2} e^{-2} T^{4k-\frac{5}{2}} t^{-\frac{1}{2}} \exp\Big(-\Big(\frac{t}{T/H}\Big)^2\Big) e^{i\alpha(t)}
		+ \widetilde{O}(\1_{t \lessapprox T/H} T^{4k-\frac{5}{2}} t^{-1} + t^{-\infty}),
	\end{align}
	where $\alpha(t)$ is as in the proposition.
	This is our target for the remainder of the section.
	
	\subsection{The rapidly decaying range: $t \ggg 1$ and $y \lll \frac{t}{T}$}
	
	In this subsection and the next, suppose $t \geq \log^3 T$. Denote
	\begin{align*}
		Y_0
		= \frac{t}{T\log^2 T}.
	\end{align*}
	Then split
	\begin{align} \label{eqn:half_I_4_split}
		\LHS\eqref{eqn:half_I_4_bd}
		= \int_{y \lesssim Y_0} \1_{y \leq T^{\varepsilon}/H} I(y) \, dy
		+ \int_{Y_0 \lesssim y \leq T^{\varepsilon}/H} I(y) \, dy
	\end{align}
	(with a smooth cutoff at $y \sim Y_0$).
	We will estimate the first integral in this subsection and the second integral in the next.
	
	By Corollary~\ref{cor:t-block_triv_bd},
	\begin{align*}
		|\widetilde{H}_s(1-z^{-2})|
		\lesssim t^{4k-\frac{5}{2}} |z| \exp\Big(-\frac{ct}{Ty+1}\Big),
	\end{align*}
	where $c$ is a constant $\gtrsim 1$.
	When $y \lesssim Y_0$ and $y \leq T^{\varepsilon}/H$, the exponent satisfies
	\begin{align*}
		\frac{t}{Ty+1}
		\gtrsim \max\Big\{\frac{t}{TY_0}, \frac{t}{T^{1+\varepsilon}/H}\Big\}
		= \max\Big\{\log^2 T, \frac{t}{T^{\frac{2}{3}-\varepsilon}}\Big\}.
	\end{align*}
	It follows that for $y$ as in the first integral in $\RHS\eqref{eqn:half_I_4_split}$,
	\begin{align*}
		|\widetilde{H}_s(1-z^{-2})|
		\lesssim T^{-\infty} t^{-\infty}.
	\end{align*}
	Thus the whole first integral is $O(T^{-\infty} t^{-\infty})$, and from \eqref{eqn:half_I_4_split} we obtain
	\begin{align} \label{eqn:half_I_4_oscillatory_+_error}
		\LHS\eqref{eqn:half_I_4_bd}
		= \int_{Y_0 \lesssim y \leq T^{\varepsilon}/H} I(y) \, dy
		+ O(T^{-\infty} t^{-\infty}).
	\end{align}
	
	\subsection{The oscillatory range: $t \ggg 1$ and $y \gtrapprox \frac{t}{T}$} \label{subsec:t_oscillatory_range}
	
	As above, suppose throughout this subsection that $t \geq \log^3 T$.
	
	If $t \geq T^{\frac{2}{3}}$, then $Y_0 \gg T^{\varepsilon}/H$, so the integral on the right hand side of \eqref{eqn:half_I_4_oscillatory_+_error} vanishes. It follows that \eqref{eqn:half_I_4_bd} holds.
	
	If $(T/H) \log^3T \leq t < T^{\frac{2}{3}}$, then $Y_0 \geq H^{-1} \log T$, so $\RHS\eqref{eqn:half_I_4_oscillatory_+_error} \lesssim T^{-\infty}$ because of the Gaussian factor in \eqref{eqn:t_integrand_def} and because of Corollary~\ref{cor:t-block_triv_bd}. Therefore \eqref{eqn:half_I_4_bd} again holds.
	
	Thus we may assume that $t \lessapprox T/H$.
	Then it suffices to show that the integral on the right hand side of \eqref{eqn:half_I_4_oscillatory_+_error} satisfies
	\begin{align} \label{eqn:t_oscillatory_goal}
		\int_{Y_0 \lesssim y \leq T^{\varepsilon}/H} I(y) \, dy
		= \frac{1}{2} e^{-2} T^{4k-\frac{5}{2}} t^{-\frac{1}{2}} \exp\Big(-\Big(\frac{t}{T/H}\Big)^2\Big) e^{i\alpha(t)}
		+ \widetilde{O}(T^{4k-\frac{5}{2}} t^{-1}).
	\end{align}
	Applying Proposition~\ref{prop:t_block_asymptotic_separated} with $p = -4k+\frac{1}{2}$,
	\begin{align*}
		I(y)
		= t^{4k-\frac{5}{2}} e^{-(Hy)^2} y^{-4k+\frac{3}{2}} \rho(y) e^{2i(Ty - t\log y)} + O(t^{4k-\frac{7}{2}} y^{-4k+\frac{3}{2}}),
	\end{align*}
	where $\rho$ is the smooth function \eqref{eqn:rho_def} on $[2T^{-1}, T^{-\frac{1}{3}}]$ obeying the derivative bounds
	\begin{align} \label{eqn:rho_applied_deriv_bds}
		|\rho^{(j)}(y)|
		\lesssim_j y^{-j}
		\qquad \text{for all} \qquad
		j \geq 0.
	\end{align}
	Inserting this into $\LHS\eqref{eqn:t_oscillatory_goal}$,
	\begin{align*}
		\LHS\eqref{eqn:t_oscillatory_goal}
		= t^{4k-\frac{5}{2}} \int_{Y_0 \lesssim y \leq T^{\varepsilon}/H} e^{-(Hy)^2} y^{-4k+\frac{3}{2}} \rho(y) e^{2i(Ty - t\log y)} \, dy
		+ O\Big(t^{4k-\frac{7}{2}} \int_{Y_0 \lesssim y \leq T^{\varepsilon}/H} y^{-4k+\frac{3}{2}} \, dy\Big).
	\end{align*}
	The error term is
	\begin{align*}
		\lesssim t^{4k-\frac{7}{2}} Y_0^{-4k+\frac{5}{2}}
		\lessapprox t^{4k-\frac{7}{2}} \Big(\frac{t}{T}\Big)^{-4k+\frac{5}{2}}
		= T^{4k-\frac{5}{2}} t^{-1},
	\end{align*}
	matching the error term in our goal \eqref{eqn:t_oscillatory_goal}.
	
	It remains to check that the main term satisfies
	\begin{align}
		t^{4k-\frac{5}{2}} \int_{Y_0 \lesssim y \leq T^{\varepsilon}/H} e^{-(Hy)^2} y^{-4k+\frac{3}{2}} \rho(y) e^{2i(Ty - t\log y)} \, dy
		&= \frac{1}{2} e^{-2} T^{4k-\frac{5}{2}} t^{-\frac{1}{2}} \exp\Big(-\Big(\frac{t}{T/H}\Big)^2\Big) e^{i\alpha(t)}
		\notag
		\\&+ \widetilde{O}(T^{4k-\frac{5}{2}} t^{-1}).
		\label{eqn:t_stat_phase_goal}
	\end{align}
	Rescaling $y$ by $T/t$, and noting that $TY_0/t = \log^{-2}T$,
	\begin{align*}
		\LHS\eqref{eqn:t_stat_phase_goal}
		= t^{4k-\frac{5}{2}} \frac{t}{T}
		\int_{\log^{-2}T \lesssim y \leq t^{-1} T^{1+\varepsilon}/H} e^{-(Hty/T)^2} (ty/T)^{-4k+\frac{3}{2}} \rho(ty/T) e^{2i(ty - t\log(ty/T))} \, dy.
	\end{align*}
	Simplifying,
	\begin{align} \label{eqn:t_before_stat_phase}
		\LHS\eqref{eqn:t_stat_phase_goal}
		= T^{4k-\frac{5}{2}}
		e^{-2it \log(t/T)} \int_{\log^{-2}T \lesssim y \leq t^{-1} T^{1+\varepsilon}/H} g(y) e^{2it(y - \log y)} \, dy,
	\end{align}
	where
	\begin{align} \label{eqn:t_g_def}
		g(y) = e^{-(Hty/T)^2} y^{-4k+\frac{3}{2}} \rho(ty/T).
	\end{align}
	For $y \gtrapprox 1$, the $j$th derivative of the Gaussian factor in $g(y)$ is $\lessapprox_j 1$. It follows from this and from the derivative bounds \eqref{eqn:rho_applied_deriv_bds} on $\rho$ that
	\begin{align*}
		|g^{(j)}(y)|
		\lessapprox_j y^{-4k+\frac{3}{2}}
	\end{align*}
	for all $j \geq 0$ and $y$ as in \eqref{eqn:t_before_stat_phase}.
	Thus by \eqref{eqn:t_before_stat_phase} and stationary phase,
	\begin{align*}
		\LHS\eqref{eqn:t_stat_phase_goal}
		= T^{4k-\frac{5}{2}}
		e^{-2it \log(t/T)} \Big[e^{i\pi/4} \sqrt{\pi} g(1) e^{2it} t^{-\frac{1}{2}} + \widetilde{O}(t^{-\frac{3}{2}})\Big],
	\end{align*}
	where the main term in brackets comes from the stationary point $y=1$.
	Expanding out,
	\begin{align} \label{eqn:t_stat_phase_result}
		\LHS\eqref{eqn:t_stat_phase_goal}
		= \sqrt{\pi} g(1) T^{4k-\frac{5}{2}} t^{-\frac{1}{2}} e^{i\pi/4 + 2it(1 - \log(t/T))}
		+ \widetilde{O}(T^{4k-\frac{5}{2}} t^{-\frac{3}{2}}).
	\end{align}
	By the definitions \eqref{eqn:t_g_def} and \eqref{eqn:rho_def} of $g$ and $\rho$ (recalling that $p = -4k+\frac{1}{2}$ in the definition of $\rho$),
	\begin{align*}
		g(1)
		&= e^{-(tH/T)^2} \rho(t/T)
		\\&= \exp\Big(-\Big(\frac{t}{T/H}\Big)^2\Big) \Big(\frac{1}{t}+i\Big)^{-4k+\frac{3}{2}} \psi(z) \exp\Big(-2it\Big[\log\Big(1-\frac{i}{t}\Big) + \phi(z)\Big]\Big),
	\end{align*}
	where $\phi,\psi$ are as in Proposition~\ref{prop:t_block_asymptotic}, and where $z = T^{-1} + i\frac{t}{T}$.
	Making the approximations
	\begin{align*}
		\Big(\frac{1}{t}+i\Big)^{-4k+\frac{3}{2}}
		= e^{3i\pi/4} (1 + O(t^{-1}))
	\end{align*}
	and
	\begin{align*}
		\psi(z) = \frac{e^{-i\pi/4}}{2\sqrt{\pi}}(1 + O(|z|^2))
	\end{align*}
	and
	\begin{align*}
		\log\Big(1-\frac{i}{t}\Big)
		= -\frac{i}{t} + O(t^{-2})
	\end{align*}
	and
	\begin{align*}
		\phi(z)
		= -\log(2) + \frac{1}{4}z^2 + O(|z|^4),
	\end{align*}
	we get
	\begin{align*}
		g(1)
		= \frac{1}{2\sqrt{\pi}} \exp\Big(-\Big(\frac{t}{T/H}\Big)^2\Big) (1 + O(t^{-1} + |z|^2)) \exp\Big(\frac{i\pi}{2} - 2it\Big[-\frac{i}{t} - \log(2) + \frac{1}{4}z^2 + O(t^{-2}+|z|^4)\Big]\Big).
	\end{align*}
	This simplifies to
	\begin{align} \label{eqn:g(1)_approx_1}
		g(1)
		= \frac{e^{-2}}{2\sqrt{\pi}} \exp\Big(-\Big(\frac{t}{T/H}\Big)^2\Big) \exp\Big(\frac{i\pi}{2} + 2it\Big[\log(2) - \frac{1}{4}z^2\Big]\Big)
		+ O(t^{-1} + |z|^2 + t|z|^4).
	\end{align}
	Since $1 < t \leq T^{\frac{2}{3}}$ (the upper bound coming from $t \lessapprox T/H$), we have
	\begin{align*}
		z^2 = \Big(T^{-1} + i\frac{t}{T}\Big)^2
		= \Big(\frac{t}{T}\Big)^2 (t^{-1}+i)^2
		= -\Big(\frac{t}{T}\Big)^2(1+O(t^{-1}))
		= -\Big(\frac{t}{T}\Big)^2 + O(T^{-\frac{4}{3}})
		= -\Big(\frac{t}{T}\Big)^2 + O(t^{-2}).
	\end{align*}
	In particular,
	\begin{align*}
		|z|^2
		\lesssim \Big(\frac{t}{T}\Big)^2 + t^{-2}
		\leq T^{-\frac{2}{3}} + t^{-2}
		\lesssim t^{-1}.
	\end{align*}
	Plugging these two estimates for $z^2$ into \eqref{eqn:g(1)_approx_1},
	\begin{align*}
		g(1)
		= \frac{e^{-2}}{2\sqrt{\pi}} \exp\Big(-\Big(\frac{t}{T/H}\Big)^2\Big) \exp\Big(\frac{i\pi}{2} + 2it\Big[\log(2) + \frac{1}{4}\Big(\frac{t}{T}\Big)^2\Big]\Big)
		+ O(t^{-1}).
	\end{align*}
	Inserting this into \eqref{eqn:t_stat_phase_result} yields
	\begin{align*}
		\LHS\eqref{eqn:t_stat_phase_goal}
		=
		\frac{1}{2} e^{-2} T^{4k-\frac{5}{2}} t^{-\frac{1}{2}} \exp\Big(-\Big(\frac{t}{T/H}\Big)^2\Big) e^{i\alpha(t)}
		+ \widetilde{O}(T^{4k-\frac{5}{2}} t^{-\frac{3}{2}}),
	\end{align*}
	where $\alpha(t)$ is given by \eqref{eqn:alpha_def}. This is the desired estimate \eqref{eqn:t_stat_phase_goal}, in fact with a better error term.
	
	Above, we reduced \eqref{eqn:half_I_4_bd} to \eqref{eqn:t_stat_phase_goal} under the assumption $t \geq \log^3 T$. We conclude that \eqref{eqn:half_I_4_bd} holds for $t \geq \log^3T$.
	
	\subsection{The trivially bounded range: $1 < t \lessapprox 1$}
	
	It remains to prove \eqref{eqn:half_I_4_bd} for $1 < t < \log^3 T$. More generally, for $1<t \lessapprox 1$, the error term in $\RHS\eqref{eqn:half_I_4_bd}$ dominates the main term, so \eqref{eqn:half_I_4_bd} reduces to
	\begin{align*}
		\Big|\int_{0}^{T^{\varepsilon}/H} I(y) \, dy\Big|
		\lessapprox T^{4k-\frac{5}{2}}.
	\end{align*}
	Writing out the definition \eqref{eqn:t_integrand_def} of $I(y)$, and using Corollary~\ref{cor:t-block_triv_bd} and the fact that $t \lessapprox 1$,
	\begin{align*}
		|I(y)|
		= e^{-(Hy)^2} |z|^{-4k+\frac{1}{2}} |\widetilde{H}_s(1-z^{-2})|
		\lessapprox |z|^{-4k+\frac{3}{2}},
	\end{align*}
	so
	\begin{align*}
		\Big|\int_{0}^{T^{\varepsilon}/H} I(y) \, dy\Big|
		\lessapprox \int_{\R} |z|^{-4k+\frac{3}{2}} \, dy
		\lesssim T^{4k-\frac{5}{2}},
	\end{align*}
	as desired.
	Thus \eqref{eqn:half_I_4_bd} holds in all cases, and the proof of Proposition~\ref{prop:I_t_principal} is complete.
	
	\bibliography{references}
	
	\parskip = 0em
	
\end{document}